\documentclass[11pt,times,letter]{article}

\usepackage{amsmath, amsfonts,amsthm,amssymb,bm}
\usepackage{dsfont}
\usepackage[english]{babel}
\usepackage[normalem]{ulem}
\usepackage{latexsym,amssymb,amsfonts,graphicx}
\usepackage{amsmath,dsfont}
\usepackage{verbatim}
\usepackage{mathrsfs}
\usepackage{bm}
\usepackage{color}
\usepackage{epsfig}
\usepackage{url}
\usepackage{bm}
\usepackage{natbib}
\usepackage[colorlinks,linkcolor=blue,citecolor=blue,anchorcolor=blue]{hyperref}

\definecolor{mycolor-green}{RGB}{199, 237, 204}
\definecolor{mycolor-yellow}{RGB}{250, 249, 222}
\definecolor{mycolor-blue}{RGB}{220, 226, 241}
\definecolor{mycolor-gray}{RGB}{234, 234, 239}

\def\esssup_#1{\underset{#1}{\mathrm{ess\,sup\, }}}
\def\essinf_#1{\underset{#1}{\mathrm{ess\,inf\, }}}
\def\argmax_#1{\underset{#1}{\mathrm{arg\,max\, }}}
\def\argmin_#1{\underset{#1}{\mathrm{arg\,min\, }}}

\newtheorem{theorem}{Theorem}[section]
\newtheorem{definition}{Definition}[section]
\newtheorem{proposition}[theorem]{Proposition}
\newtheorem{remark}[theorem]{Remark}
\newtheorem{lemma}[theorem]{Lemma}
\newtheorem{corollary}[theorem]{Corollary}

\newtheorem{assumption}[theorem]{Assumption}

\allowdisplaybreaks[4]

\title{Robust mean field control: an application to optimal execution under composite uncertainty}
\author{Huafu Liao \thanks{School of Mathematical Sciences, Dalian University of Technology. Email: hfliao@dlut.edu.cn.}\and Shuhui Liu \thanks{Department of Applied Mathematics, The Hong Kong Polytechnic University. Email: shuhuiliusdu@gmail.com.}\and Chenchen Mou \thanks{Department of Mathematics, City University of Hong Kong. Email: chencmou@cityu.edu.hk.}\and Defeng Sun\thanks{Department of Applied Mathematics, The Hong Kong Polytechnic University. Email: defeng.sun@polyu.edu.hk.}}

\begin{document}
\maketitle
\begin{abstract}
We provide a framework for robust mean field control problems that describe multi-dimensional optimal liquidation problems under uncertainty from both the underlying stochastic process and the deterministic model parameters. The verification results are established with Hamilton-Jacobi-Bellman-Isaacs (HJBI) equations where the variables are probability measures and the Hamiltonian nonlinearly involves the joint distribution of position and momentum. Using novel a priori estimates, we establish the well-posedness of the HJBI equations featuring general or quadratic Hamiltonians that are neither displacement convex nor concave in their momentum. The a priori estimates and well-posedness results are extended during their application to optimal liquidation problems, where we allow the Hamiltonian to have derivatives of linear growth and solve the constrained multi-dimensional linear quadratic optimal liquidation problem under composite uncertainty.

\vspace{0.3cm}
\noindent{\textbf{2020 AMS Mathematics subject classification}: 49N80; 49L12; 91G80.}

\vspace{0.3cm}
\noindent{\textbf{Keywords}:}\quad{robustness; mean field control; Hamilton-Jacobi-Bellman-Isaacs equation; multi-dimensional optimal execution}.
\end{abstract}

\section{Introduction}\label{intro}
\noindent In classical financial theories such as portfolio selection and option pricing, it is fundamental to assume a frictionless market and perfectly known model parameters. However, the actual trading may operate under different rules. In the real world, large investors' trading usually comes with unnegligible transaction cost and the exact model parameters could be ambiguous to investors, especially when the trading is conducted with high frequency or at a large amount. The classic Black-Scholes model for portfolio selection in a frictionless market leads to a position process with infinite variation, whereas in a frictional market such strategy could yield infinite transaction cost. Different branches of models have been developed in order to tackle the cost in liquidity. Early models dealing with transaction cost can trace back to at least \cite{Leland85} in the context of option pricing, other literature concerning models with market friction includes \cite{Cvitanic1996,Barles1998,Cvitanic1999}. In these models, the market friction comes from the transaction cost that is modeled as either fixed or proportional to the trading volume. Later on, starting from \cite{Bertsimas1998} and \cite{Almgren2001}, a different approach is developed to model the friction due to the price impact from trading. At its core, the price impact is considered to consist of temporary effect and permanent effect. The former effects the price only at the moment of execution and has no further influence on the price process, whereas the latter is linearly integrated into the price process and has effect henceforth.

The model uncertainty faced by the investor is another practical concern. Dating back to \cite{Knight21}, the model uncertainty, also known as Knightian uncertainty, has been identified different from ``risk'' and studied for decades. The application of model uncertainty in economics literature can go back to \cite{Gilboa89}. Roughly speaking, in these models, the investor is uncertain about the exact distribution of underlying stochastic processes, so she has to choose between a family of probability measures according to certain criterion. One strand of approach starts with \cite{Hansen23} in the context of portfolio selection, where the objective functional contains a penalty term that penalizes the difference between the benchmark probability measure and the chosen one. The investor has prior knowledge on the underlying stochastic processes, which constitutes the benchmark probability measure, but for fear of model misspecification, she would choose between a family of plausible alternatives according to the mentioned penalty function. We also refer to e.g. \cite{Maenhout04,Maenhout08,Bo17} as well as the reference therein.

In view of the aforementioned fundamental practical concern, we carry out an intensive study on a mean field control problem with robustness which could describe the optimal liquidation under uncertainty. Proposed independently by \cite{Lasry07} and \cite{Huang06}, the mean field games and mean field control theory has been rapidly developing for over a decade, with applications in various fields. An incomplete references on the interplay among mean field effect, model uncertainty and optimal liquidation include \cite{Nystrom14,Bauso16,Cartea2017,Huang17,Bismuth2019,Ismail19,Popier19,Pham22,Fu23,Delarue26,Delarue2026}. Our model differs from previous ones in two aspects:
\begin{itemize}
 \item In terms of the model uncertainty, we consider the composite uncertainty that comes from both the underlying stochastic processes and deterministic parameters. In control literature \cite{Sparks1997,Hara2001,Basin2009,Huang17} and so on, models with uncertainty from deterministic disturbance are studied under different circumstances. On the other hand, most existing literature on finance, such as \cite{Maenhout04,Maenhout08,Bo17,Ismail19,Pham22,Delarue26,Delarue2026} and alike, focuses on the uncertainty coming from the underlying stochastic processes. Such method neglects the possible ambiguity from the unknown parameters that are supposed to be deterministic or from an independent probability space. As a remedy, our approach models the composite uncertainty by allowing the large investor to choose between a set of plausible deterministic parameters besides underlying stochastic processes and evaluate the worst case objective functional. When dealt with dynamic programming principle, the composite uncertainty distinguishes significantly in the sense that it leads to HJBI equations with Wasserstein derivatives even when there is no mean field term in the dynamics or objective functional.
 \item With regard to‌ modeling optimal liquidation, we nonlinearly incorporate the distribution of position and transaction rate into our model. Such incorporation can better describe the price impact from the transaction of a large investor. When the large investor is selling fast, it seems natural that such action would yield more child orders of selling in the market and thus further decrease the price. It is inconvenient to work with the exact behavior of each child order, hence considering its distribution is a tractable way. In that respect, our model incorporates the distribution of the large investor's transaction rate into the price process, so that the child orders' behavior can be described. Moreover, the incorporation of distribution could be nonlinear, meaning that both the large investor and child orders could have nonlinear price impacts. We refer to \cite{Gueant17} and the references therein for the logic behind the nonlinear price impact. We also put the large investor's distribution of position into the objective function, thus enabling the consideration of the mean variance of position that corresponds to the operation uncertainty (see e.g. \cite{Bensoussan2019}).
\end{itemize}
 
Next, we explain our mathematical and technical contributions. Our framework and verification results for robust mean field control are based on dynamic programming principle together with the solution $V$ to HJBI equation in the following:
\begin{align}\label{HJB}
 \left\{\begin{aligned}\partial_tV+\frac12\int_{\mathbb R^d}{\rm tr}\big[\sigma^\top\sigma D_x\partial_\mu V(t,\mu,x)\big]\mu(dx)+\mathcal H(\tilde\mu_t)=0,\ t\in[0,T),\ \mu\in\mathcal P_2(\mathbb R^d),\\
 V(T,\mu)=U(\mu),\quad\tilde\mu_t:=(Id,\partial_\mu V(t,\mu,\cdot))\sharp\mu,\end{aligned}\right.
\end{align}
where $V$ has probability measure variables $\mu\in\mathcal P_2(\mathbb R^d)$ and under appropriate conditions,
{\small\begin{align*}
 \mathcal H\big({\rm Law}(X,Y)\big)&:=\inf_{\theta\in L^2(\Omega,\mathcal F,\mathbb P;\mathbb R^k)}\sup_{\substack{\eta\in L^2(\Omega,\mathcal F,\mathbb P;\mathbb R^{k_1})\\\alpha\in\mathbb R^{k_2}}}\mathbb E\big[a\big(\theta,\eta,\alpha,X,{\rm Law}(X,\theta,\eta)\big)\cdot Y+f\big(\alpha,{\rm Law}(X,\theta,\eta)\big)\big]\notag\\
 =&\sup_{\substack{\eta\in L^2(\Omega,\mathcal F,\mathbb P;\mathbb R^{k_1})\\\alpha\in\mathbb R^{k_2}}}\inf_{\theta\in L^2(\Omega,\mathcal F,\mathbb P;\mathbb R^k)}\mathbb E\big[a\big(\theta,\eta,\alpha,X,{\rm Law}(X,\theta,\eta)\big)\cdot Y+f\big(\alpha,{\rm Law}(X,\theta,\eta)\big)\big],\\
 X,Y\in L^2(\Omega&,\mathcal F,\mathbb P;\mathbb R^d).
 \end{align*}}
The notation and conditions will be further explained in Section \ref{robust-mfc-problem}. We note that \cite{Ismail19,Pham22} also establish verification results for robust mean field control problems, whereas the idea is based on the martingale approach and the robustness is modeled by uncertainty from the underlying stochastic processes. In \cite{Liao24}, the Hamiltonians therein nonlinearly incorporate the distribution of momentum, whereas the Hamiltonian is displacement concave in momentum, i.e., for arbitrary square integrable $\mathbb R^d\times\mathbb R^d\times\mathbb R^d$ value random variables $(X,Y_1,Y_2)$,
\begin{align}\label{displacement-concave-in-p}
 \mathcal H\big({\rm Law}(X,\gamma Y_1+(1-\gamma)Y_2)\big)\geq\gamma\mathcal H\big({\rm Law}(X,Y_1)\big)+(1-\gamma)\mathcal H\big({\rm Law}(X,Y_2)\big),\ \gamma\in[0,1],
\end{align}
which does not generally hold in our context. In \cite{Cosso16} the master Bellman-Isaacs equation with similar feature is studied in the context of zero-sum stochastic differential games of generalized McKean-Vlasov type, where the authors proved related dynamic programming principle for open-loop controls and show that the upper and lower value functions are viscosity solutions to a corresponding upper and lower Master Bellman–Isaacs equation. Recently in \cite{Delarue26}, robust mean field control problems are studied via the stochastic maximum principle, where equilibria is obtained in the form of open loop controls. To the best of our knowledge, we are the first to set up the framework and establish verification results on a robust mean field control problem via dynamic programming principle and HJBI equations, where the uncertainty can be composite and robust optimal feedback functions are derived.

Our main mathematical contribution is establishing the well-posedness of HJBI equation \eqref{HJB} and extending the results ‌in the aspect of‌ growth condition and terminal constraint. Even when there is no mean field terms in the underlying dynamics and cost functions, the composite uncertainty still make \eqref{HJB} an HJBI equations with Hamiltonians that are nonlinear with respect to the distribution of momentum. Hence \eqref{HJB} is essentially outside the scope of existing results on classical solutions to standard mean field control problems. In addition, the Hamiltonians for robust control problems could be neither concave nor convex in momentum, which can also be observed in \cite{Delarue26}. Such convexity-concavity feature is different from mean field control problems without robustness in \cite{BENSOUSSAN2019,Gangbo22,Gangbo22AOP,Liao24} and so on, thus causing difficulty. To address such difficulty, we propose a generalized displacement convexity-concavity property on Hamiltonians. Then, highlighted with novel a priori estimates, we establish the well-posedness of HJBI equations in two circumstances characterized by the Hamiltonian, namely Hamiltonians of general form and quadratic form:
\begin{itemize}
 \item For the HJBI equations with general Hamiltonians, we obtain the a priori estimates of solution $V$ and thus the well-posedness. This is done by analyzing the value functions of the corresponding $N$-particle systems $(V_N)_{N\geq1}$, which is in the spirit of \cite{Liao24}. However, here the a priori estimates are established in a significantly different way. The convexity-concavity structure of the Hamiltonian brings essential difficulty analyzing the second order derivatives of $V$.  As a result, the crucial a priori estimates on solution has to be obtained differently from the cases without robustness. From our perspective, such difference has caused essential difficulty analyzing $(\nabla^2V_N)_{N\geq1}$. Towards that end, we have to resort to a different displacement convexity structure as well as the related comparison principle for symmetric stochastic Riccati equations in Lemma \ref{lem-comparison}, which is different from \cite{Liao24}, so that  Theorem \ref{prop-eigen} can be obtained. Inspired by \cite{Cosso23}, we also develop an approximation argument that preserves the displacement convexity property in Lemma \ref{dis-con-app}. This approximation argument enables us to extend the scope of solvable cases in Theorem \ref{lqd-general-HJB-N-wp-1}, where the Hamiltonian has derivatives of linear growth.
 
 \item For the linear quadratic case where the Hamiltonian is quadratic in its variable, our a priori estimates contribute to the literature on optimal liquidation of multiple assets with strict liquidation constraint, even when the mean field effect is not considered. For single asset, the optimal liquidation with strict liquidation constraint has been studied in \cite{Ankirchner14,Kruse16,Horst19,Popier19,Graewe21} and so on. For the case with multiple assets, the volume of related literature is relatively smaller. As far as we are aware of, the related works include \cite{Kratz15,Horst19}, 
both of which belong to certain class of linear quadratic optimal control problems with terminal constraint and without robustness. However, they rely on the comparison results on stochastic Riccati equations that only work for the optimal control problems with Hamiltonians that are concave in momentum. Therefore we have difficulty applying the mathematical tools therein. In that respect, our comparison principle for stochastic Riccati equations works well with robust optimal control problems, where the Hamiltonians is not necessarily concave in momentum, so that the desired a priori estimates can be established. Our a priori estimates are novel even for the classic robust optimal liquidation problems without mean field term. Then we solve the optimal liquidation problem with singular terminal condition by considering the limit of regular ones with penalty. The above limit is utilized to obtain approximate solution numerically and illustrate how the optimal strategy and position is impacted by composite uncertainty and the variance of position.
 \end{itemize}
 We note that the above approach for a priori estimates also enables us to establish the well-posedness of HJBI equations with common noise or degenerated individual noise in the model.
 
The rest of the paper is organized as follows. Our main results are explained in Section \ref{main results}. In Section \ref{robust-mfc-problem} we set up the framework for robust mean field control problems and establish the verification results. Section \ref{solvable-cases} analyses two classes of the corresponding solvable HJBI equations. Section \ref{application} studies the optimal liquidation problems with uncertainty for a large investor, where results in Section \ref{solvable-cases} are applied and extended. In Section \ref{numerical examples} we study the numerics of a linear quadratic example. Section \ref{Proofs} contains proofs for the results in previous sections.

\section{The model and main results}\label{main results}
\subsection{The robust mean field control problem}\label{robust-mfc-problem}
To study robust optimal control in the mean field context, we will work with the feedback function in the following sense:
\begin{align}\label{feedback-theta}
 \theta:\ [0,T]\times\mathbb R^d\times\mathcal P_2(\mathbb R^d)\ \to\ \mathbb R^k,
\end{align}
 as well as the corresponding controlled process
\begin{align}\label{dynamic}
 dX^{\theta,\eta,\alpha}_s=a\big(\theta_s,\eta_s,\alpha_s,X^{\theta,\eta,\alpha}_s,\mu_{(X^{\theta,\eta,\alpha}_s,\theta_s,\eta_s)}\big)ds+\sigma dW_s,\ s\in[t,T],\quad X^{\theta,\eta,\alpha}_t=\xi\sim\mu\in\mathcal P_2(\mathbb R^d),
\end{align}
 where $a:[0,T]\times\mathbb R^k\times\mathbb R^{k_1}\times\mathbb R^{k_2}\times\mathbb R^d\times\mathcal P_2(\mathbb R^d\times\mathbb R^k\times\mathbb R^{k_1})\to\mathbb R^d$, the feedback control $\big(\theta_s:=\theta(s,X^{\theta,\eta,\alpha}_s,\mu_{X^{\theta,\eta,\alpha}_s})\big)_{s\in[t,T]}$ is generated by the feedback function $\theta$, and $(\eta_s,\alpha_s)_{s\in[0,T]}$ comes from an ambiguity set that might depend on $\theta$, where $(\eta_s)_{s\in[t,T]}$ and $(\alpha_s)_{s\in[t,T]}$ are stochastic and deterministic process respectively. Here $(\eta_s)_{s\in[t,T]}$ corresponds to the uncertainty from the underlying stochastic processes in the model. In particular, $(\eta_s)_{s\in[t,T]}$ could incorporate the drift of $(W_s)_{s\in[t,T]}$ that relates to the change of probability. Besides, $(\alpha_s)_{s\in[t,T]}$ in dynamic \eqref{dynamic} describes the ambiguity on the model parameters that are supposed to be deterministic. The process $(\alpha_s)_{s\in[t,T]}$ could also describe parameters from probability space that is independent of $\sigma(W_s,s\in[t,T])$. After taking the conditional expectation with respect to $\sigma(W_s,s\in[t,T])$, the resulting control problem is reduced to a pathwise one with deterministic $(\alpha_s)_{s\in[t,T]}$. Throughout the paper we will denote by $\mu_\Gamma$ the distribution of any random variable $\Gamma$.

Next we formulate the dynamic version of the mean field optimal control problem with robustness, starting with the admissible set. 
\begin{definition}\label{admissible-set}
 Let $t\in[0,T]$. The $t$-admissible control set $\mathcal U_t$ consists of the tuple $(\Omega,\mathcal F,\{\mathcal F_s\}_{s\in[t,T]},\mathbb P,W,\theta,\mathcal V^\theta_t)$ satisfying the following
\begin{itemize}
    \item $(\Omega,\mathcal F,\{\mathcal F_s\}_{s\in[t,T]},\mathbb P)$ is a filtered probability space satisfying the usual conditions.
    \item $(W_s)_{s\in[t,T]}$ is a standard $d$-dimensional Brownian motion defined on $(\Omega,\mathbb P,\mathcal F)$ with $W_t=0$ almost surely and $\mathcal F_s:=\sigma\big(W_u,u\in[t,s]\big)$ augmented by all the $\mathbb P$-null sets in $\mathcal F$.
    \item $\theta$ is a feedback function as in \eqref{feedback-theta}.
    \item Given feedback function $\theta$, $\mathcal V^\theta_t$ is the subset of all the processes $(\eta_s,\alpha_s)_{s\in[0,T]}$ valued in $\mathbb R^{k_1}\times\mathbb R^{k_2}$, with $(\eta_s)_{s\in[0,T]}$ $\{\mathcal F_s\}_{t\leq s\leq T}$-adapted and $(\alpha_s)_{s\in[0,T]}$deterministic, such that \eqref{dynamic} admits a solution with pathwise uniqueness on $(\Omega,\mathcal F,\{\mathcal F_s\}_{s\in[t,T]},\mathbb P)$. Here in \eqref{dynamic}, $\big(\theta_s:=\theta\big(s,X^{\theta,\eta,\alpha}_s,\mu_{X^{\theta,\eta,\alpha}_s}\big)\big)_{s\in[t,T]}$ is an $\{\mathcal F_s\}_{t\leq s\leq T}$-adapted feedback strategy on $(\Omega,\mathcal F,\mathbb P)$ that is generated by $\theta$ and $(\eta_s,\alpha_s)_{s\in[t,T]}$ is from $\mathcal V^\theta_t$.
    \item Given feedback function $\theta$, the set $\mathcal V^\theta_t$ should include those processes generated by Lipschitz feedback functions $(\eta_s,\alpha_s)_{s\in[t,T]}=\big((\hat\eta(s,X^{\theta,\eta,\alpha}_s,\mu_{X^{\theta,\eta,\alpha}_s}),\hat\alpha(s,\mu_{X^{\theta,\eta,\alpha}_s})\big)_{s\in[t,T]}$, where $\hat\eta:[0,T]\times\mathbb R^d\times\mathcal P_2(\mathbb R^d)\to\mathbb R^{k_1}$, $\hat\alpha:[0,T]\times\mathcal P_2(\mathbb R^d)\to\mathbb R^{k_2}$.
\end{itemize}
\end{definition}
\begin{remark}The following remarks are in order:
\begin{enumerate}
 \item According to standard results on mean field SDE, the admissible set $\mathcal U_t$ includes all Lipschitz feedback functions.
 \item The last item is a technical requirement for the verification theorem later.
 \item We follow the convention that using $\theta$ to represent the tuple $(\Omega,\mathcal F,\{\mathcal F_s\}_{s\in[t,T]},\mathbb P,W,\theta,\mathcal V^\theta_t)\in\mathcal U_t$.
\end{enumerate}
\end{remark}
For each $\theta\in\mathcal U_t$ we may introduce the objective functional with robustness. Roughly speaking, such functional is defined in two steps. First, define the following auxiliary functional:
\begin{align*}
 \tilde J(t,\mu,\theta,\eta,\alpha)=U\big(\mu_{X^{\theta,\eta,\alpha}_T}\big)+\int_t^Tf\big(\alpha_s,\mu_{(X^{\theta,\eta,\alpha}_s,\theta_s,\eta_s)}\big)ds.
\end{align*}
Second, introduce robustness and define the objective functional for the feedback strategies $\theta\in\mathcal U_t$ as follows:
\begin{align}\label{ob-fun-1}
 J(t,\mu,\theta)=\sup_{(\eta,\alpha)\in\mathcal V^\theta_t} \tilde J(t,\mu,\theta,\eta,\alpha).
\end{align}
In the dynamic version of the mean field control problem with robustness, we seek for a feedback functions $\theta(t,x,\mu)$ that generate (via \eqref{dynamic}) a state process at the minimum cost in \eqref{ob-fun-1}:
\begin{align}\label{robustness-vf}
 \mathcal V(t,\mu)=\inf_{\theta\in\mathcal U_t} J(t,\mu,\theta).
\end{align}
Given its mean field feature, we adopt HJBI equations with measure variables to solve \eqref{robustness-vf}. We will impose the following assumption on model paramters:
\begin{assumption}\label{assumption-saddle-points}
For $\mu_{X,Y}\in\tilde{\mathcal P}\subset\mathcal P_2(\mathbb R^d\times\mathbb R^d)$ and its lifting $(X,Y)\in L^2(\Omega,\mathcal F,\mathbb P;\mathbb R^d\times\mathbb R^d)$ in an appropriate probability space $(\Omega,\mathcal F,\mathbb P)$, there exists a unique pair of feedback functions
\begin{align*}
 \theta^*:\mathcal P_2(\mathbb R^d\times\mathbb R^d)\times\mathbb R^d\times\mathbb R^d\to\mathbb R^d,\ \eta^*:\mathcal P_2(\mathbb R^d\times\mathbb R^d)\times\mathbb R^d\times\mathbb R^d\to\mathbb R^{k_1},\ \alpha^*:\mathcal P_2(\mathbb R^d\times\mathbb R^d)\to\mathbb R^{k_2},
\end{align*}
such that
\begin{align}\label{saddlepoint-1}
&\quad\inf_{\theta\in L^2(\Omega,\mathcal F,\mathbb P;\mathbb R^k)}\mathbb E\big[a(\theta,\tilde\eta^*,\tilde\alpha^*,X,\mu_{(X,\theta,\tilde\eta^*)})\cdot Y+f(\tilde\alpha^*,\mu_{(X,\theta,\tilde\eta^*)})\big]\notag\\
 &=\sup_{\substack{\eta\in L^2(\Omega,\mathcal F,\mathbb P;\mathbb R^{k_1})\\\alpha\in\mathbb R^{k_2}}}\mathbb E\big[a(\tilde\theta^*,\eta,\alpha,X,\mu_{(X,\tilde\theta^*,\eta)})\cdot Y+f(\alpha,\mu_{(X,\tilde\theta^*,\eta)})\big]\notag\\
 &=\mathbb E\big[a(\tilde\theta^*,\tilde\eta^*,\tilde\alpha^*,X,\mu_{(X,\tilde\theta^*,\tilde\eta^*)})\cdot Y+f(\tilde\alpha^*,\mu_{(X,\tilde\theta^*,\tilde\eta^*)})\big],
 \end{align}
where
\begin{align}\label{saddlepoint}
(\tilde\theta^*,\tilde\eta^*,\tilde\alpha^*)=\big(\theta^*(\tilde\mu,X,Y),\eta^*(\tilde\mu,X,Y),\alpha^*(\tilde\mu)\big),\quad\tilde\mu={\rm Law}(X,Y).
\end{align}
\end{assumption}
It is inferred from \eqref{saddlepoint-1} and \eqref{saddlepoint} that
\begin{align}\label{saddlepoint-2}
 \mathcal H(\mu_{(X,Y)})&:=\inf_{\theta\in L^2(\Omega,\mathcal F,\mathbb P;\mathbb R^k)}\sup_{\substack{\eta\in L^2(\Omega,\mathcal F,\mathbb P;\mathbb R^{k_1})\\\alpha\in\mathbb R^{k_2}}}\mathbb E\big[a(\theta,\eta,\alpha,X,\mu_{(X,\theta,\eta)})\cdot Y+f(\alpha,\mu_{(X,\theta,\eta)})\big]\notag\\
 &=\sup_{\substack{\eta\in L^2(\Omega,\mathcal F,\mathbb P;\mathbb R^{k_1})\\\alpha\in\mathbb R^{k_2}}}\inf_{\theta\in L^2(\Omega,\mathcal F,\mathbb P;\mathbb R^k)}\mathbb E\big[a(\theta,\eta,\alpha,X,\mu_{(X,\theta,\eta)})\cdot Y+f(\alpha,\mu_{(X,\theta,\eta)})\big],
 \end{align}
which can be understood as Isaacs condition in the context of robust mean field control. Before delving into the HJBI equation, let us first introduce some preliminaries on Wasserstein derivatives. One may also find similar contents in \cite{Carmona2018-I}. 

Let $U:\mathcal P_2(\mathbb R^d)\to\mathbb R$ be a $\mathcal W_2$-continuous function. Its Wasserstein gradient is a mapping that takes the form
\begin{align*}
 \partial_\mu U(\mu,x):\ (\mu,x)\in\mathcal P_2(\mathbb R^d)\times\mathbb R^d\ \to\ \mathbb R^d,
\end{align*}
and can be characterized by
\begin{align}\label{W-derivative}
 U(\mathcal L_{\xi+\eta})-U(\mathcal L_\xi)=\mathbb E\big[\partial_\mu U(\mathcal L_\xi,\xi)\cdot\eta\big]+o\big(\mathbb E[|\eta|^2]^\frac12\big),
\end{align}
for any square integrable random variables $\xi,\eta$.

Let $\mathcal C^0\big(\mathcal P_2(\mathbb R^d)\big)$ denote the set of $\mathcal W_2$-continuous functions $U:\mathcal P_2(\mathbb R^d)\to\mathbb R$. By $\mathcal C^1\big(\mathcal P_2(\mathbb R^d)\big)$ we mean the space of functions $U\in\mathcal C^0\big(\mathcal P_2(\mathbb R^d)\big)$ such that $\partial_\mu U$ exists and is continuous on $\mathcal P_2(\mathbb R^d)\times\mathbb R^d$. Similarly, $\mathcal C^2\big(\mathcal P_2(\mathbb R^d)\big)$ is the space of functions $U\in\mathcal C^1\big(\mathcal P_2(\mathbb R^d)\big)$ such that the following maps exist and are all jointly continuous:
\begin{align*}
 &\mathcal P_2(\mathbb R^d)\times\mathbb R^d\ni(\mu,x)\mapsto D_x\partial_\mu U(\mu,x);\\
 &\mathcal P_2(\mathbb R^d)\times\mathbb R^d\times\mathbb R^d\ni(\mu,x,\tilde x)\mapsto\partial^2_{\mu\mu}U(\mu,x,\tilde x).
\end{align*}
Inductively, $\mathcal C^3\big(\mathcal P_2(\mathbb R^d)\big)$ is the space of functions $U\in\mathcal C^2\big(\mathcal P_2(\mathbb R^d)\big)$ such that the third order derivatives, i.e., the following maps exist and are all jointly continuous:
\begin{align*}
 &\mathcal P_2(\mathbb R^d)\times\mathbb R^d\ni(\mu,x)\mapsto D^2_{xx}\partial_\mu U(\mu,x);\\
 &\mathcal P_2(\mathbb R^d)\times\mathbb R^d\times\mathbb R^d\ni(\mu,x,\tilde x)\mapsto\big(D_x\partial^2_{\mu\mu}U(\mu,x,\tilde x),D_{\tilde x}\partial^2_{\mu\mu}U(\mu,x,\tilde x)\big);\\
 &\mathcal P_2(\mathbb R^d)\times\mathbb R^d\times\mathbb R^d\times\mathbb R^d\ni(\mu,x,\tilde x,\hat x)\mapsto\partial^3_{\mu\mu\mu}U(\mu,x,\tilde x,\hat x).
\end{align*}
In the same way as above, we further inductively define $\mathcal C^k\big(\mathcal P_2(\mathbb R^d)\big)$, $k=4,5,6$. For $x=(x_1,x_2)\in\mathbb R^{d_1}\times\mathbb R^{d_2}$, $\mu\in\mathcal P_2(\mathbb R^d)$, $U\in\mathcal C^2\big(\mathcal P_2(\mathbb R^d)\big)$, we use the following notation to denote different components of $\partial_\mu U$:
\begin{align*}
    \partial_\mu U(\mu,x)=\big(\partial_\mu U^{(x_1)}(\mu,x),\partial_\mu U^{(x_2)}(\mu,x)\big)^\top\in\mathbb R^{d_1}\times\mathbb R^{d_2}.
\end{align*}
Inductively, for $i=1,2$, $\partial_\mu(\partial_\mu U^{(x_i)})=\big(\partial^2_{\mu\mu}U^{(x_i)(x_1)},\partial^2_{\mu\mu}U^{(x_i)(x_2)}\big)\in\mathbb R^{d_i\times d_1}\times\mathbb R^{d_i\times d_2}$.

With the above preparation, we may now consider HJBI equation \eqref{HJB} and verification results.
\begin{theorem}\label{verification}
 Suppose that \eqref{HJB} admits a classical solution $V(t,\cdot)\in C^2\big(\mathcal P_2(\mathbb R^d)\big)$ satisfying for any compact subset $\mathcal K\subset\mathcal P_2(\mathbb R^d)$,
 \begin{align*}
  \sup_{\mu\in\mathcal K}\bigg[\int_{\mathbb R^d}|D_x\partial_\mu V(t,\mu,x)|\mu(dx)\bigg]<+\infty.
 \end{align*}
 Suppose further that Assumption \ref{assumption-saddle-points} holds for $(Id,\partial_\mu V(t,\mu,\cdot))\sharp\mu$, $\mu\in\mathcal P_2(\mathbb R^d)$ with $a$ and $f$ continuously differentiable. Moreover, let $(\theta,\eta,\alpha)$ be the feedback function defined according to \eqref{saddlepoint} with 
\begin{align*}
&\tilde\mu_t=\big(Id,\partial_\mu V(t,\mu,\cdot)\big)\sharp\mu,\ (\theta,\eta)(t,x,\mu)=(\theta^*,\eta^*)\big(t,\tilde\mu_t,x,\partial_\mu V(t,\mu,x)\big),\ \alpha(t,\mu)=\alpha^*(t,\tilde\mu_t),
\end{align*}
and suppose that $(\theta,\eta,\alpha)$ is Lipschitz. Then 
 $$V(t,\mu)=\mathcal V(t,\mu),\quad(t,\mu)\in[0,T]\times\mathcal P_2(\mathbb R^d),$$
 and $(\theta,\eta,\alpha)$ is the optimal feedback function satisfying
 \begin{align}\label{feedback func}
 \left\{\begin{aligned}
 &\int_{\mathbb R^d}\partial_{\hat\mu} a^{(\theta)}\big((\theta,\eta)(t,\tilde x,\mu),\alpha(t,\mu),x,\hat\mu_t,(\theta,\eta)(t,x,\mu),x\big)\partial_\mu V(t,\mu,\tilde x)\mu(d\tilde x)\\
 &\qquad+D_\theta a\big((\theta,\eta)(t,x,\mu),\alpha(t,\mu),x\big)\partial_\mu V(t,\mu,x)+\partial_{\hat\mu}f^{(\theta)}\big(\alpha(t,\mu),\hat\mu_t,x,(\theta,\eta)(t,x,\mu)\big)=0,\\
 &\int_{\mathbb R^d}\partial_{\hat\mu} a^{(\eta)}\big((\theta,\eta)(t,\tilde x,\mu),\alpha(t,\mu),x,\hat\mu_t,(\theta,\eta)(t,x,\mu),x\big)\partial_\mu V(t,\mu,\tilde x)\mu(d\tilde x)\\
&\qquad+D_\eta a\big((\theta,\eta)(t,x,\mu),\alpha(t,\mu),x\big)\partial_\mu V(t,\mu,x)+\partial_{\hat\mu}f^{(\eta)}\big(\alpha(t,\mu),\hat\mu_t,x,(\theta,\eta)(t,x,\mu)\big)=0,\\
&\int_{\mathbb R^d}D_\alpha a\big((\theta,\eta)(t,x,\mu),\alpha(t,\mu),x,\hat\mu\big)\partial_\mu V(t,\mu,x)\mu(dx)+D_\alpha f\big(\alpha(t,\mu),\hat\mu_t\big)=0,\\
&(t,x,\mu)\in[0,T]\times\mathbb R^d\times\mathcal P_2(\mathbb R^d).
 \end{aligned}\right.
\end{align}
where
\begin{align*}
 \hat\mu_t:=\big(\cdot,\theta(t,\cdot,\mu),\eta(t,\cdot,\mu)\big)\sharp\mu,\ \partial_{\hat\mu} a=:\big(\partial_{\hat\mu} a^{(x)},\partial_{\hat\mu} a^{(\theta)},\partial_{\hat\mu} a^{(\eta)}\big)^\top\in\mathbb R^d\times\mathbb R^k\times\mathbb R^{k_1}.
\end{align*}
\end{theorem}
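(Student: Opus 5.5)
The proof is a standard verification argument built on Itô's formula along flows of measures, combined with the saddle point structure of Assumption \ref{assumption-saddle-points}.

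\textbf{Step 1: Itô's formula along the flow of marginals.} Fix $(t,\mu)$, an admissible $\theta\in\mathcal U_t$, and $(\eta',\alpha')\in\mathcal V^\theta_t$. Let $X_s=X^{\theta,\eta',\alpha'}_s$ and $m_s=\mu_{X_s}$. We apply the Itô formula for functions of measure flows (Carmona--Delarue) to $s\mapsto V(s,m_s)$. The integrability hypothesis on $D_x\partial_\mu V$ over compact sets of $\mathcal P_2$ justifies this, since $\{m_s\}_{s\in[t,T]}$ is $\mathcal W_2$-compact. The result is
\begin{align*}
V(T,m_T)-V(t,\mu)=\int_t^T\Big(\partial_sV(s,m_s)+\mathbb E\big[a(\theta_s,\eta'_s,\alpha'_s,X_s,\mu_{(X_s,\theta_s,\eta'_s)})\cdot\partial_\mu V(s,m_s,X_s)\big]+\tfrac12\int{\rm tr}\big[\sigma^\top\sigma D_x\partial_\mu V\big]dm_s\Big)ds.
\end{align*}
Substituting the HJBI equation \eqref{HJB} for $\partial_sV$, adding $\int_t^T f\,ds$ to both sides, and using $V(T,\cdot)=U$ gives
\begin{align*}
\tilde J(t,\mu,\theta,\eta',\alpha')-V(t,\mu)=\int_t^T\Big(\mathbb E\big[a(\cdots)\cdot Y_s+f(\alpha'_s,\mu_{(X_s,\theta_s,\eta'_s)})\big]-\mathcal H(\mu_{(X_s,Y_s)})\Big)ds,
\end{align*}
where $Y_s=\partial_\mu V(s,m_s,X_s)$.

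\textbf{Step 2: Lower bound $J(t,\mu,\theta)\ge V(t,\mu)$ for every $\theta\in\mathcal U_t$.} Take $(\eta',\alpha')$ generated by the feedbacks $\eta(s,x,m)$ and $\alpha(s,m)$ of the statement. These are Lipschitz, so they belong to $\mathcal V^\theta_t$ by the last item of Definition \ref{admissible-set}. The pair $(X_s,Y_s)$ is a lifting of $\tilde\mu_s$, so $(\eta'_s,\alpha'_s)=(\tilde\eta^*,\tilde\alpha^*)$ evaluated at $(X_s,Y_s)$. The first equality in \eqref{saddlepoint-1}, together with \eqref{saddlepoint-2}, says that the infimum over $\theta$ at $(\tilde\eta^*,\tilde\alpha^*)$ equals $\mathcal H$. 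The integrand is therefore nonnegative, so $\tilde J\ge V$ and hence $J(t,\mu,\theta)\ge V(t,\mu)$.

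\textbf{Step 3: Upper bound $J(t,\mu,\theta)\le V(t,\mu)$ for the candidate $\theta$.} Now take $\theta$ to be the candidate feedback, which is Lipschitz and hence admissible, and let $(\eta',\alpha')\in\mathcal V^\theta_t$ be arbitrary. Then $\theta_s=\tilde\theta^*(X_s,Y_s)$. The second equality in \eqref{saddlepoint-1} bounds the integrand above by $0$, since $\eta'_s\in L^2$ and $\alpha'_s$ is a deterministic vector. Hence $J(t,\mu,\theta)\le V(t,\mu)$. Combined with Step 2 this gives $V=\mathcal V$ and optimality of $(\theta,\eta,\alpha)$.

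\textbf{Step 4: The first order conditions \eqref{feedback func}.} At the saddle point, perturb $\theta$ in the direction of an arbitrary $L^2$ variable, likewise $\eta$, and perturb $\alpha\in\mathbb R^{k_2}$. Then differentiate $\mathbb E[a\cdot Y+f]$ using the chain rule for the $\mathcal P_2(\mathbb R^d\times\mathbb R^k\times\mathbb R^{k_1})$-argument, as in \eqref{W-derivative}. The measure-derivative term produces an independent copy, which yields the integral against $\mu(d\tilde x)$. Setting the Gâteaux derivatives to zero, and using that the direction is arbitrary, gives the three pointwise identities; the third is an expectation identity because $\alpha$ is deterministic. Differentiability of $a$ and $f$ justifies the differentiation.

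The main obstacle is Step 1. We must verify the regularity and integrability that make the Itô formula valid, and ensure the measure flow stays in a compact set where the bounds hold. The linear growth from the Lipschitz feedbacks gives uniform second moments and continuity in $\mathcal W_2$. A localization or truncation of the $\mathbb E[a\cdot\partial_\mu V]$ term may be needed if $\partial_\mu V$ is only locally controlled.
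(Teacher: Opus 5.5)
Your proposal is correct and follows essentially the same verification argument as the paper. Both use It\^o's formula along the measure flow. Both then pair an arbitrary admissible $\theta$ with the candidate $(\eta,\alpha)$ and use the first saddle-point equality to get $V\le J(t,\mu,\theta)$. Both then pair the candidate $\theta$ with arbitrary $(\eta,\alpha)\in\mathcal V^\theta_t$ and use the second equality to get $J(t,\mu,\theta^*)\le V$; the paper phrases this last part as showing that $(\eta^*,\alpha^*)$ attains the supremum.

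Your Step 4 goes beyond the paper, whose proof never derives \eqref{feedback func}. Note, however, that the first-order conditions in $L^2$ only give those identities for $\mu$-a.e.\ $x$, hence on ${\rm supp}\,\mu$ by continuity. To reach every $x\in\mathbb R^d$ you need an extra step, for instance applying them at $(1-\epsilon)\mu+\epsilon\delta_x$ and letting $\epsilon\to0$ using joint continuity in $(x,\mu)$.
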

\begin{remark} As will be shown later, \eqref{feedback func} could serve as the characterization of the optimal robust strategy for some particular cases.
\end{remark}

\subsection{Well-posedness of the HJBI equations}\label{solvable-cases}
In view of the verification results in Theorem \ref{verification}, solving \eqref{robustness-vf} boils down to the well-posedness of \eqref{HJB}. Towards that end, we further analyse the solvability under different conditions, namely the case where parameters have bounded derivatives, as well as the linear quadratic case. Both mentioned cases admit the feature that the Hamiltonian $\mathcal H$ satisfies Assumption \ref{assumption-displacement-convex}, which is on regularity and certain kind of displacement convex/concave property. The current Hamiltonian in \eqref{HJB} has different features from the ones in \cite{Liao24}, where the focus is on mean field control and related potential mean field games without robustness. For the mean field control problems in \cite{Liao24}, the Hamiltonian is naturally displacement concave in momentum in the sense of \eqref{displacement-concave-in-p}, which is not true for \eqref{saddlepoint-1} in general. For the {\color{blue}current case} where the Hamiltonian is not necessarily displacement concave in momentum, we propose two conditions so that the well-posedness of \eqref{HJB} is established.
\begin{assumption}\label{assumption-displacement-convex}
\begin{enumerate}
 \item $\mathcal H\in\mathcal C^6\big(\mathcal P_2(\mathbb R^{d_1+d_2}\times\mathbb R^{d_1+d_2})\big)$, all derivatives of $\mathcal H$ are bounded;
 \item There exists a constant $C$ such that for $\mu\in\mathcal P_2(\mathbb R^d)$ and arbitrary measurable test mappings with compact support
\begin{align*}
 \phi_i,\ \psi_i:\ \mathbb R^d\times\mathbb R^d\mapsto\mathbb R^{d_i},\quad\tilde\phi_i:\ \mathbb R^d\mapsto\mathbb R^{d_i},\quad i=1,2,\ d_1+d_2=d,
\end{align*}
the following holds:
{\small\begin{align}\label{general-displacement}
  &\quad C\int_{\mathbb R^d}\big(|\phi_1(\xi)|^2+|\psi_1(\xi)|^2\big)\tilde\mu(d\xi)\geq\notag\\
  &\sum_{i=1}^2\bigg(\int_{\mathbb R^d}\int_{\mathbb R^d}\phi_i(\xi_1)^\top\partial^2_{\tilde\mu\tilde\mu}\mathcal H^{(x_i)(x_i)}(\tilde\mu,\xi_1,\xi_2)\phi_i(\xi_2)\tilde\mu(d\xi_1)\tilde\mu(d\xi_2)+\int_{\mathbb R^d}\phi_i(\xi)^\top D_{x_i}\partial_{\tilde\mu}\mathcal H^{(x_i)}(\tilde\mu,\xi)\phi_i(\xi)\tilde\mu(d\xi)\bigg)\notag\\
  &-\sum_{i=1}^2\bigg(\int_{\mathbb R^d}\int_{\mathbb R^d}\psi_i(\xi_1)^\top\partial^2_{\tilde\mu\tilde\mu}\mathcal H^{(p_i)(p_i)}(\tilde\mu,\xi_1,\xi_2)\psi_i(\xi_2)\tilde\mu(d\xi_1)\tilde\mu(d\xi_2)+\int_{\mathbb R^d}\psi_2(\xi)^\top D_{p_i}\partial_{\tilde\mu}\mathcal H^{(p_i)}(\tilde\mu,\xi)\psi_2(\xi)\tilde\mu(d\xi)\bigg)\notag\\
  &+\int_{\mathbb R^d}\int_{\mathbb R^d}\phi_1(\xi_1)^\top\partial^2_{\tilde\mu\tilde\mu}\mathcal H^{(x_1)(x_2)}(\tilde\mu,\xi_1,\xi_2)\phi_2(\xi_2)\tilde\mu(d\xi_1)\tilde\mu(d\xi_2)+2\int_{\mathbb R^d}\phi_1(\xi)^\top D_{x_2}\partial_{\tilde\mu}\mathcal H^{(x_1)}(\tilde\mu,\xi)\phi_2(\xi)\tilde\mu(d\xi)\notag\\
  &+\int_{\mathbb R^d}\int_{\mathbb R^d}\phi_1(\xi_1)^\top\partial^2_{\tilde\mu\tilde\mu}\mathcal H^{(x_1)(p_2)}(\tilde\mu,\xi_1,\xi_2)\psi_2(\xi_2)\tilde\mu(d\xi_1)\tilde\mu(d\xi_2)+2\int_{\mathbb R^d}\phi_1(\xi)^\top D_{p_2}\partial_{\tilde\mu}\mathcal H^{(x_1)}(\tilde\mu,\xi)\psi_2(\xi)\tilde\mu(d\xi)\notag\\
  &+\int_{\mathbb R^d}\int_{\mathbb R^d}\phi_2(\xi_1)^\top\partial^2_{\tilde\mu\tilde\mu}\mathcal H^{(x_2)(p_1)}(\tilde\mu,\xi_1,\xi_2)\psi_1(\xi_2)\tilde\mu(d\xi_1)\tilde\mu(d\xi_2)+2\int_{\mathbb R^d}\phi_2(\xi)^\top D_{p_1}\partial_{\tilde\mu}\mathcal H^{(x_2)}(\tilde\mu,\xi)\psi_1(\xi)\tilde\mu(d\xi)\notag\\
  &+\int_{\mathbb R^d}\int_{\mathbb R^d}\psi_1(\xi_1)^\top\partial^2_{\tilde\mu\tilde\mu}\mathcal H^{(p_1)(p_2)}(\tilde\mu,\xi_1,\xi_2)\psi_2(\xi_2)\tilde\mu(d\xi_1)\tilde\mu(d\xi_2)+2\int_{\mathbb R^d}\psi_1(\xi)^\top D_{p_2}\partial_{\tilde\mu}\mathcal H^{(p_1)}(\tilde\mu,\xi)\psi_2(\xi)\tilde\mu(d\xi)\notag\\
  &\quad\geq-C\int_{\mathbb R^d}\big(|\phi_2(\xi)|^2+|\psi_2(\xi)|^2\big)\tilde\mu(d\xi),
\end{align}}
and
{\small\begin{align}\label{general-displacement-1}
  &\quad C\int_{\mathbb R^d}|\tilde\phi_1(x)|^2\mu(dx)\geq\notag\\
  &\sum_{i=1}^2\bigg(\int_{\mathbb R^d}\int_{\mathbb R^d}\tilde\phi_i(x)^\top\partial^2_{\mu\mu}U^{(x_i)(x_i)}(\mu,x,\hat x)\tilde\phi_i(\hat x)\tilde\mu(dx)\mu(d\hat x)+\int_{\mathbb R^d}\tilde\phi_i(x)^\top D_{x_i}\partial_\mu U^{(x_i)}(\mu,x)\tilde\phi_i(x)\mu(dx)\bigg)\notag\\
  &+\int_{\mathbb R^d}\int_{\mathbb R^d}\tilde\phi_1(x)^\top\partial^2_{\tilde\mu\tilde\mu}U^{(x_1)(x_2)}(\mu,x,\hat x)\tilde\phi_2(x)\mu(dx)\mu(d\hat x)+2\int_{\mathbb R^d}\tilde\phi_1(x)^\top D_{x_2}\partial_\mu U^{(x_1)}(\mu,x)\tilde\phi_2(x)\mu(dx)\notag\\
  &\quad\geq-C\int_{\mathbb R^d}|\tilde\phi_2(x)|^2\mu(dx),
\end{align}
where for $(x_1,x_2,p_1,p_2)\in\mathbb R^{d_1}\times\mathbb R^{d_2}\times\mathbb R^{d_1}\times\mathbb R^{d_2},\ \tilde\mu\in\mathcal P_2(\mathbb R^d\times\mathbb R^d)$,
\begin{align*}
    \partial_{\tilde\mu}\mathcal H(\tilde\mu,x_1,x_2,p_1,p_2)=(\partial_{\tilde\mu}\mathcal H^{(x_1)},\partial_{\tilde\mu}\mathcal H^{(x_2)},\partial_{\tilde\mu}\mathcal H^{(p_1)},\partial_{\tilde\mu}\mathcal H^{(p_2)})(\tilde\mu,x_1,x_2,p_1,p_2)\in\mathbb R^{d_1}\times\mathbb R^{d_2}\times\mathbb R^{d_1}\times\mathbb R^{d_2}.
\end{align*}}
\end{enumerate}
\end{assumption}
We note here that, in the context of optimal liquidation with uncertainty, the first item in Assumption \ref{assumption-displacement-convex} will be relaxed via the approximation argument in Lemma \ref{dis-con-app} below. The second item in Assumption \ref{assumption-displacement-convex} is a property that describes a generalized version of displacement convexity. Now we are ready to state our first main result.
\begin{theorem}\label{existence-of-decoupling-field}
Suppose Assumption \ref{assumption-displacement-convex} and $\sigma\geq0$. Then HJBI equation \eqref{HJB} admits a unique global in time classical solution satisfying $V(t,\cdot)\in\mathcal C^4\big(\mathcal P_2(\mathbb R^d)\big)$, $\partial_tV(t,\cdot)\in\mathcal C^2\big(\mathcal P_2(\mathbb R^d)\big)$ and $|\partial^{i_1}_{x_1}\cdots\partial^{i_j}_{x_j}\partial^j_\mu V|_\infty$ ($0\leq j\leq 4,\ 0\leq i_1,\ldots,i_j\leq4,\ 0\leq i_1+\cdots+i_j+j\leq 4$) depends only on $\mathcal H$ and $U$.
\end{theorem}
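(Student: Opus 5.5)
The plan follows the strategy of \cite{Liao24}: combine local well-posedness with a priori estimates, proved at the level of $N$-particle approximations and then carried over to the Wasserstein space. We work on the lifted space $L^2(\Omega;\mathbb R^d)$, or on its finite-dimensional empirical restriction $(\mathbb R^d)^N$, and study the characteristic (Hamiltonian) system
\[
dX_s=-\partial_p\mathcal H\,ds+\sigma dW_s,\qquad dY_s=\partial_x\mathcal H\,ds+Z_sdW_s,\qquad Y_T=\partial_\mu U(\mu_{X_T},X_T).
\]
Here the partial derivatives are Wasserstein derivatives of $\mathcal H$ evaluated at $\mathrm{Law}(X_s,Y_s)$. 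The decoupling field is $\partial_\mu V$.

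\textbf{Step 1: local existence and uniqueness.} Because all derivatives of $\mathcal H$ up to order six are bounded, the FBSDE is uniquely solvable with a Lipschitz decoupling field on a short interval $[T-\delta,T]$. The length $\delta$ depends only on the Lipschitz constant of $\partial_\mu U$, i.e.\ on $|D_x\partial_\mu U|_\infty$ and $|\partial^2_{\mu\mu}U|_\infty$. Differentiating the flow in $\mu$ and $x$ up to fourth order gives $V(t,\cdot)\in\mathcal C^4$ with bounds controlled by those of $\mathcal H$ and $U$. Then $\partial_t V\in\mathcal C^2$ follows directly from equation \eqref{HJB}. Uniqueness follows by the standard argument: any classical solution produces a decoupling field, and the FBSDE has a unique solution. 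The whole global problem therefore reduces to an a priori bound on the second-order quantities $D_x\partial_\mu V$ and $\partial^2_{\mu\mu}V$ that is uniform in $t$. Once that bound holds, the local step can be iterated backward with a fixed $\delta$. The higher derivatives then follow from the linearized (variational) FBSDEs, which are linear with bounded coefficients once the second-order bound is known.

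\textbf{Step 2: the $N$-particle approximation.} We let $V_N(t,x^1,\dots,x^N)=V(t,\frac1N\sum\delta_{x^i})$, or equivalently solve the finite-dimensional HJBI with Hamiltonian $\mathcal H$ evaluated at empirical measures. The target is the bound
\[
-C\,I\le N\nabla^2V_N\le C\,I
\]
uniformly in $N$ and $t$. From it, bounds on $D_x\partial_\mu V$ and $\partial^2_{\mu\mu}V$ follow by the usual projection identities.

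\textbf{Step 3: the Riccati bound.} Differentiating the finite-dimensional HJBI twice gives a symmetric backward Riccati equation for $P=N\nabla^2V_N$ along the optimal characteristics:
\[
-\dot P=A+B^\top P+PB+P\,\Gamma\,P.
\]
Here $A$, $B$ and $\Gamma$ come from $\partial_{xx}\mathcal H$, $\partial_{xp}\mathcal H$ and $\partial_{pp}\mathcal H$ respectively. Since $\mathcal H$ is neither concave nor convex in $p$, $\Gamma$ has no sign, and the quadratic term could blow up in either direction. This is where the splitting $d=d_1+d_2$ and condition \eqref{general-displacement} enter. On the $(x_1,p_1)$ block the Hamiltonian is displacement concave up to $C$, and on the $(x_2,p_2)$ block it is displacement convex up to $C$. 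The terminal condition \eqref{general-displacement-1} carries the same structure. Using the comparison principle for symmetric Riccati equations in Lemma~\ref{lem-comparison}, the $x_1$-block of $P$ is bounded from above by the solution of a scalar Riccati equation whose quadratic coefficient has a favourable sign. This gives a one-sided bound that does not blow up. Symmetrically, the $x_2$-block is bounded from below. The remaining sides come from a time-reversed, dual argument: we exchange the roles of $x$ and $p$, i.e.\ consider the inverse or the Legendre-type dual of $P$ on each block, where the quadratic terms again acquire the favourable sign. The cross terms are absorbed by Young's inequality with the constant $C$ in \eqref{general-displacement}.

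\textbf{Step 4: passing to the limit.} Finally, we let $N\to\infty$ using the uniform estimates. The main obstacle is Step 3: getting two-sided bounds on each diagonal block of $P$ when the sign of $\Gamma$ is mixed. I would first try to prove that the set of matrices $\{P:\ P_{11}\le K(t),\ P_{22}\ge -K(t)\}$ is forward-invariant for the Riccati flow, for a suitable explicit $K(t)$. The opposite-side bounds would then be obtained from the bounded-derivative growth over a time step shorter than the local existence time.
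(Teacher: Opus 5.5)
Your architecture (short-time solution through the mean field FBSDE, uniform-in-$N$ Hessian bounds for the particle HJBI, comparison for stochastic Riccati equations, backward iteration with a fixed step) is the paper's, but Step~3 has a genuine gap. You extract only one-sided bounds (the $x_1$-block from above, the $x_2$-block from below) and defer the other sides to a ``dual'' argument that is not available. Inverting $P$, or taking a Legendre-type dual block by block, requires the blocks to be definite, and this fails in general: if $\mathcal H$ and $U$ do not depend on $x_1$, the $x_1$-block of $P$ vanishes identically. Moreover, a single block does not satisfy a closed Riccati equation, because it is coupled to the off-diagonal blocks of $P$ and of $\partial_{xp}\mathcal H$. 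For the same reason, Lemma~\ref{lem-comparison}, which orders full symmetric matrices, does not give a block-wise comparison of $P_{11}$ with a scalar Riccati equation. Your fallback, getting the opposite sides from growth over a step shorter than the local existence time, yields a bound that depends on the value at the restart time, so it is not uniform in $t$. As you note yourself, the backward iteration with a fixed $\delta$ needs exactly such a uniform bound. Without it the admissible steps may shrink to zero before reaching $t=0$.

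The missing idea is that \eqref{general-displacement}--\eqref{general-displacement-1}, evaluated at the empirical measure along the particle characteristics, give a two-sided \emph{full-matrix} sandwich \eqref{prop-eigen-1}. It bounds the Riccati data $\big(\begin{smallmatrix}-H^{xx}_N&-A_N\\-A_N^\top&-H^{pp}_N\end{smallmatrix}\big)$ and $V^{xx}_N(T)$. Each extreme of the sandwich defines a block-diagonal comparison equation that is trivially bounded. The upper one is
\[
dY^+_s=\big(-CD_{2,N}+D_{3,N}Y^+_s+Y^+_sD_{3,N}^\top-C\,Y^+_sD_{1,N}Y^+_s\big)ds+\sum_{i=1}^NZ^{+,i}_sdW^i_s,\qquad Y^+_T=CD_{2,N}.
\]
Its $x_1$-block carries no quadratic term, because $D_{1,N}$ only acts on $x_2$-components, so it solves a linear BSDE with data of size $C/N$. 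Its $x_2$-block has zero source and zero terminal value, so it vanishes identically whatever the sign of its quadratic term. Lemma~\ref{lem-comparison} then gives $Y\le Y^+\le\frac CN I$ for the whole $Nd\times Nd$ matrix, which includes the upper bound on the $x_2$-block that you were missing. The mirror construction gives $Y\ge Y^-\ge-\frac CN I$, so no duality is needed.

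Two smaller points. With \eqref{HJB} written as $\partial_tV+\cdots+\mathcal H=0$, the characteristics are $dX=\partial_p\mathcal H\,ds+\sigma dW$ and $dY=-\partial_x\mathcal H\,ds+Z\,dW$; your signs are reversed. Also, $V_N$ must be the solution of \eqref{HJB-N}, not the restriction of $V$ to empirical measures. The restriction does not solve \eqref{HJB-N} exactly, and bounding its Hessian is circular. The link to $V$ is provided by the $O(1/N)$ estimate of Proposition~\ref{prop-propagation-1}.
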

Our second main result focuses on another particular solvable case of \eqref{HJB} where the Hamiltonian is quadratic in its variables. More specifically, we consider the following Hamiltonian and terminal condition
\begin{align}\label{LQ-ham}
 \left\{\begin{aligned}\mathcal H(\tilde\mu)=\int_{\mathbb R^d}\int_{\mathbb R^d}\bigg[(x,p)^\top Q^{(1)}\left(\begin{matrix}x\\p\end{matrix}\right)+(x,p)^\top\mathbf c\bigg]\tilde\mu(dxdp)+(\bar x,\bar p)^\top Q^{(2)}\left(\begin{matrix}\bar x\\\bar p\end{matrix}\right),\\
 \tilde\mu\in\mathcal P_2(\mathbb R^d\times\mathbb R^d),\\
 U(\mu)=\int_{\mathbb R^d}\bigg[x^\top Q^{(1)}_Tx+x^\top\mathbf c_T\bigg]\tilde\mu(dxdp)+\bar x^\top Q^{(2)}_T\bar x,\end{aligned}\right.
\end{align}
where $Q^{(i)}\in\mathbb R^{2d\times 2d}$, $i=1,2$, $\mathbf c\in\mathbb R^{2d}$, $\bar x=\int_{\mathbb R^d}\int_{\mathbb R^d}x\tilde\mu(dxdp)$, $\bar p=\int_{\mathbb R^d}\int_{\mathbb R^d}p\tilde\mu(dxdp)$. Such type of HJBI equations can be linked to the mean field control problems with robustness where the dynamics and cost are in linear and quadratic form respectively. It turns out that the HJBI equation can be reduced to a ordinary Riccati system. We first interpret the second item of Assumption \ref{assumption-displacement-convex} into the conditions on matrix data.
\begin{lemma}\label{LQ-condition}
 Suppose Assumption \ref{assumption-displacement-convex}. Then
\begin{align}\label{generalized-displacement-convexity-LQ}
  \left\{\begin{aligned}-C\big(|x_2|^2+|p_1|^2\big)&\leq\mathbf x^\top Q^{(1)}_{11}\mathbf x+2(x_1,0)^\top Q^{(1)}_{12}\left(\begin{matrix}0\\p_2\end{matrix}\right)\\
  &\quad+2(0,x_2)^\top Q^{(1)}_{12}\left(\begin{matrix}p_1\\0\end{matrix}\right)+\mathbf p^\top Q^{(1)}_{22}\mathbf p\leq C\big(|x_1|^2+|p_2|^2\big),\\
  -C\big(|x_2|^2+|p_1|^2\big)&\leq\mathbf x^\top\big(Q^{(1)}_{11}+Q^{(2)}_{11}\big)\mathbf x+2(x_1,0)^\top\big(Q^{(1)}_{12}+Q^{(2)}_{12}\big)\left(\begin{matrix}0\\p_2\end{matrix}\right)\\
  \quad+2(0,x_2)^\top&\big(Q^{(1)}_{12}+Q^{(2)}_{12}\big)\left(\begin{matrix}p_1\\0\end{matrix}\right)+\mathbf p^\top\big(Q^{(1)}_{22}+Q^{(2)}_{22}\big)\mathbf p\leq C\big(|x_1|^2+|p_2|^2\big),\\
  -C|x_2|^2&\leq\mathbf x^\top Q^{(1)}_T\mathbf x\leq C|x_1|^2,\quad-C|x_2|^2\leq\mathbf x^\top\big(Q^{(1)}_T+Q^{(2)}_T\big)\mathbf x\leq C|x_1|^2,\\
  &\mathbf x=(x_1,x_2)^\top,\ \mathbf p=(p_1,p_2)^\top\quad x_i,\ p_i\in\mathbb R^{d_i},\ i=1,2.\end{aligned}\right.
\end{align}
\end{lemma}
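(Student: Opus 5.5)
The plan is to compute every Wasserstein derivative of the quadratic Hamiltonian and terminal cost in \eqref{LQ-ham} explicitly, substitute them into \eqref{general-displacement} and \eqref{general-displacement-1}, and then specialize the test mappings so that only the matrix inequalities \eqref{generalized-displacement-convexity-LQ} remain. Throughout I take $Q^{(i)}$ symmetric; otherwise replace it by its symmetric part, which leaves $\mathcal H$ and $U$ unchanged. I write $Q^{(i)}=\begin{pmatrix}Q^{(i)}_{11}&Q^{(i)}_{12}\\ Q^{(i)}_{21}&Q^{(i)}_{22}\end{pmatrix}$ in $(x,p)$ blocks.

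\textbf{Step 1: derivatives.} For $\tilde\mu\in\mathcal P_2(\mathbb R^d\times\mathbb R^d)$ and $\xi=(x,p)$, differentiating \eqref{LQ-ham} gives
\begin{align*}
\partial_{\tilde\mu}\mathcal H(\tilde\mu,\xi)=2Q^{(1)}\xi+\mathbf c+2Q^{(2)}(\bar x,\bar p)^\top .
\end{align*}
Hence the spatial derivative is the constant matrix $D_\xi\partial_{\tilde\mu}\mathcal H=2Q^{(1)}$, and the second measure derivative is the constant $\partial^2_{\tilde\mu\tilde\mu}\mathcal H(\tilde\mu,\xi_1,\xi_2)=2Q^{(2)}$. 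The blocks $D_{x_i}\partial_{\tilde\mu}\mathcal H^{(x_j)}$ and $D_{p_i}\partial_{\tilde\mu}\mathcal H^{(x_j)}$ are read off from $2Q^{(1)}$ after splitting $\mathbb R^d=\mathbb R^{d_1}\times\mathbb R^{d_2}$; the corresponding blocks of the second derivative come from $2Q^{(2)}$ in the same way. The same computation for $U$ gives $D_x\partial_\mu U=2Q^{(1)}_T$ and $\partial^2_{\mu\mu}U=2Q^{(2)}_T$. After substitution, every double integral in \eqref{general-displacement} becomes a quadratic form in the means $\int\phi_i\,d\tilde\mu$ and $\int\psi_i\,d\tilde\mu$ with matrix $Q^{(2)}$. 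Every single integral becomes $\int(\cdot)^\top Q^{(1)}(\cdot)\,d\tilde\mu$.

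\textbf{Step 2: specialization.} I take $\tilde\mu=\delta_{\xi_0}$ for an arbitrary $\xi_0$ and constant test maps on a neighbourhood of $\xi_0$, cut off to have compact support:
\begin{align*}
\phi_i\equiv x_i,\qquad \psi_1\equiv -p_1,\qquad \psi_2\equiv p_2 .
\end{align*}
Then all means coincide with the values, and \eqref{general-displacement} collapses to a quadratic expression in $(x_1,x_2,p_1,p_2)$ with matrix $Q^{(1)}+Q^{(2)}$. This should yield the second line of \eqref{generalized-displacement-convexity-LQ}. The sign flip on $\psi_1$ and the ``$-$'' in front of the $\psi$ terms must be arranged so that the pure $\mathbf p$ block appears as $\mathbf p^\top Q_{22}\mathbf p$ and the cross terms match the stated ones. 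I will redo this bookkeeping carefully, with constant factors of $2$ absorbed into $C$.

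\textbf{Step 3: isolating $Q^{(1)}$.} To get the first line, in which $Q^{(2)}$ is absent, I must kill the means while keeping the second moments. I take $\tilde\mu=\frac12(\delta_{\xi_0}+\delta_{\xi_0'})$ with test maps equal to $+v$ near $\xi_0$ and $-v$ near $\xi_0'$. Then every mean vanishes, the $Q^{(2)}$ terms drop out, and each single integral equals the pointwise quadratic form in $v=(x_1,x_2,p_1,p_2)$ with matrix $Q^{(1)}$. The bounds $C\int(|\phi_1|^2+|\psi_1|^2)$ and $-C\int(|\phi_2|^2+|\psi_2|^2)$ become the required right-hand sides, up to relabelling of which components carry the $C$ weights.

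\textbf{Step 4: terminal conditions.} The terminal inequalities follow identically from \eqref{general-displacement-1}: a Dirac mass with constant $\tilde\phi_i$ gives the condition on $Q^{(1)}_T+Q^{(2)}_T$, and a symmetric two-atom measure with $\pm\tilde\phi$ gives the condition on $Q^{(1)}_T$.

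\textbf{Main obstacle.} I expect the hardest part to be the index and sign bookkeeping. I need to check that the particular cross terms kept in \eqref{general-displacement} (for example $x_1$ against $p_2$ and $x_2$ against $p_1$) reproduce exactly the mixed terms $2(x_1,0)^\top Q_{12}(0,p_2)^\top$ and $2(0,x_2)^\top Q_{12}(p_1,0)^\top$. I also need to confirm that the weights $|x_2|^2+|p_1|^2$ versus $|x_1|^2+|p_2|^2$ land on the correct sides; if they do not, I would adjust the sign choices of $\psi_1,\psi_2$.
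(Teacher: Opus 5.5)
Your route is the same as the paper's. You plug the explicit derivatives $D\partial_{\tilde\mu}\mathcal H=2Q^{(1)}$ and $\partial^2_{\tilde\mu\tilde\mu}\mathcal H=2Q^{(2)}$ into \eqref{general-displacement} with constant test maps, and use a Dirac mass for the $Q^{(1)}+Q^{(2)}$ line. Your antisymmetric two-atom measure is a clean exact substitute for the paper's atom of mass $\varepsilon$ with $\varepsilon\to0$, and your Step 4 covers the terminal inequalities, which the paper never treats.

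The gap is the step you defer, and it cannot be settled by choosing signs. In \eqref{general-displacement} the diagonal momentum terms $-\int\psi_i^\top D_{p_i}\partial_{\tilde\mu}\mathcal H^{(p_i)}\psi_i\,d\tilde\mu$ are even in $\psi_i$ and always carry a minus sign. The upper bound carries $|\psi_1|^2$ and the lower bound carries $|\psi_2|^2$. So for every choice $\psi=(\pm p_1,\pm p_2)$ you get $-p_1^\top B_{11}p_1-p_2^\top B_{22}p_2$, where $B_{ii}$ is the $p_ip_i$ block of $Q^{(1)}_{22}$, with $|p_1|^2$ above and $|p_2|^2$ below. By contrast, \eqref{generalized-displacement-convexity-LQ} has $+\mathbf p^\top Q^{(1)}_{22}\mathbf p$ with $|p_2|^2$ above and $|p_1|^2$ below. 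Flipping the sign of $\psi_i$ changes neither of these.

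This is not a bookkeeping nuisance: the implication actually fails. Only item 2 of Assumption \ref{assumption-displacement-convex} can be meant here, since item 1 fails for any genuinely quadratic $\mathcal H$. Take $d_1=d_2=1$, $Q^{(2)}=0$, $Q^{(i)}_T=0$, and $Q^{(1)}$ symmetric with entries as follows: $x_1x_1,\ x_2x_2,\ p_1p_1,\ p_2p_2$ equal to $1,-1,-1,1$; $x_1p_2$ and $x_2p_1$ equal to $10$; all others $0$. The double integrals vanish, and the middle term of \eqref{general-displacement} is the integral of $2\phi_1^2-2\phi_2^2+2\psi_1^2-2\psi_2^2+40\phi_1\psi_2+40\phi_2\psi_1$. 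This integrand lies pointwise between $-202(\phi_2^2+\psi_2^2)$ and $202(\phi_1^2+\psi_1^2)$, so \eqref{general-displacement} holds for every $\tilde\mu$. Yet the first line of \eqref{generalized-displacement-convexity-LQ} at $x_1=p_2=0$, $x_2=p_1=1$ equals $-1+20-1=18$, while its claimed upper bound $C(|x_1|^2+|p_2|^2)$ is $0$.

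The paper's own proof skips the same point: its display ends with $-\mathbf p^\top Q^{(1)}_{22}\mathbf p$ and then simply declares the first line. What the paper actually uses later, in \eqref{prop-eigen-1} and in the proof of Theorem \ref{LQ-global-wp}, is a corrected version of \eqref{general-displacement}:
\begin{itemize}
\item[] $+$ in front of the $\psi$-terms;
\item[] $C\int(|\phi_1|^2+|\psi_2|^2)\,d\tilde\mu$ as the upper bound and $-C\int(|\phi_2|^2+|\psi_1|^2)\,d\tilde\mu$ as the lower bound;
\item[] coefficient $2$ on the mixed double integrals.
\end{itemize}
The last change is needed because a relative factor between the $Q^{(2)}$ and $Q^{(1)}$ cross terms cannot be absorbed into $C$, unlike the overall factors of $2$ you mention. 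You need to state this corrected hypothesis explicitly. With it, your Steps 2--4 with $\psi=\mathbf p$ give the lemma directly.
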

We can prove the following result on well-posedness:
\begin{theorem}\label{LQ-global-wp}
 Suppose that \eqref{generalized-displacement-convexity-LQ} holds. Then HJBI equation \eqref{HJB} with $(\mathcal H,U)$ from \eqref{LQ-ham} admits a solution:
 \begin{align}\label{LQ-solution}
 V(t,\mu)=\int_{\mathbb R^d}\big[x^\top a^{(1)}(t)x+a^{(2)}(t)^\top x\big]\mu(dx)+\int_{\mathbb R^d}\int_{\mathbb R^d}x^\top a^{(3)}(t)\hat x\mu(d\hat x)\mu(dx)+a^{(4)}(t),\notag\\
 (t,\mu)\in[0,T]\times\mathcal P_2(\mathbb R^d),
\end{align}
where for $t\in[0,T]$, $a^{(1)}(t),a^{(3)}(t)\in\mathbb R^{d\times d}$, $a^{(2)}(t)\in\mathbb R^d$, $a^{(4)}(t)\in\mathbb R$ are deterministic and satisfy
\begin{align}\label{LQ-Riccati}
 \left\{\begin{aligned}
 \frac d{dt}a^{(1)}(t)+Q^{(1)}_{11}+4a^{(1)}(t)^\top Q^{(1)}_{22}a^{(1)}(t)+2a^{(1)}(t)^\top Q^{(1)}_{21}+2Q^{(1)}_{12}a^{(1)}(t)=0,\\
 \frac d{dt}a^{(2)}(t)+2Q^{(1)}_{12}a^{(2)}(t)+4a^{(1)}(t)Q^{(1)}_{22}a^{(2)}(t)+2a^{(3)}(t)Q^{(1)}_{22}a^{(2)}(t)+2Q^{(2)}_{12}a^{(2)}(t)\\
 +4\big(a^{(1)}(t)+a^{(3)}(t)\big)Q^{(2)}_{22}a^{(2)}(t)+c_1+2\big(a^{(1)}(t)+a^{(3)}(t)\big)c_2=0,\\
 \frac d{dt}a^{(3)}(t)+2Q^{(1)}_{12}a^{(3)}(t)+2a^{(3)}(t)^\top Q^{(1)}_{21}+4a^{(1)}(t)^\top Q^{(1)}_{22}a^{(3)}(t)+4a^{(3)}(t)^\top Q^{(1)}_{22}a^{(3)}(t)\\
 +4a^{(3)}(t)^\top Q^{(1)}_{22}a^{(1)}(t)+Q^{(2)}_{11}+2Q^{(2)}_{12}\big(a^{(1)}(t)+a^{(3)}(t)\big)+2\big(a^{(1)}(t)+a^{(3)}(t)\big)^\top Q^{(2)}_{21}\\
 +4\big(a^{(1)}(t)+a^{(3)}(t)\big)^\top Q^{(2)}_{22}\big(a^{(1)}(t)+a^{(3)}(t)\big)=0,\\
 \frac d{dt}a^{(4)}(t)+a^{(2)}(t)^\top Q^{(1)}_{22}a^{(2)}(t)+a^{(2)}(t)^\top Q^{(2)}_{22}a^{(2)}(t)+a^{(2)}(t)^\top c_2+{\rm tr}\big[\sigma^\top\sigma a^{(1)}(t)\big]=0,\\
 a^{(1)}(T)=Q^{(1)}_T,\ a^{(2)}(T)=\mathbf c_T,\ a^{(3)}(T)=Q^{(2)}_T,\ a^{(4)}(T)=0.
 \end{aligned}\right.
\end{align}
\end{theorem}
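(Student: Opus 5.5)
The proof has two parts. First, an ansatz computation shows that \eqref{LQ-solution} solves \eqref{HJB} exactly when the coefficients satisfy \eqref{LQ-Riccati}. Second, we show that the Riccati system \eqref{LQ-Riccati} has a global solution on $[0,T]$. The second part is where the generalized displacement convexity \eqref{generalized-displacement-convexity-LQ} is used.

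\emph{Reduction to the Riccati system.} For $V$ of the form \eqref{LQ-solution}, the Wasserstein gradient is
$$\partial_\mu V(t,\mu,x)=\big(a^{(1)}+a^{(1)\top}\big)x+a^{(2)}+\big(a^{(3)}+a^{(3)\top}\big)\bar x,$$
where $\bar x=\int x\,\mu(dx)$. Taking $a^{(1)},a^{(3)}$ symmetric, which is consistent with the symmetric terminal data, the lifted momentum is therefore $P=2a^{(1)}X+2a^{(3)}\bar x+a^{(2)}$. It follows that $D_x\partial_\mu V=2a^{(1)}$, so the diffusion term equals ${\rm tr}[\sigma^\top\sigma a^{(1)}]$ once the normalization convention is fixed.

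Next we substitute $\tilde\mu_t=\mathrm{Law}(X,P)$ into \eqref{LQ-ham}. Each side of \eqref{HJB} is then a quadratic functional of $\mu$: a fluctuation part in $X-\bar x$, a mean part in $\bar x$, a linear part, and a constant. Matching the coefficients of $\int x^\top(\cdot)x\,d\mu$, $\bar x^\top(\cdot)\bar x$, $\int x\,d\mu$, and the constant term yields the four equations of \eqref{LQ-Riccati}. In these equations $a^{(1)}+a^{(3)}$ plays the role of the mean-field Riccati unknown, which is why it appears alongside $Q^{(2)}$. The terminal conditions follow by comparing with $U$. This step is routine bookkeeping.

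\emph{Global solvability.} The equations for $a^{(2)}$ and $a^{(4)}$ are linear once $a^{(1)},a^{(3)}$ are known, so only the two matrix Riccati equations matter. It is convenient to pass to $b:=a^{(1)}+a^{(3)}$. Adding the first and third equations gives a closed Riccati equation for $b$ driven by $Q^{(1)}+Q^{(2)}$, with terminal value $Q^{(1)}_T+Q^{(2)}_T$. Given $a^{(1)}$ and $b$, the equation for $a^{(3)}=b-a^{(1)}$ is then automatic.

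Local existence backward from $T$ follows from Cauchy--Lipschitz. To extend it to $[0,T]$ we need an a priori bound on $a^{(1)}(t)$ and $b(t)$. We propose the following two-sided bound for both $M=a^{(1)}$ and $M=b$:
$$-C_1|x_2|^2\le x^\top M(t)x\le C_1|x_1|^2,$$
with $C_1$ depending only on $C,T$ and the terminal data. Each such bound controls $|M(t)|$.

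\emph{Main obstacle.} The hard step is obtaining this upper and lower bound, because $Q^{(1)}_{22}$ is indefinite and the usual monotone comparison for Riccati equations, which needs $Q_{22}\le0$, fails. We plan to handle it with the comparison argument of Lemma \ref{lem-comparison}, viewing the Riccati equation as the value of a deterministic LQ zero-sum game. The block-splitting in \eqref{generalized-displacement-convexity-LQ} says that the game is convex in the $(x_1,p_2)$ directions and concave in the $(x_2,p_1)$ directions. For the upper bound, we freeze the maximizing player's feedback at zero in the $x_2$ component and evaluate along the $x_1$ trajectory; the right inequality of \eqref{generalized-displacement-convexity-LQ} combined with Gronwall then gives $x^\top M(t)x\le C_1|x_1|^2$. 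The lower bound follows symmetrically from the left inequality.

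Concretely, we would differentiate $t\mapsto x(t)^\top M(t)x(t)$ along the linear flow $\dot x=2(Q_{21}+2Q_{22}M)x$, suitably restricted. If this does not close, the fallback is the quantitative version: obtain the bound from Theorem \ref{existence-of-decoupling-field} via the displacement-convexity estimates, whose constants depend only on $C$. Once the bounds are in hand, blow-up is excluded, the solution extends to $[0,T]$, and \eqref{LQ-solution} solves \eqref{HJB}.
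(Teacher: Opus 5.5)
Your architecture matches the paper's proof: the ansatz, the reduction to two matrix Riccati equations for $a^{(1)}$ and $b=a^{(1)}+a^{(3)}$ (the latter driven by $Q^{(1)}+Q^{(2)}$), linear equations for $a^{(2)},a^{(4)}$, and continuation via a two-sided a priori bound. The bound you guess, $-C_1D_1\le M(t)\le C_1D_2$ with $D_1=\mathrm{diag}(0,I_{d_2})$ and $D_2=\mathrm{diag}(I_{d_1},0)$, is exactly what the paper obtains.

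The gap is that this bound, which is the whole content of the theorem, is never proved. The game/Gronwall sketch fails as written, for three reasons.
\begin{itemize}
\item $V$ is an $\inf\sup$, so freezing the maximizer's feedback gives a \emph{lower} bound. For the upper bound you would have to fix the minimizer and control the supremum, which is the hard direction.
\item The flow $\dot x=2(Q_{21}+2Q_{22}M)x$ contains the unknown $M$ and in general does not keep $x_2\equiv0$, since $Q^{(1)}_{12}$ and $M_{12}$ couple the blocks.
\item The fallback via Theorem \ref{existence-of-decoupling-field} is unavailable. Item 1 of Assumption \ref{assumption-displacement-convex} requires bounded derivatives of $\mathcal H$ (Proposition \ref{uniform-1-1} depends on $|\partial_{\tilde\mu}\mathcal H|_\infty$), and this fails for a quadratic Hamiltonian.
\end{itemize}
Also, neither one-sided bound controls $|M|$ alone. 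Only the pair does: it gives $0\le M_{11}\le C_1$ and $-C_1\le M_{22}\le 0$, and positive semidefiniteness then bounds $M_{12}$.

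The missing idea is an explicit, solvable pair of comparison equations for Lemma \ref{lem-comparison}. Write $Q^{(1)}_{12}=\begin{pmatrix}q_{11}&q_{12}\\ q_{21}&q_{22}\end{pmatrix}$ and $D_3=\mathrm{diag}(-2q_{11},-2q_{22})$. The $x_1$--$p_1$ and $x_2$--$p_2$ couplings $q_{11},q_{22}$ do not appear in \eqref{generalized-displacement-convexity-LQ} at all. So they cannot be estimated and must be kept as linear drift. What \eqref{generalized-displacement-convexity-LQ} does give, after $p\mapsto 2p$ and enlarging $C$, is
\[
\begin{pmatrix}-CD_2&D_3\\ D_3^\top&-CD_1\end{pmatrix}\le\begin{pmatrix}-Q^{(1)}_{11}&-2Q^{(1)}_{12}\\ -2Q^{(1)}_{21}&-4Q^{(1)}_{22}\end{pmatrix}\le\begin{pmatrix}CD_1&D_3\\ D_3^\top&CD_2\end{pmatrix},\qquad -CD_1\le Q^{(1)}_T\le CD_2 .
\]
Lemma \ref{lem-comparison} then yields $a^-\le a^{(1)}\le a^+$ on the existence interval, where
\begin{align*}
&\tfrac{d}{dt}a^+=-CD_2+D_3a^++a^+D_3^\top-C\,a^+D_1a^+,\qquad a^+(T)=CD_2,\\
&\tfrac{d}{dt}a^-=CD_1+D_3a^-+a^-D_3^\top+C\,a^-D_2a^-,\qquad a^-(T)=-CD_1.
\end{align*}
Both equations are globally solvable because their solutions are block diagonal.
\begin{itemize}
\item For $a^+$, the quadratic term only sees the $x_2$-block. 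That block has zero source and zero terminal value, so it vanishes identically. The $x_1$-block solves a linear Lyapunov equation. Hence $a^+=\mathrm{diag}(\alpha^+,0)$.
\item Symmetrically, $a^-=\mathrm{diag}(0,\beta^-)$.
\end{itemize}
This is precisely $-C_1D_1\le a^{(1)}\le C_1D_2$, with $C_1$ depending on $C$, $T$ and the unconstrained blocks $q_{11},q_{22}$. The same construction with $Q^{(1)}+Q^{(2)}$ and the second line of \eqref{generalized-displacement-convexity-LQ} bounds $b$. Only then does your continuation argument go through.
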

Among the linear quadratic case, we will further investigate those with penalized terminal data:
\begin{align}\label{LQ-ham-penalized}
 \left\{\begin{aligned}&\quad\mathcal H^\varepsilon(\tilde\mu)=\int_{\mathbb R^d\times\mathbb R^d}\bigg[(x,p)^\top Q^{(1)}\left(\begin{matrix}x\\p\end{matrix}\right)-\varepsilon p^\top_1p_1\bigg]\tilde\mu(dxdp)+(\bar x,\bar p)^\top Q^{(2)}\left(\begin{matrix}\bar x\\\bar p\end{matrix}\right)\\
 &=\int_{\mathbb R^d\times\mathbb R^d}\bigg[(x,p)^\top\tilde Q^{(1)}\left(\begin{matrix}x\\p\end{matrix}\right)\bigg]\tilde\mu(dxdp)+(\bar x,\bar p)^\top Q^{(2)}\left(\begin{matrix}\bar x\\\bar p\end{matrix}\right),\quad 
 \tilde\mu\in\mathcal P_2(\mathbb R^d\times\mathbb R^d),\\
 &U^\lambda(\mu)=\int_{\mathbb R^d}(\lambda x^\top_1x_1+q^T_{12}x^\top_1x_2+x^\top_2q^T_{22}x_2)\mu(dx),\quad\mu\in\mathcal P_2(\mathbb R^d),\end{aligned}\right.
\end{align}
where $\varepsilon>0$, the integrator $x=(x_1,x_2)\in\mathbb R^{d_1}\times\mathbb R^{d_2}$ and the penalty relating to the constraint are placed on the first component. Let $V^\lambda$ solve \eqref{HJB} with data $(\mathcal H^\varepsilon,U^\lambda)$. Our interests is in the limit of $V^\lambda$ as $\lambda\to+\infty$. Intuitively speaking, as the penalty parameter goes in infinity, the limit $V^{+\infty}$ of $(V^\lambda)_{\lambda>0}$ describes the value function for robust mean field control problem \eqref{ob-fun-1}$\sim$\eqref{robustness-vf} with an additional constraint $X^{(1)}_T=0$. Formally, $V^{+\infty}$ satisfies the following HJBI equation with singular terminal value:
\begin{align}\label{constraint-HJB}
    \left\{\begin{aligned}
        \partial_tV+\frac12\int_{\mathbb R^d}{\rm tr}\big[\sigma^\top\sigma D_x\partial_\mu V(t,\mu,x)\big]\mu(dx)+\mathcal H^\varepsilon(\tilde\mu_t)=0,\ t\in[0,T),\ \mu\in\mathcal P_2(\mathbb R^d),\\
 V(T,\mu)=+\infty.
    \end{aligned}\right.
\end{align}
For the case where $\mathcal H^\varepsilon$ is linear, i.e., $\mathcal H^\varepsilon(\tilde\mu)=\int_{\mathbb R^d\times\mathbb R^d}H(x_1,x_2,p)\tilde\mu(dxdp)$, we can plug $V(t,\mu)=\int_{\mathbb R^d}u(t,x)\mu(dx)$ into \eqref{constraint-HJB} and reduce it to a finite dimensional equation on $u$ with domain $[0,T]\times\mathbb R^d$ and singular terminal condition. Such finite dimensional counterpart of \eqref{constraint-HJB} has been studied in the optimal liquidation literature. Recall the discussion in Introduction for an incomplete reference. Our previous results have ensured the well-posedness of \eqref{HJB} with $(\mathcal H^\varepsilon,U^\lambda)$ from \eqref{LQ-ham-penalized}. Furthermore, the convergence of $(V^\lambda)_{\lambda>0}$ to $V^{+\infty}$ is made rigorous in the following:

\begin{theorem}\label{limit-V-lambda}
    Suppose \eqref{generalized-displacement-convexity-LQ} and let $V^\lambda$ solve \eqref{HJB} with data $(\mathcal H^\varepsilon,U^\lambda)$ from \eqref{LQ-ham-penalized}. Then $V^\lambda$ admits the representation
\begin{align}\label{lambda-value}
V^\lambda(t,\mu)=\int_{\mathbb R^d}x^\top a^\lambda_1(t)x\mu(dx)+\int_{\mathbb R^d}\int_{\mathbb R^d}x^\top a^\lambda_3(t)\hat x\mu(d\hat x)\mu(dx),\quad
 (t,\mu)\in[0,T]\times\mathcal P_2(\mathbb R^d),
   \end{align} 
    where $\big(a^\lambda_i(t)\big)_{1\leq i\leq 4,t\in[0,T)}$ satisfies \eqref{LQ-Riccati} with $a^\lambda_2(t)=a^\lambda_4(t)=0$, and there exists a constant $C$ independent of $\lambda$ such that for $\varphi=a^\lambda_1,\ a^\lambda_1+a^\lambda_3$,
\begin{align}\label{uniform-estimates}
 \left\{\begin{aligned}
 &\varphi(t)\geq\left(\begin{matrix}\frac1{C(4\varepsilon+C)(T-t)}I_{d_1}&0\\
 0&e^{q_{22}(T-t)}q^T_{22}e^{q_{22}(T-t)}-C(T-t)I_{d_2}
 \end{matrix}\right),\\
 &\varphi(t)\leq\left(\begin{matrix}\bigg[\frac{\sqrt C}{2\sqrt{\varepsilon}}\big(e^{4\sqrt{\varepsilon}(T-t)}-1\big)^{-1}+\frac14\sqrt{\frac C\varepsilon}\bigg]I_{d_1}&0\\
 0&e^{q_{22}(T-t)}q^T_{22}e^{q_{22}(T-t)}
 \end{matrix}\right),\ t\in[0,T].
 \end{aligned}\right.
    \end{align}
As a result, $\big(a^\lambda_i(t)\big)_{t\in[0,T-\kappa]}$ converges uniformly to $\big(a^{+\infty}_i(t)\big)_{t\in[0,T-\kappa]}$ as $\lambda\to+\infty$ for $i=1,3,$ and arbitrary $\kappa>0$, while $(V^\lambda(t,\mu)\big)_{t\in[0,T-\kappa],\mu\in\mathcal P_2(\mathbb R^d)}$ converges to the following
\begin{align}\label{constraint-value}
 V^{+\infty}(t,\mu)=\int_{\mathbb R^d}x^\top a^{+\infty}_1(t)x\mu(dx)+\int_{\mathbb R^d}\int_{\mathbb R^d}x^\top a^{+\infty}_3(t)\hat x\mu(d\hat x)\mu(dx),\notag\\
 (t,\mu)\in[0,T-\kappa]\times\mathcal P_2(\mathbb R^d).
\end{align}
\end{theorem}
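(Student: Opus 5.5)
The plan is to reduce everything to the matrix Riccati system \eqref{LQ-Riccati} and to get two-sided bounds from a comparison principle for symmetric Riccati equations.

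\textbf{Representation.} The data $(\mathcal H^\varepsilon,U^\lambda)$ in \eqref{LQ-ham-penalized} are quadratic with no linear terms, so $\mathbf c=\mathbf c_T=0$. I will check that \eqref{generalized-displacement-convexity-LQ} still holds for $\tilde Q^{(1)}=Q^{(1)}-\varepsilon\,{\rm diag}(0,0,I_{d_1},0)$ and for $Q_T=\mathrm{diag}(\lambda I_{d_1},\cdot)$. The extra term $-\varepsilon|p_1|^2$ only enters the lower bound $-C|p_1|^2$, which it respects. The terminal term $\lambda|x_1|^2$ is bounded above by $C_\lambda|x_1|^2$, and that constant may depend on $\lambda$ for existence purposes. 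Theorem \ref{LQ-global-wp} then gives the representation \eqref{LQ-solution} with $a_2\equiv0$ by uniqueness of the linear ODE with zero data. This yields \eqref{lambda-value}. The constant $a_4$ produced by the noise trace term has to be handled (it is $\int \mathrm{tr}[\sigma^\top\sigma a_1]$), and the stated formula implicitly ignores or absorbs it. I will note this and track it separately, since it converges whenever $a_1$ does.

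\textbf{Equations for the two matrices.} Set $\varphi=a_1$ and $\varphi=a_1+a_3$. Adding the $a_1$ and $a_3$ equations, $\varphi$ satisfies a Riccati equation of the same form as the $a_1$ equation, but with $Q^{(1)}$ replaced by $Q^{(1)}+Q^{(2)}$, and with terminal value $\mathrm{diag}(\lambda,\ldots)$ coupled to $q^T$. Both equations therefore read
\[
\dot\varphi+A+\varphi^\top B+B^\top\varphi+4\varphi^\top D\varphi=0.
\]
Here $D$ has block $-\varepsilon I$ plus $Q_{22}$, and the first two lines of \eqref{generalized-displacement-convexity-LQ} say that the quadratic form $(x,p)\mapsto x^\top Ax+2x^\top Bp+p^\top Dp$ lies between $-C(|x_2|^2+|p_1|^2)$ and $C(|x_1|^2+|p_2|^2)$.

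\textbf{Comparison.} For the upper bound, pick $p=2\varphi x$, so that $p_1=2(\varphi x)_1$. The right inequality then gives
\[
-\tfrac{d}{dt}\,x^\top\varphi x\le C|x_1|^2+C|p_2|^2-4\varepsilon|p_1|^2 .
\]
This yields a decoupled scalar super-solution: in the $x_1$ block, $\dot y=-C+4\varepsilon y^2$ with $y(T)=+\infty$, whose explicit solution is $\frac{\sqrt C}{2\sqrt\varepsilon}\coth$-type. This is exactly the stated $\big(e^{4\sqrt\varepsilon(T-t)}-1\big)^{-1}$ bound. In the $x_2$ block the solution is $e^{q_{22}(T-t)}q_{22}^Te^{q_{22}(T-t)}$.

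The lower bound comes the same way from the left inequality. In the $x_1$ block, $\dot y\ge -(4\varepsilon+C)y^2$ with $y(T)=\lambda\ge$ a fixed value gives $y\ge 1/(C(4\varepsilon+C)(T-t))$. In the $x_2$ block, linear drift of size $C$ gives the $-C(T-t)$ correction.

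To make these comparisons rigorous I will invoke the comparison principle for symmetric Riccati equations (Lemma \ref{lem-comparison}), applied to the difference of two solutions. That difference satisfies a linear Lyapunov-type equation, and semidefinite ordering is propagated backward via the variation-of-constants formula $\Phi^\top(\cdot)\Phi$. This is the main obstacle. The quadratic term is neither definite nor sign-controlled, because $D$ is indefinite, so the inequalities have to be used along the specific choice of $p$. Handling the off-diagonal $x_1$–$x_2$ cross terms needs the block structure exactly as in \eqref{generalized-displacement-convexity-LQ}. I would try the comparison with the block-diagonal candidates first and absorb the cross terms by Young's inequality into the constant $C$.

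\textbf{Limit.} The bounds hold uniformly in $\lambda$ on $[0,T-\kappa]$. Monotonicity in $\lambda$ follows by the same comparison lemma, since the terminal data are ordered. Hence $a_1^\lambda$ and $a_1^\lambda+a_3^\lambda$ are monotone and bounded, so they converge pointwise. The ODEs give uniform Lipschitz bounds on $[0,T-\kappa]$, so by Arzelà–Ascoli and Dini the convergence is uniform, and the same then holds for $a_3$. Passing to the limit in \eqref{lambda-value} gives \eqref{constraint-value}, uniformly on sets of $\mu$ with bounded second moment.
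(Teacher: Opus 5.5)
Your overall route is the paper's. You bound the Hamiltonian matrix $JH$ of the Riccati equation for $a^\lambda_1$ (and for $a^\lambda_1+a^\lambda_3$, which satisfies the same equation with $Q^{(1)}+Q^{(2)}$) between block-diagonal matrices, apply Lemma~\ref{lem-comparison}, solve the decoupled comparison equations, and get monotonicity in $\lambda$ from the same lemma. Your Arzel\`a--Ascoli/Dini step for uniform convergence is a useful addition.

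However, you misread \eqref{generalized-displacement-convexity-LQ}. It does not bound the full form $x^\top Ax+2x^\top Bp+p^\top Dp$. It bounds that form minus exactly the $x_1$--$p_1$ and $x_2$--$p_2$ couplings $2x_1^\top q_{11}p_1+2x_2^\top q_{22}p_2$. The $x_1$--$x_2$, $x_1$--$p_2$, $x_2$--$p_1$ and $p_1$--$p_2$ couplings are already inside the condition, so no Young absorption is needed. The blocks $q_{11},q_{22}$, on the other hand, must stay as the linear part $D_3=\mathrm{diag}(-2q_{11},-2q_{22})$ of the comparison equations; that is where $e^{q_{22}(T-t)}$ comes from. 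Absorbing them by Young would create wrong-sign source terms, e.g.\ in the $x_2$ block, which can make the comparison equations blow up. What the condition gives is a two-sided Loewner inequality for $JH$ valid for all $(x,p)$, which is exactly the hypothesis of Lemma~\ref{lem-comparison}. So nothing has to be done ``along $p=2\varphi x$'', and the indefiniteness of $D$ is not an obstacle once the block-diagonal comparison solutions exist on $[0,T]$.

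The genuine failure is the upper bound in the $x_2$ block. The term $C|p_2|^2$ in your own inequality puts $-C\hat a^2$ (forward in time) into the $x_2$ block of the upper comparison equation. Lemma~\ref{lem-comparison} with $H^1=CI\ge H^2=0$ shows this makes the solution \emph{larger} than that of the linear equation obtained by dropping the term, not smaller, so the term cannot be dropped. In fact the stated bound is false. Take $d_1=d_2=1$, $Q^{(2)}=0$, $q^T_{12}=0$, $q^T_{22}=-1$, and let the $p_2p_2$ entry $c>0$ be the only nonzero entry of $Q^{(1)}$. Then \eqref{generalized-displacement-convexity-LQ} holds, $a^\lambda_3=0$, and the $x_2$ entry of $a^\lambda_1$ is $-1/(1+4c(T-t))>-1$. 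The paper's proof drops the same term: its claim $\hat a^{\lambda,+}_1\le\hat a^{\lambda,+}_2$ goes the wrong way.

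What does hold, and is all the limit needs, is the $\lambda$-independent upper bound $0$ in that block. The terminal line of \eqref{generalized-displacement-convexity-LQ} gives $Q^{(1)}_T\le\mathrm{diag}(C_\lambda I_{d_1},0)$. With this terminal datum the $x_2$ block of the upper comparison solution vanishes identically, while the $x_1$ block is still dominated by the $\lambda$-independent solution with infinite terminal value.

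Two smaller points:
\begin{itemize}
\item Your $x_1$-block lower bound is really $1/\big(C(\lambda^{-1}+(4\varepsilon+C)(T-t))\big)$. The $\lambda^{-1}$ cannot be dropped for $T-t\lesssim\lambda^{-1}$.
\item You are right that $a^\lambda_4(t)=\int_t^T{\rm tr}[\sigma^\top\sigma a^\lambda_1(s)]ds$ need not vanish. But it does not converge merely because $a^\lambda_1$ converges on $[0,T-\kappa]$: it integrates up to $T$, and the $x_1$-block lower bound makes it grow like $\log\lambda$. It converges only if $\sigma^\top\sigma$ vanishes on the $x_1$ block, as in the liquidation model, where the noise drives only $Q$.
\end{itemize}
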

\subsection{Solving the optimal liquidation problem with uncertainty}\label{application}
Having established the well-posedness of HJBI as well as the verification results in the previous section, we consider the optimal liquidation problems with composite uncertainty.

Let $T\in(0,+\infty)$, $(\Omega,\mathcal F,\mathbb P,\mathbb F)$ be a complete filtered probability space with $\mathbb F=(\mathcal F_t)_{t\in[0,T]}$ generated by a Brownian motion $(W_t)_{t\in[0,T]}$. Consider a large investor holding $M_t$ shares in a risky stock:
\begin{align}\label{model-1}
 M_t=m-\int_0^t\xi_sds,\quad t\in[0,T].
\end{align}
In \eqref{model-1}, $(\xi_t)_{t\in[0,T]}$ is an $\mathbb F$-adapted process in $\mathbb R^{d_1}$ which describes the transaction rate of the investor. The aim of the large investor is to liquidate her assets by a fixed time $T$. As is explained in Section \ref{intro}, the trading of the large investor has an impact on the price process of the stock. More specifically, the large investor\rq{}s trading rate and its distribution have an impact on the price process in $\mathbb R^{d_2}$ via $Q_t$ in the following:
\begin{align}\label{lqd-dyn-0}
 dQ_t=\big[-\rho Q_t+\kappa_1\alpha_t-\phi\xi_t-\tilde\phi\mathbb E\xi_t\big]dt+dW^\eta_t,\quad t\in[0,T],\ Q_0=q,
 \end{align}
where $\rho>0$ and naturally $\phi,\ \tilde\phi\geq0$ since more selling in general decreases the asset price. The position and trading of the large investor could incur child orders. Since working with each child order process is inconvenient, it seems reasonable to work with its distribution, which boils down to the $\mathbb E\xi_t$ term in \eqref{lqd-dyn-0}. 

The investor suffers from two kinds of uncertainty. The first kind of uncertainty comes from the unknown of probability space, which is expressed via the drifted Brownian motion.
\begin{align}\label{lqd-dyn-3}
 W^\eta_t=W_t+\kappa_2\int_0^t\eta_sds,\quad\kappa_2\geq0.
\end{align}
Here, $W$ is a Brownian motion under the reference probability measure and $(\eta_t)_{t\in[0,T]}$ can be understood as the deviation from the reference probability due to ambiguity. This deviation can be expressed through the Radon-Nikodym derivative $\mathcal E\big(\kappa_2\int_0^\cdot\eta^\top_sdW_s\big)_t,\ t\in[0,T].$
The second kind of uncertainty is due to the unknown model parameters that is expressed via the deterministic process $(\alpha_t)_{t\in[0,T]}$ in \eqref{lqd-dyn-0}. The above two types of uncertainty are different in nature. Mathematically speaking, $(\eta_t)_{t\in[0,T]}$ is an $\mathbb F$-adapted stochastic process that arises from the change of probability, while $(\alpha_t)_{t\in[0,T]}$ is a deterministic that describes the uncertainty of model parameters that are supposed to be deterministic. In view of the intuition of $(\eta_t)_{t\in[0,T]}$, additional constraints are required so that $\mathcal E\big(\kappa_2\int_0^\cdot\eta^\top_sdW_s\big)_t$ is well defined. Towards that end, we further assume in this section that the admissible set $\tilde{\mathcal U_t}$ consists of $(\xi_s)_{s\in[t,T]}$ that satisfies Definition \ref{admissible-set} and the additional requirement that for any $(\eta_s,\alpha_s)_{s\in[t,T]}\in\mathcal V^\xi_t$,
\begin{align*}
 \bigg(\kappa_2\int_t^s\eta^\top_rdW_r\bigg)_{s\in[t,T]}
\end{align*}
is a BMO martingale, i.e.,
\begin{align}\label{BMO}
 \kappa_2\sup_{t\leq\tau\leq T}\mathbb E\bigg[\int_\tau^T|\eta_t|^2dt\bigg|\mathcal F_\tau\bigg]<+\infty,
\end{align}
where $\tau$ ranges over all stopping times of filtration $\{\mathcal F_s\}_{s\in[t,T]}$.
\subsubsection{The optimal liquidation problem with general execution cost}
In the general case, the transaction of the investor incurs a nonlinear transient impact on the fundamental price. Combining the transient and permanent impact from the transaction, the transaction price of the asset $\tilde S_t$ is postulated to deviate from fundamental price process $S_t$ in the following way:
\begin{align*}
 \tilde S_t=S_t-g(\xi_t,Q_t,\mu_{\xi_t}).
\end{align*}
In \cite{Horst19}, the function $g$ is taken to be a linear separated form $g_0(\xi,Q,\mu)=\Lambda\xi+Q$. One may understand $g_0$ as the combination of transaction cost per share $\Lambda\xi$ and the difference in liquidation proceeds per share $Q$. The mean field generalization of $g_0$ will be further analyzed in the next section. Here we are interested in more general $g$ that could be non-separable and depend on $\mu_{\xi_t}$. The investor chooses to minimize the following utility:
\begin{align}\label{general-cost}
 \sup_{(\eta,\alpha)\in\mathcal V^\xi_0}\mathbb E\bigg[U_T(M_T,Q_T,\mu_{M_T})+\int_0^T\big(U(\xi_t,Q_t,\mu_{\xi_t})+f(\mu_{M_t})-\lambda|\eta_t|^2-\Phi(\alpha_t)\big)dt\bigg].
\end{align}
Here in the objective functional, $U(\xi_t,Q_t,\mu_{\xi_t})=\xi_t\cdot g(\xi_t,Q_t,\mu_{\xi_t})$ stands for the complex of transaction cost and liquidation proceeds. The $-\lambda|\eta_t|^2$ term corresponds to the relative entropy which penalizes the deviation from the reference probability measure. The $\Phi(\alpha_t)$ in the running cost describes the ambiguity set of the parameters. The term $f(\mu_{M_t})$ can be used to model the operating risk. A typical choice of $f(\mu_{M_t})$ is the variance in the capital stock ${\rm Var}[M_t]$ (see also \cite{Bensoussan2019}). The liquidity at time $T$ is done via a bulk transaction with the the complex of transaction cost and liquidation proceeds $U_T(M_T,S_T,\mu_{M_T})$.

We make technical assumptions on growth and convexity in the following:
\begin{align}\label{ham-convexity-1}\left\{\begin{aligned}
&\text{$f,\ \Phi,\ U$ have continuous derivatives till sixth order,}\\
&\big|\partial^{l_1}_{m_1}\cdots\partial^{l_k}_{m_k}\partial^k_\mu f(\mu_M,m_1,\ldots,m_k)\big|\leq C,\ 0\leq k\leq 6,\ 0\leq l_1,\ldots,l_k\leq6,\ \\
&\qquad2\leq l_1+\cdots+l_k+k\leq 6,\ m_1,\ldots,m_k\in\mathbb R^{d_1},\quad D^2_\alpha\Phi\geq\delta I_{d_2},\\
 &f(\mu_{\gamma M_1+(1-\gamma)M_2})\leq\gamma f(\mu_{M_1})+(1-\gamma)f(\mu_{M_2}),\ \gamma\in[0,1],\ M_1,M_2\in L^2(\Omega,\mathcal F,\mathbb P;\mathbb R^{d_1}),\\
 &\big|\partial^{l_0}_\xi\partial^{\tilde l_0}_Q\partial^{l_1}_{\xi_1}\cdots\partial^{l_k}_{\xi_k}\partial^k_\mu U(\xi,Q,\mu,\xi_1,\ldots,\xi_k)\big|\leq C,\ D^2_{QQ}U(\xi,Q,\mu)\leq0,\ (\xi,Q,\mu)\in\mathbb R^{d_1}\times\mathbb R^{d_2}\times\mathcal P_2(\mathbb R^d),\\
 &\qquad0\leq k\leq 6,\ 0\leq l_1,\ldots,l_k\leq6,\ 2\leq l_0+\tilde l_0+l_1+\cdots+l_k+k\leq 6,\ \xi_1,\ldots,\xi_k\in\mathbb R^{d_1},\\
 &\mathbb E[\langle D_\xi U(\xi,Q,\mu_\xi)-D_\xi U(\eta,Q,\mu_\eta),\xi-\eta\rangle]+\mathbb E\mathbb{\breve E}[\langle\partial_\mu U(\xi,Q,\mu_\xi,\breve\xi)-\partial_\mu U(\eta,Q,\mu_\eta,\breve\eta),\xi-\eta\rangle]\\
 &\qquad\geq\delta\mathbb E|\xi-\eta|^2,\quad\xi,\eta\in L^2(\Omega,\mathcal F,\mathbb P;\mathbb R^{d_1}).
 \end{aligned}\right.
\end{align}
In order to solve the above optimal liquidation problem with robustness, we take the joint distribution of $(M_t,Q_t)$ as the state variable. In view of \eqref{model-1}$\sim$\eqref{general-cost}, the dynamic programming principle yields the associated HJB equation as follows
\begin{align}\label{lqd-general-HJB}
\left\{\begin{aligned}& \partial_tV+\int_{\mathbb R^d}\frac12{\rm tr}\big[D_q\partial_\mu V^{(q)}(t,\mu,m,q)\mu(dmdq)\big]+\mathcal H\big((Id,\partial_\mu V)\sharp\mu\big)=0,\\ &\qquad(t,\mu)\in[0,T)\times\mathcal P_2(\mathbb R^{d_1}\times\mathbb R^{d_2}),\\
&V(T,\mu)=\mathcal U(\mu):=\int_{\mathbb R\times\mathbb R}U_T(m,q,\pi_1\sharp\mu)\mu(dmdq),\quad\ \mu\in\mathcal P_2(\mathbb R^{d_1}\times\mathbb R^{d_2}),\end{aligned}\right.
 \end{align}
where $\partial_\mu V=(\partial_\mu V^{(m)},\partial_\mu V^{(q)})^\top\in\mathbb R^{d_1}\times\mathbb R^{d_2}$, $\pi_1:(m,q)\mapsto m$, for random variable $(M,Q,\tilde p_1,\tilde p_2)$ taking values in $\mathbb R^{d_1}\times\mathbb R^{d_2}\times\mathbb R^{d_1}\times\mathbb R^{d_2}$,
\begin{align}\label{lqd-general-ham}
&\quad\mathcal H\big(\text{Law}(M,Q,\tilde p_1,\tilde p_2)\big)\notag\\
&=\inf_{\xi\in L^2(\Omega;\mathbb R^+)}\sup_{\substack{\eta\in L^2(\Omega;\mathbb R)\\\alpha\in\mathbb R}}\bigg\{\mathbb E\big[-\tilde p_1\cdot\xi+\tilde p_2\cdot\big(\gamma_1\xi+\gamma_2\mathbb E\xi+\kappa_2\eta+\kappa_1\alpha\big)+U(\xi,Q,\mu_\xi)\notag\\
&\quad\qquad\qquad\qquad\qquad\qquad-\lambda|\eta|^2\big]-\Phi(\alpha)+f\big(\mu_M\big)\bigg\}\notag\\
&=\inf_{\xi\in L^2(\Omega;\mathbb R^+)}\mathbb E\big[(-\tilde p_1+\gamma^\top_1\tilde p_2+\gamma^\top_2\mathbb E\tilde p_2)\cdot\xi+U(\xi,Q,\mu_\xi)\big]+f(\mu_M)+\frac{\kappa_2^2\mathbb E|\tilde p_2|^2}{4\lambda}+\Phi^*(\mathbb E\tilde p_2),
\end{align}
where $\Phi^*(\alpha):=\inf_{\tilde\alpha\in\mathbb R^{d_2}}\{\tilde\alpha\cdot\alpha-\Phi(\tilde\alpha)\}$,\ $\alpha\in\mathbb R^{d_2}$. The Hamiltonian in \eqref{lqd-general-ham} does not satisfy Assumption \ref{assumption-displacement-convex} because $U$ and $f$ could admit unbounded derivatives. Still, with a local convergence argument, our theory in Section \ref{solvable-cases} can be utilized to prove the well-posedness \eqref{lqd-general-HJB}. Towards that end, we first modify the idea in \cite{Cosso23} and introduce an approximation procedure to $\mathcal H$ in \eqref{lqd-general-ham} where each approximation has bounded derivatives and each preserves similar convexity property to $\mathcal H$. The specific definition is as follows:

For $n\geq1$ and $\mu\in\mathcal P_2(\mathbb R^4)$, define
\begin{align}\label{N-poly}
 \mathcal H^{R,n}(\tilde\mu):=\int_{\mathbb R}\cdots\int_{\mathbb R}\mathcal H^R\bigg(\frac1n\sum_{i=1}^n\delta_{z_i}\bigg)\tilde\mu(dz_1)\cdots\tilde\mu(dz_n)=\mathbb E\bigg[\mathcal H^R\bigg(\frac1n\sum_{i=1}^n\delta_{\xi_i}\bigg)\bigg].
\end{align}
Here $\xi_1,\ldots,\xi_q\in\mathbb R$ are i.i.d. with law $\tilde\mu$ in certain probability basis, and $f^R$ is defined in such a way that
\begin{align}\label{convexity}
&\mathcal H^R\bigg(\frac1n\sum_{i=1}^n\delta_{z_i}\bigg)=\mathcal H\bigg(\frac1n\sum_{i=1}^n\delta_{z_i}\bigg),\quad z_i\in\mathbb R,\ \frac1n\sum_{i=1}^n|z_i|^2\leq R,\\
 &D^2_{zz}\mathcal H^R\bigg(\frac1n\sum_{i=1}^n\delta_{z_i}\bigg)=\rho_R\bigg(\frac1n\sum_{i=1}^n|z_i|^2\bigg)D^2_{zz}\mathcal H\bigg(\frac1n\sum_{i=1}^n\delta_{z_i}\bigg),\quad z_i\in\mathbb R,
\end{align}
where $\rho:[0,+\infty)\to[0,1]$ is smooth and satisfies
\begin{align*}
 \rho_R(x)=\left\{\begin{aligned}1,\quad0\leq x\leq R,\\
 0,\quad x\geq R+1.\end{aligned}\right.
\end{align*}
Denote by
\begin{align}\label{truncation}
 \mathcal H_n(\tilde\mu)=\mathcal H^{n,n}(\tilde\mu),\quad\tilde\mu\in\mathcal P_2\big((\mathbb R^d)^4\big),\ n=1,2,\ldots.
\end{align}
Then it is easy to see that $\mathcal H_n$ satisfies Assumption \ref{assumption-displacement-convex} if $\mathcal H$ does. Moreover the following approximation holds:
\begin{lemma}\label{dis-con-app}
 There exists $(\mathcal H_n)_{n\geq1}\subset\mathcal C^6\big(\mathcal P_2(\mathbb R)\big)$ with bounded derivatives such that
 \begin{align*}
  \lim_{n\to+\infty}\mathcal H_n(\mu)=\mathcal H(\mu),\quad\mu\in\mathcal P_2(\mathbb R).
 \end{align*}
\end{lemma}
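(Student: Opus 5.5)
The plan is to prove the lemma with the construction \eqref{N-poly}--\eqref{truncation}: set $\mathcal H_n=\mathcal H^{n,n}$ and verify two things separately. First, each $\mathcal H_n$ is of class $\mathcal C^6$ with bounded derivatives. Second, $\mathcal H_n(\mu)\to\mathcal H(\mu)$ pointwise.

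\textbf{Regularity.} I would first build $\mathcal H^R$ on empirical measures. Consider the symmetric function $h(z_1,\dots,z_n):=\mathcal H(\frac1n\sum_i\delta_{z_i})$ on $(\mathbb R)^n$; it is $C^6$ because $\mathcal H$ has continuous derivatives up to sixth order. Define $h^R$ to coincide with $h$ on the ball $\{\frac1n\sum|z_i|^2\le R\}$. Outside this ball, $h^R$ is obtained by integrating twice the cut-off Hessian $\rho_R(\frac1n\sum|z_i|^2)D^2_{zz}h$. Concretely, I would take the second-order Taylor expansion around points of the ball along radial rays and replace $D^2h$ by the cut-off Hessian, so that $h^R$ is affine-plus-controlled outside the ball of radius $R+1$.

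With this choice the Hessian of $h^R$ is bounded, and its higher derivatives are bounded too, since they involve only derivatives of $h$ on the compact set $\{\frac1n\sum|z_i|^2\le R+1\}$ together with derivatives of $\rho_R$. Symmetry of $h^R$ is preserved because $\rho_R$ depends only on the symmetric quantity $\frac1n\sum|z_i|^2$.

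Then $\mathcal H^{R,n}(\tilde\mu)=\mathbb E[h^R(\xi_1,\dots,\xi_n)]$ with $\xi_i$ i.i.d.\ of law $\tilde\mu$. Its Wasserstein derivatives can be computed explicitly. For instance,
\[
\partial_\mu\mathcal H^{R,n}(\tilde\mu,z)=\sum_{i=1}^n\mathbb E\big[D_{z_i}h^R(\xi_1,\dots,\xi_{i-1},z,\xi_{i+1},\dots,\xi_n)\big],
\]
and higher derivatives are obtained by iterating this formula, each time freezing additional coordinates. The derivatives of order at least $2$ are therefore averages of bounded derivatives of $h^R$, hence bounded uniformly in $\tilde\mu$.

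First derivatives grow at most linearly because the Hessian is bounded. If strictly bounded first derivatives are required, I would additionally let $h^R$ become constant far away. This can be achieved by including a first-order cut-off in the same radial construction, which does not affect the region $\frac1n\sum|z_i|^2\le R$.

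\textbf{Convergence.} Fix $\mu$ and let $\xi_i$ be i.i.d.\ with law $\mu$, and write $\mu^n:=\frac1n\sum_{i=1}^n\delta_{\xi_i}$. Split
\[
\mathcal H_n(\mu)-\mathcal H(\mu)=\mathbb E\big[(h^n-h)\mathbf 1_{A_n^c}\big]+\mathbb E\big[\mathcal H(\mu^n)-\mathcal H(\mu)\big],
\qquad A_n=\Big\{\tfrac1n\textstyle\sum_i|\xi_i|^2\le n\Big\}.
\]
\begin{itemize}
\item For the second term, the Glivenko--Cantelli theorem in $\mathcal W_2$ gives $\mathcal W_2(\mu^n,\mu)\to0$ almost surely and in $L^2$. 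By $\mathcal W_2$-continuity of $\mathcal H$, together with its quadratic growth (which follows from the growth of $U$, $f$, $\Phi^*$ and the $|\tilde p_2|^2$ term in \eqref{lqd-general-ham}), uniform integrability of $\frac1n\sum|\xi_i|^2$ gives convergence to $0$ by dominated convergence.
\item For the first term, both $h^n$ and $h$ have quadratic growth, with constants uniform in $n$ thanks to the Taylor construction. Since $\mathbb P(A_n^c)\to0$ and $\frac1n\sum|\xi_i|^2$ is uniformly integrable, this term also tends to $0$.
\end{itemize}

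\textbf{Main obstacle.} The hard step is the construction of $h^R$ itself. It must simultaneously (i) match $\mathcal H$ on the ball, (ii) have exactly the cut-off Hessian prescribed in \eqref{convexity}, and (iii) be globally $C^6$ with derivatives of order $2$ to $6$ bounded uniformly in $n$. The uniformity in $n$ matters because these bounds later feed into Theorem~\ref{existence-of-decoupling-field}.

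My first attempt is to define $h^R$ by integrating the cut-off Hessian along rays from the origin, matching $h(0)$ and $Dh(0)$. Because $\rho_R\equiv1$ on the ball, $h^R=h$ there, and (i) holds automatically. The remaining task is a direct calculation showing that the mixed third and higher derivatives generated by differentiating $\rho_R$ stay bounded, which uses $|x|\le R+1$ on the support of $\rho_R'$.
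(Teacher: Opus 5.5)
Your argument is correct and follows the paper's route. You take $\mathcal H_n=\mathcal H^{n,n}$, get bounded derivatives for each fixed $n$ from the truncated symmetric function, and get convergence from the law of large numbers in $\mathcal W_2$. Your split over $A_n=\{\frac1n\sum_i|\xi_i|^2\le n\}$ also supplies a step the paper skips, where it replaces $\mathbb E[\mathcal H^m(\frac1m\sum_i\delta_{\xi_i})]$ by $\mathbb E[\mathcal H(\frac1m\sum_i\delta_{\xi_i})]$ in the limit.

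\textbf{Requirement (ii) should be dropped.} It is not needed for this lemma, and it cannot be met in general; the same objection applies to \eqref{convexity} as written.
\begin{itemize}
\item A matrix field $M$ is a Hessian only if $\partial_kM_{ij}=\partial_jM_{ik}$. For $M=\rho_R(\frac1n|z|^2)D^2h$, the $\rho_R'$-terms force $z_k\,\partial_{ij}h=z_j\,\partial_{ik}h$ wherever $\rho_R'\neq0$.
\item This already fails for $h(z)=\frac1n|z|^2$, i.e.\ $\mathcal H(\mu)=\int|x|^2\mu(dx)$, because there $M$ is a multiple of the identity.
\item So your radial formula $h^R(z)=h(0)+Dh(0)\cdot z+\int_0^1(1-t)\rho_R(t^2|z|^2/n)\,z^\top D^2h(tz)z\,dt$ does not have Hessian $\rho_RD^2h$.
\end{itemize}

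\textbf{What the radial construction does give is enough.} It gives (i), and (iii) for each fixed $n$, which is all the lemma needs.
\begin{itemize}
\item Since $z^\top D^2h(tz)z=\partial_t^2[h(tz)]$, two integrations by parts give $h^R(z)=\rho_R(|z|^2/n)h(z)+\int_0^1\partial_t^2\big[(1-t)\rho_R(t^2|z|^2/n)\big]h(tz)\,dt$. Hence $h^R\in C^6$ with no loss of derivatives.
\item For $|z|^2\ge n(R+1)$ one has $h^R(z)=a(z/|z|)+|z|\,b(z/|z|)$. So all derivatives of orders $1$ to $6$ are bounded, the first included; you do not need the extra first-order cut-off.
\end{itemize}

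\textbf{Uniformity in $n$.} This is not part of the statement, and this construction would not give it.
\begin{itemize}
\item In the same example, $h^n(z)=c_n|z|-c_n'$ far out, with $c_n\to2$.
\item Then $\partial^2_{\mu\mu}\mathcal H_n(\delta_0,n,n)=n(n-1)D^2_{z_1z_2}h^n(n,n,0,\dots,0)$ is of order $n$, although $\partial^2_{\mu\mu}\mathcal H\equiv0$.
\end{itemize}

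\textbf{One hypothesis to state.} Your convergence step uses uniform-in-$n$ quadratic growth of $h^n$. This does hold for the radial construction. However, it relies on boundedness of $\partial^2_{\mu\mu}\mathcal H$ and $D_x\partial_\mu\mathcal H$, which gives $|z^\top D^2h(y)z|\le\frac Cn|z|^2$. State that hypothesis explicitly.
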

Let us modify $(\mathcal H,\mathcal U)$ from \eqref{lqd-general-HJB} via $(\mathcal H_n,\mathcal U_n)_{n\geq1}$ in the way as in Lemma \ref{dis-con-app}, and consider the associated HJB equations in the following, where the solution is denoted by $V_n$:
\begin{align}\label{lqd-general-HJB-m}
\left\{\begin{aligned}& \partial_tV_n+\int D_q\partial_\mu V^{(q)}_n(t,\mu,m,q)\mu(dmdq)+\mathcal H_n\big((Id,\partial_\mu V_n)\sharp\mu\big)=0,\\
&\qquad(t,\mu)\in[0,T)\times\mathcal P_2(\mathbb R^d),\\
&V_n(T,\mu)=\mathcal U_n(\mu),\ \mu\in\mathcal P_2(\mathbb R^d).\end{aligned}\right.
 \end{align}
\begin{theorem}\label{lqd-general-HJB-N-wp-1}
 The solutions $(V_n)_{n\geq1}$ converge to the unique classical solution $V$ to \eqref{lqd-general-HJB} on $[0,T]\times\mathcal P_2(\mathbb R^d)$.
\end{theorem}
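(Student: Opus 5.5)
Each $V_n$ exists by Theorem \ref{existence-of-decoupling-field}: $\mathcal H_n$ and $\mathcal U_n$ have bounded derivatives and inherit the generalized displacement convexity of Assumption \ref{assumption-displacement-convex} from $\mathcal H$, $\mathcal U$ (Lemma \ref{dis-con-app} and the remark before it). The plan is to prove estimates on $V_n$ that are uniform in $n$ but allow derivatives of linear growth, then pass to the limit by compactness on $\mathcal W_2$-bounded sets. Uniqueness will follow from the same estimates.

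First, I check that $\mathcal H$ in \eqref{lqd-general-ham} has the required structure. The $\xi$-infimum is a Legendre-type transform of $U$, which is strongly displacement convex in $\xi$ with parameter $\delta$. This gives displacement concavity in $\tilde p_1$, with second derivatives in $\tilde p_1$ bounded by $C/\delta$. The terms $\kappa_2^2\mathbb E|\tilde p_2|^2/(4\lambda)$ and $\Phi^*(\mathbb E\tilde p_2)$ give bounded convex contributions in $\tilde p_2$, using $D^2\Phi\ge\delta$. The term $f(\mu_M)$ is displacement convex with bounded second derivatives, and $D^2_{QQ}U\le0$ controls the $Q$ direction. So \eqref{general-displacement} holds with a constant independent of $n$: the truncation \eqref{convexity} multiplies second derivatives by $\rho_R\in[0,1]$, and averaging over empirical measures preserves the inequalities.

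The core step reruns the a priori estimates behind Theorem \ref{existence-of-decoupling-field} through the $N$-particle value functions and Theorem \ref{prop-eigen}. The key point is to check that the bounds on $D_x\partial_\mu V_n$ and $\partial^2_{\mu\mu}V_n$ (the "second order" quantities) depend only on the constants in \eqref{general-displacement}–\eqref{general-displacement-1} and on bounds for the second and higher derivatives of $\mathcal H_n,\mathcal U_n$. They should not depend on $\sup|\mathcal H_n|$ or on first derivatives, since those grow with $n$. The comparison principle for symmetric Riccati equations (Lemma \ref{lem-comparison}) then gives two-sided bounds $-C\le \nabla^2 V_{N,n}\le C$ uniformly in $n$ and $N$.

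Integrating these bounds gives linear growth of $\partial_\mu V_n$ in $(x,\mu)$, that is,
\[
|\partial_\mu V_n(t,\mu,x)|\le C\bigl(1+|x|+M_2(\mu)^{1/2}\bigr),
\]
and quadratic growth of $V_n$, all uniform in $n$. Differentiating the HJB equation then bounds derivatives up to fourth order uniformly, because the derivatives of order $\ge2$ in \eqref{ham-convexity-1} are bounded. I expect this uniformity to be the main obstacle. The truncation cutoff $\rho_R$ produces derivative terms of size $\rho_R'\cdot R$ at the boundary of the region $\{\tfrac1n\sum|z_i|^2\le R\}$. I would try to avoid this by constructing $\mathcal H^R$ directly through its Hessian as in \eqref{convexity}, so that only second-order information is cut off. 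Alternatively, I would localize: for $\mu$ in a $\mathcal W_2$-ball, the push-forward $(Id,\partial_\mu V_n)\sharp\mu$ stays in a ball whose radius is independent of $n$ thanks to the linear growth. On that ball $\mathcal H_n=\mathcal H$ once $n$ is large, up to an error from the empirical-measure averaging that vanishes as $n\to\infty$.

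Finally, Arzelà–Ascoli on $[0,T]\times$ ($\mathcal W_2$-compact sets), with the uniform third- and fourth-order bounds, gives local uniform convergence of $V_n,\partial_\mu V_n,D_x\partial_\mu V_n,\partial_tV_n$ along a subsequence. Lemma \ref{dis-con-app}, together with local uniform convergence of $\mathcal H_n$ on bounded sets (from the uniform derivative bounds), lets us pass to the limit in \eqref{lqd-general-HJB-m}, producing a classical solution of \eqref{lqd-general-HJB}. For uniqueness, I take two solutions with linear-growth gradients and apply the verification Theorem \ref{verification}, both identifying them with the value function. Alternatively, I can differentiate the difference along the characteristic flow and use Gronwall with the uniform second-order bounds. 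Uniqueness then upgrades subsequence convergence to convergence of the whole sequence.
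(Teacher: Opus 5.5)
Your architecture is the paper's. You approximate by $\mathcal H_n$, prove $n$-uniform estimates through the $N$-particle HJBI equations and the Riccati comparison of Lemma \ref{prop-eigen} (whose constant depends only on the second-order quantities \eqref{C-depend}), apply Arzel\`a--Ascoli, pass to the limit with Lemma \ref{dis-con-app}, and get uniqueness by verification. Your subsequence-plus-uniqueness step is stated more carefully than the paper's.

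\textbf{The main gap} is the sentence ``Integrating these bounds gives linear growth of $\partial_\mu V_n$ \dots\ and quadratic growth of $V_n$, all uniform in $n$.'' Uniform bounds on $D_x\partial_\mu V_n$ and $\partial^2_{\mu\mu}V_n$ only control $\partial_\mu V_n(t,\mu,x)-\partial_\mu V_n(t,\delta_0,0)$. (In the particle system these bounds are $\pm\frac CN$ as in \eqref{eigen}, not $\pm C$.) They control $V_n$ only once $V_n(t,\delta_0)$ and $\partial_\mu V_n(t,\delta_0,0)$ are known. No second-order information stops these anchor values from blowing up with $n$. Without an anchor you get neither the linear growth your localization uses nor the pointwise bounds Arzel\`a--Ascoli needs.

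You rightly discarded Proposition \ref{uniform-1-1}, whose constant involves $|\partial_{\tilde\mu}\mathcal H_n|_\infty$. You still need first-order information in an $n$-uniform linear-growth form, and this is exactly what the paper adds:
\begin{itemize}
\item Lemma \ref{uniform-derivative-bound} gives $|\partial_{\tilde\mu}\mathcal H_n(\tilde\mu,z)|\le C(1+|z|+\int|\tilde z|\,\tilde\mu(d\tilde z))$ with $C$ independent of $n$.
\item Lemma \ref{lqd-general-HJB-N-wp}(ii) feeds this and the Hessian bound into It\^o's formula for $|NY_t|^2$ and Gronwall. This gives $N|D_{x_i}V_{N,n}|\le C(1+|x_i|)+(\frac CN\sum_j|x_j|^2)^{1/2}$, which survives $N\to\infty$ in part (iii).
\item In the proof of the theorem, the bound on $V_n$ over compacts comes from the characteristics \eqref{1st-step-1} and Gronwall.
\end{itemize}
In your framework the repair is short. $\mathcal H_n$ agrees with $\mathcal H$ in value at $\delta_0$ and in Wasserstein gradient at $(\delta_0,0)$, since the cutoff is inactive there. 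So your uniform second-order bounds give the uniform linear growth. A backward Gronwall along the characteristics started at $\delta_0$ then bounds $\partial_\mu V_n(t,\delta_0,0)$ and $V_n(t,\delta_0)$. But this step has to be in the proof.

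\textbf{Displacement convexity of $\mathcal H$.} Your check that $\mathcal H$ in \eqref{lqd-general-ham} satisfies \eqref{general-displacement} overlooks that the $\xi$-infimum depends on $\tilde p_2$ through $\gamma_1^\top\tilde p_2+\gamma_2^\top\mathbb E\tilde p_2$. So it is displacement concave in $\tilde p_2$ too, with curvature up to order $|\gamma_1|^2/\delta$, and it couples $\tilde p_1$ with $\tilde p_2$. The $p_2$-block of \eqref{general-displacement} needs convexity in $\tilde p_2$ with room to absorb that coupling. Hence $\kappa_2^2\mathbb E|\tilde p_2|^2/(4\lambda)$, together with the conjugate of $\Phi$ acting on the mean, must dominate it. That is a restriction on the coefficients, not a consequence of \eqref{ham-convexity-1}. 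Compare the linear-quadratic case: the $(\tilde p_2,\tilde p_2)$ entry of $Q^{(1)}_t$ in \eqref{app-Q} is $-\frac{\gamma_1\gamma_1^\top}{4\lambda_1}+\frac{\kappa_2^2}{4\lambda_3}$, and there the paper imposes \eqref{generalized-displacement-convexity-LQ} as a hypothesis. The paper's proof of this theorem likewise takes Assumption \ref{assumption-displacement-convex} for $\mathcal H$ for granted. You should state it as a standing hypothesis rather than claim to derive it.

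\textbf{The cutoff.} Your first remedy does not remove the difficulty you identified. \eqref{convexity} already truncates at the Hessian level. One more derivative of $\rho_R(\frac1n\sum_i|z_i|^2)D^2_{zz}\mathcal H$ still produces terms schematically of the form $\rho_R'\, z\otimes D^2_{zz}\mathcal H$, which need not be bounded uniformly in $n$. The paper simply asserts the $n$-uniform higher-order bounds on $\mathcal H_n$, so you are not behind it here. Only your localization argument could genuinely close this point, and it is only sketched.
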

With the classical solution $V$ to \eqref{lqd-general-HJB}, we construct the optimal robust strategy as follows:
\begin{proposition}\label{xi-implicit}
 Let $V$ be the solution to \eqref{lqd-general-HJB}. The robust optimal liquidation has an admissible optimal strategy
 which is given implicitly by 
 {\small\begin{align}\label{optimal-strategy-1}
  \left\{\begin{aligned}&-\tilde\phi^\top\int_{\mathbb R^d}\partial_\mu V^{(q)}(t,\mu,m,q)\mu(dmdq)+D_\xi U\big(\xi(t,\mu,m,q),m,\xi(t,\mu,\cdot)\sharp\mu\big)\\
  &\quad+\int_{\mathbb R^d}\partial_{\tilde\mu}U\big(\xi(t,\mu,\tilde m,\tilde q),\tilde m,\xi(t,\mu,\cdot)\sharp\mu,\xi(t,\mu,m,q)\big)\mu(d\tilde md\tilde q)-\phi^\top\partial_\mu V^{(q)}(t,\mu,m,q)-\partial_\mu V^{(m)}(t,\mu,m,q)=0,\\
  &\eta^*=\frac1{2\lambda}\partial_\mu V^{(q)}(t,\mu,m,q),\quad\alpha^*=(D\Phi)^{-1}\big(\partial_\mu V^{(q)}(t,\mu,m,q)\big),\ (t,\mu,m,q)\in[0,T]\times\mathcal P_2(\mathbb R^d)\times\mathbb R^{d_1}\times\mathbb R^{d_2}.
  \end{aligned}\right.
 \end{align}}
For $t\in[0,T]$, $(\xi^*,\eta^*,\alpha^*)(t,\cdot)$ from \eqref{optimal-strategy-1} is Lipschitz.
\end{proposition}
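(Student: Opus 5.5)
The plan is to derive \eqref{optimal-strategy-1} as the first-order (saddle point) conditions of the Hamiltonian \eqref{lqd-general-ham} evaluated at $\tilde\mu_t=(Id,\partial_\mu V(t,\mu,\cdot))\sharp\mu$. Admissibility and optimality would then follow from Theorem \ref{verification}, once Lipschitz continuity of $(\xi^*,\eta^*,\alpha^*)$ has been checked.

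\textbf{Step 1: saddle point.} The $\eta$- and $\alpha$-parts decouple. The map $\eta\mapsto \kappa_2\tilde p_2\cdot\eta-\lambda|\eta|^2$ is strictly concave, so its maximizer is pointwise and explicit, $\eta^*=\frac{\kappa_2}{2\lambda}\tilde p_2$; this matches the stated formula up to the normalization of $\kappa_2$. The map $\alpha\mapsto\kappa_1\alpha\cdot\mathbb E\tilde p_2-\Phi(\alpha)$ is strictly concave because $D^2_\alpha\Phi\geq\delta I$. Its maximizer solves $D\Phi(\alpha)=\kappa_1\mathbb E\tilde p_2$, so $\alpha^*=(D\Phi)^{-1}(\cdot)$. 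The inverse is globally Lipschitz with constant $\delta^{-1}$, since $D\Phi$ is strongly monotone.

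For $\xi$, the functional $\xi\mapsto\mathbb E[(-\tilde p_1+\phi^\top\tilde p_2+\tilde\phi^\top\mathbb E\tilde p_2)\cdot\xi+U(\xi,Q,\mu_\xi)]$ is strongly displacement convex on $L^2$ by the last condition in \eqref{ham-convexity-1}. Hence it has a unique minimizer, characterized by the vanishing of its $L^2$-gradient. Computing this gradient with the Lions derivative of the $\mu_\xi$ dependence gives exactly the first equation of \eqref{optimal-strategy-1}, once we substitute $(\tilde p_1,\tilde p_2)=\partial_\mu V(t,\mu,m,q)$. Since the objective is convex in $\xi$ and concave in $(\eta,\alpha)$, the inf and sup commute, so Assumption \ref{assumption-saddle-points} holds.

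\textbf{Step 2: existence of a feedback and Lipschitz bounds.} The first equation of \eqref{optimal-strategy-1} has the form $F(\xi)=P$ with $P=-\tilde\phi^\top\mathbb E\partial_\mu V^{(q)}-\phi^\top\partial_\mu V^{(q)}-\partial_\mu V^{(m)}$. By strong monotonicity,
\[
\delta\,\mathbb E|\xi-\xi'|^2\leq \mathbb E[(F(\xi)-F(\xi'))\cdot(\xi-\xi')],
\]
so $\mathbb E|\xi-\xi'|^2\leq\delta^{-2}\mathbb E|P-P'|^2$. We then make this pointwise in $(m,q)$. With the law $\nu=\xi\sharp\mu$ frozen, the map $x\mapsto D_\xi U(x,q,\nu)+\int\partial_{\tilde\mu}U(\cdot,\nu,x)$ should be uniformly monotone. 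I would obtain this by testing the monotonicity inequality with perturbations supported on small sets. Combined with the bounded second derivatives of $U$, this makes $\xi^*(t,\mu,m,q)$ Lipschitz in $(m,q)$, with constant controlled by $\delta^{-1}(\|D_{(m,q)}\partial_\mu V\|_\infty+C)$.

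Lipschitz dependence on $\mu$ in $\mathcal W_2$ follows the same way, using couplings and the bound on $\partial^2_{\mu\mu}V$. Measurability of the selection comes from uniqueness together with continuity, via the implicit function theorem in $L^2$.

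\textbf{Step 3: regularity input (main obstacle).} Everything above requires $D_x\partial_\mu V$ and $\partial^2_{\mu\mu}V$ to be bounded on $[0,T]$. Because $U$ only has bounded derivatives of order $\geq2$, $V$ itself has quadratic growth, but its second derivatives should be bounded. I would take these bounds from the uniform estimates on $V_n$ established in the proof of Theorem \ref{lqd-general-HJB-N-wp-1} (through Theorem \ref{existence-of-decoupling-field} and Lemma \ref{dis-con-app}), and pass them to the limit $V$. The delicate point is that these bounds must be uniform in $n$, which is where the displacement-convexity preservation of Lemma \ref{dis-con-app} is used.

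\textbf{Step 4: admissibility and conclusion.} Given Lipschitz feedbacks, the McKean–Vlasov SDE for $(M,Q)$ is well-posed. The control $\eta^*$ is then linear in the Lipschitz quantity $\partial_\mu V^{(q)}$, which has linear growth, so it has moments of all orders; the BMO condition \eqref{BMO} then needs a conditional moment estimate, which I would get from Gaussian-type bounds on $Q$. Theorem \ref{verification} then yields optimality.
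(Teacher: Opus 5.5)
Your Steps 1--3 are sound, but they take a different route from the paper's proof.

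The paper never works with the $L^2$ minimizer directly. It restricts to empirical measures $\mu_N$ and applies Lemma \ref{emp-ham}: a finite-dimensional strictly convex problem, where the $kN$-splitting argument shows the minimizer takes a single value on each atom and is therefore a feedback. This yields the discrete first-order system \eqref{xi-implicit-1}. The paper then proves the Lipschitz bounds \eqref{xi-Lipschitz} in $(m,q)$ and in $\mu_N$ at that discrete level, extends $\xi^*(t,\cdot)$ by density of empirical measures, and uses the second half of Lemma \ref{emp-ham} to identify the extension as the minimizer for general $\mu$.

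You instead get existence and uniqueness at once from strong convexity of the lifted functional on $L^2$. Your small-set localization then turns the $L^2$ monotonicity in \eqref{ham-convexity-1} into pointwise strong monotonicity of the frozen-law map $x\mapsto D_\xi U(x,q,\nu)+\breve{\mathbb E}[\partial_\mu U(\breve\xi^*,\breve Q,\nu,x)]$. That is exactly the inequality the paper uses, without derivation, as the first step of \eqref{xi-Lipschitz}.

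Your route gains more than that justification. The frozen-law equation has a unique solution $g(m,q)$ for every $(m,q)$, and the $L^2$ first-order condition forces $\xi^*=g(M,Q)$ a.s. So the feedback form, and the definition off the support of $\mu$, come for free: the implicit function theorem in $L^2$ you invoke is not needed, and no density or extension step is required. The paper's route, in return, keeps existence and the feedback structure entirely finite-dimensional, and reuses the Lemma \ref{emp-ham} machinery already needed for Lemma \ref{uniform-derivative-bound}.

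Both routes take the bounds on $D_x\partial_\mu V$ and $\partial^2_{\mu\mu}V$ from Lemma \ref{lqd-general-HJB-N-wp}(iii) and Theorem \ref{lqd-general-HJB-N-wp-1}, as in your Step 3. Also, since \eqref{lqd-general-ham} is additively separable in $\xi$ and $(\eta,\alpha)$, exchanging $\inf$ and $\sup$ needs no minimax theorem.

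The step that does not close as you describe it is the BMO part of Step 4. The paper's proof does not resolve it either; it only asserts that the verification is standard. Condition \eqref{BMO} requires $\mathbb E[\int_\tau^T|\eta^*_s|^2ds\,|\,\mathcal F_\tau]$ to be \emph{essentially bounded}, uniformly over stopping times $\tau$. Gaussian-type moment bounds on $Q$ give integrability of every order, but they give no $L^\infty$ control of a conditional expectation. The only available estimate on $\partial_\mu V^{(q)}$ is linear growth in $(m,q)$, and such growth genuinely occurs: already in the LQ case, $\eta^{*,\lambda}$ in \eqref{lambda-feedback} is affine in $(M,Q)$. For such $\eta^*$ the conditional expectation behaves like $(T-\tau)|Q_\tau|^2$, which is not essentially bounded. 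To conclude admissibility you would need either a bound on $\partial_\mu V^{(q)}$ that is uniform in $(m,q)$, or to replace BMO by a linear-growth (Bene\v{s}-type) criterion ensuring that $\mathcal E(\kappa_2\int_0^\cdot\eta^{*\top}_sdW_s)$ is a true martingale.
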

\subsubsection{The linear quadratic case with strict liquidation constraint}
We now turn to the application of our well-posedness results on the linear quadratic case. The dynamic programming method for linear quadratic models enables us to obtain and analysis explicit solutions for optimal liquidation with composite uncertainty. Consider the large investor holding $M_t$ shares of assets as in \eqref{model-1}. For the linear quadratic case, the investor's position still follows the dynamic in \eqref{model-1} while it is postulated that\begin{align}\label{lqd-dyn-2}
 dQ_t=(-\rho Q_t+\kappa_1\alpha_t-\gamma_1\xi_t-\gamma_2\mathbb E\xi_t+\gamma_3M_t+\gamma_4\mathbb EM_t)dt+dW^\eta_t,\quad t\in[0,T],\ Q_0=q,
\end{align}
where the price dynamics is impacted by $(\xi_t,M_t)_{t\in[0,T]}$ and its distribution. Given a feedback strategy $\xi_t$ with initial wealth $x$, we are interested in the following cost functional:
\begin{align}\label{lqd-dyn-4-0}
 \left\{\begin{aligned}&J_0(m,q,\xi)=\sup_{(\eta,\alpha)\in\Gamma^\xi_0}\tilde J_0(m,\xi,\eta,\alpha),\\
&\tilde J_0(m,q,\xi,\eta,\alpha)=\mathbb E\bigg[\lambda_1\int_0^T|\xi_t|^2dt+\lambda_{21}\int_0^T|M_t|^2dt+\lambda_{22}\int_0^T|M_t-\mathbb EM_t|^2dt\\
 &\qquad\qquad\qquad-\int_0^T\big(\lambda_3|\eta_t|^2+\lambda_4|\alpha_t|^2\big)dt-\lambda_5\int_0^T\xi^\top_t\gamma_5Q_tdt+\lambda|M_T|^2-\hat\lambda_5M^\top_T\gamma_5Q_T\bigg].\end{aligned}\right.
\end{align}
In the right hand side of \eqref{lqd-dyn-4}, the first term corresponds to the transaction cost caused by the temporary market impact. The second term corresponds to the market risk. The third term penalize the deviation from the reference model, where the integral of $\lambda_3|\eta_t|^2$ describes the relative entropy between the current probability space and the reference probability space and the integral of $\lambda_4|\alpha_t|^2$ is the distance from reference parameter $\bar\alpha=0$. The forth term is the expected liquidation proceeds. The purpose of the last term is two fold: when $\lambda<+\infty$, it serves as the transaction cost from the bulk transaction at time $T$, while the bulk transaction generates liquidation proceeds $\hat\lambda_5M^\top_T\gamma_5Q_T$; when $\lambda=+\infty$, it formally puts a constraint on the liquidation that $X_T=0$, which will be thoroughly examined later. We will denote the value function by $V^\lambda$ to indicate the terminal condition.

In order to refer to previous results and solve the above optimal liquidation problem with composite uncertainty, we consider the controlled distribution flow of  $(M_t,Q_t)_{t\in[0,T]}$ from \eqref{model-1} and \eqref{lqd-dyn-2}, then we perform transformation and simplification to the objective functional. Applying It\^o's formula to $Ke^{C(T-t)}Q^\top_tQ_t$ and plugging the result into \eqref{lqd-dyn-4-0}, we obtain
\begin{align*}
&\tilde J_0(m,q,\xi,\eta,\alpha)=\mathbb E\bigg[\lambda_1\int_0^T|\xi_t|^2dt+\lambda_{21}\int_0^T|M_t|^2dt+\lambda_{22}\int_0^T|M_t-\mathbb EM_t|^2dt\\
 &\qquad\qquad\qquad-\int_0^T\big(\lambda_3|\eta_t|^2+\lambda_4|\alpha_t|^2\big)dt-\lambda_5\int_0^T\xi^\top_t\gamma_5Q_tdt-C\int_0^TK_tQ^\top_tQ_tdt\\
 &\qquad\qquad\qquad+2\int_0^TK_tQ^\top_t\big(-\rho Q_t+\kappa_1\alpha_t-\gamma_1\xi_t-\gamma_2\mathbb E\xi_t+\gamma_3M_t+\gamma_4\mathbb EM_t+\kappa_2\eta_t\big)dt\\
 &\qquad\qquad\qquad+\lambda|M_T|^2-\hat\lambda_5M^\top_T\gamma_5Q_T-K_TQ^\top_TQ_T\bigg]+K_0\mathbb E[q^\top q],
\end{align*}
where $K_t:=Ke^{C(T-t)}$ with $K$ and $C$ to be determined. Therefore, solving \eqref{lqd-dyn-4-0} is equivalent to solving the minima of $J(0,m,q,\xi)$, where
\begin{align}\label{lqd-dyn-4}
 \left\{\begin{aligned}&J(t,m,q,\xi)=\sup_{(\eta,\alpha)\in\Gamma^\xi_t}\tilde J(t,m,\xi,\eta,\alpha),\\
&\tilde J(t,m,q,\xi,\eta,\alpha)=\mathbb E\bigg[\lambda_1\int_t^T|\xi_s|^2ds+\lambda_{21}\int_t^T|M_s|^2ds+\lambda_{22}\int_t^T|M_s-\mathbb EM_s|^2ds\\
 &\qquad\qquad\qquad-\int_t^T\big(\lambda_3|\eta_s|^2+\lambda_4|\alpha_s|^2\big)ds-\lambda_5\int_t^T\xi^\top_s\gamma_5Q_sds-C\int_t^TK_sQ^\top_sQ_sds\\
 &\qquad\qquad\qquad+2\int_t^TK_sQ^\top_s\big(-\rho Q_s+\kappa_1\alpha_s-\gamma_1\xi_s-\gamma_2\mathbb E\xi_s+\gamma_3M_s+\gamma_4\mathbb EM_s+\kappa_2\eta_s\big)ds\\
 &\qquad\qquad\qquad+\lambda|M_T|^2-\hat\lambda_5M^\top_T\gamma_5Q_T-K_TQ^\top_TQ_T\bigg].
\end{aligned}\right.
\end{align}
Define the value function
\begin{align*}
 V^\lambda(t,\mu):=\inf_{\xi\in\mathcal U_t} J(t,m,q,\xi),\quad t\in[0,T],\ {\rm Law}(m,q)=\mu.
\end{align*}
Using the dynamic programming principle, we formally obtain the HJBI equation \eqref{HJB} for $V^\lambda$, where the Hamiltonian is the following:
\begin{align}\label{lqd-ham-1}
 \mathcal H_t(\tilde\mu)&=\inf_{\xi\in L^2(\Omega;\mathbb R^{d_1})}\sup_{\substack{\eta\in L^2(\Omega;\mathbb R^{d_2})\\\alpha\in\mathbb R^{d_2}}}\mathbb E\bigg[\lambda_1|\xi|^2+(\lambda_{21}+\lambda_{22})|M|^2-\lambda_{22}|\mathbb EM|^2-\lambda_3|\eta|^2-\lambda_4|\alpha|^2\notag\\
 &-(\tilde p^\top_1+\lambda_5 Q^\top\gamma^\top_5 )\xi+(\tilde p_2^\top+2K_tQ^\top)\big(-\rho Q+\kappa_1\alpha-\gamma_1\xi-\gamma_2\mathbb E\xi+\gamma_3M+\gamma_4\mathbb EM+\kappa_2\eta\big)-CK_tQ^\top Q\bigg]\notag\\
 &=-\frac1{4\lambda_1}\mathbb E\big[|-\tilde p_1-\lambda_5\gamma_5 Q-\gamma^\top_1(\tilde p_2+2K_tQ)-\gamma^\top_2\mathbb E(\tilde p_2+2K_tQ)|^2\big]+\frac{\kappa^2_1}{4\lambda_4}|\mathbb E(\tilde p_2+2K_tQ)|^2\notag\\
 &\quad+\frac{\kappa^2_2}{4\lambda_3}\mathbb E\big[|\tilde p_2+2K_tQ|^2\big]+(\lambda_{21}+\lambda_{22})\mathbb E\big[|M|^2\big]-\lambda_{22}|\mathbb EM|^2+\mathbb E\big[(\tilde p^\top_2+2K_tQ^\top)\gamma_3M\big]\notag\\
 &\quad-(C+2\rho)K_t\mathbb E[Q^\top Q]+\mathbb E\big[\tilde p^\top_2+2K_tQ^\top\big]\gamma_4\mathbb EM\notag\\
 &=:\mathbb E\big[\Lambda^\top Q^{(1)}_t\Lambda\big]-\varepsilon\mathbb E[|\tilde p_1|^2]+\mathbb E[\Lambda]^\top Q^{(2)}_t\mathbb E[\Lambda],
\end{align}
and the terminal condition is
\begin{align*}
 V^\lambda(T,\mu)=\int_{\mathbb R^{d_2}}\int_{\mathbb R^{d_1}}(\lambda|m|^2-\hat\lambda_5 m^\top q-K|q|^2)\mu(dmdq),\quad\mu\in\mathcal P_2(\mathbb R^d).
\end{align*}
Here $\Lambda=(M,Q,\tilde p_1,\tilde p_2)^\top\in L^2(\Omega,\mathcal F,\mathbb P;\mathbb R^d\times\mathbb R^d)$ is the lifting of $\tilde\mu\in\mathcal P_2(\mathbb R^d\times\mathbb R^d)$. The specific expression of $Q^{(1)},\ Q^{(2)}$ is given in \eqref{app-Q}. After directly adapting previous verification theorem and well-posedness results to the time-inhomogeneous case, we can show the existence and uniqueness of $V^\lambda$.
\begin{theorem}\label{application-LQ-1}
Suppose that \eqref{generalized-displacement-convexity-LQ} holds for some $\varepsilon>0$ on the following $\big(Q^{(1)}_t,Q^{(2)}_t\big)_{t\in[0,T]}$:
\begin{align}\label{app-Q}
 \left\{\begin{aligned}Q^{(1)}_t=\left(\begin{matrix}\lambda_{21}+\lambda_{22}&K_t\gamma^\top_3&0&\frac{\gamma^\top_3}2\\ \ \\
K_t\gamma_3&{\small\substack{\big[-(C+2\rho)K_t+\frac{\kappa^2_2K^2_t}{\lambda_3}\big]I_{d_2}\\-\frac{(\lambda_5\gamma_5+2K_t\gamma^\top_1)^\top(\lambda_5\gamma_5+2K_t\gamma^\top_1)}{4\lambda_1}}}&-\frac{\lambda_5\gamma^\top_5+2K_t\gamma_1}{4\lambda_1}&\substack{-\frac1{4\lambda_1}(\lambda_5\gamma^\top_5+2K_t\gamma_1)\gamma^\top_1\\+\frac{\kappa^2_2K_t}{2\lambda_3}}\\ \ \\
0&-\frac{\lambda_5\gamma_5+2K_t\gamma^\top_1}{4\lambda_1}&-\frac1{4\lambda_1}+\varepsilon&-\frac{\gamma^\top_1}{4\lambda_1}\\\ \\
\frac{\gamma_3}2&\substack{-\frac1{4\lambda_1}\gamma_1(\lambda_5\gamma_5+2K_t\gamma^\top_1)\\+\frac{\kappa^2_2K_t}{2\lambda_3}}&-\frac{\gamma_1}{4\lambda_1}&-\frac{\gamma_1\gamma^\top_1}{4\lambda_1}+\frac{\kappa^2_2}{4\lambda_3}
 \end{matrix}\right),\\ \ \\
 Q^{(2)}_t=\left(\begin{matrix}-\lambda_{22}&K_t\gamma^\top_4&0&\frac{\gamma^\top_4}2\ \\ \\ 
 K_t\gamma_4&{\small\substack{-\frac1{\lambda_1}(\lambda_5\gamma^\top_5+2K_t\gamma_1)K_t\gamma^\top_2\\-\frac{K^2_t}{\lambda_1}\gamma_2\gamma^\top_2+\frac{K^2_t\kappa^2_1}{\lambda_4}}}&-\frac{K_t\gamma_2}{2\lambda_1}&-\frac{K_t\gamma_2\gamma^\top_1}{2\lambda_1}+\frac{\kappa^2_1K_t}{2\lambda_4}\\ \ \\
0&-\frac{K_t\gamma^\top_2}{2\lambda_1}&0&-\frac{\gamma^\top_2}{4\lambda_1}\\ \ \\
\frac{\gamma_4}2&-\frac{K_t\gamma_1\gamma^\top_2}{2\lambda_1}+\frac{\kappa^2_1K_t}{2\lambda_4}&-\frac{\gamma_2}{4\lambda_1}&-\frac{\gamma_1\gamma^\top_2}{2\lambda_1}-\frac{\gamma_2\gamma^\top_2}{4\lambda_1}+\frac{\kappa^2_1}{4\lambda_4}
 \end{matrix}\right).\end{aligned}\right.
\end{align}
Then the value function $V^\lambda$ is given by \eqref{lambda-value}, where $(Q^{(1)},Q^{(2)})$ in \eqref{LQ-ham-penalized} is replcaced with $\big(Q^{(1)}_t,Q^{(2)}_t\big)_{t\in[0,T]}$. The optimal feedback strategy is given by
\begin{align}\label{lambda-feedback}
 \left\{\begin{aligned}
  &\xi^{*,\lambda}(t,M,Q,\mu)=\frac1{2\lambda_1}\bigg(p^\lambda_1(t,M,Q,\mu)+\gamma^\top_1\big(p^\lambda_2(t,M,Q,\mu)+2K_tQ\big)\\
  &\qquad\qquad\qquad\qquad+\gamma_2\tilde{\mathbb E}[p^\lambda_2(t,\tilde M,\tilde Q,\mu)+2K_t\tilde Q]+\lambda_5\gamma_5Q\bigg),\\
  &\eta^{*,\lambda}(t,M,Q,\mu)=\frac{\kappa_2}{2\lambda_3}\big(p^\lambda_2(t,M,Q,\mu)+2K_tQ\big),\\
  &\alpha^{*,\lambda}(t,M,Q,\mu)=\frac{\kappa_1}{2\lambda_4}\tilde{\mathbb E}[p^\lambda_2(t,\tilde M,\tilde Q,\mu)+2K_tQ].
 \end{aligned}\right.
\end{align}
Here for $i=1,2,\ s\in[0,T]$, we denote by
\begin{align*}
 &\big(p^\lambda_1,p^\lambda_2\big)^\top(t,M,Q,\mu):=\partial_\mu V^\lambda(t,\mu,M,Q)=2a^{(1)}(t)\left(\begin{matrix}M\\
 Q
 \end{matrix}\right)+2a^{(3)}(t)\left(\begin{matrix}\int_{\mathbb R^d}m\mu(dmds)\\
 \int_{\mathbb R^d}s\mu(dmds)
 \end{matrix}\right)+a^{(2)}(t),\\
 &(t,\mu,M,Q)\in[0,T]\times\mathcal P_2(\mathbb R^d)\times\mathbb R^{d_1}\times\mathbb R^{d_2}.
\end{align*}
\end{theorem}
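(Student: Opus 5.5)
The plan is to reduce the statement to Theorems \ref{limit-V-lambda} and \ref{verification}, applied in their time-inhomogeneous form.

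\textbf{Step 1 (solving the HJBI equation).} With $(Q^{(1)}_t,Q^{(2)}_t)$ as in \eqref{app-Q}, the Hamiltonian \eqref{lqd-ham-1} has exactly the structure of $\mathcal H^\varepsilon$ in \eqref{LQ-ham-penalized}, except that it depends continuously on $t$ through $K_t=Ke^{C(T-t)}$. The terminal datum $\int(\lambda|m|^2-\hat\lambda_5m^\top q-K|q|^2)\,d\mu$ has the form of $U^\lambda$, with $q^T_{12}=-\hat\lambda_5$ and $q^T_{22}=-K$.

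I would insert the quadratic ansatz \eqref{lambda-value} into \eqref{HJB}. Matching the coefficients of $\int x^\top(\cdot)x\,d\mu$ and of $\bar x^\top(\cdot)\bar x$ gives the Riccati system \eqref{LQ-Riccati} with time-dependent coefficients. Because both $U^\lambda$ and the Hamiltonian are purely quadratic, the affine and constant parts of the system are linear and start from zero data, so $a_2=a_4=0$ apart from the trace term.

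The a priori bounds \eqref{uniform-estimates} are what give global existence. They come from the comparison principle (Lemma \ref{lem-comparison}) under \eqref{generalized-displacement-convexity-LQ}, and they only require the matrix inequalities to hold uniformly for each $t$. Since $t\mapsto Q^{(i)}_t$ is continuous on the compact interval $[0,T]$, the constant $C$ can be chosen uniformly in $t$. These bounds prevent blow-up, so the system has a unique solution on $[0,T]$, and hence $V^\lambda$ is a classical solution.

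\textbf{Step 2 (identifying the saddle point).} Lift $\tilde\mu_t=(Id,\partial_\mu V^\lambda)\sharp\mu$ to $(M,Q,\tilde p_1,\tilde p_2)$. The objective in \eqref{lqd-ham-1} is:
\begin{itemize}
\item strictly convex in $\xi$, with coefficient $\lambda_1>0$;
\item strictly concave in $\eta$, with coefficient $-\lambda_3<0$;
\item strictly concave in $\alpha$, with coefficient $-\lambda_4<0$.
\end{itemize}
The coupling terms are bilinear, and the mean-field term $-\gamma_2\mathbb E\xi$ enters linearly. Setting the first-order conditions of $\xi$, $\eta$ and $\alpha$ to zero pointwise, and taking expectations where the mean appears, produces the unique saddle point. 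Evaluating it at $(\tilde p_1,\tilde p_2)=\partial_\mu V^\lambda(t,\mu,M,Q)$ gives exactly \eqref{lambda-feedback}. Uniqueness of the saddle point follows from strict convexity–concavity, so Assumption \ref{assumption-saddle-points} and the Isaacs condition \eqref{saddlepoint-2} hold.

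\textbf{Step 3 (verification).} The derivative $\partial_\mu V^\lambda=2a^{(1)}(t)x+2a^{(3)}(t)\bar x$ is affine in $(x,\bar x)$ with bounded coefficients. Hence the feedbacks $(\xi^*,\eta^*,\alpha^*)$ are Lipschitz in $(x,\mu)$, uniformly in $t$. The condition on $D_x\partial_\mu V^\lambda=2a^{(1)}(t)$ is trivial. Theorem \ref{verification}, whose Itô/chain-rule argument is unaffected by time dependence in $\mathcal H$, then yields $V^\lambda=\mathcal V$ and optimality of \eqref{lambda-feedback}.

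It remains to check admissibility in $\tilde{\mathcal U}_t$, namely the BMO requirement \eqref{BMO}. The process $\eta$ is affine in $(M,Q)$, so I would argue as follows. For Lipschitz $\eta$, conditional second moments of the state on $[\tau,T]$ are bounded by $C(1+|X_\tau|^2)$. Uniform BMO is therefore not automatic. I would either restrict the class of alternatives $\mathcal V^\xi_t$ to those for which \eqref{BMO} holds (the supremum is unaffected by density), or use the fact that Novikov holds locally on short time intervals for linear feedbacks with Gaussian state.

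\textbf{Main obstacle.} The main difficulty is Step 1: making the comparison-based bounds \eqref{uniform-estimates} work with time-dependent coefficients. This requires checking that the proof of Theorem \ref{limit-V-lambda} uses only pointwise-in-$t$ matrix inequalities with constants uniform on $[0,T]$. The role of the free constants $K$ and $C$ in $K_t$ is to make the $Q$–$Q$ block of $Q^{(1)}_t$ sufficiently negative so that \eqref{generalized-displacement-convexity-LQ} can hold; this is assumed in the statement.
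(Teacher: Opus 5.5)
Your route is the paper's. Its proof of this theorem is the one-line reduction to (time-inhomogeneous versions of) Theorem \ref{LQ-global-wp} and Theorem \ref{verification}. Your Steps 1--3 simply unpack that reduction: comparison-based global solvability of the Riccati system, the explicit saddle point of the separable objective in \eqref{lqd-ham-1}, and verification with affine, hence Lipschitz, feedbacks.

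Three side remarks need adjusting; none affects the main line.

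First, a $t$-uniform constant in \eqref{generalized-displacement-convexity-LQ} does not follow from continuity of $t\mapsto Q^{(i)}_t$ on $[0,T]$. These are one-sided bounds that impose sign conditions on sub-blocks, and the best constant can blow up where a negative block degenerates. For example, $au^2+2tuw-t^3w^2\le Cu^2$ for all $u,w$ holds at every $t\in[0,1]$, but for $t>0$ only with $C\ge a+1/t$. The uniform $C$ has to be read into the hypothesis, as the paper implicitly does.

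Second, your observation that $a^{(4)}$ is driven by ${\rm tr}[\sigma^\top\sigma a^{(1)}]$ is correct. Strictly, it means $V^\lambda$ equals \eqref{lambda-value} plus $a^{(4)}(t)=\int_t^T{\rm tr}[\sigma^\top\sigma a^{(1)}(s)]\,ds$. This term is in general nonzero here, since the noise acts on $Q$ and $a^{(1)}_{22}(T)=-KI_{d_2}$. State this explicitly. It vanishes under $\partial_\mu$, so \eqref{lambda-feedback} is unaffected.

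Third, the BMO admissibility you leave open is not part of this theorem. $V^\lambda$ is defined as an infimum over $\mathcal U_t$, and Theorem \ref{verification} needs only Lipschitz feedbacks, which you have. If you did insist on $\tilde{\mathcal U}_t$, neither of your proposed fixes would deliver it as stated:
\begin{itemize}
\item A local Novikov argument gives the martingale property of the stochastic exponential, not the bound \eqref{BMO}.
\item Shrinking $\mathcal V^\xi_t$ to BMO drifts contradicts the last item of Definition \ref{admissible-set}. It also removes the unbounded $\eta^{*,\lambda}$ itself from the alternatives, so the verification would need an additional approximation step.
\end{itemize}
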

A more involved situation is the multi-asset liquidation problems with strict liquidation constraint. In order to make the strict liquidation constraint rigorous for the liquidation problem with uncertainty, we will work with the admissible set $\tilde{\mathcal U}^{Con}_t$ consisting of $\xi\in\tilde{\mathcal U}_t$ such that $M_T=0$ for all $(\eta_s,\alpha_s)_{s\in[t,T]}\in\mathcal V^\xi_t$. Although the candidate value function has been constructed in Theorem \ref{limit-V-lambda}, our verification is more involved than the usual one since we are now dealing with a mean field control problem with both robustness and terminal constraint. The candidate optimal strategy and the corresponding controlled processes are constructed with the candidate value function in Theorem \ref{limit-V-lambda} as follows:
\begin{align}\label{MS-path}
  \left\{\begin{aligned}dM^*_s&=-\xi^*_sds,\\
  dQ^*_s&=\big[\alpha^*_s+\eta^*_s-\gamma_1\xi^*_s-\gamma_2\mathbb E\xi^*_s+\gamma_3M^*_s+\gamma_4\mathbb EM^*_s)\big]ds+dW_s,\ s\in[0,T],\\
  (M^*_0,&Q^*_0)=(m,q).
  \end{aligned}\right.
\end{align}
For $\varphi=\xi^*,\ \alpha^*,\ \eta^*$, $(\varphi_s)_{s\in[t,T]}$ is defined via a feedback strategy in the same way as in \eqref{lambda-feedback} with $\lambda=+\infty$, where $\varphi_s:=\varphi\big(s,M^*_s,Q^*_s,\mu_{(M^*_s,Q^*_s)}\big)$ and
\begin{align*}
 &\big(p^{+\infty}_1,p^{+\infty}_2\big)^\top(t,M,Q,\mu):=\partial_\mu V^{+\infty}(t,\mu,M,Q)=2a^{+\infty}_1(t)\left(\begin{matrix}M\\
 Q
 \end{matrix}\right)+2a^{+\infty}_3(t)\left(\begin{matrix}\int_{\mathbb R^d}m\mu(dmds)\\
 \int_{\mathbb R^d}s\mu(dmds)
 \end{matrix}\right),\\
 &(t,\mu,M,Q)\in[0,T]\times\mathcal P_2(\mathbb R^d)\times\mathbb R^{d_1}\times\mathbb R^{d_2}.
\end{align*}
The next theorem reveals that the candidate in \eqref{MS-path} is indeed optimal.
\begin{theorem}\label{verification-LQ-constrained}
Suppose that \eqref{generalized-displacement-convexity-LQ} holds for some $\varepsilon>0$ on the $\big(Q^{(1)}_t,Q^{(2)}_t\big)_{t\in[0,T]}$ in \eqref{app-Q}. The value function and the optimal strategy for the constrained optimal liquidation problem are given in \eqref{constraint-value} and \eqref{MS-path}, where $(Q^{(1)},Q^{(2)})$ in \eqref{LQ-ham-penalized} is replcaced with $\big(Q^{(1)}_t,Q^{(2)}_t\big)_{t\in[0,T]}$.
\end{theorem}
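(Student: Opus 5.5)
The proof is a verification argument for the constrained problem, run in two halves. The first half shows the candidate $\xi^*$ from \eqref{MS-path} is admissible in $\tilde{\mathcal U}^{Con}_0$ and achieves $V^{+\infty}$. The second half shows that no admissible constrained strategy does better. The comparison tool throughout is the penalized family $V^\lambda$ from Theorem \ref{application-LQ-1}, together with the uniform bounds \eqref{uniform-estimates} and the locally uniform convergence $a^\lambda_i\to a^{+\infty}_i$ from Theorem \ref{limit-V-lambda}.

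\emph{Lower bound.} Fix $\xi\in\tilde{\mathcal U}^{Con}_t$. For any $(\eta,\alpha)\in\mathcal V^\xi_t$ we have $M_T=0$, so the terminal penalty $\lambda|M_T|^2$ vanishes. Hence $\tilde J$ with penalty $\lambda$ equals the constrained cost for every $\lambda$, and therefore $J^\lambda(t,\mu,\xi)=J^{Con}(t,\mu,\xi)\ge V^\lambda(t,\mu)$. Letting $\lambda\to\infty$ and using pointwise convergence $V^\lambda\to V^{+\infty}$ on $[0,T-\kappa]$ gives $J^{Con}(t,\mu,\xi)\ge V^{+\infty}(t,\mu)$. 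This requires $V^\lambda$ to be monotone in $\lambda$, or at least convergent at $t$. Monotonicity follows from the comparison principle underlying \eqref{uniform-estimates}: increasing the terminal datum increases the Riccati solutions.

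\emph{Upper bound and admissibility.} Set $p=\partial_\mu V^{+\infty}$. The closed loop \eqref{MS-path} is then a linear McKean--Vlasov SDE whose coefficients are continuous on $[0,T)$ but blow up like $(T-t)^{-1}$ in the $M$-direction. I would first take expectations to get a linear ODE for $(\mathbb EM^*,\mathbb EQ^*)$. I would then write the fluctuation $M^*-\mathbb EM^*$ as the solution of a linear SDE with drift $-\frac{1}{2\lambda_1}\cdot 2a^{+\infty}_{1,11}(s)M+\dots$. The lower bound $a^{+\infty}_{1,11}(s)\ge \frac{1}{C(4\varepsilon+C)(T-s)}I$ from \eqref{uniform-estimates} yields a contraction factor of the form $\exp(-c\int_t^s (T-r)^{-1}dr)=((T-s)/(T-t))^{c}$. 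This gives $\mathbb E|M^*_s|^2\le C(T-s)^{2c'}$, and hence $M^*_T=0$ almost surely for every admissible perturbation $(\eta,\alpha)$. Such perturbations enter only through $Q$, whose coefficients are bounded, and BMO \eqref{BMO} controls them.

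Next I apply It\^o's formula for measure flows to $V^{+\infty}(s,\mu_{(M_s,Q_s)})$ on $[t,T-\kappa]$. Here $\xi=\xi^*$ and $(\eta,\alpha)$ is arbitrary. The HJBI equation \eqref{constraint-HJB} and the saddle property give
\[
V^{+\infty}(t,\mu)\ \ge\ \mathbb E\Big[\int_t^{T-\kappa}(\text{running cost})\,ds\Big]+V^{+\infty}(T-\kappa,\mu_{(M_{T-\kappa},Q_{T-\kappa})}),
\]
with equality when $(\eta,\alpha)=(\eta^*,\alpha^*)$.

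\emph{Main obstacle: the terminal term as $\kappa\to0$.} Both $a^{+\infty}_1$ and $a^{+\infty}_3$ blow up like $(T-s)^{-1}$ in the $M$-block, while $M$ vanishes. Using the upper bound in \eqref{uniform-estimates}, the terminal term is controlled by $\frac{C}{T-s}\mathbb E|M_s|^2+C\mathbb E|Q_s|^2$ plus cross terms. From the decay estimate above, $\frac{1}{T-s}\mathbb E|M^*_s|^2\le C(T-s)^{2c'-1}$, so I need $c'>1/2$. If the constant from \eqref{uniform-estimates} is not sharp enough, I would instead exploit the Riccati ODE itself: near $T$ one has $a^{+\infty}_{1,11}\sim \lambda_1/(T-s)$ and drift $-M/(T-s)$, giving $M_s\approx(T-s)\times$(martingale-type term), so $\mathbb E|M_s|^2=O((T-s)^2)$. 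The off-diagonal blocks stay bounded, so the cross terms are $O(\sqrt{T-s})$ and the $Q$-block term converges to the $-K|q|^2$ part of the terminal cost. Passing $\kappa\to0$ by dominated convergence, using BMO for integrability of the $\eta$-terms, yields $J^{Con}(t,\mu,\xi^*)\le V^{+\infty}(t,\mu)$, with equality attained at $(\eta^*,\alpha^*)$. Combined with the lower bound, this proves the claim.
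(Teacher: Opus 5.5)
Your overall architecture is the paper's. The lower bound is Lemma~\ref{verification-1}: for constrained $\xi$ the terminal terms vanish, so $J(\xi)\ge V^\lambda$ via the Lipschitz $\lambda$-optimal adversary, and then $\lambda\to\infty$. Admissibility comes from the $(T-s)^{-1}$ lower bound on the $M$-block forcing $\mathbb E|M_s|^2\to0$; the paper's weighted quantities $(T-s)^{-\alpha}|\mathbb EM_s|^2$ and $(T-s)^{-\beta}\mathbb E|M_s|^2$ implement the same contraction you describe. The upper bound is It\^o's formula for $V^{+\infty}$ on $[0,T-\kappa]$ followed by $\kappa\to0$.

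The step you single out as the main obstacle is not established as written, and it is also unnecessary.

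\emph{Why it is not established.} The contraction exponent you obtain is of order $1/(\lambda_1C(4\varepsilon+C))$, where $C$ is the large constant of \eqref{uniform-estimates}, so nothing gives $c'>1/2$. Your fallback, $a^{+\infty}_{1,11}\sim\lambda_1/(T-s)$ and hence $\mathbb E|M_s|^2=O((T-s)^2)$, is a heuristic. Justifying it would require a separate singular-asymptotic analysis of the coupled matrix Riccati system; note that the $M$-coefficient of $\xi^*$ also involves $\gamma_1^\top$ times the $(2,1)$-block of $a^{+\infty}_1$.

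\emph{Why it is unnecessary.} You only need $J(\xi^*)\le V^{+\infty}$, because Lemma~\ref{verification-1} already gives $V^{+\infty}\le\inf J\le J(\xi^*)$. That requires only a \emph{lower} bound on $V^{+\infty}(T-\kappa,\mu_{T-\kappa})$. So neither the upper bound in \eqref{uniform-estimates} nor equality at $(\eta^*,\alpha^*)$ plays any role.

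The lower bound in \eqref{uniform-estimates} is a matrix inequality against a block-diagonal matrix whose $M$-block is nonnegative. Let $\zeta=(M,Q)^\top$. Apply it with $\varphi=a^{+\infty}_1$ to $\zeta-\mathbb E\zeta$, and with $\varphi=a^{+\infty}_1+a^{+\infty}_3$ to $\mathbb E\zeta$. This discards the $M$-terms and all cross terms at once:
\[
V^{+\infty}(T-\kappa,\mu_{T-\kappa})\ \ge\ \mathbb E\big[Q^\top_{T-\kappa}\big(e^{q_{22}\kappa}q^T_{22}e^{q_{22}\kappa}-C\kappa I_{d_2}\big)Q_{T-\kappa}\big].
\]
The right side tends to the $Q$-part of the terminal cost as $\kappa\to0$. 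This is how the paper closes the argument, and it needs no rate on $M$ beyond $M_T=0$.

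Separately, for admissibility you must \emph{prove} that $\int\eta\,dW$ is BMO for every adversary along the closed loop, rather than invoke \eqref{BMO}. The paper does this for Lipschitz feedback adversaries by writing $\eta_s=\eta^0_s+A_sM_s+B_sQ_s$ and bounding the fundamental solution of the resulting linear system.
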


\section{Numerical Examples}\label{numerical examples}
In this section we get insights on the optimal liquidation problem with composite uncertainty via numerical examples. The investor liquidates single asset with the liquidation constraint $M_T=0$. The default parameters in \eqref{model-1},\ \eqref{lqd-dyn-2},\ \eqref{lqd-dyn-4} are set as follows:
\begin{align*}
 &T=1,\ \kappa_1=\kappa_2=0.1,\ \lambda_1=0.5,\ \lambda_{21}=\lambda_{22}=1,\ \lambda_3=\lambda_4=1,\ \lambda_5=\hat\lambda_5=2,\\ 
 &\gamma_1=\gamma_2=0.2,\ \gamma_3=\gamma_4=0,\ \gamma_5=1,\ C=1,\ K=e^{-1},\ \rho=1.
\end{align*}
In the numerical examples that follow, the parameters would by default take the above values unless specified otherwise. In view of Theorem \ref{limit-V-lambda} and Theorem \ref{verification-LQ-constrained}, we take $\lambda=10^5$ in \eqref{lambda-value} so that the optimal liquidation strategy with the constraint $M_T=0$ is approximately obtained.

\begin{figure}[htbp]
    \centering
    \begin{tabular}{ccc} 
        \includegraphics[width=0.3\textwidth]{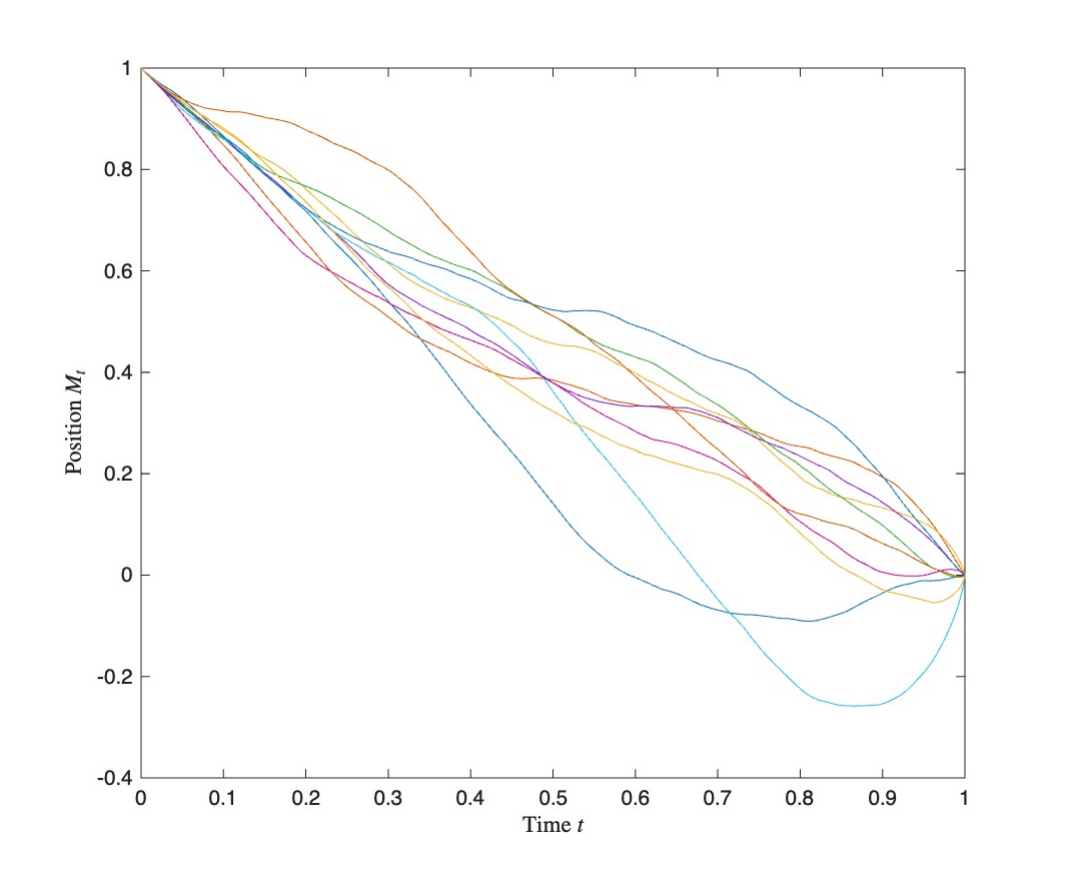} & 
        \includegraphics[width=0.3\textwidth]{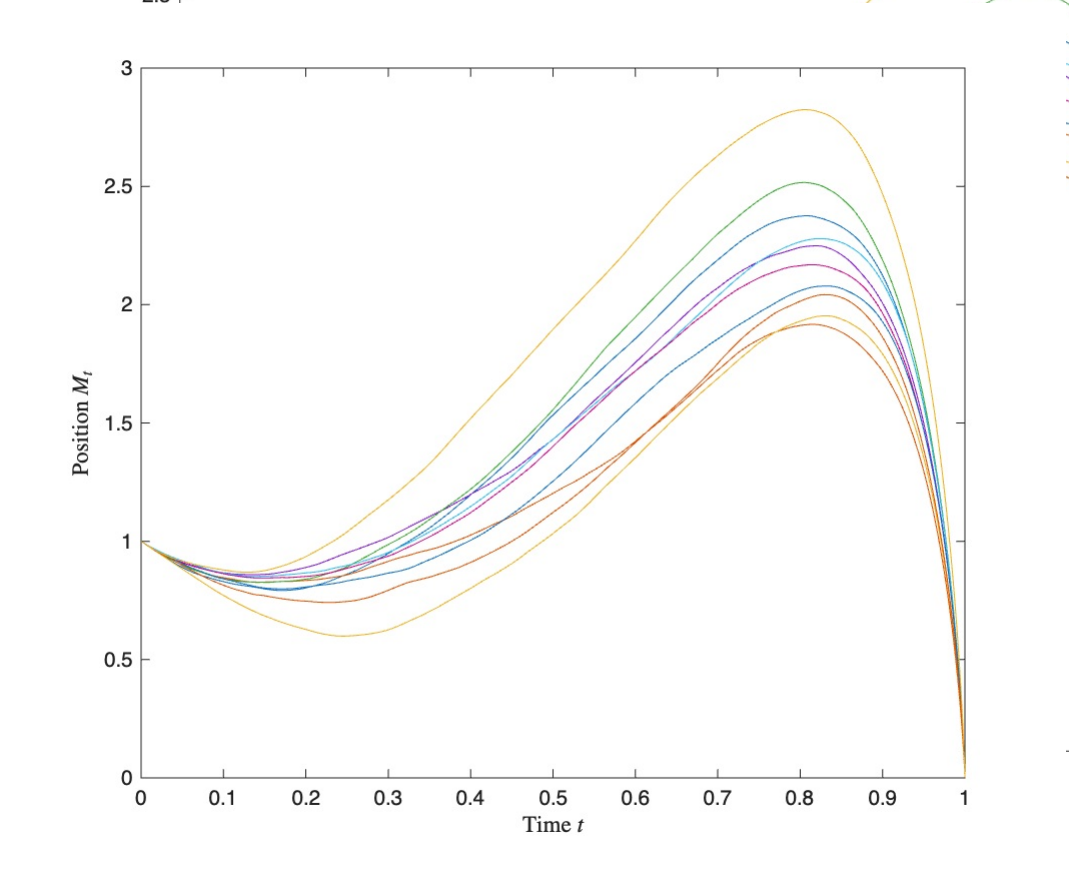} & \includegraphics[width=0.3\textwidth]{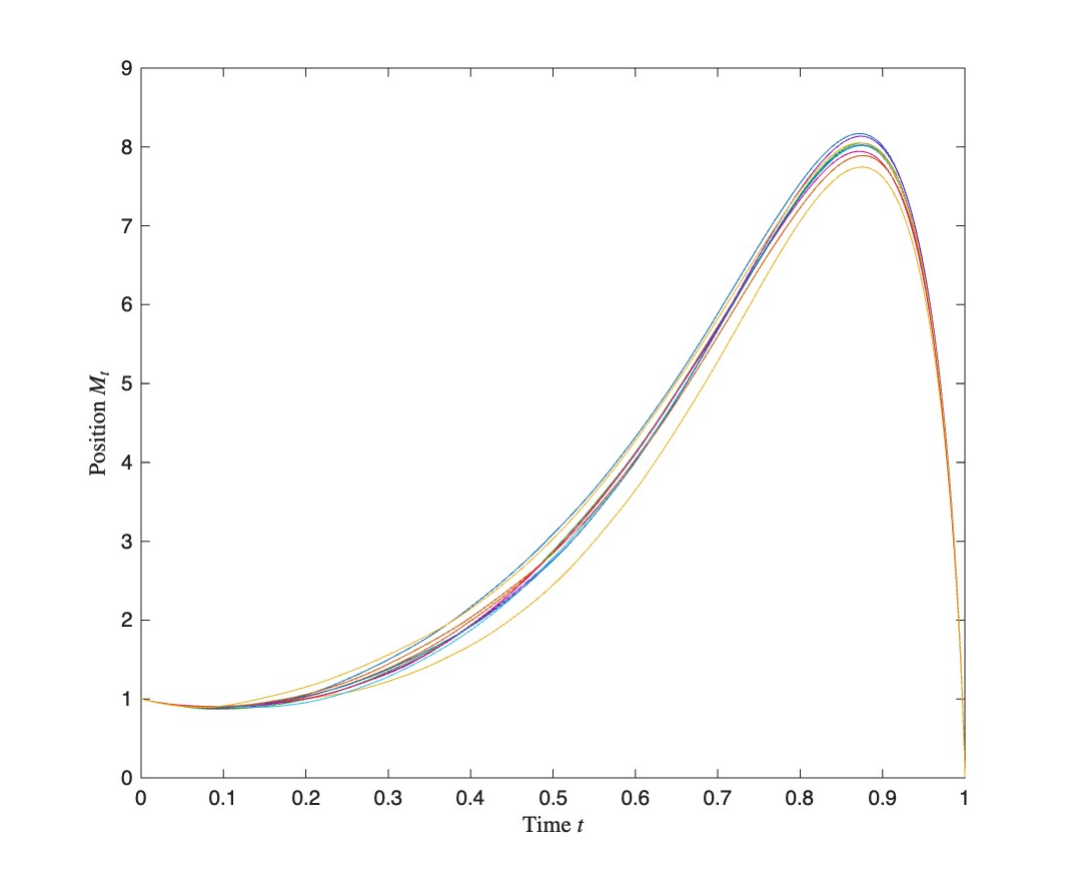}\\
        (a) $\kappa_1=0.1$ & (b) $\kappa_1=3$ & (c) $\kappa_1=4$\\        \includegraphics[width=0.3\textwidth]{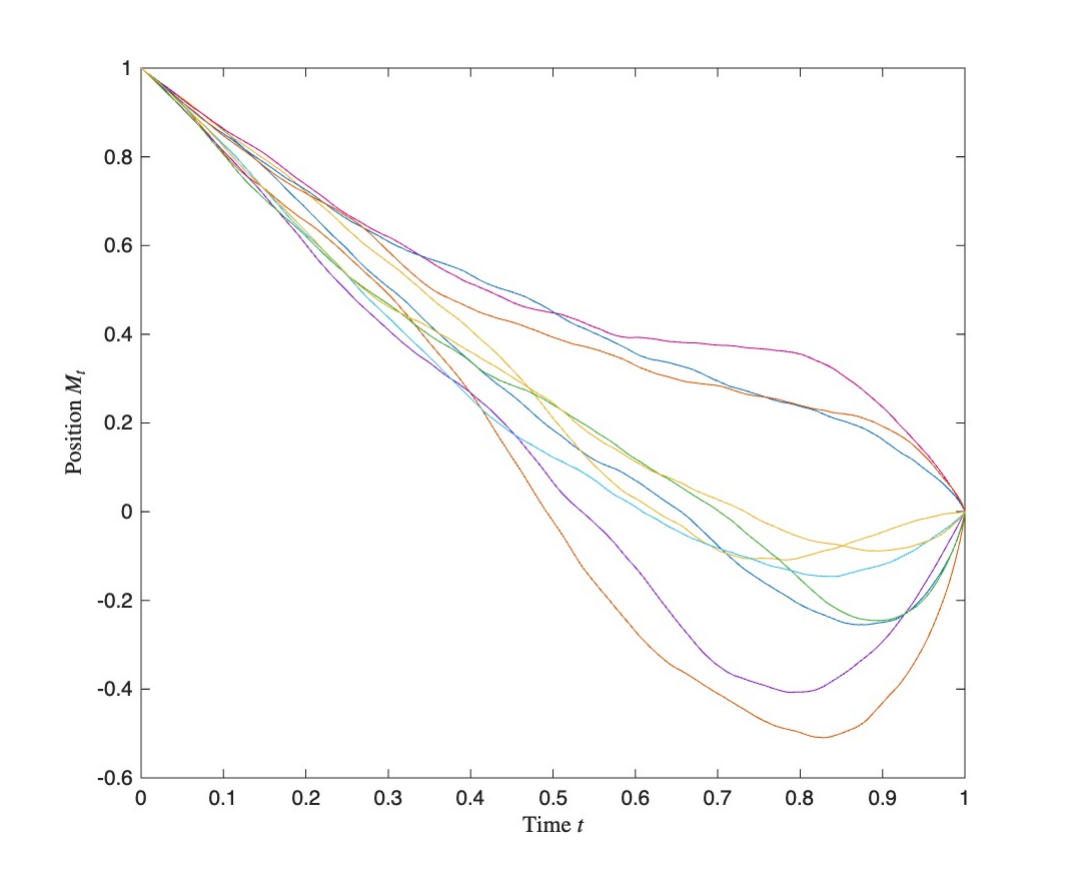} & 
        \includegraphics[width=0.3\textwidth]{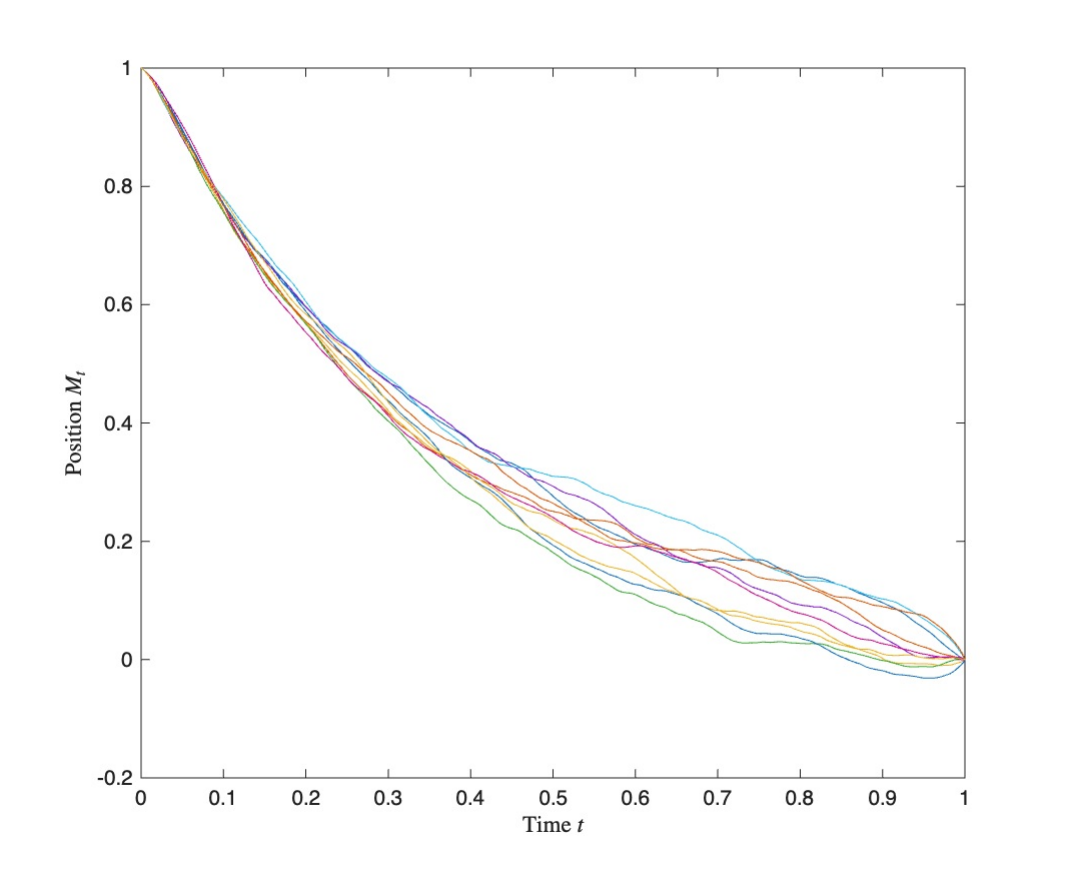} & \includegraphics[width=0.3\textwidth]{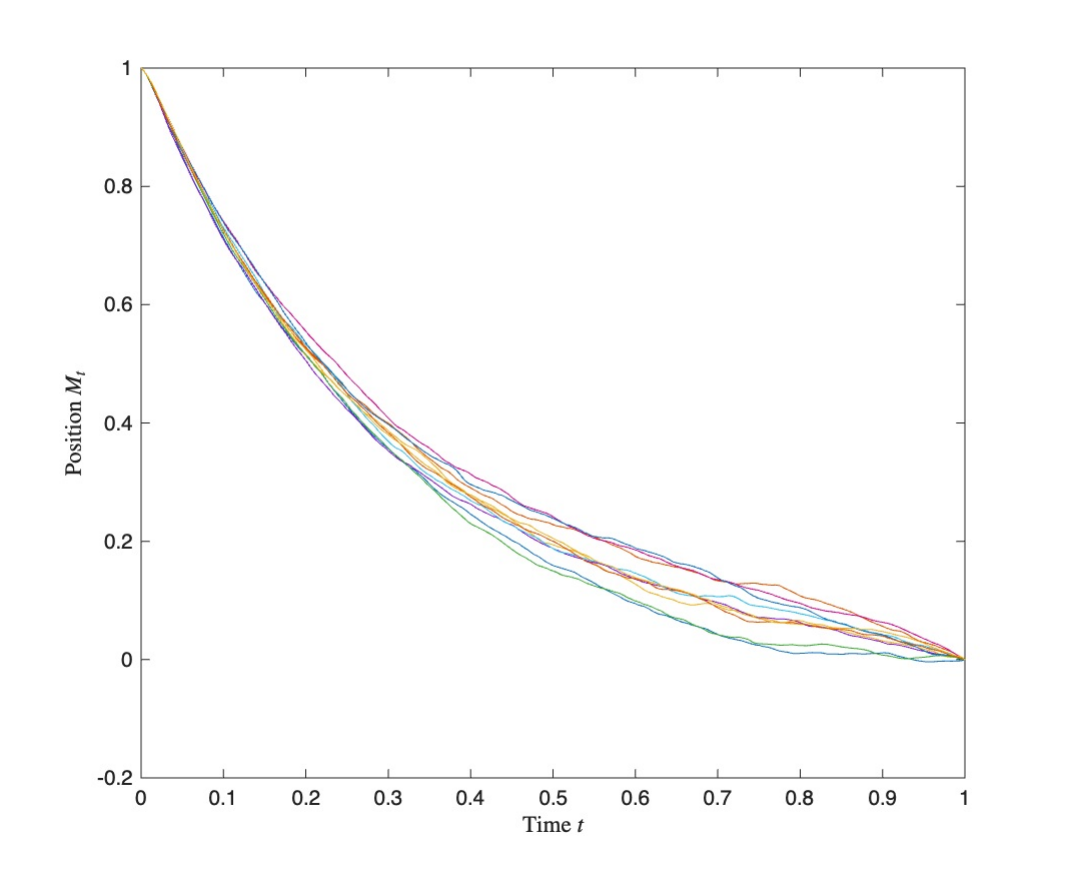}\\
        (d) $\kappa_2=0.1$ & (e) $\kappa_2=7$ & (f) $\kappa_2=10$\\    \end{tabular}
    \caption{Simulation of optimal trajectories with different uncertainty}\label{simulation-uncertainty-1}
\end{figure}
Figure \ref{simulation-uncertainty-1} shows the simulation of multiple trajectories of the optimal position under different $(\kappa_1,\kappa_2)$. It is indicated that the consideration of both kinds of uncertainty admit a regularization effect so that each sample path has similar patterns. The intuition is the following: with larger $\kappa_1,\ \kappa_2$ comes the less penalty for the same deviation from preset parameters, meaning that the investor is less certain about the model parameters or the underlying probability measure. Given the enlarged uncertainty, the investor tends to liquidate her assets along a trajectory close to certain benchmark trajectory despite the randomness from the Brownian motion in $Q_t$.

\begin{figure}[htbp]
    \centering
    \begin{tabular}{ccc} 
        \includegraphics[width=0.3\textwidth]{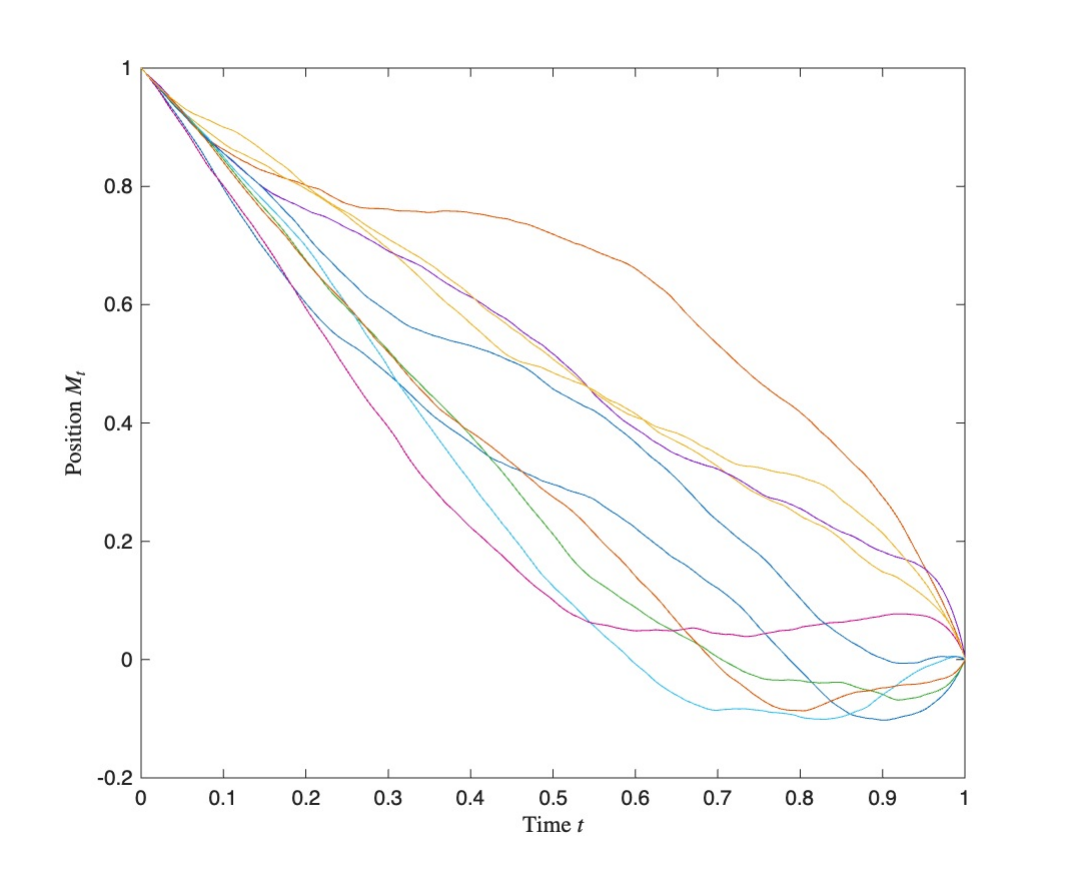} & 
        \includegraphics[width=0.3\textwidth]{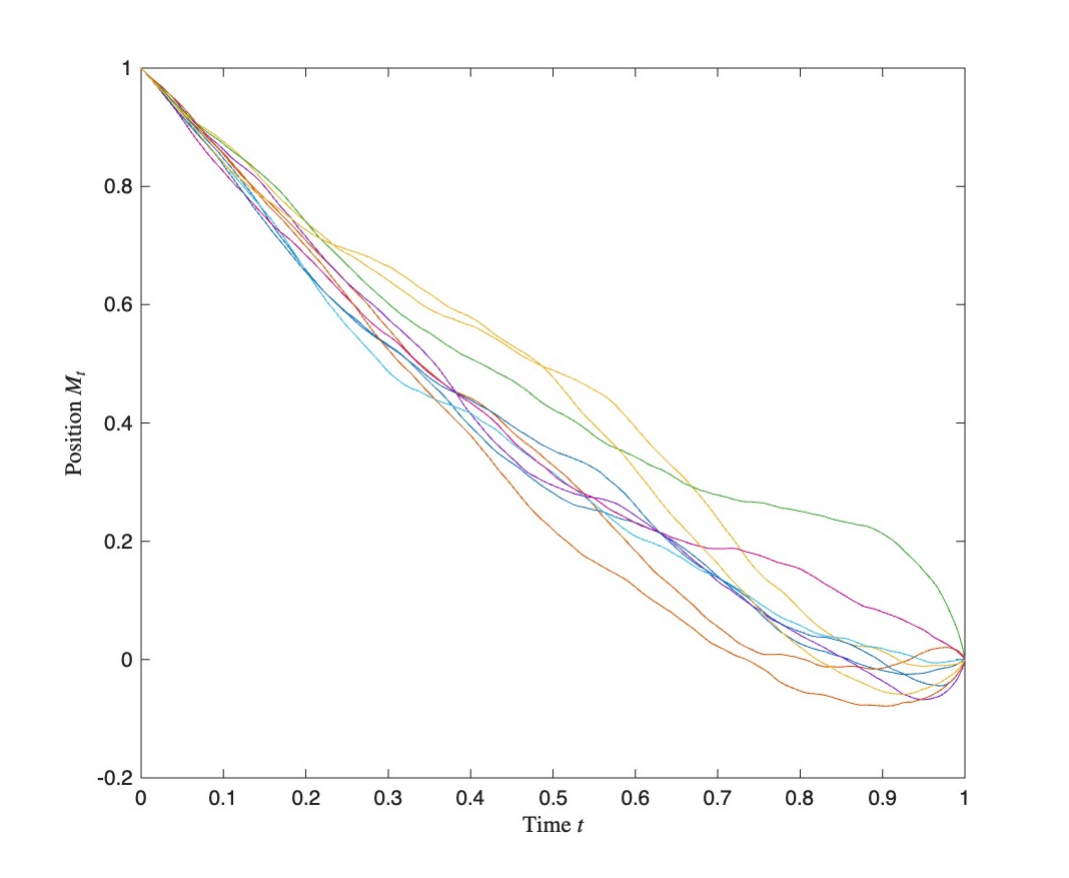} & \includegraphics[width=0.3\textwidth]{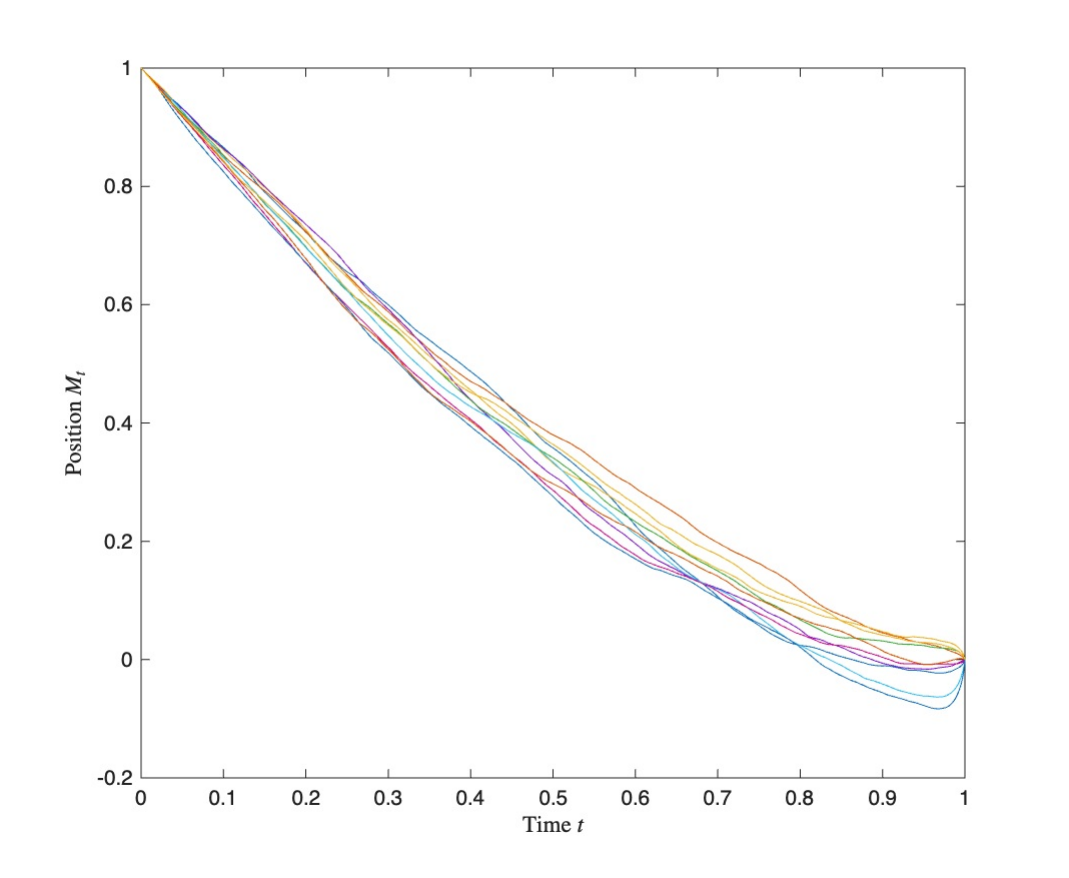}\\
        (a) $\lambda_{22}=1$ & (b) $\lambda_{22}=50$ & (c) $\lambda_{22}=500$\\
    \end{tabular}
    \caption{Simulation of optimal trajectories with different management risk}\label{simulation-management-risk}
\end{figure}
The regularization effect also happens when $\lambda_{22}$ is tuned larger, which is shown in Figure \ref{simulation-management-risk}. For the investor with the objective functional containing larger $\lambda_{22}$, more penalization is put on the variance of position $M_t$, which gives the intuition that the investor is more averse to the oscillation in their position. As a result, for the case with large $\lambda_{22}$, each trajectory of the position tends to be close to certain benchmark.
\begin{figure}[htbp]
    \centering
    \begin{tabular}{cc} 
        \includegraphics[width=0.4\textwidth]{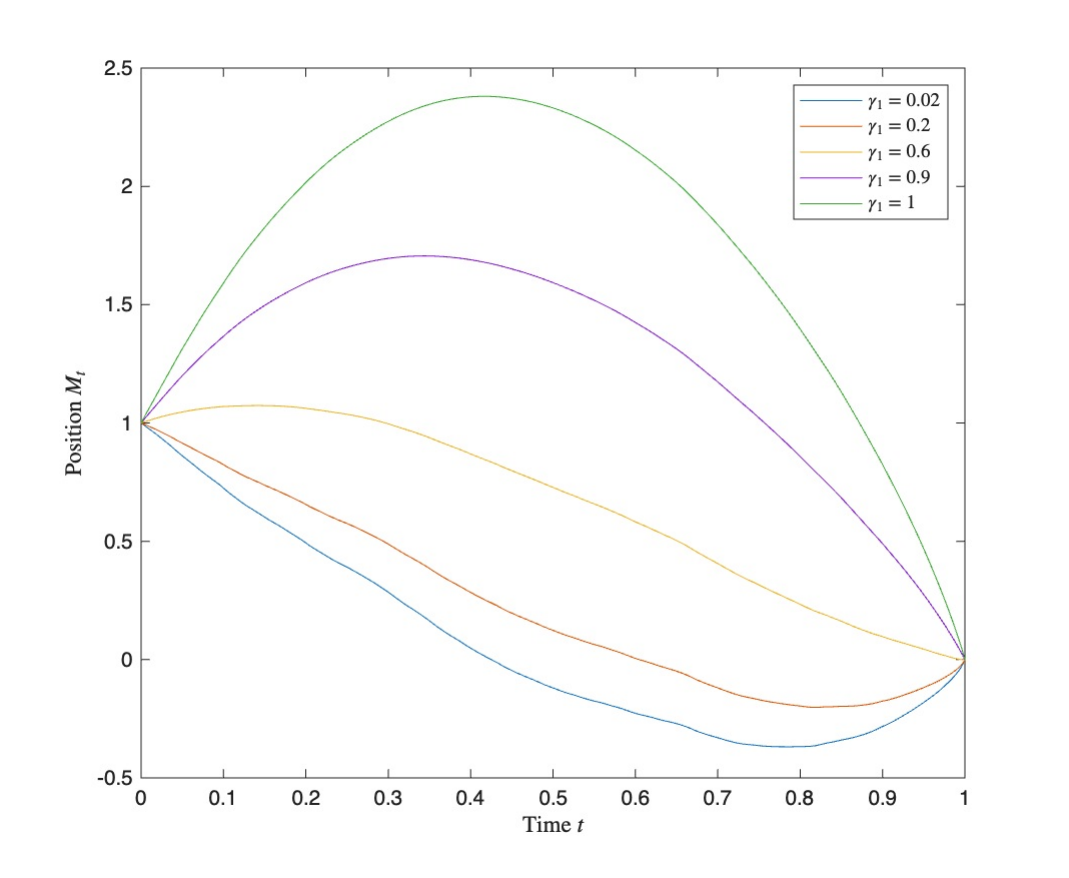} & 
        \includegraphics[width=0.4\textwidth]{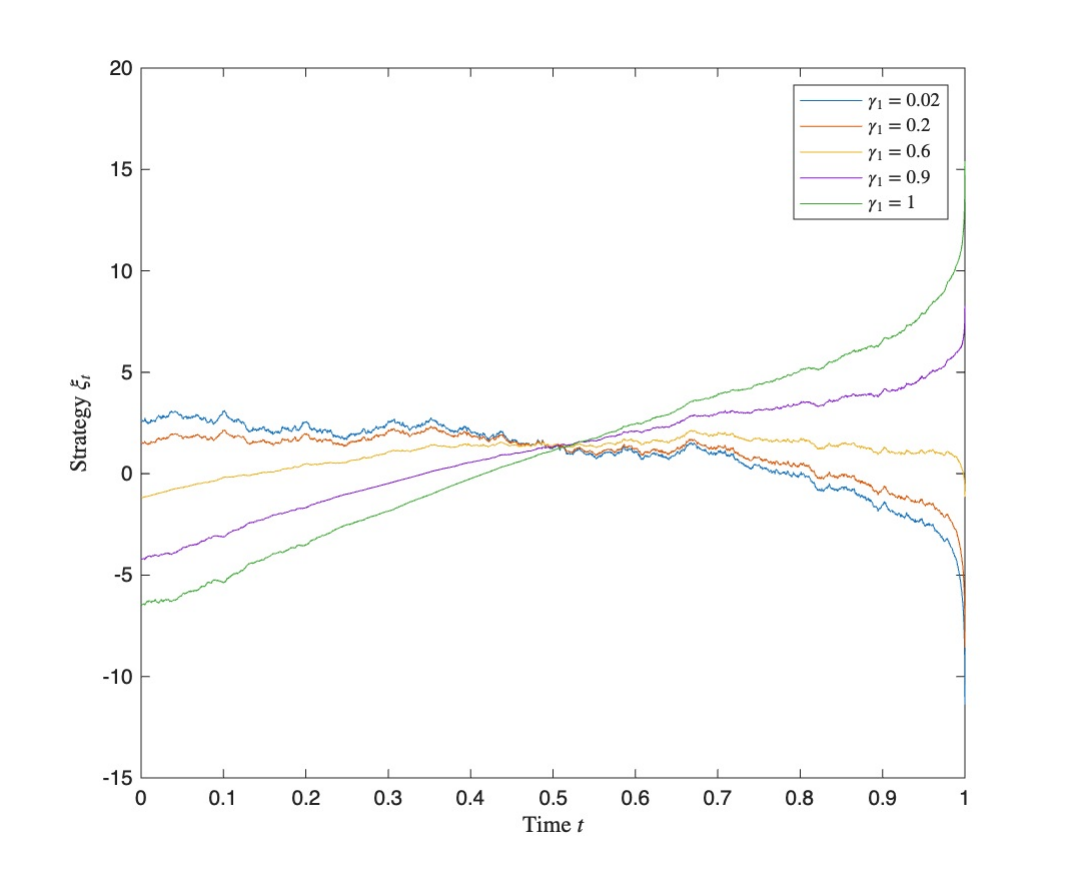} \\
         \includegraphics[width=0.4\textwidth]{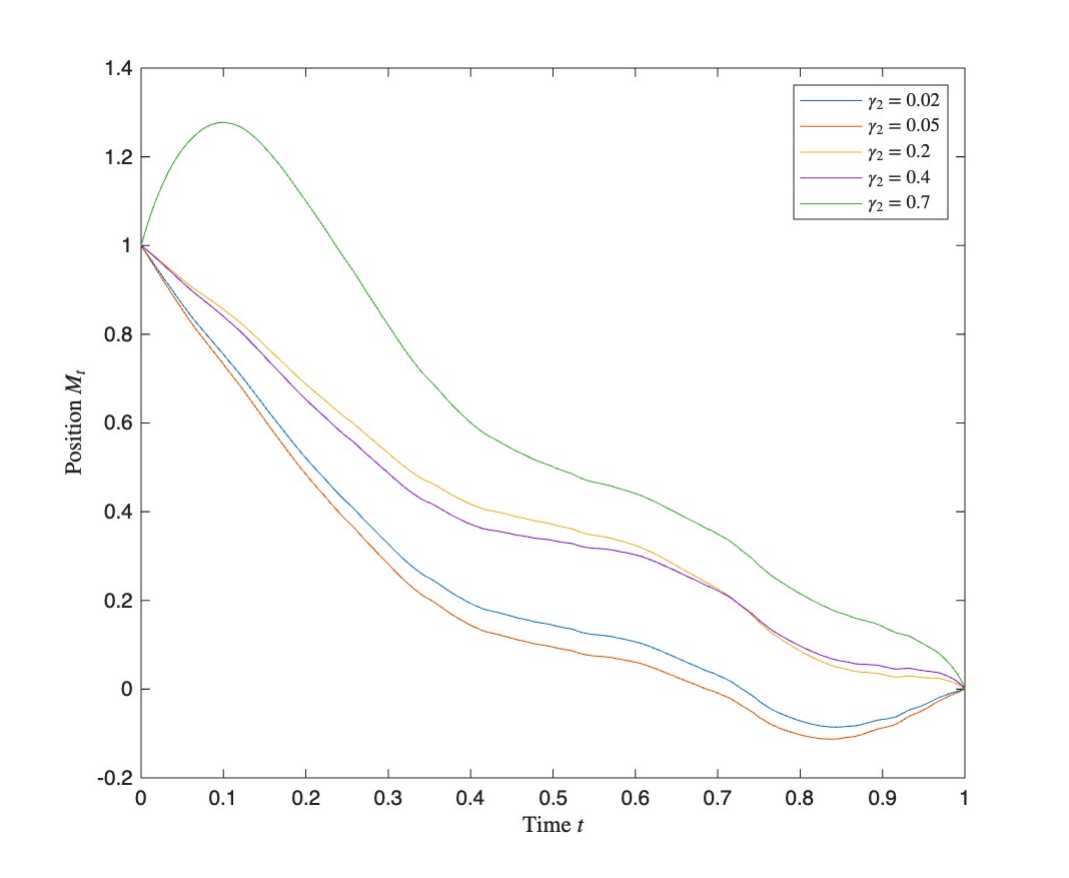} & 
        \includegraphics[width=0.4\textwidth]{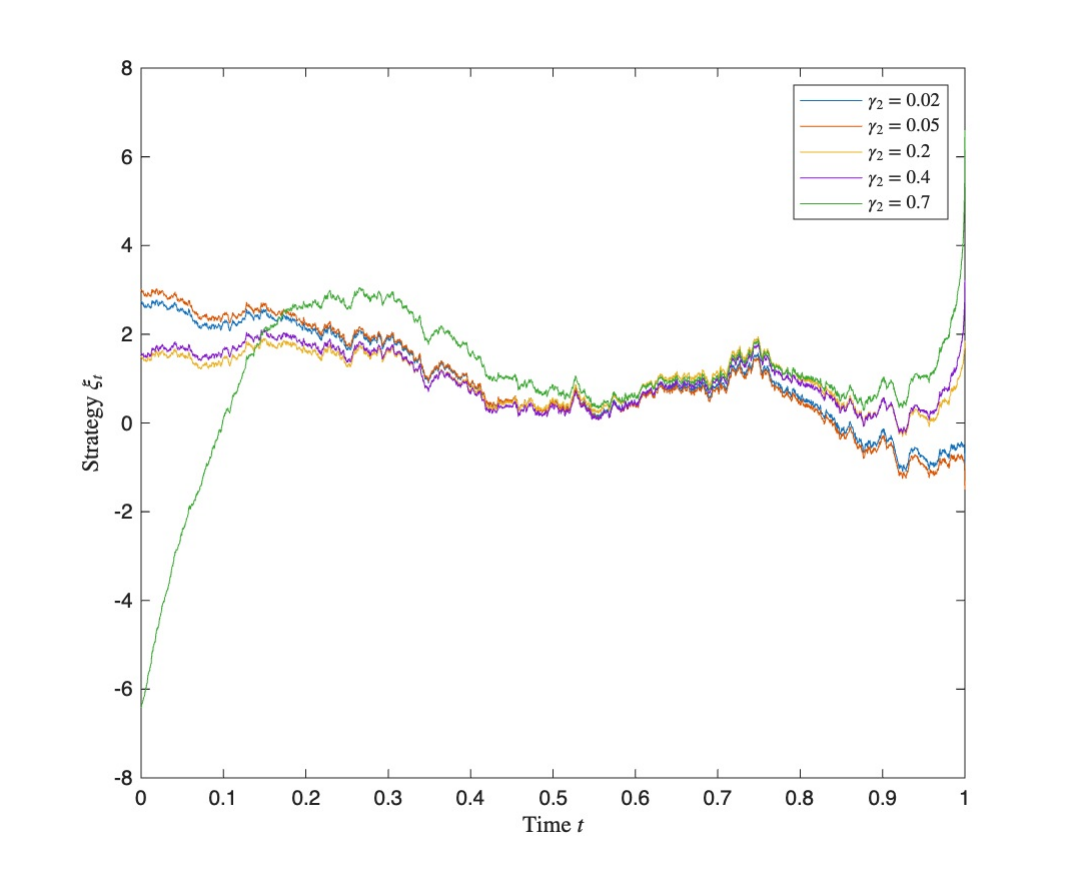} \\
    \end{tabular}
    \caption{Simulation of optimal positions and strategies with different persistent impact factor}\label{simulation-gamma}
\end{figure}
In Figure \ref{simulation-gamma} we report the simulation results on the optimal strategy versus the persistent impact factor $\gamma_1,\ \gamma_2$, where in each plot we fix the sample path of Brownian motion so that the trajectories are comparable. According to the simulation, given the consideration of child orders and uncertainty, the optimal strategy is no longer convex in time, which is different from \cite{Horst19}. On the other hand, it is still observed that the investor tends to do more transaction, either long or short, near the initial time or the maturity, especially when the transaction cost is high.

\section{Proofs for main results}\label{Proofs}
\subsection{Proofs for Theorem \ref{verification}}
\begin{proof}[Proof of Theorem \ref{verification}]
Let $\xi\in L^2(\Omega,\mathcal F,\mathbb P;\mathbb R^d)$ be the lifting of $\mu$ in an appropriate probability space $(\Omega,\mathcal F,\mathbb P)$ that is rich enough to contain a Brownian motion $(W_s)_{s\in[t,T]}$ independent of $\xi$. Consider the mean field SDE
\begin{align}\label{optimal-path}
dX^*_s=a\big(\theta^*_s,\eta^*_s,\alpha^*_s,X^*_s,\mu_{(X^*_s,\theta^*_s,\eta^*_s)}\big)ds+\sigma dW_s,\quad X^*_t=\xi,
 \end{align}
where
\begin{align*}
 (\theta^*_s,\eta^*_s):=(\theta,\eta)(s,X^*_s,\mu_{X^*_s}),\ \alpha^*_s:=\alpha(s,\mu_{X^*_s}).
\end{align*}
According assumptions on $(\theta,\eta,\alpha)$, \eqref{optimal-path} is well posed. Further consider any other admissible feedback function $\tilde\theta\in\mathcal U_t$ and the corresponding state process
\begin{align*}
dX^\theta_s=a\big(\theta_s,\eta_s,\alpha_s,X^\theta_s,\mu_{(X^\theta_s,\theta_s,\eta_s)}\big)ds+\sigma dW_s,\quad X^\theta_t=\xi,
\end{align*}
where 
\begin{align*}
 (\theta_s,\eta_s):=(\tilde\theta,\eta)\big(s,X^\theta_s,\mu_{X^\theta_s}\big),\ \alpha_s:=\alpha\big(s,\mu_{X^\theta_s}\big).
\end{align*}
We note that the above is well-posed due to the admissibility of $\tilde\theta$. According to \eqref{saddlepoint-1} and \eqref{saddlepoint-2} with $(X,Y)=\big(X^\theta_s,\partial_\mu V(s,\mu_{X^\theta_s},X^\theta_s)\big)$, an application of It\^o's formula yields
{\small\begin{align}\label{veri-0}
&\quad V(T,\mu_{X^\theta_T})-V(t,\mu_{X^\theta_t})\notag\\
&=\int_t^T\bigg\{\partial_tV(s,\mu_{X^\theta_s})+\mathbb E\big[\frac12{\rm tr}[\sigma^\top\sigma D_x\partial_\mu V(s,\mu_{X^\theta_s},X^\theta_s)]\big]+a\big(\theta_s,\eta_s,\alpha_s,X^\theta_s,\mu_{(X^\theta_s,\theta_s,\eta_s)}\big)\cdot\partial_\mu V(s,\mu_{X^\theta_s},X^\theta_s)\bigg\}ds\notag\\
 &\geq\int_t^T\bigg\{\partial_tV(s,\mu_{X^\theta_s})+\mathbb E\big[\frac12{\rm tr}[\sigma^\top\sigma D_x\partial_\mu V(s,\mu_{X^\theta_s},X^\theta_s)]\big]+\mathcal H(\tilde\mu^\theta_s)\bigg\}ds-\int_t^Tf\big(\alpha_s,\mu_{(X^{\theta}_s,\theta_s,\eta_s)}\big)ds,
\end{align}}
where $\tilde\mu^\theta_s:=\big(Id,\partial_\mu V(s,\mu_{X^\theta_s},\cdot)\big)\sharp\mu_{X^\theta_s},\ s\in[0,T].$
Plugging the boundary conditions into the above, we obtain
\begin{align*}
 U(\mu_{X^\theta_T})+\int_t^Tf\big(\mu_{(X^{\theta}_s,\theta_s,\eta_s)}\big)ds\geq V(t,\mu_{X^\theta_t}).
\end{align*}
Hence
\begin{align*}
 V(t,\mu_{X^\theta_t})\leq\tilde J(t,\mu,\tilde\theta,\hat\alpha,\hat\eta)\leq\sup_{(\eta,\alpha)\in\mathcal V^{\tilde\theta}_t}\tilde J(t,\mu,\tilde\theta,\alpha,\eta),\quad\text{where}\quad(\hat\eta_s,\hat\alpha_s)=\big(\eta(s,X^\theta_s,\mu^\theta_s),\alpha(s,\mu^\theta_s)\big).
\end{align*}
Since $\tilde\theta\in\mathcal U_t$ is arbitrary,
\begin{align*}
 V(t,\mu)\leq\inf_{\theta\in\mathcal U_t}\sup_{(\eta,\alpha)\in\mathcal V^\theta_t}\tilde J(t,\mu,\theta,\eta).
\end{align*}
The inequalities above become equalities when $\tilde\theta=\theta^*$, i.e.,
\begin{align*}
 V(t,\mu)=\tilde J(t,\mu,\theta^*,\eta^*,\alpha^*).
\end{align*}
It remains to show that
\begin{align*}
 \tilde J(t,\mu,\theta^*,\eta^*,\alpha^*)=J(t,\mu,\theta^*),
\end{align*}
in other words,
\begin{align}\label{eta-objective}
 \tilde J(t,\mu,\theta^*,\eta^*,\alpha^*)=\sup_{(\eta,\alpha)\in\mathcal V^{\theta^*}_t}\tilde J(t,\mu,\theta^*,\eta,\alpha),
\end{align}
where for any $(\eta,\alpha)\in\mathcal V^{\theta^*}_t$, $\tilde J(t,\mu,\theta^*,\eta,\alpha)$ corresponds to the state process generated in \eqref{dynamic}. In fact, an application of It\^o's formula together with the assumption on saddle point \eqref{saddlepoint-1} and \eqref{saddlepoint-2} yields
{\small\begin{align}\label{veri-0-1}
&\quad V(T,\mu_{X^{\theta^*}_T})-V(t,\mu_{X^{\theta^*}_t})\notag\\
&=\int_t^T\bigg\{\mathbb E\big[\frac12{\rm tr}[\sigma^\top\sigma D_x\partial_\mu V(s,\mu_{X^{\theta^*}_s},X^{\theta^*}_s)]+a\big(\theta^*(s,X^{\theta^*}_s,\mu_{X^{\theta^*}_s}),\eta_s,\alpha_s,X^{\theta^*}_s,\mu_{(X^{\theta^*}_s,\theta^*_s,\eta_s)}\big)\partial_\mu V(s,\mu_{X^{\theta^*}_s},X^{\theta^*}_s)\big]\notag\\
&\qquad+\partial_tV(s,\mu_{X^{\theta^*}_s})\bigg\}ds\notag\\
 &\leq\int_t^T\bigg\{\partial_tV(s,\mu_{X^{\theta^*}_s})+\mathbb E\big[\frac12{\rm tr}[\sigma^\top\sigma D_x\partial_\mu V(s,\mu_{X^{\theta^*}_s},X^{\theta^*}_s)]\big]+\mathcal H(\tilde\mu^{\theta^*}_s)\bigg\}ds-\int_t^Tf\big(\alpha_s,\mu_{(X^{\theta^*}_s,\theta^*_s,\eta_s)}\big)ds.
\end{align}}
Here ``='' holds when $(\eta,\alpha)=(\eta^*,\alpha^*)$ as in \eqref{optimal-path}, which leads to \eqref{eta-objective}.
\end{proof}

\subsection{Proofs for Theorem \ref{existence-of-decoupling-field}}
The blueprint for the proof is in the spirit of \cite{Liao24}, i.e., the particle system approach. In a nutshell, we infer the a priori estimates on \eqref{HJB} via the corresponding particle systems. However, our method is by no means the same as in \cite{Liao24}. The major difference is that the current generalized Hamiltonian is no longer dispalcement concave in momentum. Instead, both position and momentum contain displacement convex and concave components at the same time. Such difference causes essential difficulties when dealing with the a priori estimates on both the gradients and Hessians of solutions to the corresponding particle systems. Due to the difficulties coming with the aforementioned differences, some crucial lemmas and a priori estimates in \cite{Liao24} can't be applied here. Therefore we need to establish their counterparts in the current situation so that the general framework still functions well. The aforementioned crucial lemmas and a priori estimates are mainly concerning with the following HJBI equation for controlled particle systems 
\begin{align}\label{HJB-N}
  \left\{\begin{aligned}
    &\partial_tV_N(t,x)+\frac12\sum_{i=1}^N{\rm tr}\big[\sigma^\top\sigma D^2_{x_ix_i}V_N(t,x)\big]+H_N\big(x,\nabla_x V_N(t,x)\big)=0,\\
    &V_N(T,x)=U_N(x),\quad(t,x)\in[0,T)\times\mathbb R^{dN},
  \end{aligned}\right.
\end{align}
where for $x=(x_1,\ldots,x_N)\in\mathbb R^{dN}$, $p=(p_1,\ldots,p_N)\in\mathbb R^{dN}$,
\begin{align}\label{HJB-N-1}
 H_N(x,p):=\mathcal H\bigg(\frac1N\sum_{i=1}^N\delta_{(x_i,Np_i)}\bigg),\quad U_N(x):=U\bigg(\frac1N\sum_{i=1}^N\delta_{x_i}\bigg).
\end{align}
We note that the functions defined in \eqref{HJB-N-1} are special due to the following property of Wasserstein derivatives
\begin{align*}
    &D_{x_i}H_N(x,p)=\frac1N\partial_\mu\mathcal H\bigg(\frac1N\sum_{i=1}^N\delta_{(x_i,Np_i)},x_i,Np_i\bigg)^{(x)},\quad D_{x_i}U_N(x)=\frac1N\partial_\mu U\bigg(\frac1N\sum_{i=1}^N\delta_{x_i},x_i\bigg),\\
    &x=(x_1,\ldots,x_N)\in(\mathbb R^d)^N,\ p=(p_1,\ldots,p_N)\in(\mathbb R^d)^N.
\end{align*}
To see how \eqref{HJB-N} is connected to \eqref{HJB}, we take the following result from \cite{Liao24} which describes the propagation of chaos under appropriate conditions. Its proof can be directly adapted to the current case and is hence omitted.
\begin{proposition}\label{prop-propagation-1}
  Suppose that
  \begin{enumerate}
  \item Assumption \ref{assumption-displacement-convex} holds;
  \item $V_N\in C^{1,2}\big([0,T)\times(\mathbb R^d)^N\big)\cap C\big([0,T]\times(\mathbb R^d)^N\big)$ is the classical solution to \eqref{HJB-N} with bounded derivatives;
  \item $V$ is the classical solution to \eqref{HJB}, $\partial_tV(\cdot)\in\mathcal C\big([0,T]\times\mathcal P_2(\mathbb R^d)\big)$, $V(t,\cdot)\in\mathcal C^2\big(\mathcal P_2(\mathbb R^d)\big)$ with jointly continuous and bounded derivatives.
  \end{enumerate}
 Then there exists a constant $C$ that depends only on $|\partial^2_{\mu\mu}V|_\infty$ and $T$ such that
  \begin{align}\label{propagation-1}
    \big|V(t,\mu_x)-V_N(t,x_1,\ldots,x_N)\big|\leq\frac CN,\quad\mu_x:=\frac1N\sum_{i=1}^N\delta_{x_i}.
  \end{align}
\end{proposition}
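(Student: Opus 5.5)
The plan is to compare $V$, restricted to empirical measures, with $V_N$ through the equation each satisfies. Define $\hat V_N(t,x):=V(t,\mu_x)$ with $\mu_x=\frac1N\sum_{i=1}^N\delta_{x_i}$. Since $V(t,\cdot)\in\mathcal C^2(\mathcal P_2(\mathbb R^d))$ with bounded derivatives, the standard finite-dimensional projection formulas give
\begin{align*}
D_{x_i}\hat V_N(t,x)=\frac1N\partial_\mu V(t,\mu_x,x_i),\qquad D^2_{x_ix_i}\hat V_N(t,x)=\frac1ND_x\partial_\mu V(t,\mu_x,x_i)+\frac1{N^2}\partial^2_{\mu\mu}V(t,\mu_x,x_i,x_i).
\end{align*}
From the first formula, $N D_{x_i}\hat V_N=\partial_\mu V(t,\mu_x,x_i)$. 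It follows that
\begin{align*}
H_N\big(x,\nabla_x\hat V_N(t,x)\big)=\mathcal H\big((Id,\partial_\mu V(t,\mu_x,\cdot))\sharp\mu_x\big),
\end{align*}
which is exactly the Hamiltonian term in \eqref{HJB} evaluated at $\mu_x$.

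Next we plug $\hat V_N$ into \eqref{HJB-N} and use that $V$ solves \eqref{HJB} at $\mu=\mu_x$. The Hamiltonian terms cancel exactly. The first part of the trace term reproduces $\frac12\int{\rm tr}[\sigma^\top\sigma D_x\partial_\mu V]\,d\mu_x$. Hence $\hat V_N$ solves
\begin{align*}
\partial_t\hat V_N+\frac12\sum_{i=1}^N{\rm tr}\big[\sigma^\top\sigma D^2_{x_ix_i}\hat V_N\big]+H_N(x,\nabla_x\hat V_N)=R_N(t,x),\quad \hat V_N(T,x)=U_N(x),
\end{align*}
with remainder
\begin{align*}
R_N=\frac1{2N^2}\sum_{i=1}^N{\rm tr}\big[\sigma^\top\sigma\,\partial^2_{\mu\mu}V(t,\mu_x,x_i,x_i)\big],
\end{align*}
so that $|R_N|\le \frac{C}{N}|\sigma|^2|\partial^2_{\mu\mu}V|_\infty$. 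The terminal data of $\hat V_N$ and $V_N$ coincide.

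It remains to show that an $O(1/N)$ source term produces an $O(1/N)$ gap between the two solutions. Set $w=\hat V_N-V_N$ and linearize the Hamiltonian along the segment $p_\tau=\tau\nabla\hat V_N+(1-\tau)\nabla V_N$:
\begin{align*}
H_N(x,\nabla\hat V_N)-H_N(x,\nabla V_N)=b(t,x)\cdot\nabla w,\qquad b=\int_0^1D_pH_N(x,p_\tau)\,d\tau.
\end{align*}
Then $w$ satisfies a linear backward parabolic equation with drift $b$, source $R_N$ and zero terminal value. The Feynman–Kac representation, equivalently the maximum principle, gives $|w(t,x)|\le (T-t)\sup|R_N|\le C/N$.

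The main obstacle is justifying this linearized step. Every component of $D_pH_N$ has the form $\frac1N\cdot N\,\partial_{\tilde\mu}\mathcal H^{(p)}=\partial_{\tilde\mu}\mathcal H^{(p)}$, which is bounded by Assumption \ref{assumption-displacement-convex}. So $b$ is bounded, and continuous because $V_N$ and $V$ have continuous bounded derivatives. This is enough for the comparison or maximum principle, so neither convexity nor concavity of $\mathcal H$ in momentum is needed. The one point to check with care is the second-order chain rule for the empirical projection, including the diagonal term $\partial^2_{\mu\mu}V(t,\mu_x,x_i,x_i)$. This is where the constant's dependence on $|\partial^2_{\mu\mu}V|_\infty$ and $T$ enters.
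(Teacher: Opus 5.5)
Your proof is correct and follows essentially the argument the paper imports from \cite{Liao24}: you restrict $V$ to empirical measures, note that $\hat V_N(t,x)=V(t,\mu_x)$ solves \eqref{HJB-N} with the same terminal data up to the remainder $\frac1{2N^2}\sum_{i}{\rm tr}[\sigma^\top\sigma\,\partial^2_{\mu\mu}V(t,\mu_x,x_i,x_i)]=O(1/N)$, and conclude by comparison, which is why the constant depends only on $|\partial^2_{\mu\mu}V|_\infty$ and $T$. Linearizing $H_N$ along the segment between the two gradients, with $D_{p_i}H_N=\partial_{\tilde\mu}\mathcal H^{(p)}$ bounded, is what lets this comparison work without the Legendre representation that is unavailable here; the only technicality left is to apply It\^o's formula on $[t,T-\epsilon]$ and let $\epsilon\to0$, using continuity of $V_N$ up to $T$ and the linear growth of $w$.
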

In the spirit of the particle system approach, the aforementioned difficulties concerning the well-posedness of \eqref{HJB} can be addressed by studying \eqref{HJB-N}. To continue, we first prepare key lemmas and a priori estimates on \eqref{HJB-N}, which are significantly different from those in \cite{Liao24}. Then we will prove Theorem \ref{existence-of-decoupling-field}.

As has been brought up, the first difficulty we are met with is that the Hamiltonian are no longer displacement concave in the momentum, thus the representation of the Hamiltonian through Legendre transform is no longer available for \eqref{HJB-N}. As a result, we need to show the following Lemma \ref{differentiability} and Proposition \ref{uniform-1-1} from \cite{Liao24} without the mentioned representation of the Hamiltonian.
\begin{lemma}\label{differentiability}
 Suppose that Assumption \ref{assumption-displacement-convex} holds and $\sigma>0$. Then \eqref{HJB-N} admits a unique classical solution satisfying $V_N$, $(D_{x_i}V_N)_{1\leq i\leq N}$, $(D^2_{x_ix_j}V_N)_{1\leq i,j\leq N}\in C^{1,2+\gamma}\big([0,T)\times\mathbb R^{dN}\big)\cap C\big([0,T]\times\mathbb R^{dN}\big)$ for some $\gamma>0$ with linear growth and bounded gradients.
\end{lemma}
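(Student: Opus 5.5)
The plan is to treat \eqref{HJB-N} as a uniformly parabolic semilinear equation on $\mathbb R^{dN}$, with $N$ fixed and $\sigma>0$. The Hamiltonian $H_N(x,p)=\mathcal H\big(\frac1N\sum_i\delta_{(x_i,Np_i)}\big)$ inherits from Assumption \ref{assumption-displacement-convex} bounded derivatives up to order six in $(x,p)$, since
\begin{align*}
D_{p_i}H_N=\partial_\mu\mathcal H^{(p)}\big(\cdot,x_i,Np_i\big)
\end{align*}
is bounded, and higher derivatives pick up at most powers of $N$, which are harmless at fixed $N$. The same holds for $U_N$. In particular $H_N$ is globally Lipschitz in $p$ and $D_xH_N$ is bounded, so $H_N$ grows at most linearly in $(x,p)$. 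Because no Legendre representation is available, I will avoid the control interpretation entirely and work directly with PDE/BSDE tools.

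\textbf{Step 1 (existence and gradient bound).} I solve \eqref{HJB-N} through the Markovian BSDE
\begin{align*}
Y_s=U_N(X_T)+\int_s^T H_N\big(X_r,\sigma^{-\top}Z_r\big)dr-\int_s^T Z_r\,dW_r,\qquad X_s=x+\sigma(W_s-W_t),
\end{align*}
with the driver Lipschitz in $z$. Because $H_N$ and $U_N$ are $C^1$ with bounded first derivatives, the derivative BSDE for $\nabla_xY$ is linear with bounded coefficients. This gives $|\nabla_xV_N|\leq C_N$ uniformly, and $V_N$ has linear growth.

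Equivalently, I can do this analytically. Truncate $H_N$ in $p$ outside a ball of radius $2C_N$ to get a globally bounded-derivative Hamiltonian and apply standard quasilinear parabolic theory (Ladyzhenskaya--Solonnikov--Ural'tseva, or Lunardi's results on $\mathbb R^n$ with Lipschitz nonlinearity). The a priori gradient bound then shows the truncation is inactive.

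The gradient estimate itself comes from differentiating the equation in $x_k$. Then $w=D_{x_k}V_N$ solves the linear equation
\begin{align*}
\partial_t w+\tfrac12\sum_i{\rm tr}[\sigma^\top\sigma D^2_{x_ix_i}w]+D_pH_N\cdot\nabla w+D_{x_k}H_N=0,
\end{align*}
with bounded drift and bounded source. The maximum principle, or a Feynman--Kac representation, therefore bounds $w$ by $|D U_N|_\infty+T|D_xH_N|_\infty$.

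\textbf{Step 2 (higher regularity).} With $\nabla V_N$ bounded, the drift $D_pH_N(x,\nabla V_N)$ is bounded and continuous. Schauder theory for the linear equation satisfied by $V_N$ then gives $V_N\in C^{1,2+\gamma}$. Next I differentiate again. $D_{x_k}V_N$ solves a linear equation whose coefficients are $C^{\gamma}$, so it is in $C^{1,2+\gamma}$. For $D^2_{x_jx_k}V_N$ the equation contains the quadratic term
\begin{align*}
D^2_{pp}H_N[\nabla D_{x_j}V_N,\nabla D_{x_k}V_N]
\end{align*}
together with mixed $D_{xp}H_N$ terms. All of these are bounded and Hölder once the second derivatives are known to be Hölder, so the bootstrap closes using $\mathcal H\in\mathcal C^6$. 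Boundedness of $D^2V_N$ (needed so the gradients of these functions are bounded) follows for fixed $N$ from the BSDE representation of the second variation with bounded $H_N$-derivatives on the compact range of $\nabla V_N$, combined with a Gronwall argument. Continuity up to $t=T$ uses the smoothness of $U_N$.

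\textbf{Step 3 (uniqueness).} Take two classical solutions with linear growth and bounded gradients. Their difference satisfies a linear equation with drift
\begin{align*}
\int_0^1D_pH_N\big(x,\lambda\nabla V_N+(1-\lambda)\nabla\tilde V_N\big)d\lambda,
\end{align*}
which is bounded. The maximum principle in the class of functions of linear growth (Phragmén--Lindelöf type, via a barrier $e^{c(T-t)}(1+|x|^2)$) then gives uniqueness.

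\textbf{Main obstacle.} I expect the uniform-in-space bound on $D^2V_N$ at fixed $N$ to be the main obstacle. The quadratic term above is not signed here: because the Hamiltonian is not concave in $p$, the semiconcavity arguments of \cite{Liao24} do not apply. My plan is to control it by linearity of the first-derivative BSDE, using that $D^2_{pp}H_N$ is bounded and that $Z$ for $\nabla V_N$ is bounded in BMO norm. This yields a local-in-time bound, which I then iterate over small time intervals with constants depending only on $N$, $T$ and the derivatives of $\mathcal H$ and $U$.
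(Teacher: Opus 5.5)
Your argument is sound, but it takes a different route from the paper's.

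\textbf{The paper's route.} The paper cuts off the $x$-dependence, setting $H^R_N(x,p)=H_N(\Phi_R(x),p)$ and $U^R_N=U_N\circ\Phi_R$. It then:
\begin{itemize}
\item takes the classical solution of the truncated problem from \cite{Ladyzhenskaya1968} (Thm.~V.8.1);
\item gets bounds on compacts, uniform in $R$, by comparing \eqref{cut-BSDE} with the BSDEs with drivers $\pm C(1+|x|+|W_s|+|Z_s|)$;
\item uses Thm.~V.3.1 there for local H\"older continuity of $\nabla_xV^R_N$, uniformly in $R$, and lets $R\to\infty$;
\item obtains uniqueness from uniqueness of the Lipschitz BSDE along $x+\sigma W$, which is equivalent to your barrier/maximum-principle argument.
\end{itemize}
So the paper's proof only yields local, $R$-uniform estimates. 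It does not spell out the global derivative bounds claimed in the statement. The gradient bound appears later, in Proposition \ref{uniform-1-1}, by essentially your Feynman--Kac argument.

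\textbf{What your route adds.} You prove those global bounds at fixed $N$. The genuinely new ingredient is your treatment of $D^2V_N$. The unsigned Riccati term $D^2V_N\,D^2_{pp}H_N\,D^2V_N$ is absorbed by the energy (BMO) estimate for the $Z$-component $\sigma^\top D^2V_N(s,X_s)$ of the first-derivative BSDE.

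This in fact works globally in one step. Let $\mathbb Q$ be the measure under which $X$ has drift $D_pH_N(X,\nabla_xV_N)$. It\^o's formula for $|\nabla_xV_N(s,X_s)|^2$ gives $\mathbb E^{\mathbb Q}_t\int_t^T|D^2V_N(s,X_s)|^2ds\leq C(|\nabla U_N|_\infty,|\nabla_xV_N|_\infty,|D_xH_N|_\infty,|\sigma^{-1}|,T)$. The Feynman--Kac formula for $D^2V_N$ under $\mathbb Q$ then bounds $D^2V_N$ on all of $[0,T]$. So no iteration over short intervals is needed. Note also that the Gronwall argument you mention in Step 2 would not by itself control the quadratic term.

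\textbf{Two minor corrections.}
\begin{itemize}
\item Truncating $H_N$ in $p$ is unnecessary, since $H_N$ is already globally Lipschitz in $p$. The paper truncates because $H_N(x,0)$ grows linearly in $x$, so solutions are unbounded. Your BSDE route handles this, e.g.\ via Pardoux--Peng for data with bounded derivatives.
\item Before applying Schauder theory to the equation for $V_N$, you first need a H\"older bound on $\nabla_xV_N$. This can come from $W^{2,1}_p$ estimates or from Thm.~V.3.1 in \cite{Ladyzhenskaya1968}; a merely bounded drift or source does not suffice.
\end{itemize}
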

\begin{proof}
 We begin with the existence. Define the PDE with a cutoff function 
 \begin{align}\label{cut-HJB-N}
  \left\{\begin{aligned}
    \partial_tV^R_N(t,x)+\frac12\sum_{i=1}^N{\rm tr}\big[\sigma^\top\sigma D^2_{x_ix_i}V^R_N(t,x)\big]+H^R_N\big(x,\nabla_x V^R_N(t,x)\big)=0,\\
    V^R_N(T,x)=U^R_N(x),\quad(t,x)\in[0,T)\times(\mathbb R^d)^N.
  \end{aligned}\right.
\end{align}
Here
\begin{align*}
 H^R_N(x,p)=H_N\big(\Phi_R(x),p\big),\ U^R_N(x)=U_N\big(\Phi_R(x)\big),\\ \Phi_R(x)=\big(\rho_R(x_1),\ldots,\rho_R(x_N)\big),
\quad x=(x_1,\ldots,x_N)\in(\mathbb R^d)^N,
\end{align*}
where $\rho_R(x)$ is a smooth cutoff function for $x\in\mathbb R^d$ that vanishes when $|x|>R$. In view of Theorem 5.8.1 in \cite{Ladyzhenskaya1968}, \eqref{cut-HJB-N} admits a unique classical solution. It suffices to show that the aforementioned solution is locally uniformly bounded in $R$. Towards that end, define
 \begin{align*}
  X^{t,x,i}_s=x+\sigma W^i_s,\quad Y^{t,x}_s=V^R_N(s,X^{t,x}_s).
 \end{align*}
 Then $Y^{t,x}_t=V^R_N(t,x)$ and satisfies BSDE
 \begin{align}\label{cut-BSDE}
 \left\{\begin{aligned}
 dY^{t,x}_s&=-H^R_N\big(X^{t,x}_s,\sigma^{-1}Z^{t,x}_s\big)ds+Z^{t,x}_sdW_s,\ s\in[t,T],\\
 Y^{t,x}_T&=U^R\big(\mu_{X^{t,x}_T}\big).
 \end{aligned}\right.
 \end{align}
 The data satisfy
 \begin{align*}
  \big|H^R_N\big(X^{t,x}_s,\sigma^{-1}Z^{t,x}_s\big)\big|\leq C\big(1+|x|+|W_s|+|Z^{t,x}_s|\big),\ \big|U^R\big(\mu_{X^{t,x}_T}\big)\big|\leq C\big(1+|x|+|W_T|\big),
 \end{align*}
where the constant $C$ depends only on $H_N$ and is independent of $R$. Consider
\begin{align*}
 \left\{\begin{aligned}
 dY^{t,x,+}_s&=-C\big(1+|x|+|W_s|+|Z^{t,x,+}_s|\big)ds+Z^{t,x,+}_sdW_s,\ s\in[t,T],\\
 Y^{t,x,+}_T&=C\big(1+|x|+|W_T|\big),
 \end{aligned}\right.
 \end{align*}
 and
 \begin{align*}
 \left\{\begin{aligned}
 dY^{t,x,-}_s&=C\big(1+|x|+|W_s|+|Z^{t,x,-}_s|\big)ds+Z^{t,x,-}_sdW_s,\ s\in[t,T],\\
 Y^{t,x,-}_T&=-C\big(1+|x|+|W_T|\big).
 \end{aligned}\right.
 \end{align*}
 According to standard comparison theorems on BSDE,
 \begin{align*}
  Y^{t,x,-}_t\leq Y^{t,x}_t\leq Y^{t,x,+}_t.
 \end{align*}
Hence $V^R_N(t,x)$ is locally bounded uniform in $R$. In order to show such uniform boundedness for higher order derivatives of $V^R_N$, we first refer to Theorem 5.3.1 in \cite{Ladyzhenskaya1968} and get that $\nabla_xV^R_N$ is locally H$\ddot{\rm o}$lder continuous where the local H$\ddot{\rm o}$lder constant is independent of $R$, which further implies the local H$\ddot{\rm o}$lder continuity of $\nabla^2_xV^R_N$. Let us turn to the uniqueness. For any classical solution to \eqref{HJB-N} with bounded derivatives, define
\begin{align*}
 Y^{t,x}_s=V_N\big(s,X^{t,x}_s\big),\quad s\in[t,T].
\end{align*}
Then $(Y^{t,x}_s,Z^{t,x}_s)_{s\in[t,T]}$ uniquely solves the following BSDE with stochastic factor $(X^{t,x}_s)_{s\in[t,T]}$
 \begin{align*}
 \left\{\begin{aligned}
 dY^{t,x}_s&=-H_N\big(X^{t,x}_s,\sigma^{-1}Z^{t,x}_s\big)ds+Z^{t,x}_sdW_s,\ s\in[t,T],\\
 Y^{t,x}_T&=U^R\big(\mu_{X^{t,x}_T}\big).
 \end{aligned}\right.
 \end{align*}
Hence the uniqueness.
\end{proof}
The a priori estimate below has a counterpart from \cite{Liao24} where the original proof relies on the existence of optimal control and the corresponding verification results. Therefore here we need to show it in the current circumstance.
\begin{proposition}\label{uniform-1-1}
  Suppose that Assumption \ref{assumption-displacement-convex} holds and $\sigma>0$. Then there exists a constant $C$ depending only on $|\partial_{\tilde\mu}\mathcal H|_\infty+|\partial_\mu U|_\infty$ (independent of $N$) such that
  \begin{align}\label{uniform-1}
    \big|ND_{x_i}V_N(t,x)\big|\leq C,\quad i=1,\ldots,N,\ (t,x)\in[0,T]\times\mathbb R^N.
  \end{align}
  \end{proposition}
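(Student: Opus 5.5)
We prove \eqref{uniform-1} by differentiating \eqref{HJB-N} in $x_i$ and writing the resulting linear system for $(ND_{x_j}V_N)_{1\le j\le N}$ as a backward SDE with Lipschitz driver, whose solution is bounded by a comparison argument. Lemma \ref{differentiability} makes this legitimate: $D_{x_i}V_N\in C^{1,2+\gamma}$ with bounded gradients. The Legendre-transform and optimal-control argument of \cite{Liao24} is not needed.

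Set $w_i:=ND_{x_i}V_N$ and $w=(w_1,\ldots,w_N)$. Differentiating \eqref{HJB-N} in $x_i$ and using the structure \eqref{HJB-N-1} of $H_N$ gives
\begin{align*}
\partial_tw_i+\frac12\sum_{j=1}^N{\rm tr}\big[\sigma^\top\sigma D^2_{x_jx_j}w_i\big]+\sum_{j=1}^N D_{p_j}H_N(x,\nabla_xV_N)\,D_{x_j}w_i+ND_{x_i}H_N(x,\nabla_xV_N)=0.
\end{align*}
The Wasserstein structure yields
\begin{align*}
ND_{x_i}H_N=\partial_{\tilde\mu}\mathcal H^{(x)}\Big(\tfrac1N\textstyle\sum_j\delta_{(x_j,w_j)},x_i,w_i\Big),\qquad D_{p_j}H_N=\partial_{\tilde\mu}\mathcal H^{(p)}\Big(\tfrac1N\textstyle\sum_k\delta_{(x_k,w_k)},x_j,w_j\Big).
\end{align*}
So the drift $b_j:=D_{p_j}H_N$ is bounded by $|\partial_{\tilde\mu}\mathcal H|_\infty$, independently of $N$. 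The source term $ND_{x_i}H_N$ is bounded by the same constant. The terminal datum is $w_i(T)=\partial_\mu U(\mu_x,x_i)$, bounded by $|\partial_\mu U|_\infty$.

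We then use a Feynman--Kac representation. Because $b$ depends on $w$ itself, we freeze it. Fix $(t,x)$ and let $X$ solve
\begin{align*}
dX^j_s=b_j\big(s,X_s\big)ds+\sigma dW^j_s,\qquad X_t=x,
\end{align*}
with $b_j(s,y)=D_{p_j}H_N(y,\nabla_xV_N(s,y))$. This drift is Lipschitz in $y$ for fixed $N$, since $\nabla^2_xV_N$ is bounded by Lemma \ref{differentiability}, so the SDE is well posed. Applying Itô's formula to $w_i(s,X_s)$, the drift terms cancel and we obtain
\begin{align*}
w_i(t,x)=\mathbb E\Big[\partial_\mu U(\mu_{X_T},X^i_T)+\int_t^T\partial_{\tilde\mu}\mathcal H^{(x)}\big(\tilde\mu^N_s,X^i_s,w_i(s,X_s)\big)ds\Big].
\end{align*}
This immediately gives $|w_i|\le |\partial_\mu U|_\infty+T|\partial_{\tilde\mu}\mathcal H|_\infty$, which is \eqref{uniform-1} with $C$ depending only on the stated quantities and $T$.

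The main obstacle is justifying Itô's formula and the vanishing of the stochastic-integral expectation. This requires integrability of $D_xw_i$ along $X$; Lemma \ref{differentiability} provides only linear growth and bounded gradients, which suffices since $b$ is bounded. If the global bound on $\nabla^2_xV_N$ were unavailable, we would run the argument for the cutoff solutions $V^R_N$ of \eqref{cut-HJB-N}. Those are globally regular and satisfy the same bound, because the cutoff $\Phi_R$ only composes into the position variables: the chain-rule factor $|D\rho_R|$ is bounded uniformly in $R$, and it should be chosen at most $1$ so the constant is preserved. We would then pass to the limit $R\to\infty$ using the local uniform convergence established in the proof of Lemma \ref{differentiability}.
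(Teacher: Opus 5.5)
Your argument is correct and is essentially the paper's proof: you differentiate \eqref{HJB-N} in $x_i$, represent $ND_{x_i}V_N$ by Feynman--Kac along the diffusion with frozen drift $D_pH_N(X,\nabla_xV_N)$, and bound the result using $|ND_{x_i}H_N|\le|\partial_{\tilde\mu}\mathcal H|_\infty$ and $|ND_{x_i}U_N|\le|\partial_\mu U|_\infty$. The differences are cosmetic: you use a strong solution via the bounded Hessian where the paper takes a weak solution from boundedness of the drift, and you state explicitly that $C$ also depends on $T$, which the paper leaves implicit.
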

  \begin{proof}
    We lose nothing by only considering $t=0$. As mentioned in Lemma \ref{differentiability}, equation \eqref{HJB-N} has a classical solution with bounded derivatives. Hence we may consider $(X_t)_{t\in[0,T]}=\big(X^1_t,\ldots,X^N_t\big)^\top_{t\in[0,T]}$, satisfying
 \begin{align*}
 \left\{\begin{aligned}
 dX^i_t=D_{p_i}H_N\big(X_t,\nabla_x V_N(t,X_t\big)dt+\sigma dW^i_t,\quad t\in[0,T],\ i=1,\ldots,N,\\
 X_0=x=(x_1,\ldots,x_N)^\top,\ (x_1,\ldots,x_N)\in(\mathbb R^d)^N.
 \end{aligned}\right.
 \end{align*}
Since $D_{p_i}H_N(x,p)$ is bounded function for $(x,p)\in(\mathbb R^d)^N\times(\mathbb R^d)^N$, the above SDE is well posed in weak sense. After taking $D_{x_i}$ in \eqref{cut-HJB-N}, we obtain the following from Feynman-Kac representation
\begin{align*}
 D_{x_i}V_N\big(0,x\big)=\mathbb E\bigg[D_{x_i}U_N(\mu_{X_T})+\int_0^TD_{x_i}H_N\big(X_t,\nabla_x V_N(t,X_t)\big)dt\bigg].
\end{align*}
In view of \eqref{HJB-N-1},
\begin{align*}
 |D_{x_i}U_N(x)|+|D_{x_i}H_N(x,p)|\leq \frac CN,\quad(x,p)\in(\mathbb R^d)^N\times(\mathbb R^d)^N,\ 1\leq i\leq N.
\end{align*}
The above yields
\begin{align*}
 \big|ND_{x_i}V_N(t,x)\big|\leq C,\quad i=1,\ldots,N,\ (t,x)\in[0,T]\times\mathbb R^N,
\end{align*}
where the constant depends only on $|\partial_{\tilde\mu}\mathcal H|_\infty+|\partial_\mu U|_\infty$. 
\end{proof}
Proposition \ref{prop-propagation-1} and Proposition \ref{uniform-1-1} imply the following.
\begin{corollary}
  Suppose that
\begin{enumerate}
  \item Assumption \ref{assumption-displacement-convex} holds and $\sigma>0$;
  \item $V$ is the classical solution to \eqref{HJB}. For some $\tilde\delta>0$, $\partial_tV(t,\cdot)\in\mathcal C\big([T-\tilde\delta,T]\times\mathcal P_2(\mathbb R^d)\big)$, $V(t,\cdot)\in\mathcal C^2\big(\mathcal P_2(\mathbb R^d)\big)$ with jointly continuous and bounded derivatives on $t\in[T-\tilde\delta,T]$.
  \end{enumerate}
  Then it holds for $\mu,\ \nu\in\mathcal P_2(\mathbb R^d)$ that
  \begin{align*}
    |V(t,\mu)-V(t,\nu)|\leq C\mathcal W_1(\mu,\nu),\quad t\in[T-\tilde\delta,T],
  \end{align*}
  where the constant $C$ is the same as in \eqref{uniform-1} and thus depends only on $|\partial_{\tilde\mu}\mathcal H|_\infty+|\partial_\mu U|_\infty$.
\end{corollary}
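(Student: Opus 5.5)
The plan is to transfer the particle-level gradient bound of Proposition \ref{uniform-1-1} to the limit $V$ through the propagation of chaos estimate \eqref{propagation-1}. The argument first handles empirical measures and then extends to general measures by density.

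\emph{Empirical measures.} Fix $t\in[T-\tilde\delta,T]$ and $N\ge1$. Take $\mu^N_x=\frac1N\sum_{i}\delta_{x_i}$ and $\mu^N_y=\frac1N\sum_i\delta_{y_i}$, where the labelling of $y$ is chosen optimally for $\mathcal W_1$; for uniform empirical measures with the same number of atoms, an optimal coupling can be taken to be a permutation. By \eqref{uniform-1}, $V_N(t,\cdot)$ satisfies $|D_{x_i}V_N|\le C/N$. The mean value theorem along the segment from $x$ to $y$ then gives
\begin{align*}
|V_N(t,x)-V_N(t,y)|\le\sum_{i=1}^N\frac CN|x_i-y_i|=C\,\mathcal W_1(\mu^N_x,\mu^N_y).
\end{align*}
Proposition \ref{prop-propagation-1} applies on $[T-\tilde\delta,T]$ under the corollary's hypotheses. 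Its proof is a backward comparison of $V(t,\mu_x)$ with $V_N$, and it localizes to a terminal interval because only the bounded second derivatives on that interval are used. Combining it with the bound above yields
\begin{align*}
|V(t,\mu^N_x)-V(t,\mu^N_y)|\le C\,\mathcal W_1(\mu^N_x,\mu^N_y)+\frac{2C'}N.
\end{align*}

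\emph{General measures.} For arbitrary $\mu,\nu\in\mathcal P_2(\mathbb R^d)$, choose sequences of empirical measures with $N$ atoms each such that $\mu^N\to\mu$ and $\nu^N\to\nu$ in $\mathcal W_2$, for instance by quantization or by empirical measures of i.i.d. samples along a subsequence. Since $\mathcal W_1\le\mathcal W_2$, also $\mathcal W_1(\mu^N,\nu^N)\to\mathcal W_1(\mu,\nu)$. Because $V(t,\cdot)$ is $\mathcal W_2$-continuous, letting $N\to\infty$ kills the $O(1/N)$ term and gives $|V(t,\mu)-V(t,\nu)|\le C\,\mathcal W_1(\mu,\nu)$. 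The constant $C$ is that of \eqref{uniform-1}, so it depends only on $|\partial_{\tilde\mu}\mathcal H|_\infty+|\partial_\mu U|_\infty$, as claimed.

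\emph{Main obstacle.} The delicate point is ensuring that the hypotheses of Proposition \ref{prop-propagation-1} hold on the short interval $[T-\tilde\delta,T]$ rather than on all of $[0,T]$. I would check this by rerunning its proof: apply Itô's formula to $V(s,\mu_{X_s})$ along the particle dynamics from $t$ to $T$, which only involves $\partial^2_{\mu\mu}V$ on $[t,T]$. This makes the constant $C'$ depend on $T-t\le\tilde\delta$, which is harmless since that term vanishes as $N\to\infty$. It must also be verified that $V_N$ with bounded derivatives exists, which is given by Lemma \ref{differentiability}. The permutation reduction for $\mathcal W_1$ between equal-size uniform empirical measures is standard (Birkhoff's theorem).
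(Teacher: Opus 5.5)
Your proposal is correct and follows the paper's route: the paper states this corollary as an immediate consequence of the uniform bound $|ND_{x_i}V_N|\le C$ in Proposition \ref{uniform-1-1} and the $O(1/N)$ propagation-of-chaos estimate in Proposition \ref{prop-propagation-1}, first on empirical measures and then for general measures by density. You also supply details the paper leaves implicit, namely the optimal permutation coupling for equal-size empirical measures and the check that the propagation-of-chaos argument localizes to $[T-\tilde\delta,T]$.
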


The next lemma also has a counterpart in \cite{Liao24} whose proof relies on the representation of Hamiltonians through Legendre transform. We adapt the proof to current setting. Before going further, for $1\leq k,l\leq N$, $1\leq m,n\leq d$ we take $D^2_{x_kx_l}$ in \eqref{HJB-N} and obtain the system of equations on $\big(V^{ij}_N\big)_{1\leq i,j\leq N}$ where $V^{kl}_N:=D^2_{x_kx_l}V_N\in\mathbb R^{d\times d}$. For $k\neq l$:
{\small\begin{align}\label{Fey-Kac}
  \left\{\begin{aligned}
    &\qquad\partial_t V^{kl}_N+\frac12\sum_{i=1}^ND^2_{x_ix_i}{\rm tr}\big[\sigma^\top\sigma V^{kl}_N\big]+\sum_{i=1}^N\partial_{\tilde\mu}\mathcal H^{(p)}(\tilde\mu_t,x_i,NV^i_N)D_{x_i}V^{kl}_N+D_x\partial_{\tilde\mu}\mathcal H^{(p)}(\tilde\mu_t,x_l,NV^l_N)V^{lk}_N\\
    &\quad+\frac1N\sum_{i=1}^NV^{ki}_N\partial^2_{\tilde\mu\tilde\mu}\mathcal H^{(p)(x)}(\tilde\mu_t,x_i,NV^i_N,x_l,NV^l_N)+\frac1N\sum_{i=1}^N\partial^2_{\tilde\mu\tilde\mu}\mathcal H^{(x)(p)}(\tilde\mu_t,x_k,NV^k_N,x_i,NV^i_N)V^{il}_N\\
    &\quad+\sum_{i,j=1}^NV^{ki}_N\partial^2_{\tilde\mu\tilde\mu}\mathcal H^{(p)(p)}(\tilde\mu_t,x_i,NV^i_N,x_j,NV^j_N)V^{jl}_N+N\sum_{i=1}^NV^{ki}_N D_p\partial_{\tilde\mu}\mathcal H^{(p)}(\tilde\mu_t,x_i,NV^i_N)V^{il}_N\\
    &\quad+\frac1{N^2}\partial^2_{\tilde\mu\tilde\mu}\mathcal H^{(x)(x)}(\tilde\mu_t,x_k,NV^k_N,x_l,NV^l_N)+V^{lk}_N D_p\partial_{\tilde\mu}\mathcal H^{(x)}(\tilde\mu_t,x_k,NV^k_N)=0,\\
    &\quad V^{kl}_N(T,x)=\frac1{N^2}\partial^2_{\mu\mu}U\bigg(\frac1N\sum_{i=1}^N\delta_{x_i},x_k,x_l\bigg),\quad t\in[0,T],\ x=(x_1,\ldots,x_N)\in(\mathbb R^d)^N.
  \end{aligned}\right.
\end{align}}
For $k=l$:
{\small\begin{align}\label{Fey-Kac-1}
  \left\{\begin{aligned}
&\qquad\partial_tV^{kk}_N+\frac12\sum_{i=1}^ND^2_{x_ix_i}{\rm tr}\big[\sigma^\top\sigma V^{kk}_N\big]+\sum_{i=1}^N\partial_{\tilde\mu}\mathcal H^{(p)}(\tilde\mu_t,x_i,NV^i_N)D_{x_i}V^{kk}_N+D_p\partial_{\tilde\mu}\mathcal H^{(x)}(\tilde\mu_t,x_k,NV^k_N)V^{kk}_N\\
    &\quad+D_x\partial_{\tilde\mu}\mathcal H^{(p)}(\tilde\mu_t,x_k,NV^k_N)V^{kk}_N+\frac1N\sum_{i=1}^N\partial^2_{\tilde\mu\tilde\mu}\mathcal H^{(p)(x)}(\tilde\mu_t,x_i,NV^i_N,x_k,NV^k_N)V^{ik}_N\\
    &\quad+\frac1N\sum_{i=1}^N\partial^2_{\tilde\mu\tilde\mu}\mathcal H^{(x)(p)}(\tilde\mu_t,x_k,NV^k_N,x_j,NV^j_N)V^{jk}_N\\
    &\quad+\sum_{i,j=1}^NV^{ki}_N\partial^2_{\tilde\mu\tilde\mu}\mathcal H^{(p)(p)}(\tilde\mu_t,x_i,NV^i_N,x_j,NV^j_N)V^{jk}_N+N\sum_{i=1}^NV^{ki}_ND_p\partial_{\tilde\mu}\mathcal H^{(p)}(\tilde\mu_t,x_i,NV^i_N)V^{ik}_N\\
    &\quad+\frac1{N^2}\partial^2_{\tilde\mu\tilde\mu}\mathcal H^{(x)(x)}(\tilde\mu_t,x_k,NV^k_N,x_k,NV^k_N)+\frac1ND_x\partial_{\tilde\mu}\mathcal H^{(x)}(\tilde\mu_t,x_k,NV^k_N)=0,\\
    &\quad V^{kk}_N(T,x)=\frac1{N^2}\partial^2_{\mu\mu}U\bigg(\frac1N\sum_{i=1}^N\delta_{x_i},x_k,x_k\bigg)+\frac1ND_x\partial_{\mu}U\bigg(\frac1N\sum_{i=1}^N\delta_{x_i},x_k\bigg),\\
   &\quad t\in[0,T],\ x=(x_1,\ldots,x_N)\in(\mathbb R^d)^N.
  \end{aligned}\right.
\end{align}}

\begin{lemma}\label{continuation-1}
 Suppose that 
 \begin{enumerate}
  \item Assumption \ref{assumption-displacement-convex} holds and $\sigma>0$;
  \item The system \eqref{Fey-Kac}$\sim$\eqref{Fey-Kac-1} admits bounded solutions on $t\in[T_0,T]$, in particular,
 \begin{align}\label{continuation-1-assumption}
  |V^{ij}_N(T_0,x)|\leq\frac CN,\quad x\in(\mathbb R^d)^N.
 \end{align}
 \end{enumerate}
Then there exist constants $\tilde\delta_0>0$ and $K$ depending only on above $C$ such that
 \begin{align}\label{continuation-1-result}
  |V^{ij}_N(t,x)|\leq\frac KN,\quad (t,x)\in[T_0-\tilde\delta_0,T_0]\times(\mathbb R^d)^N.
 \end{align}
\end{lemma}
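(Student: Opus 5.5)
The idea is to avoid the full quadratic system \eqref{Fey-Kac}$\sim$\eqref{Fey-Kac-1}, whose quadratic terms do not obviously keep the $1/N$ scale under an entrywise bootstrap. Instead I would obtain $V^{kl}_N$ by differentiating, with respect to $x_l$, the Feynman--Kac representation of $D_{x_k}V_N$ used in the proof of Proposition \ref{uniform-1-1}. Then every term carries an explicit factor $1/N$ coming from \eqref{HJB-N-1}. The argument is a continuity (bootstrap) argument on a short interval $[T_0-\tilde\delta_0,T_0]$, and throughout I will use the bound $|ND_{x_i}V_N|\leq C$ from Proposition \ref{uniform-1-1}.

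\textbf{Step 1 (representation).} For $t\in[T_0-\tilde\delta_0,T_0]$, let $X^{t,x}$ solve $dX^i_s=D_{p_i}H_N(X_s,\nabla_xV_N(s,X_s))ds+\sigma dW^i_s$ with $X_t=x$. As in Proposition \ref{uniform-1-1}, we have
\begin{align*}
D_{x_k}V_N(t,x)=\mathbb E\Big[D_{x_k}V_N(T_0,X_{T_0})+\int_t^{T_0}D_{x_k}H_N\big(X_s,\nabla_xV_N(s,X_s)\big)ds\Big].
\end{align*}
By Lemma \ref{differentiability}, the coefficients are $C^{1}$ in $x$, so the flow $J^{jl}_s:=\partial_{x_l}X^j_s$ exists. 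It solves a linear ODE driven by $D_{x_m}D_{p_j}H_N$ and by $D_{p_m}D_{p_j}H_N\,V^{m\cdot}_N$, with $J^{jl}_t=\delta_{jl}I_d$.

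\textbf{Step 2 (bootstrap hypothesis and flow bounds).} Let $t^*$ be the smallest time in $[T_0-\tilde\delta_0,T_0]$ such that $\sup_x|V^{ij}_N(s,x)|\leq K/N$ for all $s\in[t^*,T_0]$; here $K=2C+1$, and the set of such times is nonempty by \eqref{continuation-1-assumption} and continuity. On $[t^*,T_0]$ I would estimate the flow using the formulas
$D_{p_j}D_{p_m}H_N=\partial^2_{\tilde\mu\tilde\mu}\mathcal H^{(p)(p)}+\delta_{jm}N\,D_p\partial_{\tilde\mu}\mathcal H^{(p)}$ and
$D_{x_m}D_{p_j}H_N=\frac1N\partial^2_{\tilde\mu\tilde\mu}\mathcal H^{(p)(x)}+\delta_{jm}D_x\partial_{\tilde\mu}\mathcal H^{(p)}$.
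Together with the bootstrap bound, these give
\begin{align*}
|J^{ll}_s-I_d|\leq C'(K)(s-t),\qquad |J^{jl}_s|\leq \frac{C'(K)}N(s-t)\quad (j\neq l).
\end{align*}
The proof is a Gronwall argument for the pair $\big(\max_l|J^{ll}|,\ N\max_{j\neq l}|J^{jl}|\big)$. The sums $\sum_m$ of $N$ terms are always paired with a factor $1/N$, coming either from $V^{m\cdot}_N$ or from $\frac1N\partial^2\mathcal H$.

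\textbf{Step 3 (differentiating).} Differentiating the representation in $x_l$ gives
\begin{align*}
V^{kl}_N(t,x)=\mathbb E\Big[\sum_jV^{kj}_N(T_0,X_{T_0})J^{jl}_{T_0}+\int_t^{T_0}\sum_j\big(D_{x_j}D_{x_k}H_N\,J^{jl}_s+D_{p_m}D_{x_k}H_N\,V^{mj}_N J^{jl}_s\big)ds\Big].
\end{align*}
The terminal term splits into $V^{kl}_N(T_0)J^{ll}$, of size at most $(C/N)(1+C'\tilde\delta_0)$, and $\sum_{j\neq l}V^{kj}_N(T_0)J^{jl}$, of size at most $N\cdot(C/N)\cdot C'\tilde\delta_0/N$. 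For the running terms, $D_{x_j}D_{x_k}H_N=O(1/N^2)+\delta_{jk}O(1/N)$ and $D_{p_m}D_{x_k}H_N=O(1/N)+\delta_{mk}O(1)$. Hence the running integral is bounded by $\tilde\delta_0\,C''(K)/N$. Choosing $\tilde\delta_0$ small, depending only on $C$, gives $|V^{kl}_N|\leq(C+1)/N<K/N$ on $[t^*,T_0]$. Continuity then forces $t^*=T_0-\tilde\delta_0$, which is \eqref{continuation-1-result}.

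\textbf{Main obstacle.} The delicate point is the term $D_{p_m}D_{x_k}H_N\,V^{mj}_N J^{jl}$ summed over both $m$ and $j$. A naive entrywise bound loses a factor $N$ here; the same loss is why a direct bootstrap on \eqref{Fey-Kac} fails. The fix is to use the two-scale flow estimate of Step 2, which makes off-diagonal entries of $J$ of order $1/N$. The diagonal piece $m=k$, $j=l$ then gives $O(1)\cdot K/N$, and the remaining double sums give $N^2\cdot O(1/N)\cdot(K/N)\cdot(C'/N)=O(1/N)$. I expect to verify carefully that the Step 2 Gronwall for $J$ closes using only the entrywise bootstrap bound; if it does not, I would first try adding $\max_k\sum_i|V^{ki}_N|$ as an extra bootstrapped quantity.
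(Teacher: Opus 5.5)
\textbf{There is a genuine gap in Step 2.} It is the same obstacle you wanted to avoid, now sitting inside the equation for $J$.

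The drift of the Jacobian flow is $B=D^2_{px}H_N+D^2_{pp}H_N\,\nabla^2_xV_N$. For $j\neq l$, with $z_i=(x_i,ND_{x_i}V_N)$,
\[
B_{jl}=\tfrac1N\partial^2_{\tilde\mu\tilde\mu}\mathcal H^{(p)(x)}(\tilde\mu,z_j,z_l)+\sum_{n=1}^N\partial^2_{\tilde\mu\tilde\mu}\mathcal H^{(p)(p)}(\tilde\mu,z_j,z_n)V^{nl}_N+N\,D_p\partial_{\tilde\mu}\mathcal H^{(p)}(\tilde\mu,z_j)V^{jl}_N .
\]
Under the entrywise bootstrap $|V^{ij}_N|\leq K/N$, the last two terms are $O(K)$, not $O(1/N)$. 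Each of the $N$ summands carries one factor $1/N$, but that only brings the sum down to $O(1)$. Also, the diagonal blocks of $D^2_{pp}H_N$ are of size $N$. Consequently:
\begin{itemize}
\item $\frac{d}{ds}J^{jl}$ contains $B_{jl}J^{ll}=O(K)$, so $N\max_{j\neq l}|J^{jl}|$ is driven at rate $NK$ and your Gronwall for the pair does not close.
\item $B$ may have all $N^2$ blocks of size $K$, hence norm of order $NK$. Then there is no $N$-uniform bound on $J$ at all.
\end{itemize}

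This is not a bookkeeping accident. Your Steps 2--3 use only bounds on second derivatives of $\mathcal H$ plus the entrywise bound at $T_0$. Take $d=1$, $\mathcal H(\tilde\mu)=-\frac12\int|p|^2\tilde\mu(dxdp)$ (concave in $p$, as the displacement condition with $d_2=0$ requires), and data $V_N(T_0,x)=-\frac c{2N}(\sum_ix_i)^2$, which satisfies \eqref{continuation-1-assumption}. Then $J^{jl}_s\geq (e^{Nc(s-t)}-1)/N$ for $j\neq l$. The Hessian $y(t)\mathbf 1\mathbf 1^\top$ solves $\dot y=N^2y^2$ and blows up at $T_0-t=1/(Nc)$. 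So no argument using only these ingredients can give an $N$-independent $\tilde\delta_0$.

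Your fallback, also bootstrapping $\max_k\sum_i|V^{ki}_N|$, would indeed close the $J$-estimate. But it cannot be initialized from \eqref{continuation-1-assumption}, which allows row sums of order one.

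\textbf{What would make your scheme work.} You need a norm that sees the $1/N$ scaling of the whole matrix, not just of its entries. Bootstrap $\|\nabla^2_xV_N(s,\cdot)\|\leq K/N$ in operator norm. This is what Lemma \ref{prop-eigen} supplies at $T_0$ in the paper's continuation scheme, and it implies the entrywise bound. Then
\begin{itemize}
\item $\|D^2_{pp}H_N\|\lesssim N$, $\|D^2_{px}H_N\|\lesssim 1$ and $\|D^2_{xx}H_N\|\lesssim N^{-1}$;
\item hence $\|B\|\lesssim 1+K$ and $\|J_s\|\leq e^{C(1+K)(s-t)}$;
\item your Step 3 identity in matrix form, $\nabla^2_xV_N(t,x)=\mathbb E\big[\nabla^2_xV_N(T_0,X_{T_0})J_{T_0}+\int_t^{T_0}(D^2_{xx}H_N+D^2_{xp}H_N\nabla^2_xV_N)J_s\,ds\big]$, closes in the same norm for small $\tilde\delta_0$.
\end{itemize}

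\textbf{Comparison with the paper.} The paper does not differentiate the first-order Feynman--Kac formula. It runs a short-time contraction directly on the Riccati BSDE \eqref{Fey-Kac-3-var} along the cut-off dynamics. The two routes are equivalent linearizations of the same quadratic term. Both need the scaling encoded in the norm, either the operator norm or the two-scale pattern ($N^{-1}$ on the diagonal, $N^{-2}$ off it) that the terminal data in \eqref{Fey-Kac}--\eqref{Fey-Kac-1} actually have. Entrywise bounds on the coefficients alone are not enough.

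\textbf{A smaller point.} Your continuity argument defining $t^*$ needs $\sup_x|V^{ij}_N(s,x)|$ to be finite and continuous in $s$ below $T_0$. Lemma \ref{differentiability} does not provide this; it gives only linear growth of the Hessian. The paper's cut-off $H^R_N$ is there for exactly this reason.
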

\begin{proof}
Take $\partial^2_{x_ix_j}$ in \eqref{cut-HJB-N} and obtain \eqref{Fey-Kac}$\sim$\eqref{Fey-Kac-1} with $H_N$ replaced by $H^R_N$. Then, in view of Lemma \ref{differentiability}, for $1\leq i,j\leq N$ and $x\in(\mathbb R^d)^N$, the following SDE has a weak solution
 \begin{align*}
 \left\{\begin{aligned}&dX^{R,i}_t=D_{p_i}H^R_N\big(X^R_t,\nabla_x V^R_N(t,X^R_t)\big)dt+\sigma dW^i_t,\ t\in[T_0-\tilde\delta_0,T_0],\\
 &X^R_{T-\tilde\delta_0}=x=(x_1,\ldots,x_N)^\top,\end{aligned}\right.
\end{align*} 
where $\tilde\delta_0$ is a positive constant to be determined. We may also consider
\begin{align*}
 Y^{R,ij}_t=D^2_{x_ix_j}V^R_N(t,X^R_t),\quad t\in[T_0-\tilde\delta_0,T_0].
\end{align*}
Then $\big(Y^{R,ij}_t\big)_{t\in[T-\tilde\delta_0,T],1\leq i,j\leq N}$ satisfies
\begin{align}\label{Fey-Kac-3-var}
    Y^R_t&=\mathbb E\bigg[\int_t^{T_0}\bigg(H^{R,xx}_N(s)+A^R_N(s)Y^R_s+Y^R_sA^R_N(s)^\top+Y^R_sH^{R,pp}_N(s)Y^R_s\bigg)ds\notag\\
   &\qquad\quad+V^{R,xx}_N(T_0)\bigg|\mathcal F^{\mathbf W}_t\bigg],\quad t\in[T_0-\tilde\delta_0,T_0],
  \end{align}
  where ${\mathbf W}_t=(W^1_t,\ldots,W^N_t)^\top,\ t\in[T_0-\tilde\delta_0,T_0]$,
 \begin{align*}
  &V^{R,xx}_N(T_0)=\nabla^2_{xx}V^R_N\big(T_0,X^R_N(T_0)\big),\quad H^{R,xx}_N(s)=\nabla^2_{xx}H^R_N\big(X^R_N(s),\nabla_x V^R_N(s,X^R_N(s))\big),\\
  &A^R_N(s)=\nabla^2_{xp}H^R_N\big(X^R_N(s),\nabla_x V^R_N(s,X^R_N(s))\big),\quad H^{R,pp}_N(s)=\nabla^2_{pp}H^R_N\big(X^R_N(s),\nabla_x V^R_N(s,X^R_N(s))\big).
 \end{align*}
In view of Assumption \ref{assumption-displacement-convex} and the definition, each entry of $V^{R,xx}_N(T_0),\ H^{R,xx}_N(s),\ A^R_N(s),\ H^{R,pp}_N(s)$ is bounded by a constant independent of $R$. Using a similar contraction method to that in Proposition 3.10 of \cite{Huafu24}, we may show the existence of positive constants $\tilde\delta_0,\ K$ such that \eqref{continuation-1-result} holds true.
\end{proof}
The next a priori estimate is crucial to our analysis. The corresponding method and assumption here are notably different from their counterparts in \cite{Liao24}.
\begin{lemma}\label{prop-eigen}
  Suppose that
  \begin{enumerate}
   \item Assumption \ref{assumption-displacement-convex} holds and $\sigma>0$;
   \item For some $\tilde\delta>0$, the system \eqref{Fey-Kac}$\sim$\eqref{Fey-Kac-1} admits bounded classical solutions $V^{ij}_N\in C^{1,2}([0,T]\times(\mathbb R^d)^N)$, $1\leq i,j\leq N$;
 \end{enumerate}
  Then there exists a constant $C$ depending only on
 \begin{align}\label{C-depend}
  |\partial^2_{\tilde\mu\tilde\mu}\mathcal H|_\infty+|\partial_x\partial_{\tilde\mu}\mathcal H^{(x)}|_\infty+|\partial_p\partial_{\tilde\mu}\mathcal H^{(p)}|_\infty+|\partial^2_{\mu\mu}U|_\infty+|\partial_x\partial_\mu U|_\infty,
 \end{align}
independent of $\tilde\delta$ such that
  \begin{align}\label{eigen}
    -\frac CN\sum_{i=1}^N\xi^2_i\leq\sum_{i,j=1}^N\xi^\top_iV^{ij}_N(t,x)\xi_j\leq\frac CN\sum_{i=1}^N\xi^2_i,\ \xi\in\mathbb R^N,\ (t,x)\in[T-\tilde\delta,T]\times(\mathbb R^d)^N.
  \end{align}
\end{lemma}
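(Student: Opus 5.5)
The plan is to work along the optimal particle dynamics and reduce \eqref{eigen} to a comparison statement for a matrix backward stochastic Riccati equation. Fix $(t,x)$ with $t\in[T-\tilde\delta,T]$. As in the proof of Lemma \ref{continuation-1}, let $X_s$ solve
\begin{align*}
dX^i_s=D_{p_i}H_N\big(X_s,\nabla_xV_N(s,X_s)\big)ds+\sigma dW^i_s,\quad X_t=x.
\end{align*}
Set $Y_s:=N\nabla^2_{xx}V_N(s,X_s)$, whose blocks are $NV^{ij}_N$. By hypothesis~2, It\^o's formula applied to \eqref{Fey-Kac}$\sim$\eqref{Fey-Kac-1} shows that $Y$ solves
\begin{align*}
-dY_s=\big(\mathcal Q^{xx}_s+\mathcal A_sY_s+Y_s\mathcal A^\top_s+Y_s\mathcal Q^{pp}_sY_s\big)ds-Z_sd\mathbf W_s,\qquad Y_T=N\nabla^2_{xx}U_N(X_T).
\end{align*}
Here $\mathcal Q^{xx}=N\nabla^2_{xx}H_N$, $\mathcal A=\nabla^2_{xp}H_N$ and $\mathcal Q^{pp}=N^{-1}\nabla^2_{pp}H_N$, all evaluated along $(X_s,\nabla_xV_N(s,X_s))$. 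Using \eqref{HJB-N-1} and the structure of the Wasserstein derivatives, each of these matrices has the form (one-body term $D\partial_{\tilde\mu}\mathcal H$ on the diagonal) $+\,N^{-1}$(two-body term $\partial^2_{\tilde\mu\tilde\mu}\mathcal H$). Hence they are bounded in operator norm on $(\mathbb R^d)^N$ by the quantity in \eqref{C-depend}, uniformly in $N$. The claim \eqref{eigen} is then equivalent to $-C\,I\le Y_t\le C\,I$.

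The first step is to translate Assumption \ref{assumption-displacement-convex} into a block matrix inequality. Choose the test maps so that $\phi_i(x_k,Np_k)$ and $\psi_i(x_k,Np_k)$ equal prescribed vectors $\zeta^k_i$ and $\omega^k_i$; this is possible after a density argument, since the empirical measure has $N$ atoms. Then \eqref{general-displacement} becomes the two-sided bound
\begin{align*}
-C\big(|\zeta_2|^2+|\omega_2|^2\big)\le(\zeta,\omega)^\top\mathbb M_s(\zeta,\omega)\le C\big(|\zeta_1|^2+|\omega_1|^2\big),
\end{align*}
where $\mathbb M_s$ is assembled from $\mathcal Q^{xx}$, $\mathcal A$ and $-\mathcal Q^{pp}$, with the sign flips prescribed by the splitting $x=(x_1,x_2)$, $p=(p_1,p_2)$, and the norms are normalized by $1/N$. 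In the same way, \eqref{general-displacement-1} gives $-C|\zeta_2|^2\le\zeta^\top Y_T\zeta\le C|\zeta_1|^2$.

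The core step is the comparison principle for symmetric stochastic Riccati equations (Lemma \ref{lem-comparison}). I would realize $\zeta^\top Y_t\zeta$ as the value of an auxiliary zero-sum linear-quadratic game with linear state $z_s$ started from $z_t=\zeta$, driven by $\mathcal A$, with running cost built from $\mathbb M_s$. The block structure is what makes this possible: by the inequality above, one player controls the components in which the Riccati quadratic term is favourable, and the other player controls the remaining ones.
\begin{itemize}
\item For the upper bound, let the minimizing player use the trivial control. The resulting running and terminal costs are then bounded above by $C|z_s|^2$.
\item The lower bound is symmetric, with the maximizing player using the trivial control.
\end{itemize}
With one player frozen, the state equation is linear with bounded coefficients, so $\mathbb E|z_s|^2\le e^{C(s-t)}|\zeta|^2$ by Gronwall. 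This gives $|\zeta^\top Y_t\zeta|\le Ce^{CT}|\zeta|^2$. The constant depends only on \eqref{C-depend} and $T$, not on $\tilde\delta$ or $N$, because the a priori boundedness of $V^{ij}_N$ from hypothesis~2 is used only qualitatively, to justify the game representation and the It\^o computations.

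The main obstacle is the first step. One has to check that the indefinite quadratic term $Y\mathcal Q^{pp}Y$ really splits into a part with a favourable sign for each player. This is exactly where the mixed signs in \eqref{general-displacement} (the $-\psi$ terms versus the $(p_1,p_2)$ cross terms) must be matched against the off-diagonal blocks of $\mathcal A$. One also has to check that the $1/N$ versus $N$ scalings of the one-body and two-body pieces combine into bounds uniform in $N$. If the game representation proves awkward, I would instead compare $Y$ directly, via Lemma \ref{lem-comparison}, with the deterministic Riccati solutions $\pm\bar y(s)I$, where $\bar y'=-C(1+\bar y)$. This requires verifying that the drift of $Y\mp\bar yI$ has the correct sign on the relevant block subspaces.
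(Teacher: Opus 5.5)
\textbf{There is a genuine gap: the core step fails as stated, and so does the fallback.}

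Your setup matches the paper: the backward stochastic Riccati equation for $Y=N\nabla^2_{xx}V_N$ along the particle dynamics, the uniform operator-norm bounds on $\mathcal Q^{xx},\mathcal A,\mathcal Q^{pp}$, and the translation of \eqref{general-displacement}--\eqref{general-displacement-1} into a two-sided block-matrix bound.

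The trouble is the game step. In any linear-quadratic game that reproduces the indefinite term $Y\mathcal Q^{pp}Y$, both players' controls must enter the state equation, since that is where the quadratic term comes from.
\begin{itemize}
\item Freezing the minimizer at $u=0$ gives $\zeta^\top Y_t\zeta\le\sup_vJ(0,v)$. But the maximizer still steers $z$, so the state is not "linear with bounded coefficients". Gronwall only gives $\mathbb E|z_s|^2\le C(|\zeta|^2+\mathbb E\int_t^s|v_r|^2dr)$.
\item Bounding $\sup_v$ of a payoff of the form $\int(C|z|^2-|v|^2)$ is itself a Riccati problem with a positive quadratic term. Such a problem in general blows up in finite time, which is exactly the non-uniformity in $\tilde\delta$ the lemma must exclude.
\item If instead the maximizer plays its optimal feedback, the coefficients depend on $Y$, whose bound you may only use qualitatively.
\item Setting $u=0$ is also lossy. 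It replaces the inf--sup form $\mathcal M(\zeta,\omega)$ by one that is larger by the negative part of $\mathcal Q^{pp}$. The sandwich constrains only the full form, so it no longer applies.
\end{itemize}

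The fallback fails for a related reason. For an upper comparison $\bar yI\ge Y$, Lemma \ref{lem-comparison} requires $JH^+\le JH$. The $pp$-block of $JH$ is bounded below only by $-CD_{1,N}$, not by $0$, because the Hamiltonian is not displacement concave in $p$. So any scalar comparison equation must have the form $\bar y'=q+2a\bar y-h\bar y^2$ with $h>0$, which blows up in finite backward time. A linear ODE such as $\bar y'=-C(1+\bar y)$ is not admissible.

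\textbf{The missing idea is the paper's choice of comparison data.} Take for $JH^+$ the lower bounding matrix in \eqref{prop-eigen-1} itself, with terminal value $CD_{2,N}$; this gives \eqref{sup-solution}.
\begin{itemize}
\item $D_{2,N}$ and $D_{1,N}$ project onto the complementary $x_1$- and $x_2$-components.
\item $D_{3,N}$ keeps only the block-diagonal parts $a_{11},a_{22}$ of $A_N$. The coupling blocks $a_{12},a_{21}$ have already been absorbed into the sandwich.
\end{itemize}
Consequently $Y^+$ splits into two blocks:
\begin{itemize}
\item Its $x_2$-block solves a Riccati BSDE with zero source and zero terminal value, hence vanishes identically. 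The dangerous term $-CY^+D_{1,N}Y^+$ is therefore never activated.
\item Its $x_1$-block solves a linear Lyapunov-type BSDE with bounded coefficients, so it is of size $\frac CNe^{CT}$.
\end{itemize}
Lemma \ref{lem-comparison} then gives $Y\le Y^+$, and the symmetric construction gives $Y\ge Y^-$. The constants are independent of $\tilde\delta$ and $N$.

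In your game language, this decoupling says that in the comparison problem the only active player steers the $x_2$-components. These neither enter the cost nor feed back into the $x_1$-components, so that player's optimal control is zero. Your argument needs exactly this fact but never establishes it, and it does not hold for the original coefficients $\mathcal Q^{xx},\mathcal A,\mathcal Q^{pp}$.
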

Before proving Theorem \ref{prop-eigen}, we need a comparison result on the stochastic Riccati equation. The symmetric Riccati equations have a striking property that they preserves their ordering. We now extend such property to stochastic Riccati equations. For $i=1,2$, let us consider the following two stochastic Riccati equations
\begin{align}\label{comparison}
\left\{\begin{aligned}    dY^i_t&=\big(Q^i_t+A^i_tY^i_t+Y^i_t(A^i_t)^\top-Y^i_tH^i_tY^i_t\big)dt+Z^i_td\breve W_t,\quad t\in[0,T],\\
    Y^i_T&=U^i.\end{aligned}\right.
\end{align}
And denote by
\begin{align*}
 JH^i_s=\left(\begin{matrix}Q^i_s&(A^i_s)^\top\\
 A^i_s&-H^i_s
 \end{matrix}\right),\quad s\in[0,T],\ i=1,2.
\end{align*}
With the above notations we present the following comparison result. Its deterministic counterpart can be found in \cite{Abou-Kandil2003}.
\begin{lemma}\label{lem-comparison}
 Suppose that for $i=1,2$, the BSDE in \eqref{comparison} admit solutions $(Y^i,Z^i)$ with bounded $Y^i$, and
\begin{align}\label{assumption-comparison}
 U^1\geq U^2,\ JH^1_s\leq JH^2_s,\quad s\in[0,T].
\end{align}
Then $Y^1_s\geq Y^2_s$, $s\in[0,T]$.
\end{lemma}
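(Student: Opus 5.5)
Set $\Delta_t:=Y^1_t-Y^2_t$. The plan is to show that $\Delta$ solves a \emph{linear} matrix BSDE whose terminal value and source term are nonnegative, and then use a linear (Feynman--Kac type) representation to conclude $\Delta\ge0$. This mirrors the deterministic argument in \cite{Abou-Kandil2003}.

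\textbf{Step 1: the linear equation for $\Delta$.} Subtracting the two equations in \eqref{comparison} and rearranging the quadratic terms with the identity
\begin{align*}
Y^1H^1Y^1-Y^2H^2Y^2=\Delta H^1Y^1+Y^2H^1\Delta+Y^2(H^1-H^2)Y^2,
\end{align*}
we obtain
\begin{align*}
d\Delta_t=\big(\tilde A_t\Delta_t+\Delta_t\hat A_t^\top+G_t\big)dt+(Z^1_t-Z^2_t)d\breve W_t,\qquad \Delta_T=U^1-U^2\geq0.
\end{align*}
Here $\tilde A_t=A^1_t-Y^2_tH^1_t$ and $\hat A_t^\top=(A^1_t)^\top-H^1_tY^1_t$. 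The source term is
\begin{align*}
G_t=(Q^1_t-Q^2_t)+(A^1_t-A^2_t)Y^2_t+Y^2_t(A^1_t-A^2_t)^\top-Y^2_t(H^1_t-H^2_t)Y^2_t
=\begin{pmatrix}I & Y^2_t\end{pmatrix}\big(JH^1_t-JH^2_t\big)\begin{pmatrix}I\\ Y^2_t\end{pmatrix}.
\end{align*}
This is the key algebraic observation. By \eqref{assumption-comparison}, $JH^1_t-JH^2_t\le0$, hence $G_t\le0$.

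\textbf{Step 2: symmetrize the generator.} The coefficients $\tilde A$ and $\hat A$ differ, because one factor carries $Y^1$ and the other $Y^2$. Since $\Delta$ is symmetric, we may instead write
\begin{align*}
\Delta H^1Y^1+Y^2H^1\Delta=\Delta H^1\bar Y+\bar Y H^1\Delta+\tfrac12\big(\Delta H^1\Delta-\Delta H^1\Delta\big),\qquad \bar Y:=\tfrac12(Y^1+Y^2).
\end{align*}
The bracketed correction vanishes exactly, so the generator takes the symmetric form $B_t\Delta_t+\Delta_tB_t^\top+G_t$ with $B_t=A^1_t-\bar Y_tH^1_t$. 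The process $B$ is bounded because $Y^i$, $A^i$ and $H^i$ are bounded.

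\textbf{Step 3: representation and conclusion.} Let $\Phi$ be the fundamental solution of the pathwise ODE $d\Phi_s=-\Phi_sB_s\,ds$ with $\Phi_t=I$. Apply It\^o's product rule to $\Phi_s\Delta_s\Phi_s^\top$ on $[t,T]$ and take conditional expectations; the stochastic integral is a true martingale since $Z^i$ is square integrable and $\Phi$ is bounded. This gives
\begin{align*}
\Delta_t=\mathbb E\Big[\Phi_T\Delta_T\Phi_T^\top-\int_t^T\Phi_sG_s\Phi_s^\top ds\,\Big|\,\mathcal F_t\Big].
\end{align*}
Both contributions are positive semidefinite: $\Delta_T\ge0$, and $-G_s\ge0$ by Step 1. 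Therefore $\Delta_t\ge0$, which proves the lemma.

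\textbf{Main obstacle.} The delicate points are, first, the algebraic rewriting in Steps 1--2 that makes the generator a congruence-preserving linear map plus a sign-definite source; and second, justifying the martingale property using boundedness of $Y^i$. The latter is exactly why the lemma assumes bounded $Y^i$.
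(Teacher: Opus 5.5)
Your argument is correct and is essentially the paper's proof. Your symmetrized coefficient $B=A^1-\bar YH^1$ coincides with the paper's $\tilde A=A^1-Y^2H^1-\tfrac12\tilde YH^1$, your $G$ is the paper's $\tilde Q$ written as a quadratic form in $JH^1-JH^2$, and you conclude by the same congruence with a fundamental matrix; your ordering $d\Phi_s=-\Phi_sB_s\,ds$ is the correct one, whereas the paper's $-\tilde A_s\Phi_s$ is a misprint.

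One slip, shared with the paper and inherited from the statement: with $JH$ as defined, $\begin{pmatrix} I & Y^2\end{pmatrix}(JH^1-JH^2)\begin{pmatrix} I\\ Y^2\end{pmatrix}$ gives $Y^2(A^1-A^2)+(A^1-A^2)^\top Y^2$, so to match the drift $AY+YA^\top$ the blocks should be arranged as $\begin{pmatrix} Q & A\\ A^\top & -H\end{pmatrix}$. You also implicitly use bounded $A^i,H^i$ (and symmetric $H^i$), as the paper does.
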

\begin{proof}
 Denote by
\begin{align*}
  \tilde Y_s:=Y^1_s-Y^2_s,\quad s\in[0,T].
\end{align*}
According to \eqref{comparison}, direct calculation yields
\begin{align*}
\left\{\begin{aligned}
d\tilde Y_s&=\big(\tilde A_s\tilde Y_s+\tilde Y_s(\tilde A_s)^\top+\tilde Q_s\big)ds+\tilde Z_sd\breve W_s,\quad s\in[0,T],\\
\tilde Y_T&=U^1-U^2,
\end{aligned}\right.
\end{align*}
where for $s\in[0,T]$,
\begin{align*}
 \tilde A_s&=-\frac12\tilde Y_sH^1_s-Y^2_sH^1_s+A^1_s,\\
 \tilde Q_s&=(A^1_s-A^2_s)Y^2_s+Y^2_s(A^1_s-A^2_s)+Y^2_s(H^2_s-H^1_s)Y^2_s+Q^1_s-Q^2_s.
\end{align*}
In view of \eqref{assumption-comparison}, $\tilde Y_T\geq0$,
\begin{align*}
 \tilde Q_s=\left(\begin{matrix}Y^2_s&I_d\end{matrix}\right)^\top(JH^1_s-JH^2_s)\left(\begin{matrix}Y^2_s\\I_d\end{matrix}\right)\leq0\ \text{for}\ s\in[0,T].
\end{align*}
Define the matrix valued process satisfying
\begin{align*}
 d\Phi_s=-\tilde A_s\Phi_sds,\quad s\in[0,T],\quad \Phi_0=I.
\end{align*}
Then
\begin{align*}
 d\Phi_s\tilde Y_s\Phi^\top_s=\Phi_s\tilde Q_s\Phi^\top_sds+\Phi_s\tilde Z_s\Phi^\top_sd\breve W_s,\quad s\in[0,T].
\end{align*}
The above and $\tilde Y_T\geq0$, $\tilde Q_s\leq0$ implies that $\tilde Y_s\geq0$ for $s\in[0,T]$.
\end{proof}
Given the comparison result in Lemma \ref{lem-comparison}, we turn to the proof of Lemma \ref{prop-eigen}.
\begin{proof}[Proof of Lemma \ref{prop-eigen}]
 Consider
\begin{align*}
\left\{\begin{aligned}
&dX^i_t=D_{p_i}H_N\big(X_t,\nabla_x V_N(t,X_t)\big)dt+\sigma dW^i_t,\ t\in[0,T],\\
&X_0=x=(x_1,\ldots,x_N)^\top,
\end{aligned}\right.
\end{align*} 
and for $s\in[0,T]$, $Y_s=(Y^{ij}_s)_{1\leq i,j\leq N}$ where
\begin{align}\label{prop-eigen-0}
 Y^{ij}_s=D^2_{x_ix_j}V_N(s,X_s)\in\mathbb R^{d\times d}.
\end{align}
According to \eqref{Fey-Kac}$\sim$\eqref{Fey-Kac-1}, we obtain
\begin{align*}
 \left\{\begin{aligned}
 dY_s&=-\big(H^{xx}_N(s)+A_N(s)Y_s+Y_sA_N(s)^\top+Y_sH^{pp}_N(s)Y_s\big)ds+\sum_{i=1}^NZ^{(i)}_sdW^i_s,\quad s\in[0,T],\\
 Y_T&=V^{xx}_N(T).
 \end{aligned}\right.
\end{align*}
Here for $s\in[0,T]$, we use $H^{xx}_N(s),\ A_N(s),\ H^{pp}_N(s)$ and $V^{xx}_N(T)\in\mathbb R^{dN\times dN}$ to denote
\begin{align*}
 &\big(H^{xx}_N(s)\big)_{ij}:=D^2_{x_ix_j}H_N\big(X_s,\nabla_x V_N(t,X_s)\big),\ \big(A_N(s)\big)_{ij}:=D^2_{x_ip_j}H_N\big(X_s,\nabla_x V_N(t,X_s)\big),\notag\\
 &V^{xx}_N(T)=D^2_{x_ix_j}V_N(T,X_T),\ 1\leq i,j\leq N.
\end{align*}
Let us consider
\begin{align*}
 \tilde\mu^N_t=\frac1N\sum_{i=1}^N\delta_{(X^i_t,\xi^i_t)},
\end{align*}
where
\begin{align*}
 \xi^i_t:=D_{x_i}V_N(t,X_t),\quad1\leq i\leq N,\ t\in[0,T].
\end{align*}
For arbitrary $(\alpha_i)_{1\leq i\leq N},\ (\beta_i)_{1\leq i\leq N}\in(\mathbb R^d)^N$, denote by
\begin{align*}
 \alpha_i=(\hat\alpha_i,\breve\alpha_i)^\top,\ \beta_i=(\hat\beta_i,\breve\beta_i)^\top,\quad1\leq i\leq N,\ t\in[0,T],
\end{align*}
where $\hat\alpha_i,\ \hat\beta_i\in\mathbb R^{d_1}$, $\breve\alpha_i,\ \breve\beta_i\in\mathbb R^{d_2}$. It is easy to find smooth test mapping $\phi=(\phi_1,\phi_2)^\top,\ \psi=(\psi_1,\psi_2)^\top$ where
\begin{align*}
 \phi_i,\ \psi_i:\quad\mathbb R^d\times\mathbb R^d\mapsto\mathbb R^{d_i},\quad i=1,2,
\end{align*}
such that
\begin{align*}
 \phi(X^i_t,\xi^i_t)=\alpha_i,\ \psi(X^i_t,\xi^i_t)=\beta_i,\ 1\leq i\leq N.
 \end{align*}
Plug $\tilde\mu^N_t,\ \phi,\ \psi$ into \eqref{general-displacement}. According to \eqref{HJB-N-1}, since $(\alpha_i)_{1\leq i\leq N},\ (\beta_i)_{1\leq i\leq N}\in(\mathbb R^d)^N$ are arbitrary, we obtain
\begin{align}\label{prop-eigen-1}
 \left\{\begin{aligned}&\left(\begin{matrix}-C\cdot D_{2,N}&D_{3,N}(s)\\
 D_{3,N}(s)^\top&-C\cdot D_{1,N}
 \end{matrix}\right)\leq\left(\begin{matrix}-H^{xx}_N(s)&-A_N(s)\\
 -A_N(s)^\top&-H^{pp}_N(s)
 \end{matrix}\right)\leq\left(\begin{matrix}C\cdot D_{1,N}&D_{3,N}(s)\\
 D_{3,N}(s)^\top&C\cdot D_{2,N}
 \end{matrix}\right),\\
 &-CD_{1,N}\leq V^{xx}_N(T)\leq CD_{2,N},\end{aligned}\right.
\end{align}
where
\begin{align}\label{D1D2}
\left\{\begin{aligned}
&D_{2,N}=N^{-1}{\rm diag}(D_2,\ldots,D_2)\in\mathbb R^{Nd\times Nd},\quad D_{1,N}=N^{-1}{\rm diag}(D_1,\ldots,D_1)\in\mathbb R^{Nd\times Nd},\\
&D_1=\left(\begin{matrix}0_{d_1}&0\\
 0&I_{d_2}
 \end{matrix}\right)\in\mathbb R^{d\times d},\quad
 D_2=\left(\begin{matrix}I_{d_1}&0\\
 0&0_{d_2}
 \end{matrix}\right)\in\mathbb R^{d\times d},\\
 &D_{3,N}(s)=\big((D_{3,N}(s))_{ij}\big)_{1\leq i,j\leq N}\in\mathbb R^{Nd\times Nd},\quad \big(D_{3,N}(s)\big)_{ij}=\left(\begin{matrix}-a^{ij}_{11}(s)&0\\
 0&-a^{ij}_{22}(s)
 \end{matrix}\right)\in\mathbb R^{d\times d},\\
 &\quad s\in[0,T],\ 1\leq i,j\leq N,
\end{aligned}\right.
\end{align}
where $a^{ij}_{11}(s)\in\mathbb R^{d_1\times d_1},\ a^{ij}_{22}(s)\in\mathbb R^{d_2\times d_2}$, denote the entrances of $\big(A_N(s)\big)_{ij}$ as follows:
\begin{align*}
 \big(A_N(s)\big)_{ij}=\left(\begin{matrix}a^{ij}_{11}(s)&a^{ij}_{12}(s)\\
 a^{ij}_{21}(s)&a^{ij}_{22}(s)
 \end{matrix}\right)\in\mathbb R^{d\times d}.
\end{align*}
For $s\in[0,T]$ and $k=1,2$, let $D^{(k)}_{3,N}(s)=\big((D^{(k)}_{3,N}(s))_{ij}\big)_{1\leq i,j\leq N}\in\mathbb R^{Nd_k\times Nd_k}$ and
\begin{align*}
 \big(D^{(1)}_{3,N}(s)\big)_{ij}=-a^{ij}_{11}(s)\in\mathbb R^{d_1\times d_1},\ \big(D^{(2)}_{3,N}(s)\big)_{ij}=-a^{ij}_{22}(s)\in\mathbb R^{d_2\times d_2}\quad1\leq i,j\leq N.
\end{align*}
For $k=1,2$, let $\big(Y^{+,(k)}_s\big)_{s\in[0,T]}$ solve
\begin{align}\label{sup-solution}
 \left\{\begin{aligned}dY^{+,(1)}_s&=-\frac CNI_{Nd_1}+D^{(1)}_{3,N}(s)Y^{+,(1)}_s+Y^{+,(1)}_sD^{(1)}_{3,N}(s)^\top+\sum_{i=1}^NZ^{+,(1),i}_sdW^i_s,\quad s\in[0,T),\\
 dY^{+,(2)}_s&=D^{(2)}_{3,N}(s)Y^{+,(2)}_s+Y^{+,(2)}_sD^{(2)}_{3,N}(s)^\top-\frac CNY^{+,(2)}_sI_{Nd_2}Y^{+,(2)}_s+\sum_{i=1}^NZ^{+,(2),i}_sdW^i_s,\\
 Y^{+,(1)}_T&=\frac CNI_{Nd_1},\quad Y^{+,(2)}_T=0.\end{aligned}\right.
\end{align}
And further use $\big((Y^+_s)_{ij}\big)_{s\in[0,T],1\leq i,j\leq N}$ to denote
\begin{align}\label{sup-solution-1}
 \big(Y^+_s\big)_{ij}:=\left(\begin{matrix}\big(Y^{+,(1)}_s\big)_{ij}&0\\
 0&\big(Y^{+,(2)}_s\big)_{ij}
 \end{matrix}\right)\in\mathbb R^{d\times d},\quad1\leq i,j\leq N.
\end{align}
Simple algebra yields
\begin{align*}
 \left\{\begin{aligned}dY^+_s&=-CD_{2,N}+D_{3,N}(s)Y^+_s+Y^+_sD_{3,N}(s)^\top-CY^+_sD_{1,N}Y^+_s+\sum_{i=1}^NZ^{+,i}_sdW^i_s,\quad s\in[0,T),\\
 Y^+_T&=CD_{2,N}.\end{aligned}\right.
\end{align*}
According to Lemma \ref{lem-comparison}, $Y_s\leq Y^+_s,\ s\in[0,T).$ It is straightforward to see that $Y^{+,(2)}_s=0,\ s\in[0,T).$ Moreover, according to its definition, it is easy to verify that
\begin{align*}
 |(D_{1,N})_{ij}|+|(D_{2,N})_{ij}|\leq CN^{-1}(\delta_{ij}+N^{-1}),\quad|(D_{3,N}(s))_{ij}|\leq CN^{-1},\quad s\in[0,T),\ 1\leq i,j\leq N.
\end{align*}
Then we may argue in a similar way to \cite{Liao24} and show
\begin{align*}
 \big|(Y^+_s)_{ij}\big|\leq\frac CN,\quad s\in[0,T),\ 1\leq i,j\leq N.
\end{align*}
Therefore $ Y_s\leq\frac CNI_{Nd},\ s\in[0,T]$. Considering
\begin{align*}
 \left\{\begin{aligned}dY^{+,(1)}_s&=-\frac CNI_{Nd_1}+D^{(1)}_{3,N}(s)Y^{+,(1)}_s+Y^{+,(1)}_sD^{(1)}_{3,N}(s)^\top+\sum_{i=1}^NZ^{+,(1),i}_sdW^i_s,\quad s\in[0,T),\\
 dY^{+,(2)}_s&=D^{(2)}_{3,N}(s)Y^{+,(2)}_s+Y^{+,(2)}_sD^{(2)}_{3,N}(s)^\top-\frac CNY^{+,(2)}_sI_{Nd_2}Y^{+,(2)}_s+\sum_{i=1}^NZ^{+,(2),i}_sdW^i_s,\\
 Y^{+,(1)}_T&=\frac CNI_{Nd_1},\quad Y^{+,(2)}_T=0,\end{aligned}\right.
\end{align*}
and constructing $\big((Y^-_s)_{ij}\big)_{s\in[0,T],1\leq i,j\leq N}$ in the same way as in \eqref{sup-solution}$\sim$\eqref{sup-solution-1}, we can show
\begin{align*}
 -\frac CNI_{Nd}\leq Y_s\leq\frac CNI_{Nd},\ s\in[0,T],
\end{align*}
which implies \eqref{eigen}.
\end{proof}
With the above preparation, we may go on to the global in time well-posedness of \eqref{HJB}. Let us now summarize the results here and sketch the proof of Theorem \ref{existence-of-decoupling-field}.
 \begin{proof}[Sketch of the proof of Theorem \ref{existence-of-decoupling-field}]Following the procedure in Lemma 4.2 and Lemma 4.4$\sim$4.6 in \cite{Liao24}, we may generate a local in time solution $V(t,\mu)$ on $(t,\mu)\in[T-\delta,T]\times\mathcal P_2(\mathbb R^d)$ to \eqref{HJB} via the decoupling field $\big(V^\mu_s=V(s,\mathcal L_{X^\mu_s})\big)_{s\in[T-\delta,T]}$ of the following mean field FBSDE
\begin{align}\label{1st-step}
  \left\{\begin{aligned}
    dX^\mu_s&=\partial_{\tilde\mu}\mathcal H^{(p)}\big(\mathbb P_{(X^\mu_s,Y^\mu_s)},X^\mu_s,Y^\mu_s\big)ds+\sigma dW_s,\ X^\mu_t=\xi,\\
    dY^\mu_s&=-\partial_{\tilde\mu}\mathcal H^{(x)}\big(\mathbb P_{(X^\mu_s,Y^\mu_s)},X^\mu_s,Y^\mu_s\big)ds+Z^\mu_sdW_s,\ Y^\mu_T=\partial_\mu U(\mathbb P_{X^\mu_T},X^\mu_T),\\
    V^\mu_s&=\mathbb E\bigg[U(\mathbb P_{X^\mu_T})+\int_s^T\bigg(\mathcal H(\mathbb P_{(X^\mu_u,Y^\mu_u)})-\partial_{\tilde\mu}\mathcal H^{(p)}\big(\mathbb P_{(X^\mu_u,Y^\mu_u)},X^\mu_u,Y^\mu_u\big)\cdot Y^\mu_u\bigg)du\bigg|\mathcal F^W_s\bigg],
\end{aligned}\right.
\end{align}
where the duration $\delta>0$ depends on $\mathcal H$, $U$, and $\mathbb P_{(X^\mu_u,Y^\mu_u)}:=\text{Law}(X^\mu_u,Y^\mu_u)$.

Next let us consider \eqref{HJB-N} and obtain the a priori estimates on $(V_N)_{N\geq1}$ uniform in $N$. More specifically, the uniform a priori estimates on the first order derivatives has been established here in Proposition \ref{uniform-1-1}, while the uniform a priori estimates on the Hessians of $(V_N)_{N\geq1}$ is Lemma \ref{prop-eigen}.

Given the a priori estimates on $(V_N)_{N\geq1}$, we use them to infer the a priori estimates on $V$. Similar to Proposition 5.3 in \cite{Liao24}, we can infer from Proposition \ref{uniform-1-1} that the local in time solution $V$ admits the a priori estimate that for $\mu,\ \nu\in\mathcal P_2(\mathbb R)$ that
  \begin{align}\label{1st-regularity-1}
    |V(t,\mu)-V(t,\nu)|\leq C\mathcal W_1(\mu,\nu),\quad t\in[T-\tilde\delta,T],
  \end{align}
  where the constant $C$ is the same as in \eqref{uniform-1} and depends only on $|\partial_{\tilde\mu}\mathcal H|_\infty+|\partial_\mu U|_\infty$. As for the second order derivatives of $V$, we adopt the idea that proves Theorem 5.12 in \cite{Liao24}, where the key ingredients Lemma 5.4, Lemma 5.5 therein now replaced by Lemma \ref{continuation-1}, Lemma \ref{prop-eigen} here. Then we obtain the existence of a constant $C$ depending only on \eqref{C-depend} such that
\begin{align}\label{Hessian-bound}
    0\leq|D_x\partial_\mu V(t,\cdot)|_\infty+|\partial^2_{\mu\mu}V(t,\cdot)|_\infty\leq C.
\end{align}
Having obtain the a priori estimates on the first and the second order derivatives, we may formally take derivatives in \eqref{HJB} and use the results in \cite{Buckdahn2017} on Feynman-Kac representation and obtain the following a priori estimates on the higher order derivatives of $V$:
\begin{align}\label{higher-order-goal}
 \sum_{\substack{0\leq k\leq 4,0\leq l_1,\ldots,l_k\leq4\\0\leq l_1+\cdots+l_k+k\leq 4}}\big|D^{l_1}_{x_1}\cdots D^{l_k}_{x_k}\partial^k_\mu V(t,\cdot)\big|_\infty<C_2,
\end{align}
where $C_2$ depends only on $\mathcal H$ and $U$. The corresponding details of deriving \eqref{higher-order-goal} are the same as in Proposition 5.16$\sim$5.17 in \cite{Liao24}. We may now inductively replace the terminal condition with $V(T-k\delta,\cdot)$, $k=1,2,\ldots$, apply Lemma \ref{continuation-1} together with the above reasoning, and obtain the analogy of \eqref{1st-regularity-1}$\sim$\eqref{higher-order-goal} on $[T-k\delta,T-(k+1)\delta],\ k\geq1.$ Then we can obtain the global well-posedness after finite repetition.
 \end{proof}

\subsection{Proofs for Theorem \ref{LQ-global-wp} and \ref{limit-V-lambda}}\label{lq-solvable}
We also impose Assumption \ref{assumption-displacement-convex} in this occasion, which has an implication presented in Lemma \ref{LQ-condition}.

\begin{proof}[Proof of Lemma \ref{LQ-condition}]
 Suppose \eqref{general-displacement}. For arbitrary $\varepsilon>0$ and $\mathbf x=(x_1,x_2)^\top,\ \mathbf p=(p_1,p_2)^\top,\ x_i,\ p_i\in\mathbb R^{d_i},\ i=1,2.$ Consider
 \begin{align*}
  \tilde\mu\big(\{(\mathbf0,\mathbf0)\}\big)=\varepsilon,\ \tilde\mu\big(\{(\mathbf0,\mathbf0)\}^c\big)=1-\varepsilon.\\
  \phi(\mathbf0)=\mathbf x,\ \psi(\mathbf0)=\mathbf p.
 \end{align*}
Plugging the above into \eqref{general-displacement}, we obtain
\begin{align*}
 -C\big(|x_2|^2+|p_1|^2\big)&\leq\mathbf x^\top Q^{(1)}_{11}\mathbf x+2(x_1,0)^\top Q^{(1)}_{12}\left(\begin{matrix}0\\p_2\end{matrix}\right)\\
  &\quad+2(0,x_2)^\top Q^{(1)}_{12}\left(\begin{matrix}p_1\\0\end{matrix}\right)-\mathbf p^\top Q^{(1)}_{22}\mathbf p+O(\varepsilon)\leq C\big(|x_1|^2+|p_2|^2\big).
\end{align*}
We then get the first inequality in \eqref{generalized-displacement-convexity-LQ} by sending $\varepsilon$ to 0. Consider instead
 \begin{align*}
  \tilde\mu\big(\{(\mathbf0,\mathbf0)\}\big)=1,\ \tilde\mu\big(\{(\mathbf0,\mathbf0)\}^c\big)=0,
 \end{align*}
 and use \eqref{general-displacement} again, we have the second inequality in \eqref{generalized-displacement-convexity-LQ}.
\end{proof}
In general, the second item in Assumption \ref{assumption-displacement-convex} is about infinite dimensional mapping $\mathcal H$. For the linear quadratic case, the previous assumption is reduced to the one on the matrix $\big(Q^{(i)}\big)_{i\in\{1,2\}}$ by Lemma \ref{LQ-condition}. Next we utilize such property to constitute the proof of Theorem \ref{LQ-global-wp}.

\begin{proof}[Proof of Theorem \ref{LQ-global-wp}]
 We begin by showing the well-posedness of \eqref{LQ-Riccati}. It is easy to see that the well-posedness of \eqref{LQ-Riccati} follows by the solvability of $\big(a_1(t),a_3(t)\big)_{t\in[0,T]}$. Towards that end, we first show the well-posedness of $\big(a_1(t)\big)_{t\in[0,T]}$ which is decoupled from other parts of the system. According to \eqref{generalized-displacement-convexity-LQ},
 \begin{align*}
  \left(\begin{matrix}-C\cdot D_2&D_3\\
 D^\top_3&-C\cdot D_1
 \end{matrix}\right)\leq\left(\begin{matrix}-Q^{(1)}_{11}&-2Q^{(1)}_{12}\\
 -2Q^{(1)}_{21}&-4Q^{(1)}_{22}
 \end{matrix}\right)\leq\left(\begin{matrix}C\cdot D_1&D_3\\
 D^\top_3&C\cdot D_2
 \end{matrix}\right),
 \end{align*}
where $D_1$, $D_2$ is the same as in \eqref{D1D2},
\begin{align*}
 D_1=\left(\begin{matrix}0_{d_1}&0\\
 0&I_{d_2}
 \end{matrix}\right),\quad
 D_2=\left(\begin{matrix}I_{d_1}&0\\
 0&0_{d_2}
 \end{matrix}\right),\quad
 D_3=\left(\begin{matrix}-2q_{11}&0\\
 0&-2q_{22}
 \end{matrix}\right)\in\mathbb R^{d\times d},
\end{align*}
where $q_{11}\in\mathbb R^{d_1\times d_1},\ q_{22}\in\mathbb R^{d_2\times d_2}$, they are used to denote the entrances of $Q^{(1)}_{12}$ as follows
\begin{align*}
 Q^{(1)}_{12}=\left(\begin{matrix}q_{11}&q_{12}\\
 q_{21}&q_{22}
 \end{matrix}\right)\in\mathbb R^{d\times d},\quad q_{ij}\in\mathbb R^{d_i\times d_j},\ 1\leq i,j\leq 2.
\end{align*}
Consider
\begin{align*}
 \left\{\begin{aligned}
  \frac d{dt}a^+_1(t)=-CD_2+D_3a^+_1(t)+a^+_1(t)^\top D^\top_3-Ca^+_1(t)^\top D_1a^+_1(t),\quad t\in[0,T),\\
  \frac d{dt}a^-_1(t)=CD_1+D_3a^-_1(t)+a^-_1(t)^\top D^\top_3+Ca^-_1(t)^\top D_2a^-_1(t),\quad t\in[0,T),\\
  a^+_1(T)=CD_1,\quad a^-_1(T)=-CD_2.
 \end{aligned}\right.
\end{align*}
Notice that $D_1,\ D_2,\ D_3$ are all of diagonal shape, it is then easy to show that $\big(a^+_1(t)\big)_{t\in[0,T]}$ and $\big(a^-_1(t)\big)_{t\in[0,T]}$ are also of diagonal shape and well-posed. In view of Lemma \ref{lem-comparison}, 
 \begin{align*}
  a^-_1(t)\leq a^{(1)}(t)\leq a^+_1(t),\quad t\in[0,T].
 \end{align*}
Hence $\big(a^{(1)}(t)\big)_{t\in[0,T]}$ is well-posed. Defining
\begin{align*}
 a^{(5)}(t)=a^{(1)}(t)+a^{(3)}(t),\quad t\in[0,T],
\end{align*}
and adding the first and third equalities in \eqref{LQ-ham}, we obtain
\begin{align*}
  \left\{\begin{aligned}&\frac d{dt}a_5(t)+Q^{(1)}_{11}+Q^{(2)}_{11}+4a_5(t)^\top\big(Q^{(1)}_{22}+Q^{(2)}_{22}\big)a_5(t)\\
  &\quad+2a_5(t)^\top\big(Q^{(1)}_{21}+Q^{(2)}_{21}\big)+2\big(Q^{(1)}_{21}+Q^{(2)}_{21}\big)a_5(t)=0,\quad t\in[0,T],\\
  &a_5(T)=Q^{(1)}_T+Q^{(2)}_T.\end{aligned}\right.
\end{align*}
Similar to the well-posedness of $\big(a^{(1)}(t)\big)_{t\in[0,T]}$, we may use the second inequality in \eqref{generalized-displacement-convexity-LQ} to show that $\big(a_5(t)\big)_{t\in[0,T]}$ is well-posed, hence the well-posedness of $\big(a_3(t)\big)_{t\in[0,T]}$ and \eqref{LQ-Riccati}. According to \eqref{LQ-solution},
\begin{align*}
 \partial_\mu V(t,\mu,x)=2a_1(t)x+2a_3(t)\int_{\mathbb R^d}\tilde x\mu(d\tilde x)+a_2(t),\\
 \partial^2_{\mu\mu}V(t,\mu,x,\tilde x)=2a_3(t),\quad D_x\partial_\mu V(t,\mu,x)=2a_1(t),\\
 (t,\mu,x,\tilde x)\in[0,T]\times\mathcal P_2(\mathbb R^d)\times\mathbb R^d\times\mathbb R^d.
\end{align*}
Using the well-posedness of \eqref{LQ-Riccati}, we may plug \eqref{LQ-solution} and the above into \eqref{HJB}, \eqref{LQ-ham} and show that $V$ solves \eqref{HJB}.
\end{proof}
Next we will analyze the a priori estimates and limit in Theorem \ref{limit-V-lambda}. More specifically, we will study the following limit
\begin{align}\label{penalize-constraint}
    V(t,\mu)=\lim_{\lambda\to+\infty}V^\lambda(t,\mu),\quad(t,\mu)\in[0,T)\times\mathcal P_2\big(\mathbb R^d\big),
\end{align}
where $(V^\lambda)_{\lambda>0}$ are the solutions to \eqref{HJB} with the Hamiltonian and penalized terminal condition $(\mathcal H^\varepsilon,U^\lambda)$ in \eqref{LQ-ham-penalized}.

\begin{proof}[Proof of Theorem \ref{limit-V-lambda}]
    The expression of $V^\lambda$ in \eqref{lambda-value} is directly from Theorem \ref{LQ-global-wp} and \eqref{LQ-ham-penalized}. According to \eqref{generalized-displacement-convexity-LQ},
    \begin{align*}
 & \left(\begin{matrix}-C\cdot D_2&D_3\\
 D^\top_3&-C\cdot D_1
 \end{matrix}\right)+4\varepsilon\cdot \left(\begin{matrix}0&0\\
 0&D_2
 \end{matrix}\right)\leq\left(\begin{matrix}-Q^{(1)}_{11}&-2Q^{(1)}_{21}\\
 -2Q^{(1)}_{12}&-4Q^{(1)}_{22}+4\varepsilon D_2
 \end{matrix}\right)\\
 &\qquad\leq\left(\begin{matrix}C\cdot D_1&D_3\\
 D^\top_3&C\cdot D_2
 \end{matrix}\right)+4\varepsilon\cdot \left(\begin{matrix}0&0\\
 0&D_2
 \end{matrix}\right),
 \end{align*}
    In view of Lemma  \ref{lem-comparison}, we obtain
    \begin{align}\label{a1-lambda-bound-1}
        a^{\lambda,-}(t)\leq a^\lambda_1(t)\leq a^{\lambda,+}(t),\quad t\in[0,T),
    \end{align}
    where $\big(a^{\lambda,-}(t),a^{\lambda,+}(t)\big)_{t\in[0,T)}$ solves
\begin{align}\label{a1-lambda}
 \left\{\begin{aligned}
  \frac d{dt}a^{\lambda,+}(t)=-CD_2+D_3a^{\lambda,+}(t)+a^{\lambda,+}(t)D^\top_3+a^{\lambda,+}(t)^\top(4\varepsilon D_2-CD_1)a^{\lambda,+}(t),\quad t\in[0,T),\\
  \frac d{dt}a^{\lambda,-}(t)=CD_1+D_3a^{\lambda,-}(t)+a^{\lambda,-}(t)D^\top_3+(4\varepsilon+C)a^{\lambda,-}(t)^\top D_2a^{\lambda,-}(t),\quad t\in[0,T),\\
  a^{\lambda,+}(T)=\lambda D_2+q^T_{22}D_1,\quad a^{\lambda,-}(T)=\lambda D_2+q^T_{22}D_1.
 \end{aligned}\right.
\end{align}
In view of \eqref{a1-lambda}, it is easy to see that
\begin{align*}
 a^{\lambda,+}(t)=\left(\begin{matrix}a^{\lambda,+}_1(t)&0\\
 0&\hat a^{\lambda,+}_1(t)
 \end{matrix}\right),
 \end{align*}
where $\big(a^{\lambda,+}_1(t),\hat a^{\lambda,+}_1(t)\big)_{t\in[0,T]}$ satisfies
\begin{align}\label{upper-1}
 \left\{\begin{aligned}
  \frac d{dt}a^{\lambda,+}_1(t)=-CI_{d_1}+q_{11}a^{\lambda,+}_1(t)+a^{\lambda,+}_1(t)q^\top_{11}+4\varepsilon a^{\lambda,+}_1(t)^\top a^{\lambda,+}_1(t),\quad t\in[0,T),\\
  \frac d{dt}\hat a^{\lambda,+}_1(t)=CI_{d_2}+q_{22}\hat a^{\lambda,+}_1(t)+\hat a^{\lambda,+}_1(t)q^\top_{22}-C\hat a^{\lambda,+}_1(t)^\top\hat a^{\lambda,+}_1(t),\quad t\in[0,T),\\
  a^{\lambda,+}_1(T)=\lambda I_{d_1},\quad\hat a^{\lambda,+}_1(T)=q^T_{22}.
 \end{aligned}\right.
\end{align}
The well-posedness of \eqref{upper-1} is standard and we may use Lemma \ref{lem-comparison} again to obtain
\begin{align*}
a^{\lambda,+}_1(t)\leq a^{\lambda,+}_2(t),\quad t\in[0,T],
\end{align*}
where
\begin{align}\label{upper-2}
\left\{\begin{aligned}  \frac d{dt}a^{\lambda,+}_2(t)=-CI_{d_1}+4\varepsilon C^{-1} a^{\lambda,+}_2(t)^\top a^{\lambda,+}_2(t),\quad t\in[0,T),\\
  a^{\lambda,+}_2(T)=\lambda CI_{d_1},\end{aligned}\right.
\end{align}
for a possibly different and sufficiently large $C>0$. Now it is straightforward to solve the above and get
\begin{align*}
 a^{\lambda,+}_1(t)&\leq\bigg[\frac{\sqrt C}{2\sqrt{\varepsilon}}\bigg(e^{4\sqrt{\varepsilon}(T-t)}\frac{2\sqrt{\varepsilon C}\lambda+\sqrt C}{2\sqrt{\varepsilon C}\lambda-\sqrt C}-1\bigg)^{-1}+\frac14\sqrt{\frac C\varepsilon}\bigg]I_{d_1}\\
 &\leq\bigg[\frac{\sqrt C}{2\sqrt{\varepsilon}}\big(e^{4\sqrt{\varepsilon}(T-t)}-1\big)^{-1}+\frac14\sqrt{\frac C\varepsilon}\bigg]I_{d_1},\quad t\in[0,T).
\end{align*}
At the meant time,
\begin{align*}
\hat a^{\lambda,+}_1(t)\leq\hat a^{\lambda,+}_2(t),\quad t\in[0,T],
\end{align*}
where
\begin{align*}
 \left\{\begin{aligned}\frac d{dt}\hat a^{\lambda,+}_2(t)=q_{22}\hat a^{\lambda,+}_2(t)+\hat a^{\lambda,+}_2(t)q^\top_{22},\quad t\in[0,T),\\
 \hat a^{\lambda,+}_2(T)=q^T_{22}.
 \end{aligned}\right.
\end{align*}
Hence $\hat a^{\lambda,+}_2(t)\leq e^{q_{22}(T-t)}q^T_{22}e^{q_{22}(T-t)},\ t\in[0,T]$. Similarly, for $\big(a^{\lambda,-}(t)\big)_{t\in[0,T]}$ it admits the following formulation:
\begin{align*}
 a^{\lambda,-}(t)=\left(\begin{matrix}a^{\lambda,-}_1(t)&0\\
 0&\hat a^{\lambda,-}_1(t)
 \end{matrix}\right),\quad t\in[0,T],
\end{align*}
for some $a^{\lambda,-}_1(t)\in\mathbb R^{d_1\times d_1}$, $\hat a^{\lambda,-}_1(t)\in\mathbb R^{d_2\times d_2}$. We may repeat the procedure in \eqref{upper-1}$\sim$\eqref{upper-2} towards $\big(a^{\lambda,-}_1(t),\hat a^{\lambda,-}_1(t)\big)_{t\in[0,T]}$ and show
\begin{align*}
 a^{\lambda,-}(t)\geq\left(\begin{matrix}\frac1{C\big(\lambda^{-1}+(4\varepsilon+C)(T-t)\big)}I_{d_2}&0\\
 0&e^{q_{22}(T-t)}q^T_{22}e^{q_{22}(T-t)}-C(T-t)I_{d_2}
 \end{matrix}\right).
\end{align*}
We may also apply the above analysis to $\big(a^\lambda_1(t)+a^\lambda_3(t)\big)_{t\in[0,T)}$ and obtain
\begin{align}\label{a1-lambda-bound-2}
        a^{\lambda,-}(t)\leq a^\lambda_1(t)+a^\lambda_3(t)\leq a^{\lambda,+}(t),\quad t\in[0,T).
\end{align}
In view of \eqref{a1-lambda-bound-1} and \eqref{a1-lambda-bound-2}, for each $\kappa>0$, $\big(a^\lambda_1(t)\big)_{t\in[0,T-\kappa]}$ and $\big(a^\lambda_1(t)+a^\lambda_3(t)\big)_{t\in[0,T-\kappa]}$ are bounded uniformly in $\lambda$. We can also see from Lemma \ref{lem-comparison} that $\big(a^\lambda_1(t)\big)_{t\in[0,T-\kappa]}$ and $\big(a^\lambda_1(t)+a^\lambda_3(t)\big)_{t\in[0,T-\kappa]}$ are both increasing in $\lambda$. Hence, as $\lambda$ goes to infinity, we have the uniform convergence of $\big(a^\lambda_1(t),a^\lambda_1(t)+a^\lambda_3(t)\big)_{t\in[0,T-\kappa]}$ to $\big(a^{+\infty}_1(t),a^{+\infty}_1(t)+a^{+\infty}_3(t)\big)_{t\in[0,T-\kappa]}$, as well as \eqref{uniform-estimates}.
\end{proof}
\subsection{Proofs for Theorem \ref{lqd-general-HJB-N-wp-1} and Proposition \ref{xi-implicit}}
Before delving into the proof of Theorem \ref{lqd-general-HJB-N-wp-1} and Proposition \ref{xi-implicit}, we establish several auxiliary results, starting with the approximation in Lemma \ref{dis-con-app}.
\begin{proof}[Proof of Lemma \ref{dis-con-app}]
In view of the law of large numbers (see e.g. \cite{Carmona2018-I}),
\begin{align*}
 \lim_{m\to+\infty}\mathbb E\bigg[\mathcal H\bigg(\frac1m\sum_{i=1}^m\delta_{\xi_i}\bigg)-\mathcal H(\tilde\mu)\bigg]\leq\lim_{m\to+\infty}\mathbb E\bigg[\mathcal W_2\bigg(\frac1m\sum_{i=1}^m\delta_{\xi_i},\tilde\mu\bigg)^2\bigg]=0.
\end{align*}
Since $D^2_{zz}\mathcal H^R$ is supported on a compact set, direct calculation shows that $\mathcal H_{R,N}$ has bounded derivatives. Denote by 
Then
\begin{align*}
 \lim_{m\to+\infty}\mathcal H_m(\tilde\mu)=\lim_{m\to+\infty}\mathbb E\bigg[\mathcal H^m\bigg(\frac1m\sum_{i=1}^m\delta_{\xi_i}\bigg)\bigg]=\lim_{m\to+\infty}\mathbb E\bigg[\mathcal H\bigg(\frac1m\sum_{i=1}^m\delta_{\xi_i}\bigg)\bigg]=\mathcal H(\tilde\mu).
\end{align*}
\end{proof}
We also need the following lemma to apply the results in Theorem \ref{existence-of-decoupling-field} and Theorem \ref{prop-eigen}, as well as the construction of optimal robust strategy.
\begin{lemma}\label{emp-ham}
For $\tilde\mu\in\mathcal P_2(\mathbb R^d\times\mathbb R^d)$, let $(X,P)\in L^2(\Omega,\mathcal F,\mathbb P;\mathbb R^d\times\mathbb R^d)$ be its lifting over an appropriate probability space $(\Omega,\mathcal F,\mathbb P)$, and denote by
\begin{align}\label{cal K}
 \mathcal K(\tilde\mu):=\inf_{\xi\in L^2(\Omega,\mathcal F,\mathbb P;\mathbb R^d)}\mathbb E[\psi(\xi,\mu_\xi)\cdot P+\mathcal G(\mu_{\xi,X})],
\end{align}
where $\psi\in\mathcal C^2\big(\mathcal P_2(\mathbb R^d)\times\mathbb R^d;\mathbb R^d\big)$ with bounded derivatives, $\mathcal G\in\mathcal C^2\big(\mathcal P_2(\mathbb R^d)\big)$.
Suppose that  for certain empirical measure
\begin{align*}
 \tilde\mu_N=\frac1N\sum_{i=1}^N\delta_{(x_i,p_i)},\quad\ x_1,\ldots,x_N,\ p_1,\ldots,p_N\in\mathbb R^d,
\end{align*}
with lifting $(X_N,P_N)$, it holds for some $\delta>0$ that
\begin{align}\label{cal G}
&\mathbb E\big[\mathbb{\breve E}[(\partial_\mu\psi(\xi_1,\mu_{\xi_1},\breve\xi_1)-\partial_\mu\psi(\xi_2,\mu_{\xi_2},\breve\xi_2))^\top\breve P_N]+(D_\xi\psi(\xi_1,\mu_{\xi_1})-D_\xi\psi(\xi_2,\mu_{\xi_2}))^\top P_N\big]\notag\\
 &\qquad+\mathbb E\big[\big\langle\partial_\mu\mathcal G(\mu_{(\xi_1,X_N)},\xi_1,X_N)^{(\xi)}-\partial_\mu\mathcal G(\mu_{(\xi_2,X_N)},\xi_2,X_N)^{(\xi)},\xi_1-\xi_2\big\rangle\big]\geq\delta\mathbb E|\xi_1-\xi_2|^2,\notag\\
 &\xi_1,\xi_2\in L^2(\Omega,\mathcal F,\mathbb P;\mathbb R^d).
\end{align}
Then the infimum in \eqref{cal K} is attained at $\xi^*_N\in L^2(\Omega,\mathcal F,\mathbb P;\mathbb R^d)$ satisfying
\begin{align*}
 {\rm Law}(\xi^*_N,X_N,P_N)=\frac1N\sum_{i=1}^N\delta_{(y^*_i,x_i,p_i)},
 \end{align*}
 where $(y^*_1,\ldots,y^*_N)$ uniquely solves
\begin{align*}
 \partial_\mu\mathcal G(\mu_{(\xi^*_N,X_N)},y^*_i,x_i)^{(\xi)}+\frac1N\sum_{j=1}^N\partial_\mu\psi(y^*_j,\mu_{\xi^*_N},y^*_i)^\top p_j+D_\xi\psi(y^*_i,\mu_{\xi^*_N})^\top p_i=0,\quad1\leq i\leq N,
 \end{align*}
 in other words,
 \begin{align}\label{FOC-0}
  \partial_\mu\mathcal G(\mu_{(\xi^*_N,X_N)},\xi^*_N,X_N)^{(\xi)}+\breve{\mathbb E}[\partial_\mu\psi(\breve\xi^*_N,\mu_{\xi^*_N},\xi^*_N)^\top\breve P_N]+D_\xi\psi(\xi^*_N,\mu_{\xi^*_N})^\top P_N=0\ {\rm a.s.}.
 \end{align}
Moreover, suppose that $\tilde\mu_N\to\tilde\mu$ in $\mathcal P_2(\mathbb R^d)$ for some $\tilde\mu\in\mathcal P_2(\mathbb R^d)$ with lifting $(X,P)\in L^2(\Omega,\mathcal F,\mathbb P;\mathbb R^d)$, and all the derivatives of $\psi,\ \mathcal G$ are bounded, then there exists a unique $\xi^*\in L^2(\Omega,\mathcal F,\mathbb P;\mathbb R^d)$ such that
\begin{align}\label{FOC-1}
\left\{\begin{aligned}&\mathcal K(\mu)=\mathbb E[\psi(\xi^*,\mu_{\xi^*})\cdot P+\mathcal G(\mu_{(\xi^*,X)})],\\
&\big(D_\xi\psi(\xi^*,\mu_{\xi^*})^\top+\breve{\mathbb E}[\partial_\mu\psi(\xi^*,\mu_{\xi^*},\breve\xi^*)^\top]\big)P+\partial_\mu\mathcal G(\mu_{(\xi^*,X)},\xi^*,X)^{(\xi)}=0\ {\rm a.s.}.\end{aligned}\right.
\end{align}
\end{lemma}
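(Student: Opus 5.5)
The first part is a finite-dimensional strongly convex minimization, and the second part follows by a fixed-point/continuity argument passing to the limit $N\to\infty$.

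\emph{Finite $N$.} Write $\Omega_N:=\{\omega: (X_N,P_N)(\omega)=(x_i,p_i)\}$ for the atoms, using an atomless space with $N$ equiprobable blocks. The functional
\[
F(\xi):=\mathbb E\big[\psi(\xi,\mu_\xi)\cdot P_N+\mathcal G(\mu_{(\xi,X_N)})\big]
\]
is Fréchet differentiable on $L^2$. By \eqref{W-derivative} and the chain rule for lifted functions, its gradient is
\[
\nabla F(\xi)=D_\xi\psi(\xi,\mu_\xi)^\top P_N+\breve{\mathbb E}\big[\partial_\mu\psi(\breve\xi,\mu_\xi,\xi)^\top\breve P_N\big]+\partial_\mu\mathcal G(\mu_{(\xi,X_N)},\xi,X_N)^{(\xi)} .
\]
Condition \eqref{cal G} says exactly that $\mathbb E\langle\nabla F(\xi_1)-\nabla F(\xi_2),\xi_1-\xi_2\rangle\ge\delta\mathbb E|\xi_1-\xi_2|^2$. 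Hence $F$ is $\delta$-strongly convex and continuous on the Hilbert space $L^2$. It is therefore coercive and weakly lower semicontinuous, so it has a unique minimizer $\xi^*_N$, characterized by $\nabla F(\xi^*_N)=0$ a.s., which is \eqref{FOC-0}.

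To see that $\xi^*_N$ is measurable with respect to $\sigma(X_N,P_N)$, I would apply uniqueness. For any measure-preserving rearrangement $\tau$ of $\Omega$ that preserves $(X_N,P_N)$, the variable $\xi^*_N\circ\tau$ is again a minimizer. Alternatively, replacing $\xi^*_N$ by $\mathbb E[\xi^*_N\mid X_N,P_N]$ and using convexity of $F$ shows the value does not increase, so by uniqueness $\xi^*_N=\mathbb E[\xi^*_N\mid X_N,P_N]$. In either case $\xi^*_N=y^*_i$ on the $i$-th atom. Substituting this into $\nabla F=0$ gives the finite system for $(y^*_1,\ldots,y^*_N)$. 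Its unique solvability follows from the same strong monotonicity restricted to atom-wise constant variables.

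\emph{Limit.} Take liftings with $(X_N,P_N)\to(X,P)$ in $L^2$; this is possible via optimal couplings on a rich enough space. The main obstacle is that \eqref{cal G} is assumed only for $\tilde\mu_N$, not for $\tilde\mu$. I would first try to obtain the monotonicity constant $\delta$ at $(X,P)$ by continuity: all derivatives of $\psi$ and $\mathcal G$ are bounded, so the left-hand side of \eqref{cal G} depends continuously on $(X_N,P_N)$ in $L^2$. To get uniform bounds I would split the increment as $\int_0^1 D\nabla F(\xi_2+r(\xi_1-\xi_2))\,dr$, whose entries involve second derivatives multiplied by $P$, and control the error $O(\mathbb E|P_N-P|\,|\xi_1-\xi_2|^2)$ using boundedness of the second derivatives. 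This yields monotonicity with constant $\delta-o(1)$. If that route fails, I would instead use the a priori bound $\mathbb E|\xi^*_N|^2\le C$, which follows from $\delta$-strong monotonicity tested against $\xi=0$ with constants uniform in $N$. Then I would run a Cauchy argument: test the first order condition at levels $N$ and $M$ against $\xi^*_N-\xi^*_M$, use strong monotonicity at level $N$, and bound the cross terms by $C\,\|(X_N,P_N)-(X_M,P_M)\|_{L^2}$ via the Lipschitz continuity of the derivatives. This shows that $\xi^*_N$ is Cauchy in $L^2$.

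In the limit, $\xi^*:=\lim\xi^*_N$ satisfies the second line of \eqref{FOC-1}, by continuity of $\nabla F$ in $(\xi,X,P)$. It also attains $\mathcal K(\tilde\mu)$, since $F_N\to F$ locally uniformly on bounded sets of $L^2$ and the minimizers converge. Uniqueness of $\xi^*$ then follows from the limiting strong monotonicity with constant $\delta$.
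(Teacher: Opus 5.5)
Your proposal is correct. The finite-$N$ step takes a different route from the paper, and the limit step is the same.

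\textbf{Finite-$N$ step: your route.} You read \eqref{cal G} as $\delta$-strong monotonicity of the $L^2$-gradient of the lifted functional $F$. Strong convexity then gives a unique minimizer over all of $L^2$. You recover $\xi^*_N=y^*_i$ on the $i$-th block from invariance under block-preserving rearrangements plus uniqueness. A more direct way: solve the finite system, which is strongly monotone on $(\mathbb R^d)^N$, and observe that the block-constant variable it defines has $\nabla F=0$, so it is the minimizer. Your conditional-expectation variant is not justified by convexity alone, since $F$ is not a pointwise integral functional; it also needs law invariance and an approximation by rearrangements.

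\textbf{Finite-$N$ step: the paper's route.} The paper stays finite-dimensional. It restricts the infimum to $\xi$ whose joint law with $X_N$ is a $kN$-atom empirical measure, citing the law of large numbers for this. It then proves strong convexity of $G_k$ on $(\mathbb R^d)^{kN}$ and shows that the replicated $k=1$ minimizer solves $\nabla G_k=0$. Hence the value is independent of $k$ and attained at $\xi^*_N$.

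\textbf{What each buys.} Your version gives uniqueness over all of $L^2$ at level $N$ and avoids that density step. The paper's version avoids Fr\'echet calculus on $L^2$ and any measurability question. Your Cauchy estimate in the limit step, obtained by testing both first-order conditions against $\xi^*_N-\xi^*_M$, is exactly the paper's.

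\textbf{One correction.} Your uniform ``$\delta-o(1)$'' route fails as written. The error term $\mathbb E[|P_N-P|\,|\xi_1-\xi_2|^2]$ is not $o(1)\,\mathbb E|\xi_1-\xi_2|^2$ uniformly in $(\xi_1,\xi_2)$ unless $P_N\to P$ in $L^\infty$. The route is also unnecessary. Fix a pair $(\xi_1,\xi_2)$. Because the first derivatives of $\psi$ are bounded, the coefficients multiplying $P_N$ and $\breve P_N$ in \eqref{cal G} are bounded by $C|\xi_1-\xi_2|\in L^2$. The $\mathcal G$-term is Lipschitz in $X_N$. So the left-hand side of \eqref{cal G} converges as $(X_N,P_N)\to(X,P)$ in $L^2$, and the inequality passes to the limit with the same $\delta$, pair by pair. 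This is what the paper does, and it is all your final uniqueness step needs.
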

\begin{proof}
 Since $\psi\in\mathcal C^2\big(\mathcal P_2(\mathbb R^d)\times\mathbb R^d;\mathbb R^d\big)$, $\mathcal G\in\mathcal C^2\big(\mathcal P_2(\mathbb R^d)\big)$, in view of the law of large numbers,
 \begin{align}\label{inf-1}
  \mathcal K(\mu_N)=\inf_{\substack{\xi\in L^2(\Omega,\mathcal F,\mathbb P;\mathbb R^d)\\ \mu_{(\xi,\eta)}=\frac1{kN}\sum_{i=1}^N\sum_{j=1}^N\delta_{(y_{(i-1)k+j},x_i)}\\k\in\mathbb N}}\mathbb E[\psi(\xi,\mu_\xi)\cdot\eta+\mathcal G(\mu_\xi)].
 \end{align}
For $k\geq1$, consider $\xi,\ \tilde\xi\in L^2(\Omega,\mathcal F,\mathbb P;\mathbb R^d)$ in \eqref{cal G} satisfying
\begin{align*}
 {\rm Law}(\xi,\tilde\xi,\eta)=\frac1{kN}\sum_{i=1}^N\sum_{j=1}^N\delta_{(y_{(i-1)k+j},\tilde y_{(i-1)k+j},x_i)},
\end{align*}
and denote by
\begin{align*}
 &G_k(y_1,\ldots,y_{kN}):=\frac1{kN}\sum_{i=1}^N\sum_{j=1}^k\psi\bigg(y_{(i-1)k+j},\frac1{kN}\sum_{i=1}^{kN}\delta_{y_i}\bigg)\cdot x_i+\mathcal G\bigg(\frac1{kN}\sum_{i=1}^{kN}\delta_{y_i}\bigg),\\
 &(y_1,\ldots,y_{kN})\in(\mathbb R^d)^{kN}.
\end{align*}
Then for $\mu_N$ and each $k\geq1$,
\begin{align}\label{inf-2}
 \inf_{\substack{\xi\in L^2(\Omega,\mathcal F,\mathbb P;\mathbb R^d)\\ \mu_{(\xi,\eta)}=\frac1{kN}\sum_{i=1}^N\sum_{j=1}^N\delta_{(y_{(i-1)k+j},x_i)}}}\mathbb E[\psi(\xi,\mu_\xi)\cdot\eta+\mathcal G(\mu_\xi)]=\inf_{(y_1,\ldots,y_{kN})\in(\mathbb R^d)^{kN}}G_k(y_1,\ldots,y_{kN}).
\end{align}
Moreover, for $y=(y_1,\ldots,y_{kN})$, $\tilde y=(\tilde y_1,\ldots,\tilde y_{kN})$, \eqref{cal G} implies that
\begin{align*}
 \langle\nabla G_k(y)-\nabla G_k(\tilde y),y-\tilde y\rangle\geq\delta|y-\tilde y|^2.
\end{align*}
Hence the strict convexity and super linear growth of $G_k$, which implies a unique minima. In particular, the following has a unique solution $(y^*_1,\ldots,y^*_N)$:
\begin{align}\label{FOC-G1}
 \nabla G_1(y^*_1,\ldots,y^*_N)=0,
\end{align}
which yields the unique solution to \eqref{FOC-0}. Moreover, $G_k$ also admits a unique minima $y^{(k)}$. According to \eqref{FOC-G1}, it is straight forward to check that
\begin{align}\label{inf-3}
 \hat y^*_{(i-1)k+j}:=y^*_i,\quad 1\leq i\leq N,\ 1\leq j\leq k,
\end{align}
satisfies the following first order condition related to $y^{(k)}$:
\begin{align*}
 \nabla G_k(\hat y^*_1,\ldots,\hat y^*_{kN})=0.
\end{align*}
Combining with the uniqueness of minima, we may plug \eqref{inf-3} into \eqref{inf-2} and obtain
\begin{align*}
 \inf_{\substack{\xi\in L^2(\Omega,\mathcal F,\mathbb P;\mathbb R^d)\\ \xi=\frac1{kN}\sum_{i=1}^{kN}\delta_{\xi_i}}}\mathbb E[\psi(\xi,\mu_\xi)\cdot P+\mathcal G(\mu_{\xi,X})]=\frac1N\sum_{j=1}^N\psi\bigg(y^*_j,\frac1N\sum_{i=1}^N\delta_{y^*_i}\bigg)\cdot p_j+\mathcal G\bigg(\frac1N\sum_{i=1}^N\delta_{(y^*_i,x_i)}\bigg).
\end{align*}
Noting that the right hand side of the above is independent of $k$, by \eqref{inf-1} we have \eqref{FOC-0} and 
\begin{align*}
 \mathcal K(\tilde\mu_N)=\mathbb E\big[\psi(\xi^*_N,\mu_{\xi^*_N})\cdot P_N+\mathcal G(\mu_{(\xi^*_N,X_N)})\big].
\end{align*}
Let $(\mu_N)_{N\geq1}$ be empirical measures and $\mu_N\to\mu\in\mathcal P_2(\mathbb R^d)$. Since all the derivatives of $\psi$ and $\mathcal G$ are bounded, then by \eqref{FOC-0} and \eqref{cal G},
\begin{align*}
&\quad C\big(\mathbb E|X_N-X_M|^2+\mathbb E|P_N-P_M|^2\big)^\frac12\cdot\big(\mathbb E|\xi^*_N-\xi^*_M|^2\big)^\frac12\\
 &\geq\mathbb E\big[\mathbb{\breve E}[\partial_\mu\psi(\xi^*_N,\mu_{\xi^*_N},\breve\xi^*_N)-\partial_\mu\psi(\xi^*_M,\mu_{\xi^*_M},\breve\xi^*_M)]^\top P_N+(D_\xi\psi(\xi^*_N,\mu_{\xi^*_N})-D_\xi\psi(\xi^*_M,\mu_{\xi^*_M}))^\top P_N\big]\notag\\
 &\qquad+\mathbb E\big[\big\langle\partial_\mu G(\mu_{(\xi^*_N,X_N)},\xi^*_N,X_N)^{(\xi)}-\partial_\mu G(\mu_{(\xi^*_M,X_N)},\xi^*_M,X_N)^{(\xi)},\xi^*_N-\xi^*_M\big\rangle\big]\\&\geq\delta\mathbb E|\xi^*_N-\xi^*_M|^2.
\end{align*}
The above and $\tilde\mu_N\to\tilde\mu\in\mathcal P_2(\mathbb R^d\times\mathbb R^d)$ implies that $(\xi^*_N)_{N\geq1}$ is convergent. Therefore we have \eqref{FOC-1} by Lemma 10.1.3 in \cite{Ambrosio2008}. Define
\begin{align*}
 g(\lambda)=\psi(\lambda\xi_1+(1-\lambda)\xi_2,\mu_{\lambda\xi_1+(1-\lambda)\xi_2})\cdot P+\mathcal G(\mu_{\lambda\xi_1+(1-\lambda)\xi_2,X}),\quad\lambda\in[0,1].
\end{align*}
For $\lambda_1>\lambda_2$, denote by
\begin{align*}
 \xi^\lambda_i=\lambda_i\xi_1+(1-\lambda_i)\xi_2,\ i=1,2.
\end{align*}
We may further send $N$ to infinity in \eqref{cal G} and see that 
\begin{align*}
 &\quad g'(\lambda_1)-g'(\lambda_2)\\
 &=(\lambda_1-\lambda_2)\mathbb E\big[\mathbb{\breve E}[(\partial_\mu\psi(\breve\xi^\lambda_1,\mu_{\xi^\lambda_1},\xi^\lambda_1)-\partial_\mu\psi(\breve\xi^\lambda_2,\mu_{\xi^\lambda_2},\xi^\lambda_2))^\top\breve P]+(D_\xi\psi(\xi^\lambda_1,\mu_{\xi^\lambda_1})-D_\xi\psi(\xi^\lambda_2,\mu_{\xi^\lambda_2}))^\top P\big]\notag\\
 &\qquad+\mathbb E\big[\big\langle\partial_\mu G(\mu_{(\xi^\lambda_1,X)},\xi^\lambda_1,X)^{(\xi)}-\partial_\mu G(\mu_{(\xi^\lambda_2,X)},\xi^\lambda_2,X)^{(\xi)},\xi^\lambda_1-\xi^\lambda_2\big\rangle\big]\geq\delta(\lambda_1-\lambda_2)\mathbb E|\xi^\lambda_1-\xi^\lambda_2|^2.
\end{align*}
Hence $g$ is strictly convex, therefore $\xi^*\in L^2(\Omega,\mathcal F,\mathbb P;\mathbb R^d)$ in \eqref{FOC-1} is unique.
\end{proof}
 Having obtained the above auxiliary results, we may now consider \eqref{lqd-general-HJB-m}. Similar to \eqref{HJB-N}$\sim$\eqref{HJB-N-1}, \eqref{lqd-general-HJB-m} corresponds to the following controlled particle system:
\begin{align}\label{lqd-general-HJB-N-m}
  \left\{\begin{aligned}
    &\partial_tV_{N,n}(t,x)+\frac12\sum_{i=1}^N{\rm tr}\big[\sigma^\top\sigma D^2_{x_ix_i}V_{N,n}(t,x)\big]+H_{N,n}\big(x,\nabla_x V_{N,n}(t,x)\big)=0,\\
    &\qquad\qquad(t,x)\in[0,T)\times\mathbb(R^d)^N,\\
    &V_{N,n}(T,x)=U_{N,n}(x),\quad x\in(\mathbb R^d)^N,
  \end{aligned}\right.
\end{align}
where for $x=(x_1,\ldots,x_N),\ p=(p_1,\ldots,p_N)\in(\mathbb R^d)^N$ and $(\mathcal H_n,\mathcal U_n)$ from \eqref{lqd-general-HJB-m},
\begin{align*}
 &H_{N,n}(x,p):=\mathcal H_n\bigg(\frac1N\sum_{i=1}^N\delta_{(x_i,Np_i)}\bigg),\quad U_{N,n}(x):=\mathcal U_n\bigg(\frac1N\sum_{i=1}^N\delta_{x_i}\bigg).
\end{align*}
\begin{lemma}\label{uniform-derivative-bound}
For $n\geq1$, $\mathcal H_n$ satisfies
\begin{align*}
    |\partial_\mu\mathcal H_n(\tilde\mu,z)|\leq C\bigg(1+|z|+\int_{\mathbb R^d}|\tilde z|\mu(d\tilde z)\bigg),\quad(\mu,z)\in\mathcal P_2(\mathbb R^d)\times\mathbb R^d,
\end{align*}
where the constant $C$ is independent of $n$.
\end{lemma}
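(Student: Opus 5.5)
The bound for $\partial_\mu\mathcal H_n$ will be read off from a linear growth bound on $\partial_\mu\mathcal H$, transferred through the empirical-measure average in \eqref{N-poly}. The constant will be controlled through the truncation $\rho_R$ and the i.i.d. averaging, which never introduce factors depending on $n$.

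\textbf{Step 1: linear growth of $\partial_{\tilde\mu}\mathcal H$.} From \eqref{lqd-general-ham},
\begin{align*}
\mathcal H=\mathcal K+f(\mu_M)+\frac{\kappa_2^2\mathbb E|\tilde p_2|^2}{4\lambda}+\Phi^*(\mathbb E\tilde p_2),
\end{align*}
where $\mathcal K$ is the infimum over $\xi$. Lemma \ref{emp-ham} gives a unique minimizer $\xi^*$ together with the first-order condition \eqref{FOC-1}. By an envelope argument, $\partial_{\tilde\mu}\mathcal K$ is the partial Wasserstein derivative of the integrand evaluated at $\xi^*$. Its $\tilde p$-components are $-\xi^*+\gamma_1^\top\xi^*+\gamma_2^\top\mathbb E\xi^*$, and its $Q$-component is $D_QU(\xi^*,Q,\mu_{\xi^*})$.

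The strong monotonicity in \eqref{ham-convexity-1}, tested against $\eta=0$, together with bounded second derivatives of $U$, shows that $\xi^*$ is Lipschitz in $(Q,\tilde p)$ in $L^2$ and grows at most linearly. Consequently $|\xi^*(z)|\le C(1+|z|+\int|\tilde z|\tilde\mu(d\tilde z))$ pointwise.

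The remaining pieces are controlled as follows. The term $\partial_\mu f$ grows linearly because its second derivatives are bounded. The entropic term contributes $\kappa_2^2\tilde p_2/(2\lambda)$. Finally, $D\Phi^*$ is Lipschitz since $D^2\Phi\ge\delta I$, so $D\Phi^*(\mathbb E\tilde p_2)$ is bounded by $C(1+\int|\tilde z|\tilde\mu)$. Altogether this gives
\begin{align*}
|\partial_{\tilde\mu}\mathcal H(\tilde\mu,z)|\le C\Big(1+|z|+\int|\tilde z|\tilde\mu(d\tilde z)\Big).
\end{align*}

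\textbf{Step 2: the truncation preserves the bound.} The function $\mathcal H^R$ agrees with $\mathcal H$ on $\{\frac1n\sum|z_i|^2\le R\}$. Its Hessian is $\rho_R\cdot D^2\mathcal H$, so the gradient of $\mathcal H^R$ on empirical measures is obtained by integrating a damped Hessian along rays. Since $\rho_R\in[0,1]$, the finite-dimensional gradient
\begin{align*}
nD_{z_i}\mathcal H^R\Big(\frac1n\sum_j\delta_{z_j}\Big)
\end{align*}
inherits the same linear growth $C(1+|z_i|+\frac1n\sum_j|z_j|)$, with $C$ independent of $R$ and $n$. I will construct $\mathcal H^R$ by integrating from a base point inside the ball, so that first derivatives are never amplified.

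\textbf{Step 3: differentiate the average.} Differentiating \eqref{N-poly} in $\tilde\mu$, by symmetry among the i.i.d. copies, gives
\begin{align*}
\partial_{\tilde\mu}\mathcal H_n(\tilde\mu,z)=\mathbb E\Big[nD_{z_1}\mathcal H^n\Big(\frac1n\Big(\delta_z+\sum_{i=2}^n\delta_{\xi_i}\Big)\Big)\Big].
\end{align*}
Applying Step 2 and then taking expectations yields
\begin{align*}
C\Big(1+|z|+\frac{n-1}{n}\int|\tilde z|\tilde\mu(d\tilde z)+\frac{|z|}{n}\Big)\le C\Big(1+|z|+\int|\tilde z|\tilde\mu(d\tilde z)\Big),
\end{align*}
which is the claim of the lemma.

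\textbf{Main obstacle.} I expect the main obstacle to be Step 2: the cutoff specified through $D^2\mathcal H^R$ must not spoil the linear growth of first derivatives uniformly in $R=n$. If the construction is ambiguous, my fallback is to define $\mathcal H^R$ explicitly as $\mathcal H$ composed with a smooth radial retraction in the empirical variable. A retraction has Jacobian norm at most $1$, so it preserves the linear gradient bound directly.
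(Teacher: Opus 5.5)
Your overall route is the paper's: linear growth of $\partial_{\tilde\mu}\mathcal H$ from the first-order condition of the inner minimization, then differentiation of the i.i.d.\ average \eqref{N-poly}. Your Step 3 is exactly the paper's final display. The gap is in Step 1: it does not deliver the first moment. Testing \eqref{ham-convexity-1} against $0$ gives only $\|\xi^*\|_{L^2}\le C(1+\|Z\|_{L^2})$, for a lifting $Z$ of $\tilde\mu$. Passing to a pointwise bound needs an extra argument, for instance the first-order condition at $z$ plus pointwise $\delta$-monotonicity of $x\mapsto D_\xi U(x,q,\nu)+\breve{\mathbb E}[\partial_\mu U(\breve\xi^*,\breve Q,\nu,x)]$, $\nu=\mu_{\xi^*}$, which follows from \eqref{ham-convexity-1} by perturbing on events of small probability. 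That argument yields only $|\xi^*(z)|\le C(1+|z|+\mathbb E|Z|+\mathbb E|\xi^*|)$. The level $\mathbb E|\xi^*|$ also enters $\gamma_2\mathbb E\xi^*$, and through $\mathcal W_1(\mu_{\xi^*},\delta_0)$ it enters $D_QU(\xi^*,Q,\mu_{\xi^*})$. In your argument it is controlled only by $\|\xi^*\|_{L^2}$, and $L^2$-monotonicity gives no $L^1$ control. So your ``consequently'' proves the bound with $(\int|\tilde z|^2\tilde\mu(d\tilde z))^{1/2}$ in place of $\int|\tilde z|\tilde\mu(d\tilde z)$. That weaker form is what Lemma \ref{lqd-general-HJB-N-wp}(ii) actually uses later, but it is not the statement. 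The missing ingredient is a mean-field derivative bound on the optimizer: $D_z\xi^*$ and $\partial_{\tilde\mu}\xi^*$ bounded. In the finite system \eqref{FOC} this means $\xi^*_i$ has $O(1)$ sensitivity to $z_i$ and $O(1/N)$ sensitivity to $z_j$, $j\neq i$. You get it by differentiating the first-order condition and inverting the linearization via \eqref{ham-convexity-1}; this is the role of the implicit function theorem in the paper's proof. Integrated along a ray from the origin, it gives the first moment. Equivalently, $|\partial_{\tilde\mu}\mathcal H(\tilde\mu,z)-\partial_{\tilde\mu}\mathcal H(\delta_0,0)|\le|D_z\partial_{\tilde\mu}\mathcal H|_\infty|z|+|\partial^2_{\tilde\mu\tilde\mu}\mathcal H|_\infty\mathcal W_1(\tilde\mu,\delta_0)$.

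Your Step 2 addresses something the paper's proof passes over (it applies the growth bound to $\mathcal H^n$ without comment). Its conclusion is right, but $\rho_R\in[0,1]$ is not the reason. Set $g(z)=\mathcal H(\frac1n\sum_j\delta_{z_j})$, $F(s)=nD_{z_i}g(sz)$ and $\varrho(s)=\rho_R(\frac{s^2}{n}\sum_j|z_j|^2)$. The stipulated Hessian then gives $nD_{z_i}\mathcal H^R=F(0)+\int_0^1\varrho F'\,ds$. Without damping this telescopes, but an arbitrary $[0,1]$-valued weight does not bound it by $\sup|F|$. Integrating by parts with $\rho_R$ non-increasing gives $|\int_0^1\varrho F'\,ds|\le2\sup_{[0,1]}|F|$, which transfers the growth uniformly in $R,n$. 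Note also that $\rho_R(\frac1n|z|^2)D^2g$ is in general not a Hessian: compatibility would force $D^2g\propto zz^\top$ wherever $\rho_R'\neq0$. So ``integrating from a base point'' does not construct such an $\mathcal H^R$, and the computation above is conditional on the paper's stipulation. Your retraction fallback is a reasonable repair. However, ``Jacobian norm at most $1$'' is a bound on the full $n$-particle space and loses a factor $\sqrt n$ on a single block $nD_{z_i}$. For $r(z)=c(\frac1n|z|^2)z$ you must estimate the cross term $2c'\,z_i\,\frac1n\sum_jz_j\cdot\partial_{\tilde\mu}\mathcal H(\cdot,cz_j)$ directly from the radial profile.
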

\begin{proof}
Let us consider $\mathcal H$ from \eqref{lqd-general-ham} and show that
    \begin{align}\label{cal H growth}
     |\partial_\mu\mathcal H(\tilde\mu,z)|\leq C\bigg(1+|z|+\int_{\mathbb R^d}|\tilde z|\tilde\mu(d\tilde z)\bigg).
    \end{align}
It suffices to consider $\tilde\mu=\frac1N\sum_{i=1}^N\delta_{z_i}$, $N\geq1$, $z_i=(m_i,q_i,p_{1,i},p_{2,i})\in(\mathbb R^{2d})$, $1\leq i\leq N$. In view of Lemma \ref{emp-ham} and \eqref{lqd-general-ham},
\begin{align}\label{lqd-general-ham-1}
 &\mathcal H\bigg(\frac1N\sum_{i=1}^N\delta_{z_i}\bigg)=f\bigg(\frac1N\sum_{i=1}^N\delta_{m_i}\bigg)+\frac{\kappa^2_2}{4\lambda N}\sum_{i=1}^N|p_{2,i}|^2+\Phi^*\bigg(\frac1N\sum_{i=1}^Np_{2,i}\bigg)\\
 &\qquad\qquad+\frac1N\sum_{i=1}^N\bigg[-p_{1,i}\cdot\xi^*_i+p_{2,i}\cdot\bigg(\gamma_1\xi^*_i+\frac{\gamma_2}N\sum_{j=1}^N\xi^*_j+\kappa_2\theta_i+\kappa_1\alpha_i\bigg)+U\big(\xi^*_i,q_i,\frac1N\sum_{j=1}^N\delta_{\xi^*_j}\big)\bigg].\notag
\end{align}
Here $(\xi^*_1,\ldots,\xi^*_N)$ is implicitly determined by $(z_1,\ldots,z_N)$ via the following first order condition
\begin{align*}
 \Psi(z_1,\ldots,z_N,\xi^*_1,\ldots,\xi^*_N)=0,
\end{align*}
where
\begin{align}\label{FOC}
    &\quad\Psi_i(z_1,\ldots,z_N,\xi^*_1,\ldots,\xi^*_N)\\
    &=-p_{i,1}+\gamma^\top_1p_{i,2}+\frac{\gamma^\top_2}N\sum_{k=1}^Np_{k,2}+\partial_\xi U\big(\xi^*_i,q_i,\frac1N\sum_{j=1}^N\delta_{\xi^*_j}\big)+\frac1N\sum_{k=1}^N\partial_\mu U\big(\xi^*_k,q_i,\frac1N\sum_{j=1}^N\delta_{\xi^*_j},\xi^*_i\big),\notag\\
    &1\leq i\leq N.\notag
\end{align}
According to \eqref{ham-convexity-1} and the implicit function theorem, some algebra gives
\begin{align*}
 \bigg|D_{z_1}\mathcal H\big(\frac1N\sum_{i=1}^N\delta_{z_i}\big)\bigg|\leq C\bigg(1+|z_1|+\frac1N\sum_{i=1}^N|z_i|\bigg),
\end{align*}
which implies \eqref{cal H growth}. In view of \eqref{N-poly}$\sim$\eqref{convexity}, as well as the growth condition \eqref{cal H growth},
    \begin{align*}
     |\partial_\mu\mathcal H_n(\tilde\mu,z_1)|&\leq\int_{\mathbb R^d}\cdots\int_{\mathbb R^d}\bigg|\partial_\mu\mathcal H^n\bigg(\frac1n\sum_{i=1}^n\delta_{z_i},z_1\bigg)\bigg|\tilde\mu(dz_2)\cdots\tilde\mu(dz_n)\\
     &\leq C\int_{\mathbb R^d}\cdots\int_{\mathbb R^d}\bigg(\frac1n\sum_{i=1}^n|z_i|+|z_1|+1\bigg)\tilde\mu(dz_2)\cdots\tilde\mu(dz_n),
\end{align*}
which completes the proof.
\end{proof}
Next we analyze the auxiliary equation \eqref{lqd-general-HJB-N-m}.
 \begin{lemma}\label{lqd-general-HJB-N-wp}
It holds that
  \begin{enumerate}
  \item[(i)]The HJBI equation \eqref{lqd-general-HJB-N-m} has a unique classical solution $V_{N,n}$;
  \item[(ii)]$\big(V_{N,n}\big)_{N,n\geq1}$ satisfy
   \begin{align}\label{DxiVNm-negative}
 \left\{\begin{aligned}
 &N|D_{x_i}V_{N,n}(t,x)|\leq C(|x_i|+1)+\bigg(\frac CN\sum_{j=1}^N|x_j|^2\bigg)^{\frac12},\\
 &\qquad 1\leq i\leq N,\ (t,x)\in[0,T]\times (\mathbb R^d)^N,
 \end{aligned}\right.
\end{align}
 where the constant $C$ is independent of $N,n$, and $x=(x_1,\ldots,x_N)^\top$, $x_i=(m_i,q_i)^\top\in\mathbb R\times\mathbb R$, $1\leq i\leq N;$
   \item[(iii)] The HJBI equation \eqref{lqd-general-HJB-m} has a unique classical solution $V_n$ satisfying
   \begin{align}\label{higher-order-goal-1}
 \left\{\begin{aligned}
 &|\partial_\mu V_n(t,\mu,x)|\leq C(|x|+1)+C\bigg(\int_{\mathbb R^2}|\tilde x|^2\mu(d\tilde x)\bigg)^{\frac12},\quad(t,\mu,x)\in[0,T]\times\mathcal P_2(\mathbb R^2)\times(\mathbb R^d)^N,\\
 &\sum_{\substack{0\leq k\leq 4,0\leq l_1,\ldots,l_k\leq4\\2\leq l_1+\cdots+l_k+k\leq 4}}\big|D^{l_1}_{x_1}\cdots D^{l_k}_{x_k}\partial^k_\mu V_n(t,\cdot)\big|_\infty<C,
 \end{aligned}\right.
\end{align}
 where the constant depends only on $\mathcal H,\ U$;   
   \item[(iv)] $(\partial_tV_n)_{n\geq1}$ is locally bounded uniformly in $n$ by a constant depending only on $\mathcal H,\ U$.
  \end{enumerate}
 \end{lemma}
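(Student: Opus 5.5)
Part (i) is the plan's starting point and should be quick. For fixed $n$, the approximating Hamiltonian $\mathcal H_n$ has bounded derivatives by Lemma \ref{dis-con-app}. The construction \eqref{N-poly}--\eqref{truncation} preserves the generalized displacement convexity, so $\mathcal H_n$ satisfies Assumption \ref{assumption-displacement-convex}. The same holds for $\mathcal U_n$. Hence $H_{N,n}$ and $U_{N,n}$ satisfy the hypotheses of Lemma \ref{differentiability}, and \eqref{lqd-general-HJB-N-m} has a unique classical solution $V_{N,n}$. Since the diffusion acts only on the $q$-component, I would first add a vanishing nondegenerate noise $\epsilon\,dB$ in the $m$-direction. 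I would then pass to the limit using the $\epsilon$-independent bounds below, with uniqueness coming from the BSDE representation as in Lemma \ref{differentiability}.

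For (ii), I would redo the proof of Proposition \ref{uniform-1-1} while tracking growth instead of boundedness. Let $X$ be the optimal particle system driven by $D_pH_{N,n}(X_t,\nabla_xV_{N,n})$. The Feynman--Kac representation gives
\[
N D_{x_i}V_{N,n}(0,x)=\mathbb E\Big[\partial_\mu\mathcal U_n(\mu_{X_T},X^i_T)+\int_0^T\partial_\mu\mathcal H_n^{(x)}\big(\tilde\mu^N_t,X^i_t,ND_{x_i}V_{N,n}\big)dt\Big].
\]
Lemma \ref{uniform-derivative-bound} bounds the integrand by $C(1+|X^i_t|+|ND_{x_i}V_{N,n}|+(\frac1N\sum_j|X^j_t|^2)^{1/2}+(\frac1N\sum_j|ND_{x_j}V_{N,n}|^2)^{1/2})$. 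This bound is circular, because the drift $D_{p_i}H_{N,n}$ itself grows linearly in $(x,Np)$. I would therefore close the estimate by a Gronwall/continuation argument in the sup-norm of the weighted quantity
\[
\frac{N|D_{x_i}V_{N,n}|}{1+|x_i|+(\frac1N\sum|x_j|^2)^{1/2}}.
\]
The key input is that Lemma \ref{prop-eigen} (via the comparison Lemma \ref{lem-comparison}) gives $\nabla^2V_{N,n}$ bounded by $C/N$ in operator norm, with $C$ depending only on second derivatives of $\mathcal H_n$. By construction these are bounded uniformly in $n$: \eqref{ham-convexity-1} bounds the derivatives of order $\ge 2$, and the cutoff $\rho_R$ only multiplies the Hessian. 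Given the Hessian bound, $ND_{x_i}V_{N,n}$ is Lipschitz in $x$ with the correct $N$-scaling. It then suffices to bound it at one reference point, say $x=0$. There a Feynman--Kac/BSDE comparison with linear-growth data and moment bounds for $X$ gives a constant independent of $N$ and $n$, which yields \eqref{DxiVNm-negative}.

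For (iii), I would follow the sketch of Theorem \ref{existence-of-decoupling-field}. First I would build a local solution $V_n$ through the mean field FBSDE \eqref{1st-step}. Proposition \ref{prop-propagation-1} then transfers \eqref{DxiVNm-negative} to $\partial_\mu V_n$, giving the linear growth bound. Lemma \ref{prop-eigen} transfers the $C/N$ Hessian bounds to $D_x\partial_\mu V_n$ and $\partial^2_{\mu\mu}V_n$. The higher derivatives in \eqref{higher-order-goal-1} follow from differentiating the equation and using the Feynman--Kac formula of \cite{Buckdahn2017}, since all derivatives of order $\ge 2$ of $\mathcal H_n,\mathcal U_n$ are uniformly bounded. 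Iterating over intervals of length $\delta$, which depends only on these uniform bounds, gives a global solution. Part (iv) then follows by reading $\partial_tV_n$ off the equation: the trace term is bounded, and $\mathcal H_n((Id,\partial_\mu V_n)\sharp\mu)$ is locally bounded by the growth bound on $\partial_\mu V_n$ together with $|\mathcal H_n(\tilde\mu)|\le C(1+\int|z|^2\tilde\mu(dz))$, which comes from integrating Lemma \ref{uniform-derivative-bound}.

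The main obstacle is (ii). The first-order quantity can no longer be bounded directly, as it was in Proposition \ref{uniform-1-1}, because the drift and $\partial_\mu\mathcal H_n$ grow linearly. The argument needs the Hessian estimate of Lemma \ref{prop-eigen} to hold uniformly in $n$, which in turn requires verifying that the approximation preserves the constant in \eqref{general-displacement}. If that fails, I would instead try a direct a priori $L^2$ estimate on the FBSDE \eqref{1st-step} using the monotonicity in \eqref{ham-convexity-1}.
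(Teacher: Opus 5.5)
There is a genuine gap in (ii), at the step ``given the Hessian bound, $ND_{x_i}V_{N,n}$ is Lipschitz in $x$ with the correct $N$-scaling.''

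Lemma \ref{prop-eigen} bounds only the quadratic form, i.e.\ $\|\nabla^2_xV_{N,n}\|\le C/N$ as an operator on $\mathbb R^{dN}$. This gives the right scale in particle $i$'s own direction, since $N|D^2_{x_ix_i}V_{N,n}|\le C$. In the directions of the other particles it only yields
\begin{align*}
\big|ND_{x_i}V_{N,n}(t,x)-ND_{x_i}V_{N,n}(t,0)\big|\le C|x|=C\sqrt N\Big(\frac1N\sum_{j=1}^N|x_j|^2\Big)^{\frac12},
\end{align*}
which is a factor $\sqrt N$ worse than \eqref{DxiVNm-negative}. To get the mean-field term you would need row-wise control, $|D^2_{x_ix_j}V_{N,n}|\le C/N^2$ for $i\neq j$. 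An operator-norm bound permits off-diagonal entries of order $N^{-3/2}$, e.g.\ $\frac CN$ times a normalized symmetric Hadamard matrix. Lemma \ref{continuation-1} gives only $K/N$ per block, so it does not help either. Your reference-point step inherits the same defect. Bounding $ND_{x_i}V_{N,n}(0,0)$ needs $ND_{x_j}V_{N,n}(t,X_t)$ along the path and moment bounds for the controlled system, and you control both only through this Lipschitz estimate.

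What is missing is an averaged control of the dependence on the other particles, and this is the core of the paper's proof. The paper uses the uncontrolled system $X=x+\sigma W$, whose moments are trivial. It represents $Y_t=\nabla_xV_{N,n}(t,X_t)$ as in \eqref{dynamic-Y}, which carries the extra term $\nabla^2_xV_{N,n}\nabla_pH_{N,n}$. Here the operator-norm bound is exactly the right input: by Lemma \ref{prop-eigen}, $|NY\cdot N\nabla^2_xV_{N,n}\nabla_pH_{N,n}|\le C|NY|\,|\nabla_pH_{N,n}|$. By Lemma \ref{uniform-derivative-bound} this is at most $C(|NY|^2+|X|^2+N)$. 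Gronwall applied to $D(t)=\max_{s\ge t}\mathbb E|NY_s|^2$ then gives, uniformly in $N,n$,
\begin{align*}
\frac1N\sum_{i=1}^N|ND_{x_i}V_{N,n}(t,x)|^2\le C\Big(1+\frac1N\sum_{j=1}^N|x_j|^2\Big).
\end{align*}
This bounds every empirical-measure argument of $\partial_\mu\mathcal H_n$ and $\partial_\mu\mathcal U_n$.

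With this aggregate bound, a continuation of the kind you describe does close:
\begin{itemize}
\item use the controlled representation of Proposition \ref{uniform-1-1} for the $i$-th component;
\item let the aggregate bound handle the mean-field terms;
\item use only the diagonal Lipschitz bound in $x_i$ for the drift of $X^i$;
\item iterate over finitely many time intervals whose length depends only on $C$.
\end{itemize}

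Be aware that reading the $i$-th component directly off \eqref{dynamic-Y}, as the last line of the paper's argument does, runs into $N(\nabla^2_xV_{N,n}\nabla_pH_{N,n})_i$. The operator-norm bound controls this term only by $C|\nabla_pH_{N,n}|$, which is of order $\sqrt N$, so that step needs the same row-wise structure.

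Apart from this, your (i), (iii) and (iv) follow the paper. The vanishing noise in the $m$-direction is a sensible refinement the paper omits. Note, though, that your growth bound in (iii) is obtained from (ii).
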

\begin{proof}
(i). It is the consequence of Lemma \ref{differentiability} because $\mathcal H_n$ satisfies Assumption \ref{assumption-displacement-convex}.

(ii). Consider
\begin{align*}
 \left\{\begin{aligned}
 &dX^i_t=\sigma W^i_t+\sigma_0 W^0_t,\quad Y^i_t=D_{x_i}V_{N,n}(t,X_t),\\
 &\qquad t\in[0,T],\ i=1,\ldots,N,\ X(0)=x=(x_1,\ldots,x_N)^\top.
 \end{aligned}\right.
 \end{align*}
 Taking $D_{x_i}$ in \eqref{lqd-general-HJB-N-m} yields
 \begin{align}\label{dynamic-Y}
 Y_t=\mathbb E_t\bigg[\nabla_xU_{N,n}(\mu_{X_T})+\int_t^T\big(\nabla_xH_{N,n}(t,X_t,Y_t)+D^2V_{N,n}(t,X_t)\nabla_pH_{N,n}(t,X_t,Y_t)\big)dt\bigg].
\end{align}
According to Lemma \ref{uniform-derivative-bound},
 \begin{align*}
  \big|ND_{x_i}H_{N,n}(t,X_t,Y_t)\big|+\big|D_{p_i}H_{N,n}(t,X_t,Y_t)\big|\leq C\big(|X^i_t|+N|Y^i_t|+\sqrt{N}^{-1}|X_t|+\sqrt{N}^{-1}|NY_t|+1\big).
 \end{align*}
In view of Theorem \ref{prop-eigen} and the above, 
\begin{align*}
  &\quad\big|(NY_t)^\top D^2V_{N,n}(t,X_t)N\nabla_pH_{N,n}(t,X_t,Y_t)\big|\leq\frac CN|NY_t|\cdot|N\nabla_pH_{N,n}(t,X_t,Y_t)|\\
&\leq C\big(|NY_t|^2+\sqrt{N}|NY_t|+|NY_t|\cdot|X_t|\big)\leq C(|NY_t|^2+|X_t|^2+N).
\end{align*}
Denoting by $D(t):=\max_{s\in[t,T]}\mathbb E[|NY_t|^2],\ t\in[0,T]$, and applying It\^o's formula to $|NY_t|^2$, we obtain from the above estimate that
\begin{align*}
D(t)\leq NC+C\int_t^T\big(\mathbb E|X_s|^2+N+D(s)\big)ds\leq NC+C\int_t^T\big(|x|^2+N+D(s)\big)ds.
\end{align*}
Therefore $D(t)\leq C_T(|x|^2+N),\ t\in[0,T]$. The estimates on $D(t)$ and \eqref{dynamic-Y} then yield
\begin{align*}
 |Y^i_0|&\leq\mathbb E\bigg[D_{x_i}U_{N,n}(\mu_{X_T})+\int_t^T\big(D_{x_i}H_{N,n}(t,X_t,Y_t)+(e^i_N)^\top D^2V_{N,n}(t,X_t)\nabla_pH_{N,n}(t,X_t,Y_t)\big)dt\bigg]\\
 &\leq\frac1N(|x^i|+N^{-1}|x|).
\end{align*} 

(iii). The well-posedness is the result Theorem \ref{existence-of-decoupling-field}. Let ${\rm Law}(\xi,\eta)\in\mathcal P_2(\mathbb R^d\times\mathbb R^d)$ and consider the sequence
\begin{align*}
 \frac1N\sum_{i=1}^N\delta_{(x_i,y_i)}\to{\rm Law}(\xi,\eta)\ \text{in}\ \mathcal P_2(\mathbb R^d\times\mathbb R^d).
\end{align*}
According to \eqref{DxiVNm-negative},
\begin{align*}
 |V_{N,n}(t,x+\varepsilon y)-V_{N,n}(t,x)|\leq\frac{\varepsilon C}N\sum_{i=1}^N\bigg[|x_i|+1+\bigg(\frac1N\sum_{j=1}^N|x_j|^2\bigg)^{\frac12}\bigg]|y_i|+\varepsilon^2C\bigg(\frac1N\sum_{j=1}^N|y_j|^2\bigg)^{\frac12}.
\end{align*}
Sending $N$ to infinity yields
\begin{align*}
 |V_n\big(t,{\rm Law}(\xi+\varepsilon\eta)\big)-V_n\big(t,{\rm Law}(\xi)\big)|\leq\varepsilon C\mathbb E\big[(|\xi|+\mathbb E[|\xi|^2]^\frac12)|\eta|\big]+\varepsilon^2C\mathbb E[|\eta|^2]^\frac12.
\end{align*}
Notice that the above holds for arbitrary ${\rm Law}(\xi,\eta)\in\mathcal P_2(\mathbb R^d\times\mathbb R^d)$, hence the first estimate in \eqref{higher-order-goal-1}. 

Consider \eqref{1st-step} and apply it to \eqref{lqd-general-HJB-m}. Let $(\nu_1,\nu_2)\in L^2(\Omega,\mathcal F,\mathbb P;\mathbb R^{d_1}\times\mathbb R^{d_2})$ be the lifting of $\mu$. Then
 \begin{align}\label{1st-step-1}
  \left\{\begin{aligned}
    dM_s&=\partial_{\tilde\mu}\mathcal H^{(Y_1)}_n\big(\mathbb P_{(M_s,Q_s,Y^1_s,Y^2_s)},M_s,Q_s,Y^1_s,Y^2_s\big)ds,\\
    dQ_s&=\partial_{\tilde\mu}\mathcal H^{(Y_2)}_n\big(\mathbb P_{(M_s,Q_s,Y^1_s,Y^2_s)},M_s,Q_s,Y^1_s,Y^2_s\big)ds+dW_s,\ (M_t,Q_t)=(\nu_1,\nu_2),\\
    dY^1_s&=-\partial_{\tilde\mu}\mathcal H^{(M)}_n\big(\mathbb P_{(M_s,Q_s,Y^1_s,Y^2_s)},M_s,Q_s,Y^1_s,Y^2_s\big)ds+Z^1_sdW_s,\ Y^1_T=\partial_\mu U^{(M)}(\mathbb P_{(M_T,Q_T)},M_T,Q_T),\\
    dY^2_s&=-\partial_{\tilde\mu}\mathcal H^{(Q)}_n\big(\mathbb P_{(M_s,Q_s,Y^1_s,Y^2_s)},M_s,Q_s,Y^1_s,Y^2_s\big)ds+Z^2_sdW_s,\ Y^2_T=\partial_\mu U^{(Q)}(\mathbb P_{(M_T,Q_T)},M_T,Q_T),\\
    V_s=&\mathbb E\bigg[U(\mathbb P_{(M_T,Q_T)})+\int_s^T\bigg(\mathcal H_n\big(\mathbb P_{(M_u,Q_u,Y^1_u,Y^2_u)}\big)-Y^1_u\cdot\partial_{\tilde\mu}\mathcal H^{(Y_1)}_n\big(\mathbb P_{(M_u,Q_u,Y^1_u,Y^2_u)},M_u,Q_u,Y^1_u,Y^2_u\big)\\
    &-Y^2_u\cdot\partial_{\tilde\mu}\mathcal H^{(Y_2)}_n\big(\mathbb P_{(M_u,Q_u,Y^1_u,Y^2_u)},M_u,Q_u,Y^1_u,Y^2_u\big)\bigg)du\bigg|\mathcal F^W_s\bigg].
\end{aligned}\right.
\end{align}
As is mentioned in Theorem \ref{existence-of-decoupling-field} and its proof, \eqref{lqd-general-HJB-m} is globally well-posed with bounded derivatives. In addition,
\begin{align}\label{VN-Fey-Kac}
 V_t=V_n(t,\mu),\quad\partial_\mu V_n(t,\mu)=(Y^1_t,Y^2_t).
\end{align}
According to \eqref{lqd-general-ham} and \eqref{convexity}, direct calculation yields that both $\mathcal H^{(X)}_n$ and $\mathcal H^{(P)}_n$ are bounded. Furthermore, for $0\leq\hat k\leq 4,0\leq \hat l_1,\ldots,\hat l_{\hat k}\leq4,\ \hat l_1+\cdots+\hat l_{\hat k}+\hat k=\hat d$, $1\leq\hat d\leq 4$ we may show that
\begin{align*}
 \big|\partial^{\hat l_1}_{x_1}\cdots\partial^{\hat l_{\hat k}}_{x_{\hat k}}\partial^{\hat k}_\mu\mathcal H_N(t,\cdot)\big|_\infty\leq C\sum_{\substack{0\leq k\leq 4,0\leq l_1,\ldots,l_k\leq4\\2\leq l_1+\cdots+l_k+k\leq d}}\big|\partial^{l_1}_{x_1}\cdots\partial^{l_k}_{x_k}\partial^k_\mu \mathcal H(t,\cdot)\big|_\infty.
\end{align*}
Therefore, following the same idea as in the proof of Theorem \ref{existence-of-decoupling-field}, we may show the second estimates in \eqref{higher-order-goal-1}.

(iv). In view of the uniform estimates in \eqref{higher-order-goal-1} and the equation \eqref{lqd-general-HJB-m}, we obtain the uniform estimates on $\partial_tV_n$.
 \end{proof}
With the above estimates at hand, we are now able to prove Theorem \ref{lqd-general-HJB-N-wp-1}.
\begin{proof}[Proof of Theorem \ref{lqd-general-HJB-N-wp-1}]
 Let us first show the convergence of $(V_n)_{n\geq1}$ to certain $V$. In view of the uniform estimates in Lemma \ref{lqd-general-HJB-N-wp} as well as the Arzela-Ascoli theorem, it suffices to show that $(V_n)_{n\geq1}$ are uniformly bounded on any compact set. Consider \eqref{1st-step-1}. In view of \eqref{higher-order-goal-1}, \eqref{VN-Fey-Kac}, Lemma \ref{uniform-derivative-bound} and Gronwall's inequality,
 \begin{align*}
  \mathbb E[|M_s|+|Q_s|]\leq C\mathbb E[|\nu_1|+|\nu_2|+1],\quad s\in[t,T].
 \end{align*}
Plugging the above into the last equality in \eqref{1st-step-1}, we obtain
\begin{align*}
 |V_n(t,\mu)|=|\mathbb EV_t|\leq C\mathbb E\big[|\nu_1|+|\nu_2|+1\big],
\end{align*}
which implies that $(V_n)_{n\geq1}$ are uniformly bounded on any compact set. Hence the convergence of $(V_n)_{n\geq1}$ to certain $V$. Furthermore, for $0\leq\hat k\leq 4,0\leq \hat l_1,\ldots,\hat l_{\hat k}\leq4,\ 1\leq\hat l_1+\cdots+\hat l_{\hat k}+\hat k\leq 3$, \eqref{higher-order-goal} also implies the uniform convergence of $\big(\partial^{\hat l_1}_{x_1}\cdots\partial^{\hat l_{\hat k}}_{x_{\hat k}}\partial^{\hat k}_\mu V_n\big)_{n\geq1}$ to $\partial^{\hat l_1}_{x_1}\cdots\partial^{\hat l_{\hat k}}_{x_{\hat k}}\partial^{\hat k}_\mu V$. Therefore we may pass $n$ to infinity in \eqref{lqd-general-HJB-m} and show that $V$ solves \eqref{lqd-general-HJB} in the classical sense. The uniqueness follows by the verification results in Section \ref{robust-mfc-problem}.
\end{proof}
We finish this subsection with the construction of an optimal robust strategy.

\begin{proof}[Proof of Proposition \ref{xi-implicit}]
In view of Theorem \ref{verification} and the setting of the current section, it suffices to verify Assumption \ref{assumption-saddle-points} for $(Id,\partial_\mu V(t,\mu,\cdot))\sharp\mu$, $\mu\in\mathcal P_2(\mathbb R^d)$, and show that $\big(\xi^*(t,\cdot),\eta^*(t,\cdot),\alpha^*(t,\cdot)\big)$ is Lipschitz and admissible. For arbitrary $N\geq1$ and $(m_i,q_i)_{1\leq i\leq N}\in(\mathbb R^d)^N$, define:
\begin{align*}
 \mu_N:=\frac1N\sum_{i=1}^N\delta_{(m_i,q_i)},\quad\tilde\mu_N:=\big(Id,\partial_\mu V(t,\mu_N,\cdot)\big)\sharp\mu_N,
\end{align*}
as well as
\begin{align*}
 \psi(\xi,\mu)=-\phi\xi-\tilde\phi\int_{\mathbb R^{d_1}}\tilde\xi\mu(d\tilde\xi),\ \mathcal G(\tilde\mu)=\int_{\mathbb R^{d_1}}\int_{\mathbb R^{d_1}}U(\xi,q,\pi_2\sharp\tilde\mu)\tilde\mu(d\xi dq),\\
 \mu\in\mathcal P_2(\mathbb R^{d_1}),\ \tilde\mu\in\mathcal P_2(\mathbb R^{d_1}\times\mathbb R^{d_2}),
\end{align*}
where $\pi_2:(x,\xi)\mapsto\xi$ is the projection mapping. In view of \eqref{ham-convexity-1}, we may plug the above into Lemma \ref{emp-ham} and verify that \eqref{cal G} holds. According to Lemma \ref{emp-ham}, there exists a unique $(\xi^*_{N,i})_{1\leq i\leq N}$ such that
\begin{align}\label{xi-implicit-1}
  &-\tilde\phi^\top\int_{\mathbb R^d}\partial_\mu V^{(q)}(t,\mu_N,m,q)\mu_N(dmdq)+D_\xi U(\xi^*_{N,i},m_i,\nu_N)+\frac1N\sum_{j=1}^N\partial_\mu U(\xi^*_{N,j},m_j,\nu_N,\xi^*_{N,i})\\
  &\quad-\phi^\top\partial_\mu V^{(q)}(t,\mu_N,m_i,q_i)-\partial_\mu V^{(m)}(t,\mu_N,m_i,q_i)=0,\quad1\leq i\leq N,\notag
\end{align}
where $\nu_N=\frac1N\sum_{i=1}^N\delta_{\xi^*_{N,i}}$, the joint distribution
\begin{align*}
 {\rm Law}(\xi^*_N,Q_N,M_N)=\frac1N\sum_{i=1}^N\delta_{(\xi^*_{N,i},q_i,m_i)}.
\end{align*}
For empirical measure $\mu_N$ and $(m,q),\ (\tilde m,\tilde q)\in{\rm spt}(\mu_N)$, define 
\begin{align*}
& \xi^*(t,\mu_N,m_i,p_i)=:\xi^*_{N,i},\quad \xi^*(t,\mu_N,\tilde m_i,\tilde p_i)=:\tilde\xi^*_{N,i},\\
 &D_\xi U(\xi_i,m_i,\mu_N)+\frac1N\sum_{j=1}^N\partial_\mu U(\xi_j,m_j,\mu_N,\xi^*_i)=:U^*_{N,i},\quad i=1,\ldots,N.
\end{align*}
Then \eqref{ham-convexity-1} and \eqref{xi-implicit-1} imply
\begin{align}\label{xi-Lipschitz}
\delta|\xi^*_N-\tilde\xi^*_N|^2&\leq\big\langle\xi^*_N-\tilde\xi^*_N,U^*_N-\tilde U^*_N\big\rangle\notag\\
&\leq C\big(|\partial_\mu V(t,\mu_N,m,q)-\partial_\mu V(t,\mu_N,\tilde m,\tilde q)|+|m-\tilde m|+|q-\tilde q|\big)\cdot|\xi^*_N-\tilde\xi^*_N|.
\end{align}
Since $\partial_\mu V$ has bounded derivatives, we get
\begin{align*}
 |\xi^*(t,\mu_N,m,q)-\xi^*(t,\mu_N,\tilde m,\tilde q)|\leq C(|m-\tilde m|+|q-\tilde q|).
\end{align*}
A similar analysis to \eqref{xi-Lipschitz} gives
\begin{align*}
 |\xi^*(t,\mu_N,m,q)-\xi^*(t,\hat\mu_N,m,q)|\leq C\mathcal W_2(\mu_N,\hat\mu_M).
\end{align*}
Since empirical measures are dense in $\mathcal P_2(\mathbb R^d)$, it is easy to extend the above $\xi^*(t,\cdot)$ to a continuous function that is Lipschitz on $\mathcal P_2(\mathbb R^d)\times\mathbb R^d$. Moreover, according to Lemma \ref{emp-ham} and \eqref{xi-implicit-1}, for each $\mu\in\mathcal P_2(\mathbb R^d)$ and its lifting $(M,Q)$, it holds a.s. that
\begin{align*}
 &\quad\inf_{\xi\in L^2(\Omega;\mathbb R^{d_1})}\mathbb E\big[-\partial_\mu V^{(m)}(t,\mu,M,Q)\cdot\xi+\partial_\mu V^{(q)}(t,\mu,M,Q)\cdot\big(-\phi\xi-\tilde\phi\mathbb E\xi\big)+U(\xi,Q,\mu_\xi)\big]\\
 &=\mathbb E\big[-\partial_\mu V^{(m)}(t,\mu,M,Q)\cdot\xi^*+\partial_\mu V^{(q)}(t,\mu,M,Q)\cdot\big(-\phi\xi-\tilde\phi\mathbb E\xi\big)+U(\xi,Q,\mu_\xi)\big],
 \end{align*}
 where $\xi^*=\xi^*(t,\mu,M,Q).$ The above together with $(\theta^*,\alpha^*)$ in \eqref{optimal-strategy-1} verifies Assumption \ref{assumption-saddle-points} with $\mathcal H$ in \eqref{lqd-general-ham} and $\tilde\mu:=\big(Id,\partial_\mu V(t,\mu,\cdot)\big)\sharp\mu$. Now that $\xi^*$ is Lipschitz, for any Lipschitz feedback function $(\theta,\alpha)$ and the corresponding $(M,Q)$ from \eqref{model-1}$\sim$\eqref{lqd-dyn-0}, it is now standard to verify
\begin{align*}
 \sup_{\tau\leq T}\mathbb E\bigg[\int_\tau^T\big|\eta\big(t,\mu_{(M_t,Q_t)},M_t,Q_t\big)\big|^2dt\bigg|\mathcal F_\tau\bigg]<+\infty.
\end{align*}
Thus for $\eta_s:=\eta\big(s,\mu_{(M_s,Q_s)},M_s,Q_s\big)$, $\bigg(\int_0^t\eta_sdW_s\bigg)_{t\in[0,T]}$ is a BMO martingale and $\xi^*$ is admissible.
\end{proof}

\subsection{Proofs for Theorem \ref{application-LQ-1} and \ref{verification-LQ-constrained}}
\begin{proof}[Proof for Theorem \ref{application-LQ-1}]
 It is the direct consequence of Theorems \ref{verification} and \ref{LQ-global-wp}.
\end{proof}
Next we turn to the proofs for Theorem \ref{verification-LQ-constrained}.
\begin{lemma}\label{verification-1}
 For $V^{+\infty}$ in \eqref{constraint-value}, it holds that 
 \begin{align*}
  \inf_{\xi\in\tilde{\mathcal U}^{Con}_0}J(0,m,q,\xi)\geq V^{+\infty}(0,\mu),\quad{\rm Law}(m,q)=\mu\in\mathcal P_2(\mathbb R^d).
 \end{align*}
\end{lemma}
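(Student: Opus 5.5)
The plan is to compare any constrained strategy with the penalized problems and then send $\lambda\to+\infty$. Fix $\xi\in\tilde{\mathcal U}^{Con}_0$; we may assume $J(0,m,q,\xi)<+\infty$. By definition of $\tilde{\mathcal U}^{Con}_0$, $M_T=0$ for every $(\eta,\alpha)\in\mathcal V^\xi_0$. Hence the terminal penalty $\lambda|M_T|^2-\hat\lambda_5M_T^\top\gamma_5Q_T$ vanishes for every $\lambda>0$, and only $-K_TQ_T^\top Q_T$ survives. Consequently, writing $\tilde J^\lambda$ for the functional \eqref{lqd-dyn-4} with terminal weight $\lambda$, we get $\tilde J^\lambda(0,m,q,\xi,\eta,\alpha)=\tilde J^{\lambda'}(0,m,q,\xi,\eta,\alpha)$ for all $\lambda,\lambda'$ and all admissible $(\eta,\alpha)$. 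Thus $J(0,m,q,\xi)$ does not depend on the penalty parameter.

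The first step is to show $\xi\in\tilde{\mathcal U}_0$ is admissible for the penalized problem with data $(\mathcal H^\varepsilon,U^\lambda)$. This is immediate, since $\tilde{\mathcal U}^{Con}_0\subset\tilde{\mathcal U}_0$. We then invoke the first half of the verification argument in Theorem \ref{verification}, as specialized in Theorem \ref{application-LQ-1}, which gives $V^\lambda(0,\mu)\leq J^\lambda(0,m,q,\xi)$ for every $\lambda$. Concretely, we plug the saddle feedback $(\eta^{*,\lambda},\alpha^{*,\lambda})$ from \eqref{lambda-feedback}, evaluated along the state generated by $\xi$, into the dynamics. Itô's formula for $V^\lambda(s,\mu_{(M_s,Q_s)})$ together with the Isaacs inequality \eqref{veri-0} then yields
\begin{align*}
V^\lambda(0,\mu)\leq\tilde J^\lambda(0,m,q,\xi,\eta^{*,\lambda},\alpha^{*,\lambda})\leq\sup_{(\eta,\alpha)\in\mathcal V^\xi_0}\tilde J^\lambda(0,m,q,\xi,\eta,\alpha)=J(0,m,q,\xi).
\end{align*}
The step needing care is that $(\eta^{*,\lambda},\alpha^{*,\lambda})$ lies in $\mathcal V^\xi_0$ and satisfies the BMO condition \eqref{BMO}. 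This holds because these feedbacks are affine in $(M,Q)$ with bounded coefficients $a^\lambda_1,a^\lambda_3$ on $[0,T]$ for fixed $\lambda$. They are therefore Lipschitz, which is exactly what the last item of Definition \ref{admissible-set} requires to be included.

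The second step is to pass to the limit. By Theorem \ref{limit-V-lambda}, $a^\lambda_1(0)$ and $a^\lambda_1(0)+a^\lambda_3(0)$ converge to $a^{+\infty}_1(0)$ and $a^{+\infty}_1(0)+a^{+\infty}_3(0)$, taking $\kappa<T$. Hence $V^\lambda(0,\mu)\to V^{+\infty}(0,\mu)$ by \eqref{lambda-value} and \eqref{constraint-value}. Letting $\lambda\to+\infty$ in $V^\lambda(0,\mu)\leq J(0,m,q,\xi)$ and taking the infimum over $\xi\in\tilde{\mathcal U}^{Con}_0$ gives the claim.

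The main obstacle I expect is the first step: the supremum in $J$ is over $\mathcal V^\xi_0$, which depends on $\xi$, so membership of the Lipschitz feedback adversary must be checked for an arbitrary constrained $\xi$. Equally, Itô's formula must be justified when $\xi$ is only admissible, not Lipschitz. I would handle this by using only square integrability of $(M,Q)$ along the dynamics \eqref{model-1}, \eqref{lqd-dyn-2}, together with the quadratic form of $V^\lambda$, which makes Itô's formula for $\mathbb E$ of quadratic functionals elementary.
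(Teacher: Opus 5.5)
Your proposal is correct and follows the paper's proof essentially step by step. You fix $\xi\in\tilde{\mathcal U}^{Con}_0$, take the Lipschitz saddle feedback $(\eta^{*,\lambda},\alpha^{*,\lambda})$ from \eqref{lambda-feedback} as an element of $\mathcal V^\xi_0$, run the first half of the verification argument \eqref{veri-0} to get $V^\lambda(0,\mu)\leq J(0,m,q,\xi)$, and let $\lambda\to+\infty$ using Theorem \ref{limit-V-lambda}; your explicit remark that $M_T=0$ makes the terminal penalty, and hence $J$, independent of $\lambda$ is exactly what the paper uses implicitly when it writes the same $J$ for every $\lambda$.
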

\begin{proof}
For varying $\lambda$ in the cost function \eqref{lqd-dyn-4}, denote the value function and optimal feedback strategy of the unconstrained problem by $V^\lambda$ and $(\xi^{*,\lambda},\theta^{*,\lambda},\alpha^{*,\lambda})$ as in \eqref{lambda-value} and \eqref{lambda-feedback}. Let $\xi\in\tilde{\mathcal U}^{Con}_0$ and consider
\begin{align*}
 \left\{\begin{aligned}
  dM_s&=-\xi_sds,\\
  dQ_s&=\big[\alpha^{*,\lambda}_s+\eta^{*,\lambda}_s-\gamma_1\xi_s-\gamma_2\mathbb E\xi_s+\gamma_3M_s+\gamma_4\mathbb EM_s)\big]ds+dW_s,\ s\in[0,T],\\
  M_0=&m,\ Q_0=q.
 \end{aligned}\right.
\end{align*}
where for $\varphi=\xi,\ \alpha^{*,\lambda},\ \eta^{*,\lambda}$,
\begin{align*}
 \varphi_s:=\varphi\big(s,M_s,Q_s,\mu_{(M_s,Q_s)}\big),\ s\in[0,T].
\end{align*}
Since $(\eta^{*,\lambda},\alpha^{*,\lambda})$ is Lipschitz in its variable, by definition, $(\eta^{*,\lambda},\alpha^{*,\lambda})\in\mathcal V^\xi_0$ and $M_T=0.$ Next we may follow the same idea as in \eqref{veri-0} and show that
\begin{align*}
J(0,m,\xi)\geq\tilde J(0,m,\xi,\theta^{*,\lambda},\alpha^{*,\lambda})\geq V^\lambda\big(0,\mu_{(M_t,Q_t)}\big).
\end{align*}
In view of Theorem \ref{limit-V-lambda}, we may pass $\lambda$ to infinity in the above and get
\begin{align*}
J(0,m,q,\xi)\geq V^{+\infty}\big(0,\mu_{(M_0,Q_0)}\big).
\end{align*}
Noticing $\xi\in\tilde{\mathcal U}^{Con}_0$ is arbitrary, we have completed the proof.
\end{proof}

\begin{lemma}\label{verification-2}
 The feedback control $(\xi^*_s)_{s\in[0,T]}$ in \eqref{MS-path} is admissible.
\end{lemma}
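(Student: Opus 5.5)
To show the feedback $\xi^*$ in \eqref{MS-path} is admissible, three things must hold. The closed-loop mean field SDE must be well posed on $[0,T)$. For every $(\eta,\alpha)\in\mathcal V^{\xi^*}_0$ we need $M^*_T=0$. And the BMO condition \eqref{BMO} must hold. The feedbacks are affine in $(M,Q,\mathbb E M,\mathbb E Q)$, with coefficients built from $a^{+\infty}_1,a^{+\infty}_3$. By Theorem \ref{limit-V-lambda} these are continuous on $[0,T-\kappa]$ for every $\kappa>0$. Hence on each $[0,T-\kappa]$ the system \eqref{MS-path} is a linear McKean--Vlasov SDE with Lipschitz coefficients, and existence and pathwise uniqueness are standard. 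The real issue is the blow-up at $T$. Of the coefficients, $a^{+\infty}_1$ and $a^{+\infty}_1+a^{+\infty}_3$ blow up only in the $M$-block, by \eqref{uniform-estimates}. The $Q$-block stays bounded.

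First I would take expectations. Write $\bar M_s=\mathbb E M^*_s$, $\bar Q_s=\mathbb E Q^*_s$. These solve a linear ODE whose $M$-equation is $\dot{\bar M}_s=-\frac1{\lambda_1}\big[(a^{+\infty}_1+a^{+\infty}_3)(s)\bar X_s\big]_{(m)}+\text{bounded}\cdot\bar Q_s$, up to bounded corrections coming from $\gamma_1,\gamma_2,K_s,\lambda_5$. Next I would subtract to get $\hat M=M^*-\bar M$, whose drift involves only $a^{+\infty}_1$. The two-sided bound \eqref{uniform-estimates} gives, in the $M$-block,
\begin{align*}
\frac{c_1}{T-s}-c_2\le \varphi_{(m)}(s)\le \frac{c_3}{T-s}+c_4,\qquad \varphi=a^{+\infty}_1,\ a^{+\infty}_1+a^{+\infty}_3.
\end{align*}
The upper bound uses $(e^{4\sqrt\varepsilon(T-s)}-1)^{-1}\sim (4\sqrt\varepsilon(T-s))^{-1}$.

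Applying It\^o's formula to $|\hat M_s|^2$ and $|\bar M_s|^2$ then gives differential inequalities of the form
\begin{align*}
\frac{d}{ds}\mathbb E|\hat M_s|^2\le -\frac{2c}{T-s}\mathbb E|\hat M_s|^2+C\big(\mathbb E|\hat M_s|^2+\mathbb E|\hat Q_s|^2\big)^{1/2}\big(\mathbb E|\hat M_s|^2\big)^{1/2}.
\end{align*}
Here I use that $\mathbb E|\hat Q_s|^2$ is bounded by Gronwall, since the $Q$-drift has bounded coefficients apart from the term $\xi^*$. That term is controlled because $\frac{1}{T-s}\hat M_s$ enters with the good sign. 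The inequality yields $\mathbb E|\hat M_s|^2\le C(T-s)^{2c\wedge 1}\to0$. A pathwise version comes from writing $M^*_s=\Phi(s)\big[m+\int_0^s\Phi(r)^{-1}(\cdots)\,dr\big]$, with fundamental solution $\Phi(s)\sim (T-s)^{c}$, and then using a law of the iterated logarithm or Kolmogorov-type bound on the stochastic integral contributions. This is the Brownian-bridge style argument standard in the single-asset liquidation literature. It gives $M^*_s\to0$ almost surely as $s\uparrow T$, so $M^*_T=0$.

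\textbf{Main obstacle.} The condition $M^*_T=0$ has to hold for \emph{every} $(\eta,\alpha)\in\mathcal V^{\xi^*}_0$, not only for the candidate saddle point. Take an arbitrary adversary $\eta$ satisfying \eqref{BMO} and a deterministic $\alpha$; for $\alpha$ I would require local square integrability as part of the ambiguity set. Then the $Q$-equation gains the drift $\kappa_1\alpha+\kappa_2\eta$. Its second moment stays bounded on $[0,T]$ by the BMO bound and Gronwall, and the bounded $Q$-block of $a^{+\infty}$ again keeps the feedback from amplifying $Q$. The contraction estimate above uses only boundedness of $\mathbb E|Q_s|^2$ together with the sign structure of the $M$-block, so it carries over unchanged.

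Finally, the BMO property of $\eta^*=\frac{\kappa_2}{2\lambda_3}(p^{+\infty}_2+2K_sQ)$ follows because $p^{+\infty}_2$ depends only on the bounded $Q$-block and on the $M$-$Q$ cross terms of $a^{+\infty}$. These cross terms are bounded since \eqref{uniform-estimates} is block diagonal. Conditional second moments of $Q$ and $M$ are bounded uniformly in stopping times by the same Gronwall argument applied under $\mathbb E[\cdot|\mathcal F_\tau]$. This gives \eqref{BMO} and completes the admissibility of $\xi^*$.
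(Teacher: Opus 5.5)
Your decay mechanism for $M$ is the paper's. The lower bound in \eqref{uniform-estimates} makes the $M$-drift coercive of order $(T-s)^{-1}$, and second-moment differential inequalities turn this into a power decay of $\mathbb E|M_s|^2$; the paper does this with $g(s)=(T-s)^{-\alpha}|\mathbb EM_s|^2$ and then $\tilde g(s)=(T-s)^{-\beta}\mathbb E|M_s|^2$. Your mean/fluctuation split organises this more cleanly. $\hat M$ sees only $a^{+\infty}_1$ and $\bar M$ only $a^{+\infty}_1+a^{+\infty}_3$, so the $M$-block of $a^{+\infty}_3$ never enters. That block is bounded by \eqref{uniform-estimates} only as $O((T-s)^{-1})$, without a sign, and the paper has to handle it by first proving \eqref{MT-1}.

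Your claim that the off-diagonal blocks are bounded is right, but not because the bounds are block diagonal. Write $\underline\varphi,\overline\varphi$ for the bounds in \eqref{uniform-estimates}. Then $0\leq\varphi-\underline\varphi\leq\overline\varphi-\underline\varphi$, and the $Q$-block gap is $C(T-s)$, so $|\varphi_{12}|^2\leq\|(\overline\varphi-\underline\varphi)_{11}\|\,\|(\overline\varphi-\underline\varphi)_{22}\|=O(1)$.

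What does not hold as written is the input $\sup_s\mathbb E|\hat Q_s|^2<\infty$. In the $Q$-equation, $-\gamma_1\xi^*$ contains $-\lambda_1^{-1}\gamma_1[a^{+\infty}_1]_{11}M$. This is a coupling of size $(T-s)^{-1}$ from $M$ into $Q$, and it has no sign relative to $Q$, so there is no ``good sign'' to use. Absorbing it into the coercive $M$-term by Young's inequality leaves a non-integrable $(T-s)^{-1}\mathbb E|Q_s|^2$. As stated, the $Q$-bound and the $M$-decay are therefore circular. The fix uses $dM=-\xi^*ds$: the process $Q-\gamma_1M-\gamma_2\mathbb EM$ has a drift free of $\xi^*$. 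Gronwall for the pair $(M,\,Q-\gamma_1M-\gamma_2\mathbb EM)$, in your split variables and discarding the coercive terms, then bounds $\sup_s\mathbb E(|M_s|^2+|Q_s|^2)$ before any decay is proved. The paper's one-line bound $\frac d{ds}\mathbb E[|M_s|^2+|Q_s|^2]\leq C\mathbb E[|M_s|^2+|Q_s|^2]+C$ skips the same coupling.

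Second, the BMO part is aimed at the wrong object. Membership in $\tilde{\mathcal U}^{Con}_0$ requires $M_T=0$ and \eqref{BMO} for every $(\eta,\alpha)\in\mathcal V^{\xi^*}_0$. Definition \ref{admissible-set} forces $\mathcal V^{\xi^*}_0$ to contain every pair generated by Lipschitz feedback functions. You assume \eqref{BMO} for the adversary when bounding $Q$, and then check it only for $\eta^*$. For a feedback adversary $\eta_s=\eta(s,M_s,Q_s,\mu_{(M_s,Q_s)})$, the BMO property depends on the closed loop and cannot be assumed. Checking the single process $\eta^*$ does not give it for the whole class.

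The paper's proof is built around exactly this step. It takes an arbitrary Lipschitz feedback $(\eta,\alpha)$ and writes $\eta_s=\eta^0_s+A_sM_s+B_sQ_s$ with bounded $A,B$ by the mean value theorem. It then derives the moment and decay estimates for that closed loop from linear growth alone, and transfers integrability from $(M,Q)$ to $\eta$. Your estimates adapt directly, but you must run them for every such adversary.

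Finally, conditioning on $\mathcal F_\tau$ only gives $\mathbb E[|Q_u|^2|\mathcal F_\tau]\leq C(1+|M_\tau|^2+|Q_\tau|^2)$. This is not bounded in $\omega$ because $Q_\tau$ is unbounded, so ``conditional second moments bounded uniformly in stopping times'' is false. The paper's ``straightforward'' BMO claim for $\int M\,dW$ and $\int Q\,dW$ has the same defect. Under the usual $L^\infty$ reading of \eqref{BMO}, neither argument closes this step.
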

\begin{proof}
Let $\big(M_s,Q_s\big)_{s\in[0,T]}$ solve
\begin{align}\label{MS-path-0-1}
  \left\{\begin{aligned}dM_s&=-\xi^*_sds,\\
  dQ_s&=[\alpha_s+\eta_s-\gamma_1\xi^*_s-\gamma_2\mathbb E\xi^*_s+\gamma_3M_s+\gamma_4\mathbb EM_s]ds+dW_s,\\
  M_0&=m,\quad Q_0=q.
\end{aligned}\right.
\end{align}
In \eqref{MS-path-0-1}, $(\xi^*_s,\eta_s,\alpha_s)_{s\in[0,T]}$ is a feedback strategy where $(\eta_s,\alpha_s)$ admits Lipschitz feedback function $(\eta,\alpha)$ and the feedback function of $\xi^*_s$ is from \eqref{MS-path}, i.e.,
\begin{align*}
 \xi^*_s=\frac1{2\lambda_1}\Bigg[&\left(\begin{matrix}I_{d_1} & \gamma^\top_1 \end{matrix}\right)\bigg(2a^{+\infty}_1(s)\left(\begin{matrix}M_s\\
 Q_s
 \end{matrix}\right)+2a^{+\infty}_3(s)\left(\begin{matrix}\mathbb EM_s\\
 \mathbb EQ_s
 \end{matrix}\right)\bigg)+2K_s\big(\gamma^\top_1Q_s+\gamma_2\mathbb EQ_s\big)\notag\\
 &+4\left(\begin{matrix}0&\gamma_2
 \end{matrix}\right)\bigg(\big(a^{+\infty}_1(s)+a^{+\infty}_3(s)\big)\left(\begin{matrix}\mathbb EM_s\\
 \mathbb EQ_s
 \end{matrix}\right)\bigg)+\lambda_5\gamma_5Q_s\Bigg],\ s\in[t,T].
\end{align*}
According to the definition, $\xi^*\in\tilde{\mathcal U}^{Con}_0$ if $M_T=0$ and  $\big(\int_0^s\theta_udW_u\big)_{s\in[0,T]}$ is a BMO martingale. Let us apply the mean value theorem and turn $(\eta_s)_{s\in[t,T]}$ into
\begin{align}\label{linear-theta}
\eta_s=\eta^0_s+A_sM_s+B_sQ_s,\quad s\in[t,T],
\end{align}
where for $s\in[0,T]$, $A_s\in\mathbb R^{d_2\times d_1},\ B_s\in\mathbb R^{d_2\times d_2}$ are bounded matrix value processes and
\begin{align*}
 \eta^0_s=\eta\big(s,0,0,\mu_{(M_s,Q_s)}\big).
\end{align*}
 Hence
\begin{align}\label{MS-path-1}
  \left\{\begin{aligned}dM_s&=-\xi^*_sds,\\
  dQ_s&=[\alpha_s+\eta^0_s+A_sM_s+B_sQ_s-\gamma_1\xi^*_s-\gamma_2\mathbb E\xi^*_s+\gamma_3M_s+\gamma_4\mathbb EM_s]ds+dW_s.
\end{aligned}\right.
\end{align}
In view of \eqref{uniform-estimates} and \eqref{MS-path-1}, it is easy to show that
\begin{align*}
 \frac d{ds}\mathbb E\big[|M_s|^2+|Q_s|^2\big]\leq C\mathbb E\big[|M_s|^2+|Q_s|^2\big]+C.
\end{align*}
Therefore
\begin{align*}
 \mathbb E\big[|M_s|^2+|Q_s|^2\big]\leq C,\quad s\in[t,T].
\end{align*}
Next we show $\mathbb E\big[|M_T|^2]=0.$ We first claim that for sufficiently small $\alpha\in(0,1)$, there exists a constant $C_\alpha$ such that
\begin{align}\label{MT-1}
 \big|\mathbb EM_s\big|^2\leq C_\alpha(T-s)^\alpha,\quad s\in[t,T).
\end{align}
Towards that end, define
\begin{align*}
 g(s)=\frac1{(T-s)^\alpha}\big|\mathbb EM_s\big|^2,\quad s\in[t,T),\ \alpha\in(0,1).
\end{align*}
According to \eqref{uniform-estimates} and \eqref{MS-path-1}, taking expectation in \eqref{MS-path-1} and applying It\^o's formula to $|\mathbb EM_s|^2$ yields a positive constant $C$ such that
\begin{align*}
 \frac d{ds}g(s)&\leq\frac C{(T-s)^\alpha}\bigg(\varepsilon\big((T-s)^{-1}+(e^{4\sqrt{\varepsilon}(T-s)}-1)^{-1}\big)|\mathbb EM_s|^2+\varepsilon^{-1}|\mathbb EQ_s|^2\\
 &\quad\qquad+|a^{(2)}(s)|\cdot|\mathbb EM_s|+|\mathbb EM_s|^2\bigg)+\frac\alpha{(T-s)^{1+\alpha}}\big|\mathbb EM_s\big|^2\\
 &\quad-\frac{C^{-1}}{(T-s)^\alpha}\mathbb EM^\top_sa^{(5)}_{11}(s)\mathbb EM_s,
\end{align*}
where $\delta>0$ is a constant to be determined, and
\begin{align*}
 a^{(5)}(t)=a^{(1)}(t)+a^{(3)}(t)=\left(\begin{matrix}a^{(5)}_{11}(t)&a^{(5)}_{12}(t)\\
 a^{(5)}_{21}(t)&a^{(5)}_{22}(t)
 \end{matrix}\right).
\end{align*}
In view of \eqref{uniform-estimates}, there exist positive constants $C_1,\ C_2,\ \delta$ such that
\begin{align}\label{MT-2}
C_1I_{d_1}\leq\frac{a^{(1)}_{11}(t)+a^{(3)}_{11}(t)}{(T-t)^{-1}}\leq C_2I_{d_1},\ C_1I_{d_1}\leq\frac{a^{(1)}_{11}(t)}{(T-t)^{-1}}\leq C_2I_{d_1},\quad t\in[0,T-\delta).
\end{align}
Then by \eqref{LQ-Riccati}, for any $\tilde\alpha\in(0,1)$,
\begin{align*}
 \limsup_{t\to T-}\frac{|a^{(2)}(t)|}{(T-t)^{-\tilde\alpha}}<+\infty.
\end{align*}
It is now easy to see from the above and \eqref{uniform-estimates} that there exist sufficiently small $\alpha\in(0,1)$ and $\varepsilon,\ \delta>0$ such that
\begin{align*}
 \frac d{ds}g(s)<\frac C{(T-s)^{\alpha+\tilde\alpha}}+\frac C{(T-s)^\alpha},\quad s\in[T-\delta,T).
\end{align*}
We may now choose $\alpha,\ \tilde\alpha$ sufficiently small such that $g(s)\leq C_{\alpha,\tilde\alpha}$ for some constant $C_{\alpha,\tilde\alpha}$ and thus \eqref{MT-1}. Now we claim that for sufficiently small $\beta\in(0,1)$,
\begin{align}\label{MT-3}
 \mathbb E\big[|M_t|^2]\leq C_\beta(T-t)^\beta,
\end{align}
which implies $\mathbb E\big[|M_T|^2]=0$. Define
\begin{align*}
 \tilde g(s)=\frac1{(T-s)^\beta}\mathbb E|M_s|^2,\quad s\in[t,T),\ \beta\in(0,1).
\end{align*}
In view of \eqref{uniform-estimates}, \eqref{MS-path-1} as well as \eqref{MT-1}, there exists a positive constant $C$ such that
\begin{align*}
 \frac d{ds}\tilde g(s)&\leq\frac C{(T-s)^\beta}\bigg(\varepsilon\big((T-s)^{-1}+(e^{4\sqrt{\varepsilon}(T-s)}-1)^{-1}\big)\mathbb E|M_s|^2+(\varepsilon^{-1}+1)\mathbb E|Q_s|^2+|a^{(2)}(s)|\cdot\mathbb E|M_s|\\
 &\quad\qquad+\mathbb E|M_s|^2+\big((T-s)^{-1}+(e^{4\sqrt{\varepsilon}(T-s)}-1)^{-1}\big)(T-t)^{2\alpha}\bigg)+\frac\beta{(T-s)^{1+\beta}}\mathbb E|M_s|^2\\
 &\quad-\frac{C^{-1}}{(T-s)^\beta}\mathbb E\big[M^\top_sa^{(1)}_{11}(s)M_s\big].
\end{align*}
For sufficiently small $\alpha\in(0,1)$ in \eqref{MT-1}. Let $\beta-2\alpha<0$ and $\beta,\ \varepsilon,\ \tilde\alpha$ be sufficiently small, then the above estimates yield
\begin{align*}
 \tilde g(s)<C\big((T-s)^{2\alpha-\beta}+(T-s)^{1-\beta-\tilde\alpha}+1\big),\quad s\in[0,T).
\end{align*}
Hence \eqref{MT-3}. According to \eqref{MS-path-1} and \eqref{MT-2}, 
\begin{align*}
 d\left(\begin{matrix}M_s\\
 Q_s
 \end{matrix}\right)=\tilde A_s\left(\begin{matrix}M_s\\
 Q_s
 \end{matrix}\right)+\left(\begin{matrix}0\\
 dW_s
 \end{matrix}\right),
\end{align*}
where $(\tilde A_s)_{s\in[0,T]}$ is matrix value process satisfying $\tilde A_s\in\mathbb R^{d\times d},\quad|\tilde A_s|\leq\frac C{T-s},\ s\in[0,T]$ for some constant $C$. Then
\begin{align*}
 \left(\begin{matrix}M_s\\
 Q_s
 \end{matrix}\right)=\Phi^-_s \left(\begin{matrix}M_0\\
 Q_0
 \end{matrix}\right)+\int_0^s\Phi^-_s\Phi^+_u\left(\begin{matrix}0\\
 dW_u
 \end{matrix}\right),
\end{align*}
where
\begin{align*}
 \frac d{ds}\Phi^+_s=\Phi^+_s\tilde A_s,\quad\frac d{ds}\Phi^-_s=-\Phi^-_s\tilde A_s,\quad\Phi_0=I_d.
\end{align*}
It is easy to see from Gronwall's inequality that the existence of $C>0$ such that
\begin{align*}
 |\Phi^-_s\Phi^+_u|\leq(T-s)^C,\quad u,s\in[0,T],\ u\leq s.
\end{align*}
Therefore, it is straightforward to check that $\big(\int_0^sM_udW_u\big)_{s\in[t,T]}$ and $\big(\int_0^sS_udW_u\big)_{s\in[t,T]}$ are BMO martingales. Hence, by \eqref{linear-theta} $\big(\int_0^s\eta_udW_u\big)_{s\in[t,T]}$ is also a BMO martingale.
\end{proof}

\begin{proof}[Proof of Theorem \ref{verification-LQ-constrained}]
After Lemma \ref{verification-1} and Lemma \ref{verification-2}, it remains to show
\begin{align*}
J(0,m,q,\xi^*)=V(0,\mu).
\end{align*}
Set $\mu_t:={\rm Law}(M_t,Q_t),\ t\in[0,T]$. Applying It\^o's formula to $(\mu_t)_{t\in[0,T]}$ in \eqref{MS-path-0-1} and repeating the similar reasoning to that in \eqref{veri-0-1}, we get
\begin{align*}
 &V^{+\infty}(T-\delta,\mu_{T-\delta})+\mathbb E\bigg[\lambda_1\int_0^{T-\delta}|\xi_s|^2ds+\lambda_{21}\int_0^{T-\delta}|M_s|^2ds+\lambda_{22}\int_0^{T-\delta}|M_s-\mathbb EM_s|^2ds\\
 &\qquad\qquad\qquad-\int_0^{T-\delta}\big(\lambda_3|\eta_s|^2+\lambda_4|\alpha_s|^2\big)ds-\lambda_5\int_0^{T-\delta}\xi_s\cdot Q_sds\bigg]\leq V(0,\mu).
\end{align*}
Denote by $\zeta_s:=(M_s,S_s)^\top,\ s\in[0,T]$. In view of Theorem \ref{limit-V-lambda},
{\small\begin{align*}
 &\quad V(T-\delta,\mu_{T-\delta})=\mathbb E[\zeta^\top_{T-\delta}]\big(a^{+\infty}_1(T-\delta)+a^{+\infty}_3(T-\delta)\big)\mathbb E[\zeta_{T-\delta}]+\mathbb E\big[(\zeta_{T-\delta}-\mathbb E\zeta_{T-\delta})^\top a^{+\infty}_1(T-\delta)(\zeta_{T-\delta}-\mathbb E\zeta_{T-\delta})\big]\\
 &\geq\mathbb E[Q^\top_{T-\delta}]e^{-\delta q^T_{22}}q^T_{22}e^{-\delta q^T_{22}}\mathbb E[Q_{T-\delta}]+\mathbb E\big[(Q_{T-\delta}-\mathbb EQ_{T-\delta})^\top e^{-\delta q^T_{22}}q^T_{22}e^{-\delta q^T_{22}}(Q_{T-\delta}-\mathbb EQ_{T-\delta})\big]-\delta C\\
 &=\mathbb E[Q_{T-\delta}e^{-\delta q^T_{22}}q^T_{22}e^{-\delta q^T_{22}}Q_{T-\delta}]-\delta C.
\end{align*}}
Sending $\delta$ to $0$, we get
\begin{align*}
 &\mathbb E\bigg[Q_Tq^T_{22}Q_T+\lambda_1\int_0^T|\xi_t|^2dt+\lambda_{21}\int_0^T|M_t|^2dt+\lambda_{22}\int_0^T|M_t-\mathbb EM_t|^2dt\\
 &\qquad\qquad\qquad-\int_0^T\big(\lambda_3|\eta_t|^2+\lambda_4|\alpha_t|^2\big)dt-\lambda_5\int_0^T\xi_tQ_tdt\bigg]\leq V(0,\mu).
\end{align*}
Since $(\eta,\alpha)$ is arbitrary, we obtain
\begin{align*}
J(0,m,q,\xi^*)=V(0,\mu).
\end{align*}
\end{proof}

{\bf Acknowledgements.} HL is partially supported by NSFC grant 12522122 and Hong Kong RGC Grant ECS 21302521. SL is partially supported by the NSFC/RGC Joint Research Scheme under grant P0031382/S-ZG9U and the Postdoc Matching Fund 1-W32B. CM is supported in part by NSFC grant 12522122, NSFC/RGC JRS N\_CityU165/25, GRF 11311422, and GRF 11303223. DS is supported in part  by the Research Center for Intelligent Operations Research and Hong Kong RGC Senior Research Fellow Scheme No. SRFS2223-5S02. 

\bibliographystyle{plain}

\end{document}